\newlength{\myhmargin} \setlength{\myhmargin}{1in} \addtolength{\myhmargin}{18pt}
\title{A contact invariant in sutured monopole homology}
\author[John A. Baldwin]{John A. Baldwin}
\address{Department of Mathematics \\ Boston College}
\email{john.baldwin@bc.edu}
\author[Steven Sivek]{Steven Sivek}
\address{Department of Mathematics \\ Princeton University}
\email{ssivek@math.princeton.edu}
\thanks{JAB was partially supported by NSF grant DMS-1104688.  SS was partially supported by NSF postdoctoral fellowship DMS-1204387.}
\def\R{{\mathbb{R}}}
\newcommand\hf{\widehat{HF}}
\newcommand\hfp{HF^+}
\newcommand\zt{\mathbb{F}_2}
\newcommand\zz{\mathbb{Z}}
\newcommand\Sc{\text{Spin}^c}
\newcommand\spc{\mathfrak{s}}
\newcommand\ssm{\smallsetminus}
\newcommand\Psit{\underline{\Psi}}
\newcommand\Z{\mathbb{Z}}
\newcommand\inr{{\rm int}}
\newcommand\RR{\mathcal{R}}
\newcommand\data{\mathscr{D}}
\newcommand\SFH{SFH}
\newcommand\SHM{SHM}
\newcommand\SHMt{\underline{\SHM}}
\newcommand\KHM{KHM}
\newcommand\KHMt{\underline{\KHM}}
\newcommand{\definefunctor}[1]{\textbf{\textup{#1}}}
\newcommand\SHMtfun{\definefunctor{\SHMt}}
\newcommand\Img{{\rm Im}}
\newcommand\invt{\psi} 
\newcommand\HMtoc{\HMto_{\bullet}}
\newcommand\PSys{\textbf{\textup{PSys}}}
\newcommand{\RPSys}[1][\RR]{{#1}\mbox{-}\PSys}
\newcommand\DiffSut{\textup{\textbf{DiffSut}}}
\newcommand\CobSut{\textup{\textbf{CobSut}}}
\newcommand{\longcomment}[2]{#2}
\DeclareFontFamily{U}{mathx}{\hyphenchar\font45}
\DeclareFontShape{U}{mathx}{m}{n}{
      <5> <6> <7> <8> <9> <10>
      <10.95> <12> <14.4> <17.28> <20.74> <24.88>
      mathx10
      }{}
\DeclareSymbolFont{mathx}{U}{mathx}{m}{n}
\DeclareMathAccent{\widecheck}{0}{mathx}{"71}
\newcommand{\HMto}{\widecheck{\mathit{HM}}}
    \def\HMto{%
       \setbox0=\hbox{$\widehat{\mathit{HM}}$}
       \setbox1=\hbox{$\mathit{HM}$}
       \dimen0=1.1\ht0
       \advance\dimen0 by 1.17\ht1
       \smash{\mskip2mu\raise\dimen0\rlap{%
          \begin{turn}{180}
              {$\widehat{\phantom{\mathit{HM}}}$}
           \end{turn}} \mskip-2mu    
                \mathit{HM}
    }{\vphantom{\widehat{\mathit{HM}}}}{}}
    \newcommand*\oline[1]{%
  \vbox{%
    \hrule height 0.35pt
    \kern0.1ex
    \hbox{%
      \kern-0.0em
      \ifmmode#1\else\ensuremath{#1}\fi
      \kern-0.1em
    }
  }
}
\newtheorem{theorem}{Theorem}[section]
\newtheorem{lemma}[theorem]{Lemma}
\newtheorem{conjecture}[theorem]{Conjecture}
\newtheorem{corollary}[theorem]{Corollary}
\newtheorem{proposition}[theorem]{Proposition}
\theoremstyle{definition}
\newtheorem{definition}[theorem]{Definition}
\newtheorem{notation}[theorem]{Notation}
\newtheorem{remark}[theorem]{Remark}
\newtheorem*{rep@thm}{\rep@title}
\newcommand{\newreptheorem}[2]{%
\newenvironment{rep#1}[1][0,0]{%
\def\rep@title{#2##1}%
\begin{rep@thm}}%
{\end{rep@thm}}}
\begin{document}
\begin{abstract} 
We define an invariant of contact 3-manifolds with convex boundary using Kronheimer and Mrowka's sutured monopole Floer  homology  theory ($\SHM$). Our invariant can be viewed as a generalization of Kronheimer and Mrowka's contact invariant for closed contact 3-manifolds and as the monopole Floer analogue of Honda, Kazez, and Mati{\'c}'s contact invariant in sutured Heegaard Floer homology ($\SFH$). In the process of defining our invariant, we construct  maps on $\SHM$ associated to contact handle attachments, analogous to those defined by Honda, Kazez, and Mati{\'c} in $\SFH$. We  use these maps to establish a bypass exact triangle in   $\SHM$  analogous to Honda's in $\SFH$. This paper also provides the topological basis for the construction of similar gluing maps in sutured instanton Floer homology, which are used in \cite{bsSHI} to define a contact invariant in the instanton Floer setting.
 \end{abstract}

\maketitle


\section{Introduction}
\label{sec:intro}
Floer-theoretic invariants of contact structures---in particular, those defined by Kronheimer and Mrowka in \cite{km} and by  Ozsv{\'a}th and Szab{\'o} in  \cite{osz1}---have led to a number of spectacular results  in low-dimensional    topology over the last decade or so. Kronheimer and Mrowka's invariant, defined using Taubes's work  on the Seiberg-Witten equations for symplectic 4-manifolds,  assigns to a closed contact 3-manifold $(Y,\xi)$ a class
$\psi(Y,\xi)$ in the monopole Floer homology   $\HMtoc(-Y,\spc_{\xi})$, where $\spc_{\xi}$ is the $\Sc$ structure  associated to $\xi$.\footnote{This formulation of Kronheimer and Mrowka's invariant first appears in \cite{kmosz}, actually.}  Ozsv{\'a}th and Szab{\'o}'s invariant similarly  takes the form of a class $c^+(Y,\xi)$ in the Heegaard Floer homology   $\hfp(-Y,\spc_{\xi})$, but is defined using Giroux's correspondence between contact structures and open books.

 Honda, Kazez, and Mati{\'c}  introduced an important generalization of Ozsv{\'a}th and Szab{\'o}'s construction in \cite{hkm4}, using a \emph{relative} version of Giroux's correspondence  to define an invariant of \emph{sutured contact manifolds}, which are triples of the form $(M,\Gamma,\xi)$ where $(M,\xi)$ is a contact 3-manifold with convex boundary and  $\Gamma\subset \partial M$ is a multicurve dividing the characteristic foliation of $\xi$ on $\partial M$.\footnote{Technically, the invariant in \cite{hkm4} generalizes the ``hat" version of Ozsv{\'a}th and Szab{\'o}'s invariant. Also, it is worth mentioning that the term \emph{sutured contact manifold} is used slightly differently in \cite{cghh}.} Their invariant  assigns to $(M,\Gamma,\xi)$ a class $EH(M,\Gamma,\xi)$ in  the sutured Heegaard Floer homology   $\SFH(-M,-\Gamma)$. The work in \cite{hkm4} and its sequel \cite{hkm5} has enhanced our understanding of   Legendrian knots \cite{sv}, knot Floer homology \cite{evz},  functoriality in $SFH$ \cite{juhasz3}, and  bordered Heegaard Floer homology \cite{zarev}, and has   revealed interesting categorical structure in contact topology \cite{honda4}. This categorical structure has, in turn,  had  important applications to the categorification of quantum groups \cite{yintian1,yintian2}.

In this paper, we define an invariant of sutured contact manifolds in Kronheimer and Mrowka's sutured monopole Floer homology   ($SHM$), generalizing their  invariant  for closed contact manifolds in the same way that Honda, Kazez, and Mati{\'c}'s contact invariant generalizes that of Ozsv{\'a}th and Szab{\'o} on the Heegaard Floer side.  In other words, \[\text{our invariant}: \psi \,\,::\,\, EH:c^+.\] Although our contact invariant can be thought of as the monopole Floer analogue of Honda, Kazez, and Mati{\'c}'s $EH$ invariant, our construction is quite different from theirs (not surprising, considering the different constructions of $\psi$ and $c^+$). One advantage of our construction is that it does not rely on the full strength of the relative Giroux correspondence, a complete proof of which is currently lacking. Moreover, we show that our contact invariant is ``natural" in the sense that it is preserved by the canonical isomorphisms relating the different sutured monopole homology groups associated to a given  sutured contact manifold, something which has not been completely established on the Heegaard Floer side.

As a byproduct of our construction, we define  ``gluing" maps in $\SHM$ associated to contact handle attachments, analogous to those  in $\SFH$ defined by Honda, Kazez, and Mati{\'c} in \cite{hkm5}.  As we shall see, Kronheimer and Mrowka's approach to sutured Floer theory via  the \emph{closure} operation    allows for a conceptually simpler construction of  these  maps than  in $\SFH$. We use these gluing maps  to establish  a monopole Floer analogue of Honda's bypass exact triangle in $\SFH$---the  centerpiece   of his \emph{contact category} \cite{honda4}.  Moreover, our approach shows that these bypass  triangles are instances of the usual surgery exact triangle in Floer homology, suggesting that Honda's contact category fits naturally into a larger category of closed manifolds. 

Our work on defining gluing maps in $\SHM$ also provides the topological foundation for  a  similar gluing map construction in Kronheimer and Mrowka's sutured instanton Floer homology. We use this construction  in \cite{bsSHI} to define the first invariant of contact manifolds in the instanton Floer setting.  

Beyond providing new insights into   developments that have sprung from   Honda, Kazez, and Mati{\'c}'s work,  intrinsic advantages of the monopole Floer perspective have enabled us to prove  results with no counterparts on the Heegaard Floer side. This point is illustrated in \cite{bsLEG}, where we use the contact invariant defined in this paper to construct new invariants of Legendrian and transverse knots in monopole knot homology. The functoriality of Kronheimer and Mrowka's invariant $\psi$ under exact symplectic cobordism leads to a proof that our Legendrian invariant is functorial with respect to Lagrangian cobordism (cf. \cite{sivek} for a similar result), something which is not known to be true of the analogous ``LOSS" invariant in knot Floer homology \cite{lossz}.

Below, we outline the constructions of our contact invariant and our gluing maps, elaborating on several points in the discussion above. We discuss  future work at the end.

\subsection{A contact invariant in $\SHM$} 
\label{ssec:introcontact}

Suppose $(M,\Gamma)$ is a balanced sutured manifold.  Consider the manifold obtained by gluing  a  thickened surface $F\times I$ to $M$ by   a map which identifies $\partial F\times I$ with a tubular neighborhood of $\Gamma\subset\partial M$. Under  mild  assumptions, this manifold has two diffeomorphic boundary components. Gluing these  together by some diffeomorphism, one obtains a closed 3-manifold $Y$ with a distinguished surface $R\subset Y$. Kronheimer and Mrowka call a pair $(Y,R)$ obtained in this way  a \emph{closure} of $(M,\Gamma)$. Its \emph{genus}  refers to the genus of $R$.

Suppose now that $(Y,R)$ is a closure of $(M,\Gamma)$ with genus at least two and   $\eta$ is an embedded, nonseparating 1-cycle in $R$. Let $\RR$ be the Novikov ring over $\Z$. As defined in \cite{km4}, the  \emph{sutured monopole homology} of $(M,\Gamma)$ is  the $\RR$-module given by the portion of the  ``twisted" monopole Floer homology of $Y$ in the ``topmost" $\Sc$ structures relative to $R$, \begin{equation}\label{eqn:closuredef}\SHMt(M,\Gamma) := \HMtoc(Y|R;\Gamma_{\eta}):=\bigoplus_{ \langle c_1(\spc), [R]\rangle = 2g(R)-2} \HMtoc(Y,\spc; \Gamma_{\eta}).\footnote{$\Gamma_{\eta}$ refers to a local system on the Seiberg-Witten configuration space $\mathcal{B}(Y,\spc)$ with fiber  $\RR$ specified by  $\eta$. } \end{equation}
 Kronheimer and Mrowka showed that $\SHMt(M,\Gamma)$ is well-defined up to isomorphism. Moreover, the combined results of Kronheimer and Mrowka \cite[Lemma 4.9]{km4}, Taubes \cite{taubes1,taubes2,taubes3,taubes4,taubes5}, Colin, Ghiggini, and Honda \cite{cgh3, cgh4, cgh5}, and Lekili \cite{lekili2} show that \begin{equation}\label{eqn:SFHMiso}\SHMt(M,\Gamma) \cong \SFH(M,\Gamma)\otimes\RR.\footnote{See also Kutluhan, Lee, and Taubes \cite{klt1,klt2,klt3,klt4,klt5}.}\end{equation}

In \cite{bs3}, we introduced a refinement of this construction which assigns to $(M,\Gamma)$ an $\RR$-module that is well-defined up to \emph{canonical} isomorphism, modulo multiplication by units in $\RR$. Our refinement begins with a modification of Kronheimer and Mrowka's notion of closure. For us, a (marked) closure is a tuple $\data$ which records the data $(Y,R,\eta)$, as well as things like  an explicit smooth structure on $Y$ and  smooth embeddings of $M$ and $R$ into $Y$. The sutured monopole homology $\SHMt(\data)$ of a closure $\data$ of $(M,\Gamma)$ is then defined in terms of $(Y,R,\eta)$ as in \eqref{eqn:closuredef}. For any two closures $\data,\data'$ of $(M,\Gamma)$, we constructed an isomorphism \[\Psit_{\data,\data'}:\SHMt(\data)\to\SHMt(\data'),\] which is well-defined up to multiplication by a unit in $\RR$ and satisfies the transitivity  \[\Psit_{\data,\data''}\doteq\Psit_{\data',\data''}\circ\Psit_{\data,\data'}.\footnote{Here, ``$\doteq$'' means ``equal up to multiplication by a unit".}\]  We view these maps as canonical isomorphisms relating the $\RR$-modules assigned to the different closures of $(M,\Gamma)$. These maps and modules are organized into what we call a \emph{projectively transitive system}, denoted by $\SHMtfun(M,\Gamma)$. It is  this system we are referring to in this paper when we talk about the \emph{sutured monopole homology} of $(M,\Gamma)$.

Now, suppose $(M,\Gamma,\xi)$ is a sutured contact manifold. To define the contact invariant, we introduce the notion of    a \emph{contact closure} of $(M,\Gamma,\xi)$, which is a closure $\data$ of $(M,\Gamma)$  together with a contact structure $\bar\xi$ on $Y$ extending $\xi$ and satisfying certain conditions. One of these conditions is that the surface $R$ is convex with negative region an annulus, which guarantees that \[\langle c_1(\spc_{\bar\xi}),[R]\rangle = 2-2g(R)\] by basic convex surface theory. But this implies that $\HMtoc(-Y,\spc_{\bar\xi};\Gamma_{-\eta})$ is a direct summand of $\SHMt(-\data)$, where $-\data$ is the closure of $(-M,-\Gamma)$ induced by reversing the orientations on $Y,$ $R,$ and $\eta$. It therefore makes sense to define \[\psi(\data,\bar\xi):=\psi(Y,\bar\xi)\in \HMtoc(-Y,\spc_{\bar\xi};\Gamma_{-\eta})\subset\SHMt(-\data),\] where, here,  $\psi(Y,\bar\xi)$ is the ``twisted" version of Kronheimer and Mrowka's contact invariant. 

Our main result is, roughly speaking, that the classes $\invt(\data,\bar\xi)$ define an invariant of $(M,\Gamma,\xi)$ up to canonical isomorphism. For the sake of exposition, we have broken  this result into the following two theorems (see Theorems \ref{thm:well-defined} and \ref{thm:well-defined2} for more precise statements).

\begin{theorem}
\label{thm:mainintrosame}
If $(\data,\bar\xi)$ and $(\data',\bar\xi')$ are  contact closures of $(M,\Gamma,\xi)$, then \[\Psit_{-\data,-\data'}(\invt(\data,\bar\xi))\doteq \invt(\data',\bar\xi')\] for $g(\data)=g(\data')$.
\end{theorem}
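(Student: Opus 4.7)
The plan is to express $\Psit_{-\data,-\data'}$ as a composition of monopole Floer cobordism maps induced by Stein (Weinstein) cobordisms between the ambient closed contact manifolds, and then invoke the functoriality of Kronheimer and Mrowka's closed contact invariant $\invt$ under such cobordisms, which is the very property used to define $\invt$ in the closed setting.

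First I would recall from \cite{bs3} that, for closures of the same genus, $\Psit_{-\data,-\data'}$ is constructed as a composition of cobordism maps on $\HMtoc$ corresponding to a finite sequence of elementary moves connecting $-\data$ to $-\data'$. These moves include isotopies of the embeddings of $M$ and $R$, changes of the gluing diffeomorphism by Dehn twists along simple closed curves in the preglued auxiliary surface, and replacements of the auxiliary $1$-cycle $\eta$ by a homologous cycle. The key point is that all of the corresponding topological cobordisms are supported in the auxiliary region $Y\ssm M$.

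The next step is to analyze the contact geometry on that region. Since the surface $R$ is convex with annular negative region in any contact closure, a collar of $R$ in $Y\ssm M$ is determined by the dividing set of $R$ via convex surface theory. Consequently, for any two contact closures of $(M,\Gamma,\xi)$, the contact structures $\bar\xi$ and $\bar\xi'$ agree up to contactomorphism rel $M$ on the auxiliary region, so that the elementary cobordisms in the decomposition above relate contact closures whose contact structures essentially differ only by prescribed local moves.

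The crux of the argument, and the main obstacle, is to realize each elementary cobordism as a Stein cobordism between the ambient contact closures. Each Dehn-twist move corresponds to a $2$-handle attachment along a knot in the auxiliary region; since the contact structure there is locally standard, the attaching knot can be isotoped to a Legendrian representative whose Thurston--Bennequin framing exceeds the surgery framing by one, allowing Eliashberg's criterion to build a Stein $2$-handle cobordism interpolating the two contact structures. Isotopy moves correspond to trivial product Stein cobordisms, and changes of $\eta$ can be realized by cobordism $2$-chains lying in the auxiliary region. Some care is needed to track the twisted coefficient system $\Gamma_{-\eta}$ across each Stein piece, so that the resulting cobordism map genuinely agrees, up to a unit in $\RR$, with the corresponding component of $\Psit_{-\data,-\data'}$ rather than being twisted by some further unit depending on $\bar\xi$.

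With the Stein cobordisms in place, Kronheimer and Mrowka's functoriality of $\invt$ immediately implies that each elementary map sends the contact class to the contact class up to a unit. Composing these identities and using the (projective) transitivity of the maps $\Psit$ then yields
\[
\Psit_{-\data,-\data'}\bigl(\invt(\data,\bar\xi)\bigr) \doteq \invt(\data',\bar\xi'),
\]
as desired.
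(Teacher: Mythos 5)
Your strategy matches the paper's at a high level: decompose $\Psit_{-\data,-\data'}$ into $2$-handle cobordism maps supported in the auxiliary region, realize them contact-geometrically, and apply Kronheimer--Mrowka's functoriality of $\psi$ under exact symplectic cobordism (Theorem \ref{thm:ht-functoriality}). But the way you propose to realize the cobordisms is wrong in a way that would break the argument. The $2$-handles entering the paper's definition of $\Psit_{-\data,-\data'}$ are attached with $(+1)$-framing relative to the parallel copies of $R_1$; since the Dehn-twist curves $a_i$ lie in $R_1\ssm\partial A_1$, the surface framing equals the contact framing, so these are contact $(+1)$-surgeries with surgery framing $tb+1$, not Legendrian surgeries with framing $tb-1$ as you assert. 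Such cobordisms are not Stein in the forward direction, and in fact $\Psit_{-\data,-\data'}$ is not a composition of forward cobordism maps at all: it equals $\Theta^{\bar C}\circ\HMtoc({-}X_+|{-}R_1;\Gamma_{-\nu})\circ\HMtoc({-}X_-|{-}R_1;\Gamma_{-\nu})^{-1}$, involving the \emph{inverse} of one cobordism map, with both $X_\pm$ emanating from an auxiliary manifold $(Y_1)_-$ obtained from $Y_1$ by Legendrian surgery.

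The paper fills this gap with Corollary \ref{cor:contactplusone}, which says that contact $(+1)$-surgery cobordism maps preserve the contact class because the \emph{orientation-reversed} cobordism is Weinstein. The proof then passes $\invt(\data_1,\bar\xi_1)$ backwards through $\HMtoc({-}X_-|{-}R_1;\Gamma_{-\nu})$ to $\psi((Y_1)_-,(\bar\xi_1)_-)$, forward through $\HMtoc({-}X_+|{-}R_1;\Gamma_{-\nu})$, and finally through the contactomorphism $\bar C$. You should also justify Legendrian realizability of the attaching curves not by ``local standardness'' but by the Legendrian Realization Principle, which applies because the $a_i$ are nonisolating in $R_1$ (nonseparating and disjoint from $\partial A_1$, hence with connected complement meeting the dividing set). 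With these corrections, your outline becomes precisely the paper's proof of Theorem \ref{thm:well-defined}.
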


\begin{theorem}
\label{thm:mainintrodiff}
If $(\data,\bar\xi)$ and $(\data',\bar\xi')$ are  contact closures of $(M,\Gamma,\xi)$, then \[\Psit_{-\data,-\data'}(\invt(\data,\bar\xi))\doteq \invt(\data',\bar\xi')\] whenever $g(\data)$ and $g(\data')$ are sufficiently large.
\end{theorem}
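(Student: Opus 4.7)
The plan is to reduce Theorem \ref{thm:mainintrodiff} to the equal-genus case of Theorem \ref{thm:mainintrosame} by introducing a contact analogue of the genus-increasing stabilization of closures used in \cite{bs3} to establish that $\SHMtfun(M,\Gamma)$ is well-defined. Concretely, given a contact closure $(\data,\bar\xi)$ of $(M,\Gamma,\xi)$, the goal is to construct a contact closure $(\data^+,\bar\xi^+)$ with $g(\data^+) = g(\data)+1$ together with an identification
\[\Psit_{-\data,-\data^+}\bigl(\invt(\data,\bar\xi)\bigr)\doteq\invt(\data^+,\bar\xi^+).\]

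To build the stabilization, I would modify the auxiliary surface $F$ used in forming $\data$ by attaching a handle in a collar region disjoint from $M$, and extend the closure across the corresponding product neighborhood by an $I$-invariant contact structure on a neighborhood of a higher-genus convex surface whose dividing set remains a single nonseparating curve contained in the original $\eta$. Basic convex surface theory guarantees that such an extension exists and is unique up to contact isotopy, that the distinguished surface $R^+$ remains convex with negative region still an annulus, and that the relation $\langle c_1(\spc_{\bar\xi^+}),[R^+]\rangle = 2-2g(R^+)$ continues to hold. Thus $(\data^+,\bar\xi^+)$ is a genuine contact closure in which the 1-cycle $\eta$ may be kept unchanged inside $R^+$.

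The heart of the argument is to show that the canonical isomorphism $\Psit_{-\data,-\data^+}$, which in \cite{bs3} is assembled from a sequence of monopole cobordism maps (essentially a 2-handle attachment followed by a restriction to the topmost $\Sc$ structures), carries the contact class $\psi(Y,\bar\xi)$ to $\psi(Y^+,\bar\xi^+)$ up to a unit in $\RR$. The strategy is to realize the relevant cobordism symplectically: working in a neighborhood of the added handle and using the $I$-invariant model for $\bar\xi^+$ across the stabilizing region, one equips the trace of the 2-handle attachment with a symplectic form whose concave and convex contact boundaries are $\bar\xi^+$ and $\bar\xi$, respectively. The naturality of Kronheimer and Mrowka's contact invariant under such symplectic cobordisms, together with the compatibility of their cobordism maps with the twisted local system $\Gamma_{-\eta}$, then yields the desired identification.

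The theorem then follows by iteration: given $(\data,\bar\xi)$ and $(\data',\bar\xi')$ with sufficiently large genera, one stabilizes each until they share a common (still large) genus $g$, applies Theorem \ref{thm:mainintrosame} at genus $g$, and invokes the transitivity $\Psit_{\data,\data''}\doteq\Psit_{\data',\data''}\circ\Psit_{\data,\data'}$ to push the equality back to the original closures. The main technical obstacle I anticipate lies in the third step: constructing the Stein/symplectic model explicitly enough that its associated monopole cobordism map can be matched with the geometrically defined stabilization isomorphism $\Psit_{-\data,-\data^+}$, and verifying that tensoring with the Novikov local system $\Gamma_{-\eta}$ is compatible with this identification at the chain level. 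Everything else is largely a matter of packaging contact handle data and applying the transitivity already built into $\Psit$.
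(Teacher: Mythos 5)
Your proposed route is fundamentally different from the paper's, and it runs into a gap that the paper explicitly identifies as the obstruction to this kind of argument. The canonical isomorphism $\Psit_{-\data,-\data^+}$ between closures of \emph{different} genera is not, as you describe it, assembled from ``a 2-handle attachment followed by a restriction to the topmost $\Sc$ structures''---that description applies only to the equal-genus isomorphisms used in Theorem \ref{thm:mainintrosame}. As recalled in Subsection \ref{ssec:indepgenus}, the genus-changing isomorphism is built from a \emph{splicing} cobordism $W: Y'\sqcup M(\Sigma')\to Y$ obtained by gluing $Y_M\times[0,1]$ and $Y_\Sigma\times[0,1]$ along a saddle $T_1\times S$; to get a one-variable map one then pairs with a generator of $\HMtoc(-M(\Sigma')|{-}\Sigma';\Gamma_{-\eta_{\Sigma'}})\cong\RR$. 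This is not a 2-handle cobordism, it has two incoming boundary components, and---as the introduction already notes when explaining why a second argument is needed---it ``does not admit, in any obvious way, the structure of an exact symplectic cobordism.'' So the ``heart of the argument'' you flag as the main technical obstacle is exactly the step that is not known to work; you would be proving that a certain symplectic 2-handle cobordism carries $\psi$ to $\psi$, but you would have no way to identify that cobordism map with the splicing-based $\Psit_{-\data,-\data^+}$ that actually defines the system $\SHMtfun$.

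The paper sidesteps this entirely. Its proof of Theorem \ref{thm:well-defined2} uses the ``existence'' half of the relative Giroux correspondence to write $(M,\Gamma,\xi)$ as a product sutured handlebody $H(S)$ with contact 2-handles attached, then uses the contact 2-handle morphism $\mathscr{H}$, whose well-definedness commutes with the $\Psit$'s by construction, to transport the problem to $H(S)$. For $H(S)$ the claim is automatic because $\SHMt(-\data_S)\cong\RR$ and the contact class is a generator (Proposition \ref{prop:handlebody-invt}), so \emph{any} isomorphism must carry it to the other generator up to a unit---no symplectic realization of $\Psit$ is required. If you want to salvage your approach you would need either to produce a genuinely new exact-symplectic model for the splicing cobordism (which would be interesting in its own right but is not in the literature), or to redefine the genus-changing isomorphisms of $\SHMtfun$ and re-prove the naturality results of \cite{bs3} for the new definition; as written the proposal silently assumes one of these.
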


In the lexicon of projectively transitive systems, Theorem \ref{thm:mainintrosame} is equivalent to the statement that, for each $g$, the collection of classes $\{\invt(\data,\bar\xi)\mid g(\data)=g\}$  defines an \emph{element} \[\psi^g(M,\Gamma,\xi)\in\SHMtfun(-M,-\Gamma).\] Meanwhile, Theorem \ref{thm:mainintrodiff} is equivalent to the statement that these elements stabilize in the sense that $\psi^g(M,\Gamma,\xi) = \psi^h(M,\Gamma,\xi)$ for $g,h$  sufficiently large. Our contact invariant  is defined to be this stable element, which we denote  by \[\psi(M,\Gamma,\xi)\in \SHMtfun(-M,-\Gamma).\]





 The key facts in   the proof of Theorem \ref{thm:mainintrosame} are   that (1) contact closures of the same genus are related by Legendrian surgery and (2)  $\Psit_{-\data,-\data'}$ is the map induced by the associated Stein cobordism.\footnote{This is a considerable simplification of the real story.} 
 Theorem \ref{thm:mainintrosame}  therefore follows from the functoriality of Kronheimer and Mrowka's contact invariant with respect to exact symplectic cobordism (see Theorem \ref{thm:psi-weakly-fillable}). For  closures of different genera, $\Psit_{-\data,-\data'}$ is defined in terms of a  \emph{splicing} cobordism which does not admit, in any obvious way, the structure of an exact symplectic cobordism. So, the previous argument cannot be used to  prove Theorem \ref{thm:mainintrodiff}. Our proof  relies instead on our gluing map construction together with what we call the ``existence" part of the relative Giroux correspondence. We     outline this proof in detail in the next subsection after describing these gluing maps.
 
 Our contact invariant shares several features with Honda, Kazez, and Mati{\'c}'s  invariant. For example, it is preserved by contact isotopy and \emph{flexibility}, and vanishes for overtwisted contact structures (see Corollaries \ref{cor:isotopyindependence} and \ref{cor:independenceofdividingset} and Theorem \ref{thm:ot} for more precise statements). 
 
 We also prove  the following theorem relating  our invariant to Kronheimer and Mrowka's contact invariant for closed manifolds (stated more precisely in Proposition \ref{prop:darboux-complement}). Below,  $(Y,\xi)$ is a closed contact 3-manifold and   $(Y(1), \xi|_{Y(1)})$ is the sutured contact manifold obtained from it by removing a Darboux ball.

 \begin{theorem}
 \label{thm:introhatplus}
There exists a  map 
\[\SHMtfun(-Y(1)) \to \HMtoc(-Y) \otimes\RR\]
which sends $\invt(Y(1), \xi|_{Y(1)})$ to $\psi(Y,\xi) \otimes 1$.  \end{theorem}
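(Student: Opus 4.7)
The plan is to realize a suitable contact closure of $(Y(1),\xi|_{Y(1)})$ as a contact connected sum $(Y,\xi)\#(N,\xi_0)$, with the distinguished surface $R$ lying entirely in the $N$-summand, and then exploit the functoriality of $\invt$ under Weinstein cobordisms together with a rank-one calculation on the $N$-factor.

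First I would construct a contact closure $(\data,\bar\xi)$ of $(Y(1),\xi|_{Y(1)})$ whose underlying closed 3-manifold is identified with $Y\# N$, with $N=\Sigma_g\times S^1$ (for an appropriate choice of gluing diffeomorphism in the closure procedure), $R$ identified with a fiber $\Sigma_g\times\{pt\}\subset N$, and $\eta\subset R$ a nonseparating curve. Because the contact connected sum is modeled by a Weinstein 1-handle attachment, I can arrange that $\bar\xi$ is contactomorphic to $\xi\#\xi_0$ for an $R$-compatible contact structure $\xi_0$ on $N$ making $R$ convex with negative region an annulus; concretely, $\xi_0$ can be built from an $S^1$-invariant contact structure in a collar $R\times S^1$.

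Next, applying Theorem~\ref{thm:psi-weakly-fillable} to the orientation-reversed Weinstein 1-handle cobordism $-W:-Y\#N\to -Y\sqcup -N$ yields a map
\[
\HMtoc(-Y\# N)\longrightarrow \HMtoc(-Y)\otimes \HMtoc(-N)
\]
sending $\invt(\bar\xi)\mapsto \invt(\xi)\otimes \invt(\xi_0)$. Since $R$ lies entirely in the $N$-factor, restricting to the summand of $\Sc$ structures satisfying $\langle c_1(\spc),[R]\rangle=2-2g(R)$ gives a map
\[
\SHMt(-\data)\longrightarrow \HMtoc(-Y)\otimes \HMtoc(-N\mid R;\Gamma_{-\eta})
\]
sending $\invt(\data,\bar\xi)\mapsto \invt(Y,\xi)\otimes \invt(N,\xi_0)$. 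The second tensor factor is isomorphic to $\RR$---this is a direct Seiberg--Witten calculation for $\Sigma_g\times S^1$, and can alternatively be deduced from \eqref{eqn:SFHMiso} together with the standard $\SFH$ computation of the corresponding sutured product.

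Finally, $\invt(N,\xi_0)$ is a unit in this rank-one $\RR$-module: any nonzero element of a free rank-one module over the Novikov ring is a generator, and $\invt(\xi_0)\neq 0$ because $(N,\xi_0)$ is Stein-fillable---one explicit filling is obtained by capping off a symplectic thickening of $R$---so Kronheimer--Mrowka's nonvanishing result for fillable contact structures applies. Normalizing $\invt(\xi_0)$ to $1$ gives the desired map $\SHMtfun(-Y(1))\to\HMtoc(-Y)\otimes\RR$ sending $\invt(Y(1),\xi|_{Y(1)})$ to $\invt(Y,\xi)\otimes 1$; the passage to the projective system is formal given Theorems~\ref{thm:mainintrosame} and~\ref{thm:mainintrodiff}. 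The hard step is arranging a contact closure that genuinely splits as a contact connected sum with an identifiable $(N,\xi_0)$, which requires the careful structural analysis of closures (and the attendant gluing-map machinery) developed later in the paper.
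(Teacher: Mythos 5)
Your overall strategy matches the paper's: realize a contact closure of $(Y(1),\xi|_{Y(1)})$ as a contact connected sum $Y\#N$ with $N=\Sigma_g\times S^1$ carrying an $S^1$-invariant contact structure and $R$ a fiber, apply the Weinstein $1$-handle cobordism to split off $\HMtoc(-Y)$, and normalize by the rank-one $N$-factor. The paper implements exactly this via the contact $3$-handle discussion in Subsubsection~\ref{sssec:3handles}, with $N=Y_{B^3}$ the closure of the Darboux ball.

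There is, however, a genuine gap in your argument for why $\invt(N,\xi_0)$ is a unit. You assert that $(N,\xi_0)$ is Stein fillable ``by capping off a symplectic thickening of $R$.'' This is false: as Remark~\ref{rem:why-twisted} points out, the $S^1$-invariant contact structure on $\Sigma_g\times S^1$ with negative region an annulus has \emph{vanishing untwisted} contact invariant (Wendl, via the ECH--HM correspondence), and therefore cannot be Stein fillable or even strongly fillable. It is only \emph{weakly} fillable, by the construction of Niederkr\"uger--Wendl. Consequently your appeal to ``Kronheimer--Mrowka's nonvanishing result for fillable contact structures'' does not apply as stated; the relevant result is Theorem~\ref{thm:psi-weakly-fillable}, which requires the local-coefficient curve $\eta$ to be Poincar\'e dual to the restriction of the weak symplectic form. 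Your proposal chooses $\eta$ to be an arbitrary nonseparating curve in $R$, with no reference to the filling, so the hypotheses of Theorem~\ref{thm:psi-weakly-fillable} are not verified. This is precisely the content of Proposition~\ref{prop:darboux-ball-invt}, and it is the structural reason the paper must work with twisted coefficients at all; the issue cannot be sidestepped by a Stein-fillability shortcut.

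Two smaller issues: the cobordism functoriality you invoke should be Theorem~\ref{thm:ht-functoriality} (functoriality of $\psi$ under exact symplectic cobordism), not Theorem~\ref{thm:psi-weakly-fillable}; and the map on $\HMtoc$ induced by a cobordism with one incoming and two outgoing ends is nontrivial to define because of reducibles --- the paper relies on Bloom's work for this, a point worth acknowledging.
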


As explained in Remark \ref{rmk:analoguehatplus}, this map can be thought of as a monopole Floer analogue of the natural map in Heegaard Floer homology relating the ``hat" and ``plus" versions of Ozsv{\'a}th and Szab{\'o}'s contact invariant. 

The following is an immediate corollary (see Corollary \ref{cor:nonzerostronglyfillable}).

\begin{corollary}
If $(Y,\xi)$ is strongly symplectically fillable, then $\invt(Y(1), \xi|_{Y(1)})\neq 0$. \qed
\end{corollary}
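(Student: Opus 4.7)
The plan is to deduce this essentially as a formal consequence of Theorem~\ref{thm:introhatplus} combined with the known nonvanishing of Kronheimer and Mrowka's closed contact invariant $\psi(Y,\xi)$ under strong symplectic fillability. The contrapositive strategy is cleanest: if $\invt(Y(1),\xi|_{Y(1)})$ were zero, then its image under any $\RR$-module map would be zero.

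Concretely, I would invoke Theorem~\ref{thm:introhatplus} to produce the map
\[
\SHMtfun(-Y(1)) \longrightarrow \HMtoc(-Y)\otimes\RR
\]
sending $\invt(Y(1),\xi|_{Y(1)})$ to $\psi(Y,\xi)\otimes 1$. Strong symplectic fillability of $(Y,\xi)$ implies, by Kronheimer and Mrowka's nonvanishing theorem for the closed contact invariant (the monopole Floer analogue of the Ozsv\'ath--Szab\'o nonvanishing theorem, relying ultimately on Taubes's identification of Seiberg--Witten invariants of symplectic 4-manifolds with Gromov invariants), that $\psi(Y,\xi)\neq 0$ in $\HMtoc(-Y)$. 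This is the functoriality result already cited as Theorem~\ref{thm:psi-weakly-fillable}, applied to the exact symplectic cobordism from the empty set to $(Y,\xi)$ given by the strong filling. Since $\RR$ is a domain (the Novikov ring over $\Z$), the class $\psi(Y,\xi)\otimes 1$ remains nonzero in $\HMtoc(-Y)\otimes\RR$, and therefore $\invt(Y(1),\xi|_{Y(1)})$ cannot be zero.

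The only genuine content one needs beyond Theorem~\ref{thm:introhatplus} is thus the nonvanishing of $\psi$ for strongly fillable closed contact manifolds, which is standard. There is no real obstacle here; the main step is simply to verify that the map in Theorem~\ref{thm:introhatplus} respects the identification of the contact classes in the stated way, but that is its defining property. Note that the argument does not require the full strength of the relative Giroux correspondence and applies directly at the level of the projectively transitive system, so the non-zeroness is well-defined independent of the choice of contact closure.
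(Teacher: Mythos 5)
Your proof matches the paper's: Corollary \ref{cor:nonzerostronglyfillable} is stated there as an immediate consequence of Proposition \ref{prop:darboux-complement} (the precise form of Theorem \ref{thm:introhatplus}) together with the nonvanishing of the closed monopole contact invariant for strongly fillable contact manifolds. One small misattribution to flag: a strong filling is not generally an exact symplectic cobordism (so Theorem \ref{thm:ht-functoriality} is not the relevant input), and ``$\RR$ is a domain'' is not quite the right reason that $\psi(Y,\xi)\otimes 1$ stays nonzero --- what you actually want is Theorem \ref{thm:psi-weakly-fillable} applied with $[\omega|_Y]=0$ (strong $\Rightarrow$ weak with trivial local system), giving primitivity of $\psi(Y,\xi)$ over $\RR$, together with the fact that $\RR$ is torsion-free (hence flat) over $\Z$.
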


Before moving on, it is worth mentioning that Kronheimer and Mrowka also define   a version of $\SHM$  without local coefficients. However, local coefficients are necessary in this paper, both for naturality purposes (in defining the canonical isomorphisms for closures of different genera) and because the contact class $\psi(\data,\bar\xi)$ always vanishes without them (see Remark \ref{rem:why-twisted}).

\subsection{A gluing map in $\SHM$} Below, we describe a  gluing map on $\SHM$ for contact handles. Our main results in this direction are the following (combining  Propositions \ref{prop:H0}, \ref{prop:H1}, \ref{prop:H2}, \ref{prop:H3}, and Corollary \ref{cor:handles}).

\begin{theorem}
\label{thm:contacthandle} Suppose  $(M',\Gamma',\xi')$ is obtained from $(M,\Gamma,\xi)$ by attaching a contact $i$-handle, for some $i\in\{0,1,2,3\}$. Then there exists a map
\[\mathscr{H}_i:\SHMtfun(-M,-\Gamma)\to\SHMtfun(-M',-\Gamma')\] which sends $\invt^g(M,\Gamma,\xi)$ to $\invt^g(M',\Gamma',\xi')$ for $g$ sufficiently large. 
\end{theorem}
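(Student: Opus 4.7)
The plan is to realize each contact handle attachment as a controlled geometric operation on contact closures, to define $\mathscr{H}_i$ via the induced map on monopole Floer homology, and to verify its effect on the contact class using Theorem \ref{thm:psi-weakly-fillable} (functoriality of $\invt$ under exact symplectic cobordism). Given a sufficiently large genus contact closure $(\data,\bar\xi)$ of $(M,\Gamma,\xi)$, the goal is to construct from it a contact closure $(\data',\bar\xi')$ of $(M',\Gamma',\xi')$ such that the underlying cobordism $Y \to Y'$ carries a distinguished symplectic (usually Stein) structure.

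The central case is $i=2$. A contact 2-handle is attached along a Legendrian arc $L \subset \partial M$ meeting $\Gamma$ transversely in two endpoints. Inside $(Y, \bar\xi)$ we close $L$ up to a Legendrian knot $\tilde L$ by extending through the auxiliary region using the convex structure on $R$ in a canonical way. Legendrian surgery on $\tilde L$ produces a closed contact manifold $(Y', \bar\xi')$; a verification using convex surface theory near $R$ shows that $(Y', \bar\xi')$ is a contact closure of $(M', \Gamma', \xi')$ of the same genus, and the reversed trace of the surgery is an exact symplectic cobordism from $-Y'$ to $-Y$. Theorem \ref{thm:psi-weakly-fillable} then implies that the associated cobordism map sends $\invt(\data, \bar\xi)$ to $\invt(\data', \bar\xi')$. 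The cases $i=1$ and $i=3$ are handled analogously: the handle attachment realizes a subcritical Stein handle (for $i=1$) or the dual of a 0-handle attachment (for $i=3$) on the underlying closure, giving Stein-type cobordisms whose induced maps again satisfy Theorem \ref{thm:psi-weakly-fillable}. For $i=0$, disjoint union with a Darboux ball can, after choosing auxiliary data appropriately, be arranged so that the new closure is a simple modification of the old, and $\mathscr{H}_0$ is identified with tensoring against the contact class of a standard model piece.

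The principal obstacle will be to verify that these cobordism-induced maps descend to well-defined morphisms of projectively transitive systems, rather than depending on the particular closures chosen. Concretely, for two different contact closures $\data_1, \data_2$ of $(M,\Gamma,\xi)$ giving rise to handle closures $\data_1', \data_2'$ of $(M',\Gamma',\xi')$, the induced square of canonical isomorphisms $\Psit$ and handle cobordism maps must commute up to a unit in $\RR$. This requires an interpolating family of four-manifold cobordisms which perform the handle attachment and the splicing in either order, together with a verification by arguments parallel to those used to establish the transitivity of $\Psit$ in \cite{bs3}. Once this naturality is in place, the fact that $\mathscr{H}_i$ sends $\invt^g(M,\Gamma,\xi)$ to $\invt^g(M',\Gamma',\xi')$ for $g$ sufficiently large follows by concatenating the handle cobordism with the splicing cobordisms and tracking $\invt$ through each stage via Theorem \ref{thm:psi-weakly-fillable}.
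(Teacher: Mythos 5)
The skeleton of your proposal---build a closure of $(M',\Gamma',\xi')$ out of a closure of $(M,\Gamma,\xi)$, define $\mathscr{H}_i$ via the induced cobordism map, and trace the contact class through Kronheimer--Mrowka's functoriality---matches the paper. The naturality verification you sketch at the end is also in the right spirit: the 2-handle cobordisms realizing the handle attachment are supported in the region $m(M)$, while the surgeries and splicings defining $\Psit$ happen outside $m(M)$, so the relevant squares commute. However, several of your specifics go wrong.

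First, you propose cobordism maps for $i=0$ and $i=1$, but the paper shows these cases require no cobordism at all. For a contact $0$-handle, one chooses the auxiliary surface $F_0$ for $(M_0,\Gamma_0,\xi_0)$ to have a boundary component glued to the Darboux ball; capping that component with the disk produces an auxiliary surface $F$ for $(M,\Gamma,\xi)$, and the preclosure is literally the same 3-manifold. Similarly a contact $1$-handle absorbs into $F\times[-1,1]$ by attaching a 2-dimensional 1-handle to the auxiliary surface. In both cases $\mathscr{H}_i$ is induced by the identity on Floer homology. Your $i=1$ description as a ``subcritical Stein handle'' conflates the 3-dimensional contact handle with a 4-dimensional Stein handle, and your $i=0$ description as ``tensoring against a standard piece'' presupposes a disconnected closure, which the paper avoids by taking $F_0$ connected.

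Second, for $i=2$ the direction of surgery is forced and you have it backwards. The attaching curve $\gamma$ meets $\Gamma$ in two points, so the $\partial M$-framing on its Legendrian pushoff $\gamma'$ is the contact framing \emph{plus one}; a contact $2$-handle attachment therefore realizes contact $(+1)$-surgery, not Legendrian (i.e., contact $(-1)$) surgery. If you perform Legendrian surgery as written, you produce the wrong closure. The correct map then comes from Corollary \ref{cor:contactplusone}, which packages the functoriality of $\psi$ under exact symplectic cobordism (Theorem \ref{thm:ht-functoriality}). You repeatedly cite Theorem \ref{thm:psi-weakly-fillable}, but that is the statement about weak fillability and primitivity; it plays no role in defining the handle maps. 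This is not a cosmetic misattribution---without \ref{thm:ht-functoriality} there is no statement in the paper's toolkit sending $\psi(Y,\bar\xi)$ to $\psi(Y',\bar\xi')$ under a 2-handle cobordism.

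Finally, for $i=2$ there is a nontrivial step you elide: one must show that the solid tori $N$ and $N_1$ obtained by attaching the contact $2$- and $1$-handles near $\gamma$ are \emph{tight}, so that the operation on the closure really is a contact surgery. The paper checks this by reducing to the concrete case where $(M,\Gamma,\xi)$ is $H(S)$ for an annulus $S$, using Etg\"u--\"Ozba\u{g}c{\i}'s model, the Darboux ball computation, and the $1$-handle map already defined. Your ``verification using convex surface theory near $R$'' would need to be fleshed out along these lines.
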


\begin{corollary}
\label{cor:introhandles}
The map $\mathscr{H}_i$ sends $\invt(M,\Gamma,\xi)$ to $\invt(M',\Gamma',\xi')$.
\end{corollary}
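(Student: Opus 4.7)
The plan is to derive the corollary directly from Theorem \ref{thm:contacthandle} and the definition of $\invt(M,\Gamma,\xi)$ as a stable element of the projectively transitive system $\SHMtfun(-M,-\Gamma)$. Recall that by Theorems \ref{thm:mainintrosame} and \ref{thm:mainintrodiff}, the classes $\invt(\data,\bar\xi)$ organize into genus-$g$ elements $\invt^g(M,\Gamma,\xi)$ which stabilize to a single element $\invt(M,\Gamma,\xi)$: that is, $\invt^g(M,\Gamma,\xi) = \invt(M,\Gamma,\xi)$ as elements of $\SHMtfun(-M,-\Gamma)$ for all sufficiently large $g$, and analogously for $(M',\Gamma',\xi')$.

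The key observation is that $\mathscr{H}_i$ is already a morphism of projectively transitive systems, not merely a family of maps defined at each genus, so it sends stable elements to stable elements. Concretely, I would choose a single genus $g$ large enough that three conditions hold simultaneously: (a) $\invt^g(M,\Gamma,\xi) = \invt(M,\Gamma,\xi)$; (b) $\invt^g(M',\Gamma',\xi') = \invt(M',\Gamma',\xi')$; and (c) Theorem \ref{thm:contacthandle} applies at genus $g$, so that $\mathscr{H}_i(\invt^g(M,\Gamma,\xi)) \doteq \invt^g(M',\Gamma',\xi')$. The intersection of these three ``sufficiently large'' conditions is itself nonempty, so such a $g$ exists. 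Combining (a), (b), and (c) yields $\mathscr{H}_i(\invt(M,\Gamma,\xi)) \doteq \invt(M',\Gamma',\xi')$.

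The only point I would verify carefully is that conditions (a), (b), and (c) are genuinely compatible in a single common genus $g$, and that the ``$\doteq$'' up-to-unit ambiguities in Theorem \ref{thm:contacthandle} and in the stabilization theorems are consistent with the notion of equality of elements in the projectively transitive system (they are, essentially by construction, since elements are equivalence classes up to multiplication by units in $\RR$). I expect no genuine obstacle here: the substantive content has all been packed into Theorem \ref{thm:contacthandle}, and the corollary is essentially a bookkeeping statement about passing from the genus-by-genus formulation $\invt^g$ to the stabilized invariant $\invt$.
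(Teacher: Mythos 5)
Your argument is correct and is exactly the reasoning the paper intends: Corollary \ref{cor:handles} (the body-text version of this statement) is stated as an ``immediate corollary'' of Propositions \ref{prop:H0}--\ref{prop:H3} and \ref{prop:Fp} with no written proof, and the filled-in argument is precisely your choice of a single genus $g$ large enough that $\invt^g$ agrees with $\invt$ on both sides and that the genus-$g$ statement of the proposition applies. One small observation: the paper's propositions actually assert $\mathscr{H}_i(\invt^g(M,\Gamma,\xi))=\invt^g(M_i,\Gamma_i,\xi_i)$ for \emph{every} $g\geq g(M,\Gamma)$, which is stronger than the ``$g$ sufficiently large'' hypothesis you quote from the introductory Theorem \ref{thm:contacthandle}; this only makes the simultaneous-largeness step easier, and your argument goes through either way.
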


It is worth pointing out that these maps depend \emph{only} on the smooth data involved in the  handle attachments; in particular, they do not depend on the contact structures $\xi$ or $\xi'$. 

These maps are defined in terms of natural  cobordisms between   closures: for $i=0,1$, we show that a contact closure of $(M',\Gamma',\xi')$ can also  be viewed naturally as a contact closure of $(M,\Gamma,\xi)$, and  we define $\mathscr{H}_i$ to be the isomorphism induced by the  identity map on monopole Floer homology. For $i=2$, the curve of attachment in $\partial M$   gives rise to  a Legendrian knot in any contact closure of $(M,\Gamma,\xi)$. We  prove that  the result of contact $(+1)$-surgery  along  such a  knot can be viewed naturally as a contact closure of $(M',\Gamma',\xi')$, and  we define  $\mathscr{H}_2$ in terms of the map on  Floer homology induced by the corresponding 2-handle cobordism. Finally, for $i=3$, we prove that one obtains a contact closure of $(M,\Gamma,\xi)$ by taking a connected sum of a contact closure of $(M',\Gamma',\xi')$ with the tight $S^1\times S^2$, and we define $\mathscr{H}_3$ in terms of the map on Floer homology induced by the corresponding 1-handle cobordism.

Theorem \ref{thm:contacthandle} is reminiscent of the main result of \cite{hkm5}. Suppose  $(M,\Gamma)$ is a sutured submanifold of $(M',\Gamma')$ and  $\xi$ is a contact structure on $M'\ssm \inr(M)$ with  dividing set $\Gamma\cup \Gamma'$. Suppose further that $\xi_M$ is a contact structure on $M$ which agrees with $\xi$ near $\partial M$. In \cite{hkm5}, Honda, Kazez, and Mati{\'c}  construct  a   map\footnote{Modulo incorporating the naturality results of \cite{juhaszthurston}.} \[\Phi_{\xi}:\SFH(-M,-\Gamma)\to\SFH(-M',-\Gamma'),\] depending only on $\xi$,  which sends $EH(M,\Gamma,\xi_M)$ to  $EH(M',\Gamma',\xi_M\cup \xi).$
We can use Theorem \ref{thm:contacthandle} to define an analogous map in $\SHM$, starting from the observation that  $(M'\ssm \inr(M),\Gamma \cup \Gamma',\xi)$ can be obtained from a vertically invariant contact structure on $\partial M\times I$ by attaching contact handles. Given such a contact handle decomposition $H$,   we define  \begin{equation}\label{eqn:PhiH}\Phi_{\xi,H}:\SHMtfun(-M,-\Gamma)\to\SHMtfun(-M',-\Gamma')\end{equation}  to be the  corresponding composition of  contact  handle attachment maps. Corollary \ref{cor:introhandles} implies that this map sends contact invariant to contact invariant. We conjecture the following.

\begin{conjecture}
\label{conj:PhiHwd}
The map $\Phi_{\xi,H}$ is independent of the handle decomposition $H$.
\end{conjecture}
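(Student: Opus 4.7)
The plan is to adapt the standard Cerf-theoretic proof of handle-decomposition independence to the contact setting. Each of the maps $\mathscr{H}_i$ appearing in \eqref{eqn:PhiH} is, by construction, induced by a concrete smooth $4$-dimensional cobordism $W_i$ between closures: for $i=0,1$ the cobordism $W_i$ is essentially a product (with a reinterpretation of the distinguished surface), for $i=2$ it is the $4$-dimensional $2$-handle cobordism determined by a Legendrian attaching sphere, and for $i=3$ it is the $1$-handle cobordism coming from the connected sum with a copy of the tight $S^1\times S^2$. By the cobordism functoriality of $\HMto$ with local coefficients, the composition $\Phi_{\xi,H}$ is (up to unit) the map on $\SHMt$ induced by the concatenation $W_H = W_k \cup \cdots \cup W_1$. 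The goal is then to show that $W_H$ and $W_{H'}$ induce the same map on $\SHMt$ for any two contact handle decompositions $H,H'$ of the contact cobordism $(M' \ssm \inr(M), \Gamma \cup \Gamma', \xi)$.

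The first and main step is a relative contact Cerf theorem: any two such decompositions, both beginning from the $I$-invariant contact structure on $\partial M \times I$, can be connected by a finite sequence of elementary moves, namely (i) reordering two handles whose attaching regions can be isotoped to be disjoint, and (ii) insertion or deletion of a canceling contact $(i,i+1)$-handle pair. Such a statement should follow by combining standard smooth Cerf theory with Giroux's convex surface theory and Honda's bypass calculus: at each intermediate level one arranges the separating surface to be convex with a prescribed dividing set and then Legendrian-realizes the smooth attaching circles of contact $1$- and $2$-handles, absorbing any framing discrepancy by extra canceling contact handle pairs.

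Granting the Cerf theorem, invariance under each elementary move becomes a statement about smooth $4$-manifolds with distinguished surfaces. Reordering two handles with disjoint attaching regions produces a diffeomorphic total cobordism $W_H$ rel boundary (preserving the distinguished surface and the cycle $\eta$), so the induced map on $\SHMt$ is unchanged. For a canceling $(i,i+1)$-handle pair, the composite $W_{i+1}\cup W_i$ is diffeomorphic to a product by the standard smooth handle cancellation lemma, hence induces the identity up to unit by functoriality. One then checks in each of the three cases $(0,1)$, $(1,2)$, and $(2,3)$ that the specific cobordisms $W_i$ built in the proof of Theorem \ref{thm:contacthandle} concatenate to a product when the contact handles cancel; for the cases involving a $2$-handle this amounts to verifying that its Legendrian framing matches the framing produced by contact cancellation of the underlying smooth handles.

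The hard part is the contact Cerf theorem itself. The delicate point is the interaction between Legendrian attaching spheres of contact $2$-handles and smooth Cerf moves: two smoothly canceling handles need not cancel on the nose in the contact category, so one must produce a controlled correction by inserting additional pairs of contact handles already in canceling position. The bypass exact triangle established in this paper, together with the existence half of the relative Giroux correspondence that is already exploited for Theorem \ref{thm:mainintrodiff}, should supply the raw material for moving between convex decompositions via contact handle moves. Once such a Cerf statement is in place, the remaining verifications reduce to smooth cobordism bookkeeping already present in the proof of Theorem \ref{thm:contacthandle}.
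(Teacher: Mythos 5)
The statement you are attempting to prove is labeled a \emph{conjecture} in the paper; the authors explicitly list its proof as a goal of future work in Subsection \ref{ssec:introfuture}, so there is no paper proof to compare against. Your job, then, was to actually supply one, and the proposal falls short of that: you have sketched a natural strategy, but the key technical input is missing.

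The central gap is the ``relative contact Cerf theorem'' you invoke in the second paragraph. You correctly identify it as ``the hard part,'' but you do not prove it; you only say it ``should follow'' from smooth Cerf theory combined with convex surface theory and bypass calculus. That is precisely where the difficulty of Conjecture \ref{conj:PhiHwd} is concentrated, and the tools you cite do not obviously combine to give it. Smooth Cerf theory controls the critical points of a generic path of functions, but a contact handle decomposition carries strictly more data than its underlying smooth handle decomposition: the intermediate dividing sets must be realized on convex level surfaces, and the attaching circles of $2$-handles must be Legendrian with prescribed framing relative to those dividing sets. Legendrian realization requires the curve to be nonisolating, which need not hold at an arbitrary intermediate stage of a smooth Cerf path; and even when it does hold, a smoothly canceling $(1,2)$-pair need not cancel contact geometrically (you note this yourself), and the ``controlled correction by inserting additional pairs'' is not constructed. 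Appealing to the bypass exact triangle here is a category error: it is an exact triangle of $\RR$-modules, not a geometric statement about paths of contact handle decompositions, and it gives no mechanism for producing the canceling pairs you need.

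There is also a secondary issue in the third paragraph. You assert that reordering disjoint handles and canceling an $(i,i+1)$-pair become ``a statement about smooth $4$-manifolds'' and follow from functoriality of $\HMto$. This is too quick for the reordering step: the morphisms $\mathscr{H}_i$ are morphisms of projectively transitive systems, defined via choices of closures at each stage, and the $1$-handle map is built by \emph{reinterpreting} a given closure rather than by a genuine cobordism. Showing that two orderings yield the same composite therefore requires matching up the chains of intermediate closures and checking commutativity of the relevant diagrams up to unit, exactly as is done for the individual handle maps in Section \ref{sec:well-definedness}; it is not immediate from ``the total cobordism is diffeomorphic rel boundary.'' Similarly, for a canceling $(2,3)$-pair the relevant verification is that the surgery $2$-handle cobordism followed by the $4$-dimensional $1$-handle cobordism induces the identity up to unit with the local system $\Gamma_{-\nu}$, and for a canceling $(1,2)$-pair that the surgery is along an unknotted curve with the framing making it trivial; these need to be checked, not just asserted. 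In short, the plan is plausible in outline, but as written it reduces the conjecture to an unproven contact Cerf theorem and leaves the cancellation/commutation bookkeeping undone, so it does not constitute a proof.
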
 

We next outline the proof of Theorem \ref{thm:mainintrodiff}, as promised. As mentioned above, our proof relies upon the  ``existence" part of the relative Giroux correspondence, which states that every sutured contact manifold admits a compatible partial open book decomposition.\footnote{The full statement of the relative Giroux correspondence comprises  this ``existence" part, whose proof is well-established, together with the ``uniqueness" part, which states that any two such partial open book decompositions are related by positive stabilization. A complete proof of the ``uniqueness" part is lacking.} This implies, in particular, that for every $(M,\Gamma,\xi)$, there is a compact surface $S$ with nonempty boundary such that $(M,\Gamma,\xi)$ can be  obtained from the tight sutured contact manifold \[H(S)=(S\times [-1,1], \partial S\times\{0\},\xi_S)\] by attaching contact 2-handles. The corresponding composition of contact 2-handle  maps, \[\mathscr{H}:\SHMtfun(-H(S))\to\SHMtfun(-M,-\Gamma),\] therefore  sends $\invt^g(H(S))$ to $\invt^g(M,\Gamma,\xi)$ for $g$ sufficiently large. Now, suppose that \[(\data_S,\bar\xi_S),\,(\data'_S,\bar\xi'_S) \,\, \text{ and }\,\,  (\data,\bar\xi),\,(\data',\bar\xi')\] are contact closures of  $H(S)$ and $(M,\Gamma,\xi)$, respectively, with \[
g=g(\data_S)=g(\data) \,\, \text{ and } \,\,
h=g(\data'_S) = g(\data').\]
The morphism $\mathscr{H}$ encodes maps $\mathscr{H}_{-\data_S,-\data}$ and $\mathscr{H}_{-\data'_S,-\data'}$ which make the  diagram 
\[ \xymatrix@C=50pt@R=30pt{
\SHMt(-\data_S) \ar[r]^-{\mathscr{H}_{-\data_S,-\data}} \ar[d]_{\Psit_{-\data_S,-\data'_S}} & \SHMt(-\data) \ar@<-1.5ex>[d]^{\Psit_{-\data,-\data'}} \\
\SHMt(-\data'_S) \ar[r]_-{\mathscr{H}_{-\data'_S,-\data'}} & \SHMt(-\data')
} \] 
commute, up to multiplication by a unit. Theorem \ref{thm:contacthandle} can be translated as saying that 
\[
\mathscr{H}_{-\data_S,-\data}(\invt(\data_S,\bar\xi_S))\doteq\invt(\data,\bar\xi)\,\, \text{ and } \,\,
\mathscr{H}_{-\data_S',-\data'}(\invt(\data_S',\bar\xi_S'))\doteq\invt(\data',\bar\xi')\]
for sufficiently large $g$ and $h$. To prove Theorem \ref{thm:mainintrodiff}, it therefore suffices to show that \[\Psit_{-\data_S,-\data'_S}(\invt(\data_S,\bar\xi_S))\doteq\invt(\data_S',\bar\xi_S').\] But this is true since $\Psit_{-\data_S,-\data'_S}$  is an isomorphism and $\invt(\data_S,\bar\xi_S)$ and $\invt(\data_S',\bar\xi_S')$ generate the modules $\SHMt(-\data_S)$ and $\SHMt(-\data'_S)$, which are both isomorphic to $\RR$ (see Subsection \ref{ssec:closureexamples}).

From this proof sketch, one also finds that the contact invariant $\invt(M,\Gamma,\xi)\in \SHMtfun(-M,-\Gamma)$ is characterized by \begin{equation}\label{eqn:characterization}\invt(M,\Gamma,\xi) = \mathscr{H}(\mathbf{1}),\end{equation} where $\mathbf{1}=\invt(H(S))$ is the \emph{generator} of $\SHMtfun(-H(S))\cong \RR$. It is  therefore natural to ask whether one can \emph{define} a contact invariant in this way, forgetting about contact closures and Kronheimer and Mrowka's contact invariant entirely. In other words, a partial open book decomposition $\mathfrak{ob}$ compatible with $(M,\Gamma,\xi)$ determines a surface $S$, a map $\mathscr{H}$, and a class \[\invt(\mathfrak{ob}):=\mathscr{H}(\mathbf{1})\in\SHMtfun(-M,-\Gamma),\] and the question is whether one can show, without appealing to the existing monopole Floer apparatus for closed contact manifolds, that $\invt(\mathfrak{ob}) = \invt(\mathfrak{ob'})$ for any two such partial open book decompositions. Although we do not show it in this paper, it turns out this can be done using the full relative Giroux correspondence (both the ``existence" and the ``uniqueness" parts). In fact, this idea is the basis for our construction in \cite{bsSHI} of a contact invariant in sutured instanton Floer homology. 

We end with a   synopsis of our bypass exact triangle in $\SHM$. A \emph{bypass move} is a certain local modification of the dividing set of  a sutured  manifold (see Figure \ref{fig:bypass-move}). Every such move can be achieved by attaching a \emph{bypass} (roughly, half  of a thickened overtwisted disk) to the manifold along an  arc in its boundary. In turn, this bypass attachment can  be achieved by  attaching a contact 1-handle followed by a contact 2-handle in a manner determined by the arc (see Figure \ref{fig:bypass-handles}). So, the contact handle attachment maps of Theorem \ref{thm:contacthandle} allow us to define similar maps for bypass attachments.

In \cite{honda4}, Honda studies a certain 3-periodic sequence of bypass moves  which he calls a \emph{bypass triangle} (see Figure \ref{fig:bypass-triangle}), and he shows that the $\SFH$ groups  of sutured manifolds related by a bypass triangle fit into an exact triangle. This \emph{bypass exact triangle} is the main feature of his \emph{contact category,} which is envisioned as an algebraic approach to  contact geometry. This contact category, though still a work-in-progress, has been studied for a variety of purposes, including as an approach to categorifying certain quantum groups \cite{yintian1,yintian2,yintian3}.

In this paper, we establish a monopole Floer analogue of Honda's bypass exact triangle (see Theorem \ref{thm:bypass} for a more precise statement):

\begin{theorem}
\label{thm:introbypass}
Suppose $\Gamma_1,\Gamma_2,\Gamma_3\subset \partial M$ is a 3-periodic sequence of sutures related by the moves in a bypass triangle. Then there is an exact triangle
\[ \xymatrix@C=-37pt@R=28pt{
\SHMtfun(-M,-\Gamma_1) \ar[rr] & & \SHMtfun(-M,-\Gamma_2) \ar[dl] \\
& \SHMtfun(-M,-\Gamma_3) \ar[ul] & \\
}, \]
in which each arrow is the corresponding bypass attachment map.\footnote{We only prove this over $\RR\otimes\Z/2\Z$.}

\end{theorem}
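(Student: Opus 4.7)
The plan is to deduce the bypass exact triangle from Kronheimer--Mrowka's surgery exact triangle for monopole Floer homology, by choosing closures of $(M,\Gamma_1)$, $(M,\Gamma_2)$, $(M,\Gamma_3)$ that form a Dehn surgery triad and identifying the bypass attachment maps with the corresponding surgery cobordism maps. This matches the intuition advertised in the introduction that these bypass triangles are instances of the usual surgery exact triangle in Floer homology.

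First I would exploit the fact that the three sutures in a bypass triangle differ only locally in a disk on $\partial M$ to arrange closures $(Y_i,R,\eta)$ of $(-M,-\Gamma_i)$ whose underlying 3-manifolds differ by Dehn surgery on a single framed knot $K$ lying in a collar of the bypass disk inside the auxiliary piece $F\times I$. By making the auxiliary surface $F$ and the gluing diffeomorphism identical across the three closures, one arranges $Y_1,Y_2,Y_3$ so that they form a surgery triad relative to the distinguished surface $R$ and the 1-cycle $\eta$.

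Next I would unpack each bypass attachment map as a composition $\mathscr{H}_2\circ\mathscr{H}_1$ of the contact 1- and 2-handle attachment maps from Theorem~\ref{thm:contacthandle}. The contact 1-handle underlying a bypass can be absorbed into the closure construction, so its map factor is an isomorphism implementing a canonical identification of closures. The contact 2-handle map, by its definition in terms of a contact $(+1)$-surgery on a Legendrian attaching knot $L_i$ in the closure, becomes the map induced by the 2-handle cobordism of the corresponding smooth surgery on $L_i$; with the arrangement of the first step, the three cobordisms fit together into the Dehn triad connecting $Y_1\to Y_2\to Y_3\to Y_1$, where the three integer framings are the contact framings of the $L_i$. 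Finally I would invoke Kronheimer--Mrowka's surgery exact triangle for $\HMto$, restricted to the topmost $\Sc$-structures summand $\HMtoc(-Y_i|R;\Gamma_{-\eta})$. The surgery cobordisms preserve the class $[R]$ and the cycle $\eta$ by construction, so both the grading restriction defining $\SHMt$ and the local coefficient system descend to the triad, yielding an exact triangle of $\SHMtfun$'s whose arrows are the bypass attachment maps.

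The main obstacle is the geometric identification in the second step: one must verify that the contact framings of the three Legendrian knots $L_i$ really are the three consecutive integer framings required by the Dehn triad, and that the contact 1-handle contributions are cleanly absorbed into the identification of closures. A secondary subtlety is tracking signs and orientations of the local coefficient system $\Gamma_{-\eta}$ along the surgery cobordisms and checking that the three induced triad maps are the correct surgery maps on the nose rather than only up to sign; this seems to be where the statement is forced to work over $\RR\otimes\Z/2\Z$.
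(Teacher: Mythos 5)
Your overall strategy — realize the bypass triangle as an instance of the monopole Floer surgery exact triangle by choosing closures in which the contact 2-handle maps become surgery cobordism maps — is indeed the one the paper uses. But the proposal stops short of the real content of the argument, and misstates how the surgery triad is actually organized. First, the paper does not try to find three closures of $(-M,-\Gamma_1),(-M,-\Gamma_2),(-M,-\Gamma_3)$ directly forming a triad; instead it attaches all three contact 1-handles $H_1,H_2,H_3$ at once to $(M,\Gamma_1)$ to produce a single sutured manifold $(Z_1,\gamma_1)$, so that the three bypass moves are realized by three successive contact 2-handle attachments along curves $\beta_1,\beta_2,\beta_3$ on $\partial Z_1$. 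This reorganization is what makes the three maps appear as 2-handle cobordism maps between closures of the $(Z_i,\gamma_i)$, which are then identified with closures of the original $(M,\Gamma_i)$ via the (isomorphism) 1-handle maps. Second, and more seriously, the attaching curves $\beta_1',\beta_2',\beta_3'$ of the three 2-handle cobordisms are \emph{not} "one knot with three consecutive framings": to get the Kronheimer--Mrowka surgery triangle you need $F_1$ to be $0$-surgery on a knot $K$, $F_2$ to be surgery on a meridian $\mu_1$ of $K$, and $F_3$ to be surgery on a meridian $\mu_2$ of $\mu_1$. Verifying this structure requires a particular choice of closure: the paper first performs auxiliary $(+1)$-surgeries on pushoffs of two curves $a'$, $b'$ in the $r(R\times[-1,1])$ part of a starting closure, and then performs nontrivial handleslides over $a'$, $b'$, and $\beta_1'$ to exhibit $\beta_2''$ and $\beta_3''$ as the required meridians with the required framings. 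Neither the auxiliary surgeries nor the handleslide computation is present or anticipated in your sketch; without them, there is no reason the three surgeries form a triad, and the argument does not go through.

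One smaller point: you attribute the restriction to $\RR\otimes\Z/2\Z$ to sign-tracking of the local system along the cobordisms. In fact, the reason is more basic: the surgery exact triangle in monopole Floer homology has only been established in characteristic 2 (see Remark~\ref{rmk:novfield}), which is why the whole theorem is stated over the Novikov field rather than the Novikov ring.
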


Recall that each bypass attachment map is the composition of a 1-handle map with a 2-handle map. But, on the level of closures, a  1-handle map is essentially the identity and a 2-handle map is the cobordism map associated to integral surgery on a knot. It is perhaps not surprising then  that our bypass exact triangle is  really just the usual surgery exact triangle in monopole Floer homology in disguise. This suggests that  Honda's contact category may  be   a natural subcategory of  some   larger category of closed 3-manifolds defined without reference to contact structures (perhaps one with an interesting $A_\infty$ structure).

\subsection{Future directions}
\label{ssec:introfuture} Below, we describe several goals for future work.
One of our  immediate  goals is to prove that our contact invariant ``agrees" with Honda, Kazez, and Mati{\'c}'s  $EH$ invariant.  Forgetting about naturality, we can view  $\invt(M,\Gamma,\xi)$  as given by \[\psi(M,\Gamma,\xi):=\invt(Y,\bar\xi)\in \HMtoc({-}Y|{-}R;\Gamma_{-\eta}) =: \SHMt(-M,-\Gamma)\] for any contact closure $(Y,R,\eta,\bar\xi)$ of $(M,\Gamma,\xi)$. We aim   to prove the following

\begin{conjecture} 
\label{conj:isosfhshm}There exists an isomorphism  \[\SHMt(-M,-\Gamma)\to\SFH(-M,-\Gamma)\otimes\RR\] which sends $\psi(M,\Gamma,\xi)$ to $EH(M,\Gamma,\xi)\otimes 1$.
\end{conjecture}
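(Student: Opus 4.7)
The natural strategy is to exploit the partial open book characterization \eqref{eqn:characterization} of $\invt$ together with an analogous characterization on the $\SFH$ side. First I would establish, for any partial open book $\mathfrak{ob}$ compatible with $(M,\Gamma,\xi)$ with page $S$ and resulting contact handle decomposition $H$, the Heegaard Floer analog
\[
EH(M,\Gamma,\xi) \doteq \Phi^{SFH}_{\xi,H}(\mathbf{1}_{SFH}),
\]
where $\mathbf{1}_{SFH}$ generates $\SFH(-H(S)) \cong \Z$ and $\Phi^{SFH}_{\xi,H}$ is the Honda--Kazez--Mati\'c composition of contact 2-handle maps from \cite{hkm5}. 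This is essentially built into the Honda--Kazez--Mati\'c construction via the observation that $EH(H(S))$ is the generator, combined with the transport formula for $EH$ under contact 2-handle attachment.

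With that in hand, the plan is to build the conjectured isomorphism out of the rank-one case. The product sutured manifold $H(S)$ has $\SHMt(-H(S))$ and $\SFH(-H(S)) \otimes \RR$ both free of rank one (see Subsection \ref{ssec:closureexamples}), with $\invt(H(S))$ and $EH(H(S)) \otimes 1$ as generators, so there is a preferred isomorphism $\Psi_S$ between them sending one generator to the other. For general $(M,\Gamma,\xi)$, define $\Psi_{M,\Gamma}$ so that the square
\[ \xymatrix@C=40pt@R=22pt{
\SHMt(-H(S)) \ar[r]^-{\Psi_S} \ar[d]_{\mathscr{H}} & \SFH(-H(S)) \otimes \RR \ar[d]^{\Phi^{SFH}_{\xi,H}\,\otimes\, \mathrm{id}} \\
\SHMt(-M,-\Gamma) \ar@{-->}[r]_-{\Psi_{M,\Gamma}} & \SFH(-M,-\Gamma) \otimes \RR
} \]
commutes on the image of $\mathscr{H}$, extended by any isomorphism (which exists by \eqref{eqn:SFHMiso}) on a complement. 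By construction $\Psi_{M,\Gamma}(\invt(M,\Gamma,\xi)) \doteq EH(M,\Gamma,\xi)\otimes 1$.

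The main obstacle is well-definedness: one must show $\Psi_{M,\Gamma}$ is independent of the choice of partial open book, and is genuinely an isomorphism rather than a map between modules of a priori matching ranks. Well-definedness amounts to a compatibility between the two families of 2-handle maps: if $\mathfrak{ob}$ and $\mathfrak{ob}'$ yield different decompositions $H,H'$, the induced maps $\Phi^{SFH}_{\xi,H}\circ\mathscr{H}^{-1}$ and $\Phi^{SFH}_{\xi,H'}\circ(\mathscr{H}')^{-1}$ must agree up to a unit. This is essentially an enhanced form of Conjecture \ref{conj:PhiHwd} that must be proved simultaneously on both sides and compatibly with the Seiberg--Witten/Heegaard Floer identification. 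The ideal resolution would be a refinement of the Taubes--Colin--Ghiggini--Honda--Lekili identification that is manifestly functorial under the 2-handle cobordisms used in Section \ref{sec:contacthandle} of this paper; since such naturality for the SW=HF correspondence is currently out of reach, a more tractable first step would be to verify the conjecture in settings where both invariants are computable, such as Stein fillable contact structures (where both $\invt$ and $EH$ are nonvanishing) and overtwisted ones (where both vanish by Theorem \ref{thm:ot} and its $\SFH$ analog), and for basic sutured contact manifolds admitting a single-stabilization partial open book, then induct on the number of 2-handles.
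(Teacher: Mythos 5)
The statement you are addressing is Conjecture \ref{conj:isosfhshm}; the paper does not prove it, only sketches a strategy in the surrounding discussion. The authors propose to factor through the Taubes and Colin--Ghiggini--Honda isomorphism $\HMtoc({-}Y|{-}R;\Gamma_{-\eta})\to\hfp({-}Y|{-}R;\Gamma_{-\eta})$, which carries $\psi(Y,\bar\xi)$ to $c^+(Y,\bar\xi)$, and then Lekili's isomorphism $\hfp({-}Y|{-}R;\Gamma_{-\eta})\to\SFH(-M,-\Gamma)\otimes\RR$, thereby reducing to showing that a suitable modification of Lekili's map sends $c^+(Y,\bar\xi)$ to $EH(M,\Gamma,\xi)\otimes 1$; they intend to do this by matching the open-book characterization \eqref{eqn:characterization} of $\psi$ with an analogous characterization of $c^+$.

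Your opening step --- characterizing $EH$ by applying the Honda--Kazez--Mati\'c gluing maps to the generator of $\SFH(-H(S))$, in parallel with \eqref{eqn:characterization} --- is a sound observation and is part of what the authors have in mind. The middle step, however, does no work. Declaring that $\Psi_{M,\Gamma}$ should ``commute on the image of $\mathscr{H}$ and extend by any isomorphism on a complement'' presupposes exactly what needs to be established: that $\invt(M,\Gamma,\xi)$ and $EH(M,\Gamma,\xi)\otimes 1$ vanish together, and that when nonzero they generate compatibly embedded direct summands with abstractly isomorphic complements. The abstract isomorphism \eqref{eqn:SFHMiso} gives no control over where the two contact classes sit; if, say, $\invt(M,\Gamma,\xi)=0$ while $EH(M,\Gamma,\xi)\neq 0$, no isomorphism at all can make your square commute, and nothing in your argument rules this out. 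So the ``construction'' is a restatement of the conjecture rather than a step toward proving it. You do correctly identify at the end that the irreducible difficulty is a functoriality statement for the Seiberg--Witten/Heegaard Floer correspondence under the contact 2-handle cobordisms of Section \ref{sec:well-definedness}; this is in substance the same difficulty the paper proposes to attack through Lekili's map. Your proposal therefore locates the right hard step but neither reduces nor circumvents it.
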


If true, this conjecture would lead to  a new proof of the invariance of $EH$ up to isomorphism, independent of the more controversial ``uniqueness" part of the relative Giroux correspondence. Combined with our work in \cite{bsLEG}, it would also show that the ``LOSS" invariant in knot Floer homology satisfies a certain functoriality with respect to Lagrangian concordance.

Unsurprisingly, our strategy for proving Conjecture \ref{conj:isosfhshm} relies on the combined work of Taubes \cite{taubes1,taubes2,taubes3,taubes4,taubes5} and Colin, Ghiggini, and Honda \cite{cgh3, cgh4, cgh5}, which  shows that there is an isomorphism \[\HMtoc({-}Y|{-}R;\Gamma_{-\eta}) \to \hfp({-}Y|{-}R;\Gamma_{-\eta})\] sending $\psi(Y,\bar\xi)$ to $c^+(Y,\bar\xi)$, together with   the work of Lekili \cite{lekili2}, which establishes an isomorphism \[\hfp({-}Y|{-}R;\Gamma_{-\eta})\to \SFH(-M,-\Gamma)\otimes\RR.\] To prove Conjecture \ref{conj:isosfhshm}, it thus suffices to prove that Lekili's isomorphism sends $c^+(Y,\bar\xi)$ to $EH(M,\Gamma,\xi)\otimes 1$. We plan to prove this for a slight modification of Lekili's isomorphism, using a characterization of $c^+(Y,\bar\xi)$ similar to that of $\invt(M,\Gamma,\xi) = \psi(Y,\bar\xi)$ in \eqref{eqn:characterization}.

Another immediate goal involves Kronheimer and Mrowka's \emph{monopole knot homology}, defined in \cite{km4}. Their theory  assigns to a knot $K$ in a closed 3-manifold $Y$ the isomorphism class of $\RR$-modules \[\KHMt(Y,K):=\SHMt(Y\ssm \nu(K),m \cup -m),\] where $\nu(K)$ is a tubular neighborhood of the knot and $m$ is an oriented meridian on the boundary of this knot complement (see \cite{bs3} for our ``natural" refinement of this construction.) It follows from the work of Taubes et al. that monopole knot homology is isomorphic to the ``hat" version of knot Floer homology, \[\KHMt(Y,K)\cong \widehat{HFK}(Y,K)\otimes\RR.\] Our aim is to use the bypass attachment maps defined here to construct a version of monopole knot homology analogous to the more powerful ``minus" version of knot Floer homology. Our approach is based on the  work of Etnyre, Vela-Vick, and Zarev \cite{evz} described below. 

Suppose $K$ is an oriented Legendrian knot in $(Y,\xi)$ and let $M_n = (Y \smallsetminus \nu(K),\Gamma_n)$ be the complement of the $n$-fold negative stabilization of $K$. Then $M_n$ can be thought of as obtained from $M_{n-1}$ by gluing on a layer $(T^2\times I,\xi_-)$ called a \emph{negative basic slice}. Etnyre, Vela-Vick, and Zarev define $\underrightarrow{SFH}(-Y,K)$ to be the direct limit of the sequence
\[ \SFH(-M_0) \xrightarrow{\Phi_{\xi_-}} \SFH(-M_1) \xrightarrow{\Phi_{\xi_-}} \SFH(-M_2) \to \dots, \]
where the $\Phi_{\xi_-}$ are the  gluing maps associated to these basic slice attachments. This limit $\underrightarrow{SFH}(-Y,K)$ is a module over $(\Z/2\Z)[U]$, where the $U$-action is induced  by  maps \[\Phi_{\xi_+}:\SFH(-M_n)\to\SFH(-M_{n-1})\] associated to gluing on layers  $(T^2\times I, \xi_+)$ called \emph{positive} basic slices. The authors of \cite{evz} show that $\underrightarrow{SFH}(K)$ is isomorphic to $HFK^-(-Y,K)$ as a $(\Z/2\Z)[U]$-module.

Since these basic slice attachments are equivalent to bypass attachments, we can use our bypass attachment maps to define a similar limit module in $\SHM$. To define an $\RR[U]$-module structure on this limit, one must show that the bypass attachment maps corresponding to the positive and negative basic slices  commute. This is work in progress. Once complete, it will be interesting to compare this limit theory with $HFK^-(-Y,K)$ and with a similar invariant defined by Kutluhan \cite{kutluhan}  in terms of filtrations on the monopole Floer complex of $-Y$. 

Another major goal is to prove Conjecture \ref{conj:PhiHwd}, which posits that the  map $\Phi_{\xi,H}$ in \eqref{eqn:PhiH} is independent of  $H$. This would imply, in particular, that the maps associated to the  positive and negative basic slice attachments  above commute as desired. More importantly, it would  allow us to  to assign well-defined maps  to cobordisms between sutured manifolds in the monopole Floer setting---in the language of \cite{bs3}, to extend $\SHMtfun$ to a functor from $\CobSut$  to $\RPSys$. Our approach is based on the work of Juh{\'a}sz  \cite{juhasz3} outlined below.

As defined in \cite{juhasz3},  a cobordism  from $(M_1,\Gamma_1)$ to $(M_2,\Gamma_2)$ consists of a smooth 4-manifold $W$ with boundary $\partial W=-M_1\cup Z \cup M_2,$ together with a contact structure $\xi$ on $Z$ with dividing set $\Gamma_1\cup \Gamma_2$ on $\partial Z = -\partial M_1\cup \partial M_2$. Juh{\'a}sz assigns to such a cobordism a map \[F_W:\SFH(-M_1,-\Gamma_1)\to\SFH(-M_2,-\Gamma_2),\] defined as the composition  $F_W = F_{W}'\circ\Phi_{\xi},$ where \begin{equation}\label{eqn:jhkm}\Phi_{\xi}:\SFH(-M_1,-\Gamma_1)\to \SFH(-M_1\cup H, -\Gamma_2)\end{equation} is the contact gluing map defined by Honda, Kazez, and Mati{\'c}, and \begin{equation}\label{eqn:j2}F_W':\SFH(-M_1\cup H, -\Gamma_2)\to\SFH(-M_2,-\Gamma_2)\end{equation} is a map defined via more standard Heegaard Floer techniques.

Once we prove Conjecture \ref{conj:PhiHwd}, we will have an $SHM$ analogue  of the map in \eqref{eqn:jhkm}. Moreover, we  can already define a monopole Floer analogue of the map in \eqref{eqn:j2}. Indeed, since the sutured manifolds $(-M_1\cup H, -\Gamma_2)$ and $(-M_2,-\Gamma_2)$ have the same (sutured) boundaries, there is a natural way of turning $W$ into a cobordism $\overline W$ between their closures. We define the analogue of $F'_W$ to be the map in monopole Floer homology induced by $\overline W$. With analogues of the maps in \eqref{eqn:jhkm} and  \eqref{eqn:j2}, we can then define an analogue of Juh{\'a}sz's map $F_W$ via composition as above.


The last project we  mention here concerns defining a monopole Floer version of bordered Heegaard Floer homology.
In \cite{zarev}, Zarev showed that the homologies of Lipshitz, Ozsv{\'a}th, and Thurston's bordered Heegaard Floer invariants \cite{lot3} can be expressed as direct sums of certain sutured Floer homology groups. Furthermore, he showed that the multiplication in the homology of the DGA associated to a parametrized surface in bordered Floer homology can be expressed in terms of the sutured cobordism maps defined by Juh{\'a}sz in \cite{juhasz3}. Once we define analogous sutured cobordism maps in $\SHM$, as described above, we will be able to define corresponding homology-level bordered invariants in the monopole Floer setting. 
Of course, simply knowing the homology-level multiplications for the surface DGA and the bordered modules  is not enough for a pairing theorem (a central feature of any good bordered theory), but it would be an important  start.

As mentioned previously, the ideas  in this paper are used to define similar contact handle attachment maps in the instanton Floer setting in \cite{bsSHI}. These maps give rise to an analogous bypass exact triangle in that setting. We plan to use them in future work to construct  sutured cobordism maps and bordered invariants in instanton Floer homology as well, following the strategy outlined above.

\subsection{Organization} In Section \ref{sec:prelims}, we  review  projectively transitive systems, the construction of sutured monopole homology, Kronheimer and Mrowka's  invariant of  closed contact 3-manifolds, and some convex surface theory. In Section \ref{sec:monopoleinvt}, we define the  classes $\invt^g(M,\Gamma,\xi)$ and $\invt(M,\Gamma,\xi)$  and establish some of their basic properties. Much of Section \ref{sec:monopoleinvt} is devoted to preliminary work on \emph{contact preclosures} that is used in Section \ref{sec:well-definedness} to prove Theorem \ref{thm:mainintrosame}---that $\invt^g(M,\Gamma,\xi)$ is well-defined for each $g$. In Section \ref{sec:well-definedness}, we prove Theorem \ref{thm:mainintrosame} and define the contact handle attachment maps in $SHM$. We then use these maps to prove Theorem \ref{thm:mainintrodiff}---that $\invt(M,\Gamma,\xi)$ is well-defined. In Section \ref{sec:bypasstri}, we prove Theorem \ref{thm:introbypass}---that $\SHM$ satisfies a bypass exact triangle.
\subsection{Acknowledgements} We thank John Etnyre, Ko Honda, Peter Kronheimer, {\c C}a{\u g}atay Kutluhan, Tom Mrowka, Jeremy Van Horn-Morris, Shea Vela-Vick, and Vera V{\'e}rtesi for helpful conversations.

\section{Preliminaries}
\label{sec:prelims}
In this section, we review the notion of a projectively transitive system, the construction of sutured monopole homology, and basic properties of Kronheimer and Mrowka's contact invariant, and we collect some facts from convex surface theory.

\subsection{Projectively transitive systems}
\label{ssec:transitive-systems}
In \cite{bs3} we introduced \emph{projectively transitive systems} to make precise the idea of a collection of modules being canonically isomorphic up to multiplication by a unit. We recount their definition and related notions below.

\begin{definition}
Suppose $M_\alpha$ and $M_\beta$ are modules over a unital commutative ring $\RR$. We say that  elements $x,y\in M_\alpha$ are \emph{equivalent} if $x=u\cdot y$ for some  $u\in \RR^\times$. Likewise,  homomorphisms \[f,g: M_\alpha \to M_\beta\]  are  \emph{equivalent} if $f=u\cdot g$ for some  $u\in \RR^\times$.  
\end{definition}

\begin{remark}
We will write $x\doteq y$ or $f\doteq g$ to indicate that two  elements or homomorphisms are equivalent, and  will denote their equivalence classes by $[x]$ or $[f]$.
\end{remark}

Note that \emph{composition} of  equivalence classes of homomorphisms is well-defined, as is the \emph{image} of the equivalence class of an element under an equivalence class of homomorphisms.

\begin{definition}
Let $\RR$ be a unital commutative ring.  A \emph{projectively transitive system of $\RR$-modules} consists of a set $A$ together with:
\begin{enumerate}
\item a collection of $\RR$-modules $\{M_\alpha\}_{\alpha \in A}$ and
\item a collection of equivalence classes of homomorphisms $\{g^\alpha_\beta\}_{\alpha,\beta \in A}$ such that:
\begin{enumerate}
\item elements of the equivalence class $g^\alpha_\beta$ are isomorphisms from $M_\alpha$ to $M_\beta$, 
\item  $g^\alpha_\alpha=[id_{M_\alpha}]$,
\item $g^\alpha_\gamma = g^\beta_\gamma \circ g^\alpha_\beta$.
\end{enumerate}
\end{enumerate}
\end{definition}


\begin{remark}The equivalence classes  of homomorphisms in a projectively transitive system of $\RR$-modules can  be thought of as specifying canonical isomorphisms between the modules in the system that are well-defined up to multiplication by units in $\RR$. 
\end{remark}


The class of  projectively transitive systems of $\RR$-modules forms a category $\RPSys$ with the following notion of morphism.

\begin{definition}
\label{def:projtransysmor} A \emph{morphism} of  projectively transitive systems of $\RR$-modules \[F:(A,\{M_{\alpha}\},\{g^{\alpha}_{\beta}\})\to(B,\{N_{\gamma}\},\{h^{\gamma}_{\delta}\})\]  is  a collection of equivalence classes  of homomorphisms $F=\{F^\alpha_\gamma\}_{\alpha\in A,\,\gamma\in B}$  such that:
\begin{enumerate}
\item elements of the equivalence class $F^\alpha_\gamma$ are homomorphisms from $M_\alpha$ to $N_\gamma$,
\item \label{eqn:projtransysmor} $F^\beta_\delta\circ g^\alpha_\beta = h^\gamma_\delta\circ F^\alpha_\gamma$.
\end{enumerate}  
Note that $F$ is an \emph{isomorphism} iff the elements in each equivalence class $F^\alpha_\gamma$ are isomorphisms.
\end{definition}

\begin{remark}\label{rmk:completesubset} A collection of equivalence classes of homomorphisms $\{F^\alpha_\gamma\}$  with indices  ranging over any nonempty subset of $A\times B$ can be uniquely completed to a morphism as long as this collection satisfies the compatibility in \eqref{eqn:projtransysmor} where it makes sense.
\end{remark}

\begin{definition}
\label{def:element}
 An \emph{element} of a projectively transitive system of $\RR$-modules \[x\in\mathcal{M}=(A,\{M_{\alpha}\},\{g^{\alpha}_{\beta}\})\]
 is a collection of equivalence classes of elements $x = \{x_{\alpha}\}_{\alpha\in A}$ such that:
\begin{enumerate}
\item elements of the equivalence class $x_{\alpha}$ are elements of $M_\alpha$,
\item \label{eqn:projtransysmorelt} $x_\beta = g^\alpha_\beta(x_\alpha)$.
\end{enumerate}  
\end{definition}

\begin{remark}
\label{rmk:completesubset2}
 As in Remark \ref{rmk:completesubset}, a collection of equivalence classes of elements $\{x_{\alpha}\}$ with indices ranging over any nonempty subset of $A$ can be uniquely completed to an element of $\mathcal{M}$ as long as this collection satisfies the compatibility in \eqref{eqn:projtransysmorelt} where it makes sense.
 \end{remark}
 
We  say that $x$ is a \emph{unit} in $\mathcal{M}$  if each $M_\alpha$ is isomorphic to $\RR$ and each $x_{\alpha}$ is the equivalence class of a generator. As there is a well-defined notion of scalar multiplication for projectively transitive systems,  we may also talk about \emph{primitive} elements of $\mathcal{M}$. The \emph{zero} element $0\in \mathcal{M}$ is the collection of equivalence classes of the elements $0\in M_\alpha$. Finally, it is clear how to define the image $F(x)$ of an element $x\in \mathcal{M}$ under a morphism $F:\mathcal{M}\to\mathcal{N}$ of projectively transitive systems of $\RR$-modules.

\begin{remark} In an abuse of notation, we will also use $\RR$ to denote the distinguished system in $\RPSys$ given by  \[ \RR=(\{\star\},\{\RR\}, \{[id_\RR]\})\] consisting of the single $\RR$-module $\RR$ together with the equivalence class of the identity map. There is an obvious  correspondence between elements of a projectively transitive system of $\RR$-modules $\mathcal{M}$ and morphisms $\RR\to\mathcal{M}$. 
\end{remark}

As the category $\RPSys$ contains kernels and images,  there is a straightforward notion of an exact sequence of projectively transitive systems of $\RR$-modules. Concretely, suppose 
\[
\mathcal{M} = (A, \{M_\alpha\}, \{g^\alpha_\beta\}), \,\,\,\,\,\,
\mathcal{N} = (B, \{N_\gamma\}, \{h^\gamma_\delta\}), \,\,\,\,\,\,
\mathcal{P} = (C, \{P_\epsilon\}, \{i^\epsilon_\zeta\})\] are projectively transitive systems of $\RR$-modules. It is easy to see that a sequence \[\mathcal{M} \xrightarrow{F} \mathcal{N} \xrightarrow{G} \mathcal{P}\] is exact at $\mathcal{N}$ iff there exist $\alpha\in A$, $\gamma\in B$, $\epsilon\in C$ and representatives $\hat F^\alpha_\gamma$, $\hat G^\gamma_\epsilon$ of the equivalence classes $F^\alpha_\gamma$, $G^\gamma_\epsilon$ such that the sequence of $\RR$-modules \[M_\alpha \xrightarrow{\hat F^\alpha_\gamma}N_\gamma \xrightarrow{\hat G^\gamma_\epsilon} P_\epsilon\] is exact at $N_\gamma$. 

\subsection{Sutured monopole homology}
\label{ssec:shm}

In this subsection, we describe  our refinement of Kronheimer and Mrowka's sutured monopole homology, as defined  in \cite{bs3}.

\begin{definition} 
\label{def:sutured}A \emph{balanced sutured manifold}  $(M,\Gamma)$ is a compact, oriented, smooth 3-manifold $M$ with a collection $\Gamma$ of disjoint, oriented,  smooth curves in $\partial M$ called \emph{sutures}. Let $R(\Gamma) = \partial M\smallsetminus\Gamma$, oriented as a subsurface of $\partial M$. We require that:
\begin{enumerate}
\item neither $M$ nor $R(\Gamma)$ has  closed components,
\item $R(\Gamma) = R_+(\Gamma)\sqcup R_-(\Gamma)$ with $\partial R_+(\Gamma) = -\partial R_-(\Gamma) = \Gamma$,
\item $\chi(R_+(\Gamma)) = \chi(R_-(\Gamma))$.
\end{enumerate}
 \end{definition}

An \emph{auxiliary surface} for $(M,\Gamma)$ is a compact, connected, oriented surface $F$ with $g(F)>0$ and $\pi_0(\partial F)\cong \pi_0(\Gamma)$.  Suppose $F$ is an auxiliary surface for $(M,\Gamma)$, $A(\Gamma)$ is a closed tubular neighborhood of $\Gamma$ in $\partial M$, and    \[h:\partial F\times[-1,1]\rightarrow A(\Gamma)\] is an orientation-reversing diffeomorphism which sends $\partial F\times \{\pm 1\}$ to $\partial (R_{\pm}(\Gamma)\smallsetminus A(\Gamma)).$ The  \emph{preclosure} of $M$ associated to $F$, $A(\Gamma)$, and $h$ is the smooth 3-manifold \begin{equation*}\label{eqn:bF}M'=M\cup_h F\times [-1,1]\end{equation*} formed by gluing $F\times[-1,1]$ to $M$ according to $h$ and rounding corners.  Condition (3) in Definition \ref{def:sutured} ensures that $M'$ has two diffeomorphic boundary components, $\partial_+ M'$ and $\partial_- M'$. In particular, an easy calculation shows that \begin{equation}\label{eqn:genusM'}g(\partial_\pm M') = \frac{|\Gamma|-\chi(R_+(\Gamma))+2g(F)}{2}.\end{equation}
We may glue $\partial_+M'$ to $\partial_-M'$ by some diffeomorphism   to form a closed manifold $Y$ containing a distinguished surface \[R:=\partial_+M' = -\partial_- M'\subset Y.\] In \cite{km4}, Kronheimer and Mrowka define a \emph{closure} of $(M,\Gamma)$ to be any pair $(Y,R)$ obtained in this way.  Our definition of closure, as needed for naturality, is  slightly more involved.

\begin{definition}[\cite{bs3}]
\label{def:smoothclosure} A \emph{marked closure} of $(M,\Gamma)$ is a tuple $\data = (Y,R,r,m,\eta)$ consisting of:
\begin{enumerate}
\item a closed, oriented,  3-manifold $Y$,
\item  a closed, oriented,  surface $R$ with $g(R)\geq 2$,
\item an oriented, nonseparating, embedded curve $\eta\subset R$,
\item a smooth, orientation-preserving embedding $r:R\times[-1,1]\hookrightarrow Y$,
\item a smooth, orientation-preserving embedding $m:M\hookrightarrow Y\smallsetminus\inr(\Img(r))$ such that: 
\begin{enumerate}
\item $m$ extends  to a diffeomorphism \[M\cup_h F\times [-1,1]\rightarrow Y\smallsetminus{\rm int}(\Img(r))\] for some $A(\Gamma)$, $F$, $h$, as above,
\item $m$ restricts to an orientation-preserving embedding \[R_+(\Gamma)\smallsetminus A(\Gamma)\hookrightarrow r(R\times\{-1\}).\]
\end{enumerate}
 \end{enumerate} 
 The \emph{genus} $g(\data)$ refers to the genus of $R$.
\end{definition}

\begin{remark}
It follows from the formula in \eqref{eqn:genusM'} that $(M,\Gamma,\xi)$ admits a genus $g$ marked closure for every \[g\geq \max\bigg(2, \frac{|\Gamma|-\chi(R_+(\Gamma))+2}{2}\bigg).\] We will denote this maximum by $g(M,\Gamma)$.
\end{remark}

\begin{remark} For a marked closure $\data$ as in Definition \ref{def:smoothclosure}, the pair $(Y,r(R\times\{t\}))$  is  a  closure of $(M,\Gamma)$ in the sense of Kronheimer and Mrowka for any $t\in[-1,1]$.
\end{remark}

 
 \begin{remark} Suppose $\data = (Y,R,r,m,\eta)$ is a marked closure of $(M,\Gamma)$. Then, the tuple \[-\data:=(-Y,-R,r,m,-\eta),\] obtained  by reversing the orientations of $Y$, $R$, and $\eta$, is a marked closure of $-(M,\Gamma):=(-M,-\Gamma),$ where, $r$ and $m$ are the induced embeddings  of $-R\times[-1,1]$ and $-M$ into $-Y$. 
  \end{remark}
 
 

\begin{notation}For the rest of this paper,  $\RR$ will be the \emph{Novikov ring over $\zz$}, given by \[\RR=\bigg\{\sum_{\alpha}c_{\alpha}t^{\alpha}\,\bigg | \,\alpha\in\mathbb{R},\,c_{\alpha}\in\zz,\,\#\{\beta<n|c_{\beta}\neq 0\}<\infty\textrm{ for all } n\in \zz\bigg\}.\] 
\end{notation}

Following Kronheimer and Mrowka \cite{km4}, we made the following definition in \cite{bs3}.

\begin{definition}\label{def:shmt}Given a marked closure $\data = (Y,R,r,m,\eta)$ of $(M,\Gamma)$, the \emph{sutured monopole homology of $\data$} is the $\RR$-module \[\SHMt(\data):=\HMtoc(Y|R;\Gamma_\eta).\]
\end{definition}

Here, $\HMtoc(Y|R;\Gamma_\eta)$ is shorthand for the monopole Floer homology of $Y$ in the ``topmost" $\Sc$ structures relative to $r(R\times\{0\})$, 
\begin{equation}\label{eqn:relativelocal}\HMtoc(Y|R;\Gamma_\eta):=\bigoplus_{\substack{\spc\in\Sc(Y)\\ \langle c_1(\spc), [r(R\times\{0\})]\rangle = 2g(R)-2}} \HMtoc(Y,\spc; \Gamma_{r(\eta\times\{0\})}), \end{equation} where, for each $\Sc$ structure $\mathfrak{s}$,  $\Gamma_{r(\eta\times\{0\})}$ is the local system on the Seiberg-Witten configuration space $\mathcal{B}(Y,\mathfrak{s})$ with fiber  $\RR$ specified by the curve $r(\eta\times\{0\})\subset Y$, as defined in \cite[Section 2.2]{km4}.

In \cite{km4}, Kronheimer and Mrowka proved that the isomorphism class of $\SHMt(\data)$ is an invariant of $(M,\Gamma)$. We strengthened this in \cite{bs3}, proving that the sutured monopole homology groups of any two marked closures of $(M,\Gamma)$ are canonically isomorphic, up to multiplication by a unit in $\RR$. Specifically, for any two marked closures $\data,\data'$ of $(M,\Gamma)$, we construct an isomorphism \[\Psit_{\data,\data'}:\SHMt(\data)\to\SHMt(\data'),\]  well-defined up to multiplication by a unit in $\RR$, such that the modules in $\{\SHMt(\data)\}_{\data}$ and the equivalence classes of maps in $\{\Psit_{\data,\data'}\}_{\data,\data'}$ form a projectively transitive system of $\RR$-modules.\footnote{The collection of marked closures is a proper class rather than a set and so cannot technically serve as the indexing object for a projectively transitive system. One can remedy this by requiring that $Y$ and $R$ be submanifolds of Euclidean space. We will not worry about such issues.} We will review the construction of these maps in Section \ref{sec:well-definedness}.

\begin{definition}
The \emph{sutured monopole homology of $(M,\Gamma)$} is the projectively transitive system of $\RR$-modules $\SHMtfun(M,\Gamma)$ given by the modules  and the equivalence classes above.
\end{definition} 

Sutured monopole homology is functorial in the following sense. Suppose \[f:(M,\Gamma)\to(M',\Gamma')\] is a diffeomorphism of sutured manifolds and $\data' = (Y',R',r',m',\eta')$ is a marked closure of $(M',\Gamma')$. Then \begin{equation}\label{eqn:dataf}\data'_f:=(Y',R',r',m'\circ f,\eta')\end{equation} is a marked closure of $(M,\Gamma)$. Let \[id_{\data'_f,\data'}: \SHMt(\data'_f)\to\SHMt(\data')\] be the identity map on $\SHMt(\data'_f) = \SHMt(\data')$. The equivalence classes of these identity maps can be completed to a morphism (as in Remark \ref{rmk:completesubset}) \[\SHMtfun(f):\SHMtfun(M,\Gamma)\to\SHMtfun(M',\Gamma'),\]  which is an invariant of the isotopy class of $f$. We proved in \cite{bs3} that these morphisms behave as expected under composition of diffeomorphisms, so that  $\SHMtfun$ defines a functor from $\DiffSut$ to $\RPSys{},$ where $\DiffSut$ is the category of balanced sutured manifolds and isotopy classes of diffeomorphisms between them. 

Recall that a \emph{product sutured manifold} is a sutured manifold $(M,\Gamma)$ obtained from a product $(S\times[-1,1],\partial S\times\{0\})$ by rounding corners, for some surface $S$ with boundary. Product sutured manifolds have simple Floer homology, as expressed  below. This fact will be important for us at several points in this paper.

\begin{proposition}
\label{prop:productsutured} If $(M,\Gamma)$ is a product sutured manifold, then $\SHMtfun(M,\Gamma)\cong\RR$.
\end{proposition}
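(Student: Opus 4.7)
The plan is to exhibit a particularly nice marked closure $\data$ of the product sutured manifold $(M,\Gamma) = (S\times[-1,1],\partial S\times\{0\})$, compute $\SHMt(\data)$ directly, and then invoke the definition of $\SHMtfun(M,\Gamma)$ as a projectively transitive system: since all the modules in the system are canonically isomorphic to one another (by the maps $\Psit_{\data,\data'}$ constructed in \cite{bs3}), computing any single $\SHMt(\data)$ suffices to identify the system up to canonical isomorphism.

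First I would build the closure. Choose an auxiliary surface $F$ with $\pi_0(\partial F)\cong \pi_0(\partial S)$ and $g(F)$ large enough that $R:=S\cup_{\partial}F$ has genus at least two. Gluing $F\times[-1,1]$ onto $M$ via the obvious identification $\partial F\times[-1,1]\to \partial S\times[-1,1]\subset\partial M$ yields the preclosure $M' = R\times[-1,1]$, whose two boundary components $R\times\{\pm1\}$ are \emph{canonically} diffeomorphic. Close up $M'$ by the identity diffeomorphism $R\times\{1\}\to R\times\{-1\}$, producing $Y = R\times S^1$ with distinguished surface $R\times\{pt\}$. Pick any nonseparating embedded curve $\eta\subset R\times\{pt\}$ and extend the above data to a marked closure $\data=(Y,R,r,m,\eta)$, with $r$ a tubular neighborhood of $R\times\{pt\}$ and $m$ the evident embedding of $M$.

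Next I would invoke Kronheimer--Mrowka's explicit computation of the top monopole Floer homology of $\Sigma\times S^1$ with the twisted coefficient system determined by a nonseparating curve in the fiber. By \cite{km4} (this is precisely the computation that underlies their definition of $\SHM$ and the proof of its nontriviality), one has
\[
\HMtoc\bigl(R\times S^1\,\big|\,R\times\{pt\};\,\Gamma_\eta\bigr)\;\cong\;\RR.
\]
The point of the twisting by $\eta$ is that it collapses the contributions from the multiple $\Sc$ structures in the top relative grading (which otherwise would be distinguished by $H^1(R)$) to a single copy of the Novikov ring. Hence $\SHMt(\data)\cong \RR$.

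Finally, by definition $\SHMtfun(M,\Gamma)$ is the projectively transitive system $(\{\data\},\{\SHMt(\data)\},\{[\Psit_{\data,\data'}]\})$, in which each $\Psit_{\data,\data'}$ is an isomorphism. Since one module in the system is isomorphic to $\RR$, every module in the system is, and the canonical isomorphisms between them are well-defined up to multiplication by units; this is exactly the statement that $\SHMtfun(M,\Gamma)\cong \RR$ as projectively transitive systems. The only substantive input is the monopole Floer computation on $R\times S^1$, which is a black box from \cite{km4}; the rest of the proof is simply setting up the right closure so that this computation applies.
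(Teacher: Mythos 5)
Your proof is correct and follows the same route as the paper: construct the product marked closure $(R\times S^1, R, r, m, \eta)$ with $R = S\cup F$ and invoke the Kronheimer--Mrowka computation $\HMtoc(R\times S^1 \mid R; \Gamma_\eta)\cong\RR$, which is \cite[Lemma 4.7]{km4}. The only cosmetic difference is that your heuristic for why the twisting gives $\RR$ ("collapsing $\Sc$ structures") is not quite the mechanism -- even untwisted one gets a single copy of the base ring here -- but this does not affect the argument, since you (correctly) treat the computation as a black box.
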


\begin{proof}
Let $F$ be an auxiliary surface for $(M,\Gamma)$ with $g(F)\geq 2$. Thinking of $(M,\Gamma)$ as  obtained from  $(S\times[-1,1],\partial S\times\{0\})$ by rounding corners, we can  form a preclosure of $(M,\Gamma)$ by gluing $F\times[-1,1]$ to $S\times[-1,1]$ according to a map \[h:\partial F\times[-1,1]\to\partial S\times[-1,1]\] of the form $f\times{\rm id}$ for some diffeomorphism $f:\partial F\to \partial S$. This preclosure is then a product $M'=(S\cup F)\times[-1,1]$. To form a marked closure, we take $R=S\cup F$ and  glue $R\times[-1,1]$ to $M'$ by the ``identity" maps 
\[
R\times\{\pm 1\}\to S\times\{\mp 1\}.
\]
An oriented, nonseparating curve $\eta\subset S\cup F$ gives a marked closure \[\data = ((S \cup F)\times S^1, (S\cup F),r,m,\eta).\] Here, we are thinking of $S^1$ as the union of two copies of $[-1,1]$, and $r$ and $m$ as the obvious embeddings. Therefore, \[\SHMt(\data) := \HMtoc((S\cup F)\times S^1|(S\cup F);\Gamma_\eta)\cong \RR,\] by \cite[Lemma 4.7]{km4}. The proposition follows.
\end{proof}

\begin{remark}
\label{rmk:novfield}
In Section \ref{sec:bypasstri}, we will work over the \emph{Novikov field} \[\RR/2\RR:= \RR\otimes_{\zz}\Z/2\Z\] in order to use the surgery exact triangle in monopole Floer homology, which has not been established in characteristic 0. This might alarm the reader familiar with \cite{km4}, where, when working with local coefficients, Kronheimer and Mrowka require that $\RR$ (which is not necessarily the Novikov ring in their case) have no $\zz$-torsion. This condition is imposed to ensure that certain $\mathrm{Tor}$ terms arising in the K{\"u}nneth theorem vanish.  It turns out, however, that we are safe when working in characteristic 2 and using the Novikov field $\RR/2\RR$, as these $\mathrm{Tor}$ terms still vanish; see \cite[Section 2.2]{sivek} for details. 
\end{remark}


\subsection{The monopole Floer contact invariant}
\label{ssec:hm-contact}

In \cite{km}, Kronheimer and Mrowka defined an invariant  of contact structures on closed 3-manifolds which assigns to a closed contact manifold $(Y,\xi)$ a class \[\psi(Y,\xi)\in \HMtoc(-Y,\spc_\xi) \] which depends only on the isotopy class of the contact structure $\xi$. We will use the same notation for the version of this invariant  in monopole Floer homology with coefficients in a local system.  Below, we review three important properties of this  invariant. 

The first is that the invariant vanishes for overtwisted contact structures.

\begin{theorem}[\cite{km}]
\label{thm:hm-ot}
If $(Y,\xi)$ is overtwisted, then $\psi(Y,\xi) = 0$.
\end{theorem}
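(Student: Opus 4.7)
The plan is to establish vanishing by realizing $(Y,\xi)$ as the concave end of an exact symplectic cobordism whose positive end has zero contact class for geometric reasons, then appealing to the functoriality of $\psi$ under such cobordisms recorded in Theorem \ref{thm:psi-weakly-fillable}: if $W:(Y_-,\xi_-)\to (Y_+,\xi_+)$ is exact symplectic, the induced map on $\HMtoc$ carries $\psi(Y_+,\xi_+)$ to $\psi(Y_-,\xi_-)$. So it suffices to exhibit $(Y,\xi)$ as the negative end of a Weinstein cobordism whose positive end has vanishing $\psi$.

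To build such a cobordism, I would first fix an embedded overtwisted disk $\Delta\subset(Y,\xi)$. Inside a standard contact neighborhood of $\Delta$ one can Legendrian-realize a curve $L$ parallel to $\partial\Delta$; the overtwisted condition on $\Delta$ allows one to arrange $\mathrm{tb}(L)=0$ relative to the smooth disk provided by $\Delta$ itself. Performing Legendrian surgery on $L$ produces a contact manifold $(Y',\xi')$ together with a Weinstein (in particular exact symplectic) cobordism $W:(Y,\xi)\to(Y',\xi')$ obtained by attaching a single $2$-handle along $L$ with framing $\mathrm{tb}(L)-1=-1$. Capping off the core of this $2$-handle with the overtwisted disk $\Delta\subset Y=\partial_-W$ produces a smoothly embedded sphere $S\subset W$ whose self-intersection, computed from $\mathrm{tb}(L)=0$ and the attaching framing, is $+1$.

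The remaining step is to conclude that the cobordism map induced by $W$ on the relevant $\Sc$ summand vanishes. For this I would invoke the standard principle in Seiberg-Witten / monopole Floer theory that a cobordism containing a smoothly embedded sphere of self-intersection $+1$ induces the zero map on $\HMtoc$ in every $\Sc$ component whose first Chern class pairs nontrivially with the sphere class $[S]$, via a reducible-solution argument (the monopole Floer analogue of the blow-up formula). Combined with functoriality, this forces $\psi(Y,\xi)=0$ provided $\langle c_1(\spc_{\xi'}),[S]\rangle\neq 0$, which one checks from the local model of the Weinstein handle attachment.

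The hard part will be the last step: one must identify the $\Sc$ structure $\spc_{\xi'}$ carrying $\psi(Y',\xi')$, evaluate $\langle c_1(\spc_{\xi'}),[S]\rangle$ explicitly, and verify that the embedded $+1$-sphere truly kills the Seiberg-Witten cobordism map with the local coefficients used to define $\psi$. A secondary technical point is that $\Delta$ is only a topological disk (its boundary is Legendrian, not smoothly standard), so one needs to round corners and verify the self-intersection calculation for the glued sphere $S$ with care. Once these pieces are in place, the vanishing follows by chasing $\psi(Y',\xi')$ through the cobordism map induced by $W$.
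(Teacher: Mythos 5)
The theorem you are proving is stated in the paper only as a citation to Kronheimer and Mrowka \cite{km}; the paper does not reproduce a proof of it. The paper does give an independent argument for the \emph{sutured} analogue in Lemma~\ref{lem:ot2}, where vanishing follows from embedding a standard overtwisted-disk neighborhood as a sutured contact submanifold, presenting it via a partial open book whose associated contact $2$-handle attachment produces a sutured manifold with $\SHMtfun = 0$ (a surgery curve compresses the surface $R$ in every closure, and the adjunction inequality kills the Floer groups). That route is quite different in spirit from what you propose.

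Your argument has a genuine gap, and the sphere calculation at its heart does not go the way you claim. The Weinstein $2$-handle is attached along $L = \partial\Delta$ with framing $tb(L)-1=-1$ measured against the Seifert framing supplied by $\Delta$, so the closed sphere $S = \Delta\cup(\mathrm{core})$ has $[S]^2 = -1$ in $W$, not $+1$; and when Theorem~\ref{thm:ht-functoriality} recasts $W$ as a cobordism from $-Y'$ to $-Y$ the underlying oriented $4$-manifold is unchanged, so no sign flip rescues this. More decisively, the $\Sc$ structure that carries the contact class is the canonical Weinstein one $\spc_W$, and it pairs with $[S]$ by the rotation number, $\langle c_1(\spc_W),[S]\rangle = r(\partial\Delta)$. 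The Wu relation $\langle c_1,[S]\rangle \equiv [S]^2 \pmod 2$ forces this to be odd, and indeed the boundary of a standard overtwisted disk has $r=\pm 1$. But an exceptional $-1$-sphere with $c_1\cdot[S]=\pm 1$ is exactly the case in which the blow-up formula gives a \emph{nonzero} cobordism map: here $W\cong (Y\times I)\#\overline{\mathbb{C}P^2}$ (you are blowing down an unknot, so $Y'\cong Y$), and $\HMtoc(W)$ in that $\Sc$ component is essentially the identity. Chasing $\psi$ through your cobordism therefore yields only the tautological relation $\psi(Y,\xi)\doteq\psi(Y',\xi')$, not vanishing. The adjunction-type vanishing for spheres of nonnegative square lives in the world of \emph{closed} $4$-manifolds with $b^+>1$; your $W$ is negative definite with $b^+=0$ and its sphere has square $-1$, so that result does not apply. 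In short, the ``standard principle'' you invoke in the last step is not available in the form you need, and the concrete numerology goes the wrong way.
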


Next, recall that a \emph{weak symplectic filling} of a closed contact 3-manifold $(Y,\xi)$ is a symplectic manifold $(X,\omega)$ such that $Y=\partial X$ and $\omega|_\xi > 0$. 

\begin{theorem}[\cite{km,kmosz}]
\label{thm:psi-weakly-fillable}
If $(Y,\xi)$ has a weak symplectic filling $(X,\omega)$, then  \[\psi(Y,\xi)\in\HMtoc(-Y, \spc_\xi; \Gamma_{-\eta})\] is a primitive  class (in particular, nonzero) for any local system $\Gamma_{-\eta}$ with fiber $\RR$ given by a curve $\eta\subset Y$ which is Poincar{\'e} dual to  $[\omega]\in H^2(Y;\mathbb{R})$.
\end{theorem}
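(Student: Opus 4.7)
The plan is to reduce the weak filling case to the closed symplectic case and invoke Taubes's theorem, using the local coefficient system $\Gamma_{-\eta}$ to absorb the obstruction to turning a weak filling into a strong one. The strategy follows the Kronheimer--Mrowka--Ozsv\'ath--Szab\'o approach in \cite{kmosz}, built on Eliashberg's capping construction.

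First I would recall the definition of $\psi(Y,\xi)$: by attaching a symplectic collar $[0,\infty) \times Y$ to $(X,\omega)$ with the appropriate contact-type completion and perturbing the Seiberg--Witten equations by a suitable self-dual $2$-form built from $\omega$, one obtains a relative invariant, and $\psi(Y,\xi)$ is (by construction) the image of the generator under the resulting cobordism-type map. Equivalently, one can compute $\psi(Y,\xi)$ as the image of $1$ under the map \[\HMtoc(S^3) \longrightarrow \HMtoc(-Y,\spc_\xi;\Gamma_{-\eta})\] induced by a symplectic cap, whenever such a cap exists.

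The technical core is Eliashberg's theorem: any weak filling $(X,\omega)$ of $(Y,\xi)$ can be modified in a collar of $Y$ to a symplectic form $\omega'$ making $(X,\omega')$ a \emph{strong} filling, at the cost of changing $[\omega] \in H^2(X;\mathbb{R})$ by a class restricting to $[\omega|_Y]$ on $Y$. I would then apply a capping result (e.g. Eliashberg--Etnyre--Honda) to glue on a strong concave cap $(X_c,\omega_c)$, producing a closed symplectic $4$-manifold $(W,\Omega) = (X,\omega') \cup_Y (X_c,\omega_c)$. By Taubes's theorem, the Seiberg--Witten invariant of $(W,\spc_\Omega)$ equals $\pm 1$.

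Next, I would relate this closed invariant to $\psi(Y,\xi)$ by factoring the cobordism $W \setminus (\text{ball})$ from $S^3$ to itself through $-Y$: the composition \[\HMtoc(S^3) \xrightarrow{X_c} \HMtoc(-Y,\spc_\xi;\Gamma_{-\eta}) \xrightarrow{X} \HMtoc(S^3)\] computes (up to sign) the Seiberg--Witten invariant of $(W,\spc_\Omega)$, with the local system $\Gamma_{-\eta}$ appearing precisely because the modification of $\omega$ to $\omega'$ picks up the class $[\omega|_Y]$, whose Poincar\'e dual is $\eta$. Since this composition is nonzero, the class $\psi(Y,\xi) = (X_c)_*(1)$ cannot be contained in any proper submodule of the cyclic summand it generates; combined with the fact that $\RR$ is a local ring so that nonzero images detect primitivity, this forces $\psi(Y,\xi)$ to be a primitive element of $\HMtoc(-Y,\spc_\xi;\Gamma_{-\eta})$.

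The main obstacle is the bookkeeping in the weak-to-strong deformation: one must track how the perturbation $2$-form used in the Floer-theoretic definition of $\psi$ interacts with Eliashberg's collar modification and how the resulting cohomology shift exactly matches the local system $\Gamma_{-\eta}$ determined by $\eta = \mathrm{PD}[\omega|_Y]$. Once this matching is pinned down, primitivity is a formal consequence of Taubes's nonvanishing, because the target $\HMtoc(S^3)\cong \RR$ is a rank-one free module over the Novikov ring and any element mapping onto a unit must be primitive.
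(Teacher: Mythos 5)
The paper does not actually prove Theorem~\ref{thm:psi-weakly-fillable}; it is cited as a result of Kronheimer--Mrowka and Kronheimer--Mrowka--Ozsv{\'a}th--Szab{\'o}. So there is no in-paper proof to compare against, and I will evaluate your argument on its own terms relative to those sources.

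Your overall strategy---cap the filling, apply Taubes's nonvanishing, and pull back through the cap cobordism map to get primitivity---is the right shape of argument. However, there is a genuine error at the technical core. You assert that ``any weak filling $(X,\omega)$ of $(Y,\xi)$ can be modified in a collar of $Y$ to a symplectic form $\omega'$ making $(X,\omega')$ a \emph{strong} filling.'' This is false in general. Eliashberg's deformation turns a weak filling into a strong one only when the restriction $[\omega|_Y]$ vanishes in $H^2(Y;\R)$ (for instance when $Y$ is a rational homology sphere). When $[\omega|_Y] \neq 0$, a strong filling is impossible, because strong implies $\omega'$ is exact in a collar of $Y$ while $[\omega'|_Y] = [\omega|_Y]\neq 0$. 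The whole point of the local system $\Gamma_{-\eta}$ with $\eta = \mathrm{PD}[\omega|_Y]$ is to handle exactly this obstruction; your framing (``absorb the obstruction to turning a weak filling into a strong one'') suggests the obstruction is being removed, but it is not. Correspondingly, the cap you glue on cannot be a ``strong concave cap''---its symplectic form must agree with $\omega$ along $Y$, so it is a cap compatible with the \emph{weak} boundary data (this is the Eliashberg/Etnyre capping result for weak fillings). If you attempted to glue a strong concave cap onto an honestly weak filling, the symplectic forms would not match along $Y$ and the closed manifold would not exist.

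A secondary issue: the claim that ``$\RR$ is a local ring so that nonzero images detect primitivity'' is wrong twice over. The Novikov ring over $\Z$ used here is not local (for example $2$ and $t - 2$ are both non-units whose sum $t$ is a unit), and even over a local ring a nonzero image under a map to $\RR$ does not detect primitivity (take $M = \RR$, $x = 2$, $F = \mathrm{id}$). What actually works---and requires no locality hypothesis---is that a map $F: M \to \RR$ with $F(\psi) \in \RR^\times$ gives a splitting $M \cong \RR \psi \oplus \ker F$, exhibiting $\psi$ as a generator of a free rank-one direct summand, hence primitive. You should replace the local-ring reasoning with this splitting argument; once the cap and the matching of $\eta$ with $[\omega|_Y]$ are fixed as above, that argument does close the proof.
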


Finally, suppose $(Y_-,\xi_-)$ and $(Y_+,\xi_+)$ are closed contact $3$-manifolds and recall that an \emph{exact symplectic cobordism} from $(Y_-,\xi_-)$ to $(Y_+,\xi_+)$ is an exact symplectic manifold $(X,\omega=d\lambda)$ with boundary $\partial X = Y_+ \sqcup- Y_-$ for which the restrictions $\lambda|_{Y_\pm}$ are contact forms for $\xi_\pm$. 

\begin{theorem}[\cite{ht}]
\label{thm:ht-functoriality}
Suppose $(X,\omega)$ is an exact symplectic cobordism from $(Y_-,\xi_-)$ to $(Y_+,\xi_+)$.  Then, viewing $X$  as a cobordism from $-Y_+$ to $-Y_-$, the induced  map \[ \HMtoc(X): \HMtoc(-Y_+) \to \HMtoc(-Y_-) \] 
sends $\psi(Y_+,\xi_+)$ to $\pm\psi(Y_-,\xi_-)$.
\end{theorem}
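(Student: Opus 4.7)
The plan is to work through Taubes's identification of monopole Floer homology with embedded contact homology (ECH). Under this identification, for a closed contact $3$-manifold $(Y,\xi)$, the contact class $\psi(Y,\xi)\in\HMtoc(-Y,\spc_\xi)$ corresponds to the ECH contact element, represented on the chain level by the empty orbit set $\emptyset\in ECC(Y,\xi)$. It therefore suffices to prove the analogous statement in ECH: that an exact symplectic cobordism $(X,d\lambda)$ from $(Y_-,\xi_-)$ to $(Y_+,\xi_+)$ induces a cobordism map that sends $[\emptyset]$ to $\pm[\emptyset]$.

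First I would complete $X$ to a manifold with cylindrical ends $\hat X$ by attaching the positive and negative symplectizations of $Y_{\pm}$, and choose a cobordism-admissible almost complex structure. The SW cobordism map for $X$ has a description in the large-perturbation limit of Taubes's perturbed Seiberg--Witten equations on $\hat X$: the chain-level contributions correspond to (possibly broken) $J$-holomorphic currents in $\hat X$ asymptotic to Reeb orbit sets at the two ends. The coefficient of $[\emptyset]$ in the image of $[\emptyset]$ is thus a signed count of $J$-holomorphic currents in $\hat X$ with empty positive and negative asymptotic ends. Exactness is the crucial input here: any such current $C$ would be a closed pseudoholomorphic surface, and
\[
0 < \int_C \omega = \int_C d\lambda = 0
\]
by Stokes's theorem, a contradiction unless $C$ is the empty (constant) current. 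Hence the only contribution is the empty curve itself, counted with sign $\pm 1$, giving the claimed identity $\HMtoc(X)(\psi(Y_+,\xi_+))=\pm\psi(Y_-,\xi_-)$.

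The main obstacle is the rigorous construction of the ECH cobordism map and the verification that it matches the SW cobordism map $\HMtoc(X)$ under Taubes's isomorphism. This requires substantial technical input: transversality for the perturbed SW equations on the noncompact completed cobordism, compactness/gluing analysis for the corresponding $J$-holomorphic currents (including careful treatment of multiply covered components), and a comparison of orientation conventions on both sides, which is the source of the overall sign ambiguity in the statement. All of this machinery is developed in the work of Hutchings and Taubes, and once it is assumed, the exactness computation above is essentially the only contact-geometric input needed to conclude the theorem.
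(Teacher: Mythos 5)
This theorem is cited directly from Hutchings and Taubes \cite{ht}; the paper itself offers no proof, only a remark after the statement that the same argument extends from coefficients in $\zt$ to $\Z$ and to local systems. So there is no in-paper proof to compare against, and the real question is whether your sketch captures what \cite{ht} actually does.

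It does, at a high level. Taubes's isomorphism identifies $\HMtoc(-Y,\spc_\xi)$ with the embedded contact homology of $(Y,\xi)$, and under it the Kronheimer--Mrowka contact class corresponds to the ECH contact invariant, represented at chain level by the empty orbit set. Your exactness/Stokes argument---a nontrivial closed $J$-holomorphic current in the completed cobordism has positive $\omega$-area but zero $d\lambda$-integral---is exactly the contact-geometric input that \cite{ht} uses. One imprecision worth flagging: the ECH cobordism map is not actually defined by counting holomorphic currents (transversality can fail badly for multiply covered components); it is defined on the Seiberg--Witten side and then proved to satisfy a \emph{holomorphic curves axiom}, which says only that a nonzero chain-level matrix element from one orbit set to another must be supported by a possibly broken $J$-holomorphic current with those asymptotics. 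Your Stokes argument must therefore be applied level-by-level to rule out nontrivial \emph{broken} currents from $\emptyset$ to $\emptyset$, since a single level in $\widehat{X}$ with empty ends is only one piece of a potential broken configuration; equivalently, one uses that the symplectic action filtration is respected and that $\emptyset$ has minimal action $0$. With that clarification your sketch matches the argument in \cite{ht}, and the substantial analytical machinery you defer to is precisely what that reference develops.
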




We will frequently apply this theorem via the following corollary.

\begin{corollary} 
\label{cor:contactplusone}Suppose $(Y',\xi')$ is the result of contact $(+1)$-surgery on a Legendrian knot in $(Y,\xi)$ and let $W$ be the corresponding 2-handle cobordism from $Y$ to $Y'$. Then, the map
\[\HMtoc(-W): \HMto(-Y) \to \HMto(-Y') \]
 sends $\psi(Y,\xi)$ to $\pm \psi(Y,\xi')$.  
\end{corollary}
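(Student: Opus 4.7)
The strategy is to exhibit $-W$ as (the underlying 4-manifold of) an exact symplectic cobordism from $(Y',\xi')$ to $(Y,\xi)$ and then invoke Theorem \ref{thm:ht-functoriality}. The starting point is the Ding--Geiges cancellation lemma, which says that contact $(+1)$-surgery along a Legendrian knot $L \subset (Y,\xi)$ is inverted by contact $(-1)$-surgery along a suitable Legendrian $L' \subset (Y',\xi')$, namely the Legendrian realization of the dual core in the surgery solid torus. In particular, contact $(-1)$-surgery on $L'$ recovers $(Y,\xi)$, and the associated smooth 2-handle cobordism $V: Y' \to Y$ is the same underlying smooth 4-manifold as $W$, but with the roles of the incoming and outgoing boundary swapped.

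Next I would invoke the Weinstein construction, as refined by Eliashberg, which asserts that the trace $V$ of contact $(-1)$-surgery (a.k.a.\ Legendrian surgery) on $L' \subset (Y',\xi')$ admits a Stein structure, and in particular an exact symplectic structure whose Liouville form restricts to contact forms for $\xi'$ on $Y'$ and $\xi$ on $Y$ at the two ends. Thus $V$ is an exact symplectic cobordism from $(Y',\xi')$ to $(Y,\xi)$ in precisely the sense of Theorem \ref{thm:ht-functoriality}.

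The one bit of bookkeeping I would then do is identify the oriented cobordism $V$ with $-W$. The general principle is that reversing the direction of a cobordism reverses its orientation as a 4-manifold: since $W$ has oriented boundary $-Y \sqcup Y'$ as a cobordism $Y \to Y'$, the same smooth 4-manifold viewed in the opposite direction as $V : Y' \to Y$ has boundary $-Y' \sqcup Y$ and hence the opposite orientation, i.e.\ $V = -W$. Theorem \ref{thm:ht-functoriality} applied to $V$, viewed as a cobordism from $-Y$ to $-Y'$, then yields that the induced map $\HMtoc(-W) : \HMto(-Y) \to \HMto(-Y')$ sends $\psi(Y,\xi)$ to $\pm\psi(Y',\xi')$. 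I expect the only delicate point to be this orientation/direction bookkeeping; the symplectic-geometry ingredients (Ding--Geiges cancellation and the Weinstein--Eliashberg handle attachment) are classical and already encoded in the hypotheses of Theorem \ref{thm:ht-functoriality}.
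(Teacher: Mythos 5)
Your proposal matches the paper's proof essentially verbatim: both invoke the Ding--Geiges cancellation to present $(Y,\xi)$ as the result of contact $(-1)$-surgery on a Legendrian knot in $(Y',\xi')$, realize the trace of that surgery as a Weinstein (hence exact symplectic) cobordism from $(Y',\xi')$ to $(Y,\xi)$, apply Theorem~\ref{thm:ht-functoriality}, and identify this cobordism with $-W$. The only cosmetic difference is in the description of the cancelling knot --- the paper calls it a parallel copy of $K$ in the surgery solid torus, while you describe it as a Legendrian realization of the dual core --- but these are isotopic Legendrian knots, so the argument is the same.
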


\begin{proof}
If $(Y',\xi')$ is the result of contact $(+1)$-surgery on $K\subset (Y,\xi)$, then $(Y,\xi)$ is the result of contact $(-1)$-surgery on a parallel knot $K'\subset (Y',\xi')$. Let $X$ be the Weinstein 2-handle cobordism corresponding to the latter surgery. By Theorem \ref{thm:ht-functoriality}, the map \[\HMtoc(X): \HMto(-Y) \to \HMto(-Y') \]
 sends $\psi(Y,\xi)$ to $\pm \psi(Y,\xi')$.  But, as a cobordism from  $-Y$ to $-Y'$, $X$ is isomorphic to $-W$.
 \end{proof}

\begin{remark} In \cite{ht}, Theorem \ref{thm:ht-functoriality} is  stated  for monopole Floer homology with coefficients in $\zt$. However, the theorem also holds over $\zz$ and with local coefficients, up to multiplication by a unit in both cases (the same proof extends to these settings without modification). 
\end{remark}


\subsection{Convex surfaces and contact manifolds with boundary}
\label{ssec:convex}

Here, we record some  facts about characteristic foliations and convex surfaces, largely in order to standardize vocabulary. We will assume our reader is  familiar with most of this material. For more comprehensive treatments, see \cite{giroux, giroux3, honda2}.

Suppose $S$ is a smooth surface and $\mathfrak{F}$ is a singular foliation of $S$. An embedded multicurve $\Gamma\subset S$  is said to \emph{divide} $\mathfrak{F}$ if:
\begin{enumerate}
\item $\Gamma$ is transverse to the leaves of $\mathfrak{F}$,
\item $S\ssm \Gamma$ is a disjoint union of \emph{positive} and \emph{negative} regions $S_+\sqcup S_-$,
\item there is a volume form $\omega$ and vector field $w$ on $S$ such that
\begin{enumerate}
\item $w$ directs $\mathfrak{F}$,
\item $w$ points transversely out of $S_+$ along $\Gamma$,
\item $\pm\mathcal{L}_w(\omega)>0$ on $S_{\pm}$.
\end{enumerate}
\end{enumerate}
Given an embedded surface $S\subset (M,\xi)$,  the \emph{characteristic foliation} $S_\xi$ is the singular foliation of $S$ obtained by integrating the vector field $TS\cap \xi$. Giroux showed in \cite{giroux} that $S_\xi$ determines $\xi$ in a neighborhood of $S$, up to contactomorphism fixing $S$. 


A \emph{contact vector field}  is one whose flow preserves $\xi$. An embedded surface $S\subset M$ is \emph{convex} if there exists a contact vector field transverse to $S$. Given a contact vector field $v$ transverse to $S$, the \emph{dividing set} on $S$ associated to $v$ is the multicurve  \[\Gamma=\{p\in S\,|\,v_p\in \xi_p\}.\] This multicurve divides  $S_\xi$ in the sense above. In particular,  we orient $\Gamma$ so that \[\partial \overline{S}_+ = -\partial \overline{S}_- = -\Gamma.\] 
Conversely,  any multicurve which divides $S_\xi$ is the dividing set of some contact vector field (see \cite[Theorem 4.8.5]{geiges}). The space of such multicurves  is contractible (see \cite{massot2}); in particular, the isotopy class of $\Gamma$ is independent of $v$.

A contact structure on $S\times \R$ is  called \emph{vertically invariant}  if $\partial_t$ is a contact vector field. A contact vector field $v$ transverse to $S\subset (M,\xi)$ defines (after cutting off $v$ away from $S$ using a Hamiltonian) a tubular  neighborhood $S\times\R$ of $S=S\times\{0\}$ in which $v$ is identified with $\partial_t$. We will refer to such a neighborhood  as a  \emph{vertically invariant neighborhood} of $S$.

Giroux's Flexibility Theorem below expresses the idea that the crucial information about a contact structure in the neighborhood of a convex surface $S$ is encoded by the dividing set.


\begin{theorem}[\cite{giroux}]
\label{thm:flexibility}
Suppose $S\subset (M,\xi)$ is a  convex surface with dividing set $\Gamma$ for some contact vector field $v$. Let $\mathfrak{F}$ be a singular foliation divided by $\Gamma$ and let $N$ be any neighborhood of $S$. Then there is an isotopy of embeddings $\varphi_r:S\to N$, $r\in[0,1]$, such that
\begin{enumerate}
\item $\varphi_0$ is the inclusion $S\subset M$,
\item each $\varphi_r(S)$ is transverse to $v$ (hence, convex) with dividing set $\varphi_r(\Gamma)=\Gamma$,
\item the characteristic foliation $(\varphi_1(S))_\xi$ agrees with $\varphi_1(\mathfrak{F})$. 
\end{enumerate}
\end{theorem}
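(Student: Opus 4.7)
The plan is to work inside a vertically invariant tubular neighborhood of $S$ and to realize the target foliation as the characteristic foliation of a graph over $S$.

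\textbf{Step 1 (Reduction to a normal form).} First I would use the contact vector field $v$ transverse to $S$ to produce a tubular neighborhood $U\cong S\times(-\epsilon,\epsilon)\subset N$ on which $v=\partial_t$, so that $\xi$ is vertically invariant on $U$. In this model the contact form can be written as $\alpha=u\,dt+\beta$ with $u\in C^\infty(S)$ and $\beta\in\Omega^1(S)$ both $t$-independent. The dividing set of $S=S\times\{0\}$ for the contact vector field $v$ is recovered as $\Gamma=u^{-1}(0)$, and the characteristic foliation $S_\xi$ is directed by $\ker\beta$.

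\textbf{Step 2 (Graphs and their foliations).} For any sufficiently small function $f\in C^\infty(S)$, the graph $\Sigma_f=\{(p,f(p)):p\in S\}\subset U$ is transverse to $v=\partial_t$, and therefore convex. Its dividing set, viewed via the projection $\pi:\Sigma_f\to S$, is still $\Gamma$, and a direct calculation using $\alpha=u\,dt+\beta$ shows that $\pi$ sends the characteristic foliation $(\Sigma_f)_\xi$ to the foliation on $S$ directed by $\ker(\beta+u\,df)$. Consequently, any isotopy of graph type $\varphi_r(p)=(p,f_r(p))$ with $f_0=0$ automatically satisfies conditions (1) and (2) of the theorem.

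\textbf{Step 3 (Realization by a Moser-type argument).} It remains to find $f_r$ with $\ker(\beta+u\,df_1)$ equal to the distribution directing $\mathfrak{F}$. To do this, I would first connect $\mathfrak{F}_0:=S_\xi=\ker\beta$ to $\mathfrak{F}_1:=\mathfrak{F}$ by a smooth path $\mathfrak{F}_r$ of singular foliations on $S$ all divided by $\Gamma$; such a path exists because, after choosing a defining 1-form $\beta_r$ for $\mathfrak{F}_r$ with $\beta_0=\beta$, the condition of being divided by $\Gamma$ (unpacked via a volume form $\omega$ and directing vector field $w$ with the appropriate signs of $\mathcal{L}_w\omega$) is open and affinely convex, so a straight-line interpolation at the level of defining forms works. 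Given the family $\beta_r$, differentiate the desired relation $\beta+u\,df_r\in\R_{>0}\cdot\beta_r$ in $r$: this gives a linear first-order equation $u\,d\dot f_r\equiv \dot\beta_r\pmod{\beta_r}$ for $\dot f_r$, which away from $\Gamma$ is simply division by $u$ and along $\Gamma$ is regular because $\mathfrak{F}_r$ is transverse to $\Gamma$ (so the normal direction to the leaves along $\Gamma$ is tangent to $\Gamma$, and $\beta_r-\beta$ pairs trivially with that direction). Integrating produces the required family $f_r$, and $\varphi_r(p)=(p,f_r(p))$ is the desired isotopy.

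The main obstacle is the last step: one must verify that the Moser ODE for $\dot f_r$ admits a smooth global solution despite the degeneracy of $u$ along $\Gamma$. This is the content of Giroux's realization lemma, and it works precisely because all the interpolating foliations $\mathfrak{F}_r$ share the dividing set $\Gamma$, so no singularity develops when one solves $u\,d f_r$ for the transverse component of $\beta_r-\beta$ across $\Gamma$.
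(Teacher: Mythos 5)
The paper cites Theorem~\ref{thm:flexibility} directly to \cite{giroux} and gives no proof of its own, so your task amounted to reconstructing Giroux's realization lemma. Steps 1 and 2 are correct (the vertically invariant model $\alpha = u\,dt + \beta$, and the computation that pulling $\alpha$ back to the graph $\Sigma_f$ gives $u\,df + \beta$). But Step 3 contains a genuine gap: the graph ansatz cannot realize an arbitrary $\mathfrak{F}$ divided by $\Gamma$.

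Indeed, if $\beta + u\,df = g\,\beta_1$ with $g>0$ and $\beta_1$ directing $\mathfrak{F}$, then evaluating at a point $p\in\Gamma = u^{-1}(0)$ gives $\beta|_p = g(p)\,\beta_1|_p$, hence $\ker\beta|_p = \ker\beta_1|_p$. But two singular foliations divided by the same $\Gamma$ need not have equal leaf directions along $\Gamma$ --- the dividing condition fixes only transversality to $\Gamma$ and the orientation of crossing, not the angle --- so in general no $f$ exists and the Moser ODE cannot even be started, let alone solved. This is precisely why Giroux's statement allows $\varphi_r$ to move $\Gamma$ within itself (condition (2) is only set-theoretic): one must first apply a ``horizontal'' isotopy $\psi_r$ of $S$ preserving $\Gamma$ and isotopic to the identity, chosen so that $(\psi_1)_*(\ker\beta_1)$ agrees with $\ker\beta$ along $\Gamma$, and only then realize $(\psi_1)_*\mathfrak{F}$ by a graph isotopy. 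Separately, your claim that the set of defining $1$-forms for foliations divided by $\Gamma$ is ``open and affinely convex'' is unjustified; the contractibility of this set is itself a theorem of Giroux, not a consequence of straight-line interpolation of arbitrary defining forms. Only after the horizontal normalization can the defining forms be chosen to agree along $\Gamma$, which is what actually makes the Moser equation regular across $\Gamma$.
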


We will rely heavily on Giroux's Uniqueness Lemma below.

\begin{lemma}[\cite{giroux3}]
\label{lem:uniqueness}
Suppose  $S$ is a surface and $\xi_0$ and $\xi_1$ are two contact structures on $S\times[0,1]$ which induce the same characteristic foliations on $S\times\{0,1\}$. Suppose each $S\times\{t\}$ is convex with respect to both  $\xi_0$ and $\xi_1$, and contains a multicurve $\Gamma_t$ which varies continuously with $t$ and divides the characteristic foliations $(S\times\{t\})_{\xi_0}$ and $(S\times\{t\})_{\xi_1}$. Then $\xi_0$ and $\xi_1$ are isotopic by an isotopy which is stationary on $S\times\{0,1\}$.\end{lemma}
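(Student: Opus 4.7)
The plan is to construct an ambient isotopy of $S \times [0,1]$, stationary on the boundary, carrying $\xi_0$ to $\xi_1$. The strategy combines Giroux's Flexibility Theorem with a Moser-type argument, reducing the global problem to a sequence of local comparisons on thin slabs.

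First, I would discretize the interval. For each $t_0 \in [0,1]$ and each $i \in \{0,1\}$, the assumption that $S \times \{t_0\}$ is convex for $\xi_i$ gives a contact vector field $v_i^{t_0}$ for $\xi_i$ transverse to $S \times \{t_0\}$ with dividing set (isotopic to) $\Gamma_{t_0}$. Its flow produces a vertically invariant neighborhood of $S \times \{t_0\}$ in which $\xi_i$ has the form of a product along the $v_i^{t_0}$-trajectories. Using continuity of $t \mapsto \Gamma_t$ together with compactness of $[0,1]$, I would pick a partition $0 = t_0 < t_1 < \cdots < t_N = 1$ fine enough that on each slab $S \times [t_k, t_{k+1}]$ both $\xi_0$ and $\xi_1$ are contained in such vertically invariant neighborhoods, and each $S \times \{t_k\}$ is convex for both structures with dividing set isotopic to $\Gamma_{t_k}$.

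Next, on each closed slab $S \times [t_k, t_{k+1}]$ I would reduce to the case that the characteristic foliations of $S \times \{t\}$ for $\xi_0$ and $\xi_1$ coincide on the two boundary slices $t = t_k, t_{k+1}$ (these already agree there by hypothesis at $t = 0, 1$, and for interior partition points one applies Giroux's Flexibility Theorem inside the vertically invariant neighborhood to bring $(S\times\{t_k\})_{\xi_1}$ to $(S\times\{t_k\})_{\xi_0}$ via an isotopy supported away from $S \times \{0,1\}$). Once the characteristic foliations match on the boundary slices, the two vertically invariant models of $\xi_0$ and $\xi_1$ on the slab share the same germ on $S \times \{t_k\} \cup S \times \{t_{k+1}\}$; since the germ of a contact structure near a (closed enough) surface is determined by its characteristic foliation, this provides a diffeomorphism of a neighborhood of each boundary slice, isotopic to the identity rel $S \times \{t_k, t_{k+1}\}$, conjugating $\xi_1$ to $\xi_0$ there.

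Having matched germs on both boundary slices of every slab, I would interpolate in the interior of each slab. Consider the path of contact structures $\xi_s = (1-s)\xi_0 + s\xi_1$ (after a small perturbation to make it a genuine path of contact structures, using that both share the same vertically invariant normal form). Apply Gray stability on the slab: solve $\iota_{X_s} d\alpha_s + (d H_s - \dot\alpha_s) = 0$ for a time-dependent vector field $X_s$, whose flow $\phi_s$ will satisfy $\phi_s^*\xi_s = \xi_0$. Crucially, because the germs of $\xi_0$ and $\xi_1$ already agree near $S \times \{t_k, t_{k+1}\}$, the path $\xi_s$ is constant in those collars, forcing $X_s \equiv 0$ there and hence $\phi_s$ to fix pointwise neighborhoods of the boundary slices. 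Piecing the $\phi_s$ on adjacent slabs together gives a global isotopy $\phi_s$ of $S \times [0,1]$ with $\phi_0 = \mathrm{id}$, $\phi_s|_{S \times \{0,1\}} = \mathrm{id}$, and $\phi_1^*\xi_1 = \xi_0$.

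The main obstacle I anticipate is the \emph{local-to-global} step at the partition points $t_k$: producing compatible boundary germs for $\xi_0$ and $\xi_1$ on every slab simultaneously requires the Flexibility-Theorem isotopies from neighboring slabs to be consistent with one another, and the subsequent Gray-stability vector fields must vanish to sufficiently high order along $S \times \{t_k\}$ so that the piecewise-constructed $\phi_s$ is smooth. Handling this carefully — essentially by arranging each local isotopy to be the identity on a neighborhood of the partition slices — is where the technical core of Giroux's argument lies.
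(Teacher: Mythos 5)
The paper does not prove this lemma; it cites it from \cite{giroux3} and, in the remark immediately following, points to \cite[Lemma 4.9.2]{geiges} for the strengthening that the isotopy can be made stationary on $S\times\{0,1\}$. So there is no proof in the paper to compare against, and your proposal must stand on its own.

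Your overall framework---discretize $[0,1]$ by compactness into slabs on which both contact structures admit vertically invariant normal forms, reduce to matching data on the partition slices, and then apply a Moser/Gray-stability argument on each slab---is indeed the right skeleton, and matches the spirit of the treatment in \cite{geiges}. However, there is a genuine gap at the core of the interpolation step. You write that after matching boundary germs one should take the path $\xi_s = (1-s)\xi_0 + s\xi_1$ and perturb it slightly to a path of contact structures. This does not work in general: the contact condition in the vertically invariant model $\ker(\beta + u\,dt)$ is the positivity of $u\,d\beta + \beta\wedge du$, which is \emph{not} convex in the pair $(\beta,u)$, and the linear interpolation of two contact structures with the same dividing set need not be contact at any intermediate time, nor is a ``small perturbation'' guaranteed to rescue it. The actual content of Giroux's uniqueness lemma is precisely the construction of a genuine path of ($\Gamma_t$-divided, vertically invariant) contact structures from $\xi_0$ to $\xi_1$ on each slab; this uses the flexibility theorem and the connectedness (in fact contractibility) of the space of singular foliations on $S$ divided by a fixed $\Gamma$, not convex combination.

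A second, smaller imprecision: you invoke that ``the germ of a contact structure near a surface is determined by its characteristic foliation'' to match germs on the partition slices. What Giroux's result actually gives is that two contact structures inducing the same characteristic foliation on $S$ agree in a neighborhood of $S$ \emph{up to an isotopy fixing $S$}, not as literal germs. This forces you to compose your local constructions with additional isotopies, and the ``gluing the local $\phi_s$'' step needs to account for them; otherwise the pieces do not fit together smoothly at the partition slices. These two points---producing the path of contact structures and controlling the boundary behavior of the compensating isotopies---are where the technical weight of the argument lies, and the proposal as written elides both.
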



\begin{remark}
The isotopy Giroux constructs is not necessarily stationary on $S\times\{0,1\}$. One can arrange this, however (see \cite[Lemma 4.9.2]{geiges}). 
\end{remark}


Suppose $M$ is a manifold with boundary and  $\mathfrak{F}$ is a singular foliation of $\partial M$. Let $\rm{Cont}(M,\mathfrak{F})$ be the set of contact structures on $M$ for which $\partial M$ is convex with $(\partial M)_\xi= \mathfrak{F}$. The  following  is due to Honda \cite[Proposition 4.2]{honda2}.

\begin{proposition}
\label{prop:differentfoliations} If $\mathfrak{F}_0$ and $\mathfrak{F}_1$ are two characteristic foliations of $\partial M$ divided by the same multicurve $\Gamma$, then there is a canonical bijection  $f_{01}:\pi_0({\rm Cont}(M, \mathfrak{F}_0)) \rightarrow \pi_0({\rm Cont}(M, \mathfrak{F}_1))$.
\end{proposition}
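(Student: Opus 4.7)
The plan is to define $f_{01}$ via a collar-attachment construction and then use Giroux's Uniqueness Lemma to verify it descends to isotopy classes and is a canonical bijection.

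First I would construct the map on the level of representatives. Given $\xi_0 \in \mathrm{Cont}(M,\mathfrak{F}_0)$, pick a contact vector field $v$ transverse to $\partial M$ with dividing set isotopic to $\Gamma$ (which exists by our discussion of convex surfaces). Extend $\xi_0$ outward by the corresponding vertically invariant contact structure on a collar $C = \partial M \times [0,1]$ to get a contact manifold $(M \cup C, \tilde{\xi}_0)$ in which every slice $\partial M \times \{t\}$ is convex with dividing set $\Gamma$. Now apply Theorem \ref{thm:flexibility} inside $C$ to the slice $\partial M \times \{1\}$: since $\mathfrak{F}_1$ is divided by $\Gamma$, there is an isotopy $\varphi_r$ of $\partial M \times \{1\}$ inside $C$, through convex surfaces all divided by $\Gamma$, such that $\varphi_1$ carries the characteristic foliation to $\mathfrak{F}_1$. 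Let $M'$ be the region bounded by $\varphi_1(\partial M \times \{1\})$ in $M \cup C$; the restriction $\tilde{\xi}_0|_{M'}$, transported back via a diffeomorphism $M' \to M$ that is the identity on the original copy of $M$, defines the representative $\xi_1 \in \mathrm{Cont}(M,\mathfrak{F}_1)$ of $f_{01}([\xi_0])$.

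Next I would show that $f_{01}$ is well-defined on $\pi_0$. The construction depends on the choice of collar, the transverse contact vector field $v$, and the flexibility-theorem isotopy $\varphi_r$. For any two such choices, the resulting contact structures on the thickened piece $\partial M \times [0,1]$ agree with $\mathfrak{F}_0$ on the inner end and with $\mathfrak{F}_1$ on the outer end, and each intermediate slice is convex with dividing set $\Gamma$; Lemma \ref{lem:uniqueness} then produces an isotopy rel boundary between them. Gluing this isotopy to the identity on the inner copy of $M$ shows that different choices yield isotopic $\xi_1$. The same argument, applied to an isotopy between two representatives $\xi_0, \xi_0'$ of $[\xi_0]$, shows the resulting $\xi_1, \xi_1'$ are isotopic.

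Bijectivity is immediate from the symmetric construction: the analogous map $f_{10}$ is an inverse, since stacking the two thickened collars $\partial M \times [0,1]$ yields a vertically invariant neighborhood whose two ends carry $\mathfrak{F}_0$, and Giroux's Uniqueness Lemma identifies it with the trivial collar rel boundary. The same stacking argument proves the cocycle identity $f_{02} = f_{12} \circ f_{01}$ for three foliations divided by $\Gamma$, which is precisely the sense in which the bijection is canonical. Finally, naturality under contactomorphisms of $M$ fixing $\partial M$ pointwise follows by transporting the entire collar construction.

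The main obstacle will be the well-definedness step: one must be vigilant that the various auxiliary choices (the transverse contact vector field, the parameter range of the collar, and the isotopy supplied by Theorem \ref{thm:flexibility}) all feed into the hypotheses of Lemma \ref{lem:uniqueness} correctly, in particular producing a continuously varying family of dividing curves on each intermediate slice. A minor technical point is that Giroux's lemma as quoted gives an isotopy that need not fix $\partial M \times \{0,1\}$ pointwise, but the remark after Lemma \ref{lem:uniqueness} shows one can upgrade it to a stationary isotopy there, which is exactly what is needed to glue to the identity on the inner copy of $M$.
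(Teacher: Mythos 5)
Your proposal follows the same route as the paper's proof: construct a collar contact structure on $\partial M\times[0,1]$ interpolating from $\mathfrak{F}_0$ to $\mathfrak{F}_1$ via Giroux flexibility, define $f_{01}$ by gluing this collar onto $M$ and compressing, and invoke Giroux's Uniqueness Lemma both for well-definedness and for the cocycle identity $f_{02}=f_{12}\circ f_{01}$, which together with $f_{00}=\mathrm{id}$ gives bijectivity. One small correction in phrasing: a surjective diffeomorphism $M'\to M$ with $M\subsetneq M'$ cannot literally be the identity on all of $M$; what is wanted (and what the paper writes) is a map that is the identity outside a collar of $\partial M$ inside $M$ and shifts that collar together with the attached outer layer, and similarly the stacked collar in your bijectivity argument is not vertically invariant but merely isotopic rel boundary to a vertically invariant one by the Uniqueness Lemma.
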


In the above proposition, the term ``canonical" means that $f_{00} = {\rm id}$ and $f_{02} = f_{12}\circ f_{01}$. Moreover, this bijection sends tight contact structures to tight contact structures. We outline  the proof of this proposition  below (our proof is slightly different in form but not in substance from Honda's) as elements of the proof will be useful later on.

\begin{proof}[Proof of Proposition \ref{prop:differentfoliations}]
Suppose $\xi_0$ is a contact structure on $M$ with characteristic foliation $(\partial M)_{\xi_0} = \mathfrak{F}_0$. We claim that there exists a  contact structure $\xi_{01}$ on $\partial M\times[0,1]$ such that: 
\begin{enumerate}
\item the restriction $\xi_{01}|_{\partial M\times\{0\}}=\xi_0|_{\partial M}$,
\item  the characteristic foliation $(\partial M \times \{1\})_{\xi_{01}}=\mathfrak{F}_1$,
\item each $\partial M\times\{t\}$ is convex and $\Gamma\times\{t\}$ divides $(\partial M\times\{t\})_{\xi_{01}}$,
\item $\partial_t$ is a contact vector field near the boundary.
\end{enumerate} 
This is a relatively easy application of Theorem \ref{thm:flexibility}. 

\begin{remark}
\label{rmk:anycontactstructure}For any contact structure $\xi_1$ defined near $\partial M$ with $(\partial M)_{\xi_1}=\mathfrak{F}_1$, we can arrange that $\xi_{01}$ restricts to $\xi_1$ on $\partial M\times\{1\}$. 
\end{remark}

\begin{remark}
\label{rmk:partialagreement}If $\mathfrak{F}_0$ agrees with $\mathfrak{F}_1$ on some open subset $A\subset \partial M$,  then we can take   $\partial_t$ to be a contact vector field  on $A\times[0,1]$. 
\end{remark}

To define $f_{01}$, we  first choose a vertically invariant collar neighborhood $\partial M\times(-\infty,0]$ of $\partial M = \partial M\times\{0\}$ such that $\Gamma$ is the dividing set associated to $\partial_t$.  Let $(M',\xi')$ be the contact manifold formed by gluing $(\partial M\times[0,1],\xi_{01})$ to $(M,\xi_0)$ along $\partial M\times\{0\}$ according to the identity map  and the obvious collars. Let  \[\varphi:M'\rightarrow M\] be the smooth map which is  the identity outside of $\partial M\times (-\infty,1]$ and sends $(x,t)$ to $(x,t-1)$ for $(x,t)\in \partial M\times(-\infty,1]$. We define $f_{01}(\xi_0)$ to be the  contact structure $\xi_1 = \varphi_*(\xi')$. Note that $(\partial M)_{\xi_1} = \mathfrak{F}_1$, as desired. 

\begin{remark} We   say that two contact structures $\xi_0$ and $\xi_1$ on $M$ are \emph{related by flexibility}  if $\xi_1$ is obtained from $\xi_0$ in this way.
\end{remark}

\begin{remark} 
\label{rmk:partialagreementcontact} Note that $\xi_1 = \xi_0$ outside of $\partial M\times[-1,0]$. If $\mathfrak{F}_0=\mathfrak{F}_1$ on some open subset $A\subset \partial M$,  then we can  arrange, per Remark \ref{rmk:partialagreement}, that $\xi_1 = \xi_0$ outside of $(\partial M \ssm A)\times[-1,0]$.  
\end{remark}

Lemma \ref{lem:uniqueness} implies that the contact structure $\xi_{01}$ is unique, up to isotopy stationary on the boundary of $\partial M \times[0,1]$. If follows that $f_{01}(\xi_0)$ is independent of $\xi_{01}$, up to isotopy stationary on $\partial M$. The fact that the space of vertically invariant collars as above  is connected (in fact, contractible) implies that $f_{01}(\xi_0)$ is  independent of  the chosen collar. Finally, it is clear that if $\xi_0$ and $\xi_0'$ are isotopic, then so are $f_{01}(\xi_0)$ and $f_{01}(\xi_0').$ Thus, $f_{01}$ is well-defined as a map from $\pi_0({\rm Cont}(M, \mathfrak{F}_0))$ to $\pi_0({\rm Cont}(M, \mathfrak{F}_1))$. It is clear that $f_{00} = {\rm id}$. The transitivity $f_{02} = f_{12}\circ f_{01}$ is an easy application of Lemma \ref{lem:uniqueness}. Note that these two relations imply that $f_{01}$ is a bijection with inverse $f_{10}$. 
\end{proof}

The convex surfaces considered to this point have been closed. A convex surface in $(M,\xi)$ with \emph{collared Legendrian boundary} is a properly embedded surface $S\subset M$ with Legendrian boundary, equipped with a collar neighborhood $\partial S\times[0,1]$ of $\partial S=\partial S\times\{0\}$ on which  $\xi$ is $[0,1]$-invariant (see \cite{honda2}). In particular, the curves $\partial S\times\{s\}$ are  Legendrian; these curves are called \emph{rulings}. Moreover, there is an even number of Legendrian arcs in $\partial S\times[0,1]$ of the form $\{p\}\times[0,1]$ for  $p\in \partial S$; these are called \emph{divides}.  Note that, for any transverse contact vector field $v$, the  dividing set on $\partial S\times[0,1]$ consists of arcs  parallel to and alternating with the  divides, as  in Figure \ref{fig:collared}.

\begin{figure}[ht]
\labellist
\hair 2pt\tiny

\pinlabel $0$ at 5.9 31
\pinlabel $1$ at 18 40
\pinlabel $-1$ at -4 3.5
\pinlabel $1$ at -2 21.5
\endlabellist
\centering
\includegraphics[width=6.7cm]{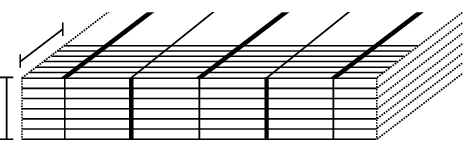}
\caption{The neighborhood $\partial S\times[0,1]_s\times[-1,1]_t\subset S\times[-1,1]_t$ for a convex surface $S$ with collared Legendrian boundary. The arcs in bold belong to the  dividing sets associated to the contact vector fields $\partial_t$ and $\partial_s$; the others are  rulings and  divides. Note that the  divides share endpoints with the dividing sets. }
\label{fig:collared}
\end{figure}

\begin{lemma}
\label{lem:existenceproduct}
Suppose $S$ is a surface with boundary, and let $\Gamma$ be a nonempty collection of oriented, disjoint, properly embedded curves and arcs on $S$ such that $S\ssm \Gamma=S_+\sqcup S_-$ with \[\partial \overline{S}_+ = -\partial \overline{S}_-=\Gamma.\]
Then there exists   a  $[-1,1]$-invariant contact structure on $S\times [-1,1]$ for  which each $S\times\{t\}$ is convex with collared Legendrian boundary $\partial S\times[0,1]\times\{t\}$ and dividing set $\Gamma\times\{t\}$. 
\end{lemma}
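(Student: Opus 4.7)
The plan is to construct a $t$-independent contact form of the shape $\alpha = \beta + u\,dt$ on $S\times[-1,1]$, where $u\in C^\infty(S)$ and $\beta\in\Omega^1(S)$ are chosen appropriately. Because neither factor depends on $t$, the vector field $\partial_t$ will automatically preserve $\xi := \ker\alpha$, so the resulting contact structure is $[-1,1]$-invariant and each slice is convex with $\partial_t$ as transverse contact vector field. A direct computation gives
\[
\alpha \wedge d\alpha \;=\; (u\,d\beta - du\wedge\beta)\wedge dt,
\]
so $\alpha$ is contact iff $\tau := u\,d\beta - du\wedge\beta$ is a positive area form on $S$. On each slice, the characteristic foliation is $\ker(\beta|_{TS})$ and the dividing set associated to $\partial_t$ is exactly $\{u=0\}$. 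The lemma therefore reduces to producing $(u,\beta)$ with $u^{-1}(0)=\Gamma$, $\pm u>0$ on $S_\pm$, with $\tau>0$ everywhere, and with the required behavior near $\partial S$.

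To handle the collared boundary, fix a collar $\partial S\times[0,1]_s$ of $\partial S$ so that $\Gamma\cap(\partial S\times[0,1])=P\times[0,1]$ for the finite set $P\subset\partial S$ of endpoints of arcs in $\Gamma$, and pick a disjoint set $Q\subset\partial S$ of the same size interleaved with $P$ around each component of $\partial S$; the points of $Q$ will provide the divides. In coordinates $(y,s)$ on the collar, set $u=u(y)$ vanishing transversely on $P$ with signs matching $S_\pm$, and $\beta = b(y)\,ds$ with $b$ vanishing transversely on $Q$. Then $\tau = (u b' - u' b)\,dy\wedge ds$, and the interleaving of $P$ and $Q$ allows one to choose $u,b$ so that the Wronskian $ub'-u'b$ is strictly positive on $\partial S$ (for instance, by locally modelling $(u,b)$ on $(\sin\theta,\cos\theta)$). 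These choices give $\alpha(\partial_y) = 0$, making the curves $\partial S\times\{s\}\times\{t\}$ Legendrian rulings, and $\alpha(\partial_s) = b(y)$ vanishing on $Q\times[0,1]\times\{t\}$, producing the required even family of divides.

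To extend globally, smoothly extend $u$ to $S$ with $u^{-1}(0)=\Gamma$, $\pm u > 0$ on $S_\pm$, and $du$ nonzero along $\Gamma$; this is routine since $\Gamma$ is a properly embedded codimension-one submanifold. On a tubular neighborhood $\nu(\Gamma)$, choose $\beta$ so that $\beta|_{T\Gamma}$ is a positive basis for $T\Gamma^*$ oriented by the boundary of $S_+$; this makes $-du\wedge\beta$ a positive area form on $\nu(\Gamma)$. After shrinking $\nu(\Gamma)$, the term $u\,d\beta$ is of order $|u|$ and is dominated by $-du\wedge\beta$, so $\tau>0$ on $\nu(\Gamma)$. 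On each component $C$ of $S_\pm\ssm\nu(\Gamma)$, a compact surface with nonempty boundary, fix a positive area form $\omega$; since $H^2(C;\mathbb{R})=0$, we may choose $\beta|_C$ with $d(\beta|_C)=\pm c\,\omega$ matching the already-constructed $\beta$ on $\partial C$ for any chosen constant $c>0$. Then $u\,d\beta = c|u|\,\omega$ on $C$, so for $c$ sufficiently large this term dominates $-du\wedge\beta$ and $\tau>0$ on $C$ as well.

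The main technical point is assembling these local forms into a single smooth $\beta$ with $\tau>0$ throughout $S$. This is handled by a partition of unity subordinate to the cover consisting of the collar $\partial S\times[0,1)$, the interior of $\nu(\Gamma)$, and $S_\pm\ssm\overline{\nu'(\Gamma)}$ for a smaller nested neighborhood $\nu'(\Gamma)\subset\nu(\Gamma)$, with the parameters chosen so that in each transition region one summand of $\tau$ is already strictly positive and bounded below while the other is made subordinate through the local constructions above. The resulting 1-form $\alpha = \beta + u\,dt$ is the desired contact form.
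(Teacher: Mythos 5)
Your construction takes a genuinely different route from the paper's. The paper doubles $(S,\Gamma)$ to a closed surface $S'$, takes the standard $[-1,1]$-invariant contact structure on $S'\times[-1,1]$ with dividing set $\Gamma'=\Gamma\cup-\Gamma$, uses that $\partial S$ is nonisolating in $S'$ to apply Legendrian realization (producing a foliation divided by $\Gamma'$ containing $\partial S$ as leaves and $[0,1]$-invariant on the collar), applies flexibility via Proposition~\ref{prop:differentfoliations}, and restricts to $S\times[-1,1]$. You instead build an explicit $t$-invariant contact form $\alpha=\beta+u\,dt$ with dividing set $\{u=0\}$ and characteristic foliation $\ker\beta$. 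Your approach is more elementary and self-contained; the paper's is shorter given the convex-surface machinery it already cites and fits the flexibility formalism used throughout the rest of the section.

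That said, two steps in your argument do not go through as written. The claim that $\beta|_C$ can be chosen with $d(\beta|_C)=\pm c\,\omega$ \emph{and} matching the already-constructed $\beta$ on $\partial C$ ``for any chosen constant $c>0$'' is false: Stokes' theorem forces $c\int_C\omega=\pm\int_{\partial C}\beta$, so $c$ is pinned down once $\beta|_{\partial C}$ is, and you cannot make $c$ large to dominate $-du\wedge\beta$ as you subsequently rely on. Separately, the partition-of-unity gluing of $\beta$ produces the cross term in
\[
\tau=\chi\,\tau_1+(1-\chi)\,\tau_2+u\,d\chi\wedge(\beta_1-\beta_2),
\]
which does not become small under the manipulations you indicate (shrinking the transition region shrinks $\sup|u|$ but inflates $|d\chi|$ commensurately). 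Both are repairable, but not by the route you sketch: take $u\equiv\pm1$ outside $\nu(\Gamma)$ so that $du\equiv0$ there and the contact condition on $C$ reduces to $\pm d\beta>0$ with no largeness requirement (this sign is available since $\int_{\partial C}\beta>0$: the $\partial S$ portion of $\partial C$ contributes zero because $\beta=b(y)\,ds$ vanishes on $T\partial S$, and the $\partial\nu(\Gamma)$ portion contributes positively by your sign normalization of $\beta|_{T\Gamma}$), and build a single global $\beta$ satisfying the collar, near-$\Gamma$, and sign-of-$d\beta$ conditions simultaneously \emph{before} choosing $u$, which removes the need for a partition of unity on $\beta$ altogether. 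As currently written, however, the last paragraph asserts rather than proves the key global assembly, and the free-$c$ claim is a genuine error.
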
 

Although this  result is well-known to experts, we sketch a proof below since we could not find one in the literature.

\begin{proof}
Let $(S' = S\cup -S, \Gamma' = \Gamma \cup -\Gamma)$ be the double of $(S,\Gamma)$. Then $ S'\ssm \Gamma'$ is a disjoint union $ S'_+\sqcup  S'_-$ with \[\partial \overline{S'}_+ = -\partial \overline{S'}_-=\Gamma',\]  and it is not  hard to construct a $[-1,1]$-invariant contact structure $\xi'$ on $S'\times[-1,1]$ for which each $S'\times\{t\}$ is convex with dividing set $\Gamma'\times\{t\}$. Note that  $\partial S\subset S'$ is  \emph{nonisolating}, meaning that each component of $S'\ssm \partial S$ intersects $\Gamma'$ nontrivially. It follows that there is a singular foliation $\mathfrak{F}$ of $S'$ which is divided by $\Gamma'$ and contains $\partial S$ as a union of leaves (see \cite{honda2}). In fact, we can assume that $\mathfrak{F}$ restricts to a $[0,1]$-invariant foliation on $\partial S\times[0,1]\subset S\subset S'$. By Proposition \ref{prop:differentfoliations}, there exists a contact structure $\xi'$ on $\partial S'\times[-1,1]$ such that the characteristic foliation of $\xi'$ on $S'\times\{1\}$ is equal to $\mathfrak{F}$. Changing notation, let us now denote by $(S'\times[-1,1],\xi')$  a vertically invariant neighborhood of $S'\times\{1\}$. Then the restriction of $\xi'$ to $S\times[-1,1]$ is  the desired contact structure.
 \end{proof}

\begin{remark} Note that, for any contact structure  as in Lemma \ref{lem:existenceproduct},  $\partial_t$ and $\partial_s$ are contact vector fields  on $\partial S\times[0,1]_s\times[-1,1]_t\subset S\times[-1,1]_t.$
\end{remark}

The lemma below asserts that, for a given $\Gamma$, there is essentially only one contact structure on $S\times[-1,1]$ satisfying the conditions in Lemma \ref{lem:existenceproduct}.

\begin{lemma} 
\label{lem:productuniqueness}Suppose $\xi$ and $\xi'$ are contact structures on $S\times[-1,1]$ as in Lemma \ref{lem:existenceproduct}. Then, up to flexibility, $\xi$ and $\xi'$ are isotopic.
\end{lemma}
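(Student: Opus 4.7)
The plan is to reduce the lemma to Giroux's Uniqueness Lemma (Lemma \ref{lem:uniqueness}) by using flexibility to match the characteristic foliations of $\xi$ and $\xi'$ on the entire boundary of $S\times[-1,1]$.

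First, I would observe that on the side portion $\partial S\times[-1,1]$ of the boundary, $\xi$ and $\xi'$ already induce the \emph{same} characteristic foliation, and in fact agree on some neighborhood of the side. This is because the collared Legendrian boundary hypothesis means that on $\partial S\times[0,1]_s\times[-1,1]_t$, both contact structures are invariant in $s$ and $t$, with rulings $\partial S\times\{s\}$ on each slice. So the foliation on the side consists of the Legendrian circles $\partial S\times\{t\}$, and the dividing set near $\partial S$ is forced into the pattern of Figure \ref{fig:collared} for both structures. After a preliminary flexibility adjustment (using the relative version in Remark \ref{rmk:partialagreementcontact}) I can assume $\xi$ and $\xi'$ agree on an honest neighborhood of $\partial S\times[-1,1]$.

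Next I would match the foliations on the remaining boundary components $S\times\{\pm 1\}$. Both of these foliations are divided by $\Gamma$ but may well differ. I would apply Proposition \ref{prop:differentfoliations} (invoked with the partial-agreement Remark \ref{rmk:partialagreementcontact} so as to leave the structure untouched near $\partial S\times\{\pm 1\}$) to replace $\xi'$ by a contact structure $\tilde\xi'$, related to $\xi'$ by flexibility, with $(S\times\{\pm 1\})_{\tilde\xi'}=(S\times\{\pm 1\})_\xi$ and still satisfying all the hypotheses of Lemma \ref{lem:existenceproduct}. At this point $\xi$ and $\tilde\xi'$ have identical characteristic foliations on every face of $\partial(S\times[-1,1])$, each slice $S\times\{t\}$ is convex with the same dividing set $\Gamma\times\{t\}$ under both, and this dividing set varies continuously (in fact is constant) in $t$. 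Lemma \ref{lem:uniqueness} therefore yields an isotopy from $\tilde\xi'$ to $\xi$ rel boundary, and composing with the flexibility from $\xi'$ to $\tilde\xi'$ gives the desired conclusion.

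The main subtlety I expect to encounter is that $S\times[-1,1]$ has corners along $\partial S\times\{\pm 1\}$, whereas Proposition \ref{prop:differentfoliations} and Lemma \ref{lem:uniqueness} are phrased for smooth boundaries or closed surface cross-sections. This is a cosmetic rather than substantive issue: after smoothing corners and arranging (via Remark \ref{rmk:partialagreementcontact}) that all flexibility modifications are supported in the interior of the top and bottom faces, away from the side where $\xi$ and $\xi'$ already coincide, the collared Legendrian structure is preserved throughout and Giroux's lemma applies without change. An alternative, if one prefers to avoid corners altogether, would be to double $S$ along $\partial S$ and extend both contact structures to $[-1,1]$-invariant contact structures on $(S\cup_{\partial S}-S)\times[-1,1]$ as in the proof of Lemma \ref{lem:existenceproduct}, run the same argument on the closed double, and then restrict back to $S\times[-1,1]$.
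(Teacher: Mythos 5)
Your outline (flexibility to match foliations on $S\times\{\pm1\}$, then Lemma \ref{lem:uniqueness}) follows the paper's in spirit, but the first paragraph contains a substantive gap. It is true that $\xi$ and $\xi'$ induce the same characteristic foliation on the side $\partial S\times[-1,1]$ (the nonsingular foliation by the Legendrian circles $\partial S\times\{t\}$), but it does \emph{not} follow that $\xi$ and $\xi'$ agree on a neighborhood of the side, nor even that their characteristic foliations on $S\times\{\pm1\}$ agree on the collar $\partial S\times[0,1]$. The hypotheses of Lemma \ref{lem:existenceproduct} fix $\Gamma$ and the collar, but not the positions of the divides: these are singular lines of the characteristic foliation based at points of $\partial S$ alternating with $\Gamma\cap\partial S$, and two contact structures satisfying the same hypotheses can place them at different points. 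Generically, then, $(\partial S\times[0,1]\times\{\pm1\})_\xi\neq(\partial S\times[0,1]\times\{\pm1\})_{\xi'}$, so the ``preliminary flexibility adjustment'' you propose cannot be done via Remark \ref{rmk:partialagreementcontact}---that remark only lets you leave the contact structure unchanged where the source and target foliations \emph{already} coincide, which near the corners they do not.

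The paper fills precisely this hole. First it chooses an isotopy $\varphi_r$ of $(S,\Gamma)$ carrying the restriction of $\mathfrak{F}'$ on the collar to the restriction of $\mathfrak{F}$ on the collar (in particular matching the divides), so that the target foliation $\mathfrak{F}''=(\varphi_1)^{-1}\mathfrak{F}$ actually agrees with $\mathfrak{F}'$ near $\partial S$; only then can the flexibility be localized away from the corners. Moreover, even after pushing forward by $\varphi_1\times\mathrm{id}$, the resulting structure has the same characteristic foliation as $\xi$ on $S\times\{\pm1\}$ and is invariant on the collar, but is only \emph{contactomorphic}, not equal, to $\xi$ near the side; the paper uses Giroux's Reconstruction Lemma to upgrade this to honest agreement on $C\times[-1,1]$ before invoking Lemma \ref{lem:uniqueness}, which is also what lets the final isotopy be taken stationary near the side. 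Your proposal omits both the matching isotopy and the Reconstruction Lemma step, and neither is cosmetic. The doubling alternative you mention at the end has its own problem: an isotopy of the two extended contact structures on $(S\cup-S)\times[-1,1]$ produced by Lemma \ref{lem:uniqueness} need not preserve $S\times[-1,1]$ setwise, so it does not restrict to an isotopy on $S\times[-1,1]$.
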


Although this  is result also familiar to experts, we include a proof below in order to more precisely describe the isotopy alluded to in the lemma (see Remark \ref{rmk:isotopykind}).

\begin{proof}
Let $\mathfrak{F}$ and $\mathfrak{F}'$ be the characteristic foliations on $S$ induced by $\xi$ and $\xi'$, and let $C$ and $C'$ be the collars of $\partial S$ associated to $\xi$ and $\xi'$.
There exists an isotopy $\varphi_r:(S,\Gamma)\to (S,\Gamma)$, $r\in[0,1]$, such that $\varphi_0 = {\rm id}$ and $\varphi_1$ sends the restriction of $\mathfrak{F}'$ on $C'$ to the restriction of $\mathfrak{F}$ on $C$. Let $\mathfrak{F}'' = (\varphi_1)^{-1}(\mathfrak{F}).$  Let $\xi''$ be the contact structure, obtained from $\xi'$ by flexibility, whose characteristic foliation  on $S\times\{\pm 1\}$ agrees with $\mathfrak{F}''$. Note that we can apply Proposition \ref{prop:differentfoliations} here, even though $S$ has boundary, because $\mathfrak{F''}$  agrees with $\mathfrak{F}'$ on $C'$. In fact, we may assume that $\xi''$ agrees with $\xi'$ on $C'\times[-1,1]$, as in Remark \ref{rmk:partialagreementcontact}. To prove the lemma, it suffices to show that $\xi''$ is isotopic to $\xi$. We do this in three steps.

The isotopy $\varphi_r$ extends to an isotopy $\varphi_r\times id$ of $S\times[-1,1]$. Let $\xi''' = (\varphi_1\times id)_*(\xi'')$. Then the characteristic foliation of $\xi'''$ on $S\times[-1,1]$ agrees with that of $\xi$. Moreover, $\xi'''$ is $[-1,1]$-invariant on $C\times[-1,1]$. Thus, $\xi'''$ is isotopic to a contact structure $\xi''''$ which agrees with $\xi$ on $C\times[-1,1]$ by a $[-1,1]$-invariant isotopy which preserves the characteristic foliation on each $S\times\{t\}$. This is essentially Giroux's Reconstruction Lemma \cite{giroux3}. Since each $S\times\{t\}$ is convex with dividing set $\Gamma\times\{t\}$ for both $\xi''''$ and $\xi$ and the characteristic foliations of these two contact structures agree on $S\times\{\pm 1\}$, Lemma \ref{lem:uniqueness} asserts that $\xi''''$ and $\xi$ are isotopic by an isotopy which is stationary on $S\times\{\pm 1\}$. In fact, we can take this isotopy to be stationary on $C\times[-1,1]$ since $\xi''''$ and $\xi$ agree there.
\end{proof}

\begin{remark}
\label{rmk:isotopykind}
It is clear from the proof that the isotopy alluded to in Lemma \ref{lem:productuniqueness} can be taken to be of the form $\varphi_r\times id$ near $\partial S\times[-1,1]$, for some isotopy $\varphi_r$ of $(S,\Gamma).$
\end{remark}

\section{A contact invariant in sutured monopole homology}
\label{sec:monopoleinvt}

Suppose $(M,\Gamma)$ is a balanced sutured manifold and $\xi$ is a contact structure on $M$ such that $\partial M$ is convex and $\Gamma$ divides the characteristic foliation of $\partial M$ induced by $\xi$. We will refer to the triple $(M,\Gamma,\xi)$ as a \emph{sutured contact manifold}. In this section, we construct the contact invariants \[\invt^g(M,\Gamma,\xi) \in \SHMtfun(-M,-\Gamma).\]  (We will show that these elements are well-defined and agree for large $g$ in Section \ref{sec:well-definedness}.) As mentioned in the introduction, the rough  idea  is to extend $\xi$ to a contact structure $\bar{\xi}$ on a genus $g$ closure of $(M,\Gamma)$ and define $\invt^g(M,\Gamma,\xi)$ in terms of the monopole Floer contact  invariant of this closed contact manifold.  We will make this precise below. 

\subsection{Contact preclosures}
\label{ssec:contactclosure}

We begin by studying a certain class of contact structures on  preclosures of $M$.

\begin{definition}
Let $F$ be a connected, orientable surface with nonempty boundary and positive genus. An \emph{arc configuration} $\mathcal{A}$ on $F$ consists of an embedded,  nonseparating curve $c$ on $F$ together with disjoint embedded arcs $a_1,\dots, a_n$ such that:
\begin{enumerate}
\item  every $a_i$ has one endpoint on $\partial F$ and another on $c$,
\item $\inr(a_i)\cap (c\cup \partial F)=\emptyset$ for each $i$,
\item every component of $\partial F$ contains an endpoint of some $a_i$.
\end{enumerate}
\end{definition}

Suppose $\mathcal{A}$ is an arc configuration on  $F$ and let $N(\mathcal{A})$ be a regular neighborhood of $\mathcal{A}$. Let $\Gamma_{\mathcal{A}}$ be  the collection of oriented, properly embedded arcs in $F$ given by \begin{equation}\label{eqn:divset}\Gamma_{\mathcal{A}}=-\overline{(\partial N(\mathcal{A})\ssm\partial F)}.\end{equation} Let $\Xi_{\mathcal{A}}$ be a $[-1,1]$-invariant contact structure on $F\times[-1,1]$ for which each $F\times\{t\}$ is convex with collared Legendrian boundary  and dividing set $\Gamma_{\mathcal{A}}\times\{t\}$. There is a unique such $\Xi_{\mathcal{A}}$ up to flexibility and isotopy, by Lemma \ref{lem:productuniqueness}.
Note that the dividing set   on $\partial F\times[-1,1]$ is   of the form $\{p_1,\dots, p_{2n}\}\times[-1,1]$ for  points $p_i\in \partial F$. Moreover, the negative region on $F\times\{1\}$ is   $N(\mathcal{A})\times \{1\}$. See Figure \ref{fig:arcconfiguration} for an example in which  this and the negative region $\partial F\times[-1,1]$ have been shaded.

\begin{figure}[ht]
\centering
\includegraphics[width=13.6cm]{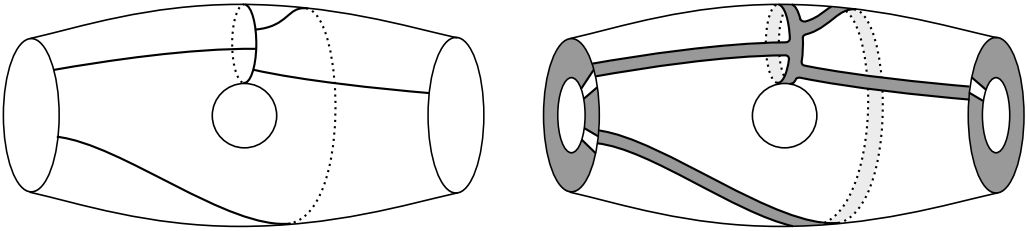}
\caption{Left, an arc configuration on a genus one surface with two boundary components. Right, the thickened surface with the negative regions on $F\times\{1\}$ and $\partial F\times[-1,1]$ shaded.}
\label{fig:arcconfiguration}
\end{figure}

Suppose   the surface $F$ above is an auxiliary surface for $(M,\Gamma)$, and consider the preclosure $M'=M\cup_h F\times[-1,1]$ associated to some  neighborhood $A(\Gamma)$ and    diffeomorphism \[h:\partial F\times[-1,1] \rightarrow A(\Gamma).\]  We would like to define a contact structure $\xi'$ on $M'$ in terms of   $\Xi_{\mathcal{A}}$ and $\xi$. To do so, we   first perturb $\xi$ in a neighborhood of $A(\Gamma)$ so that $h$ identifies  $\Xi_{\mathcal{A}}$ with this perturbed contact structure.  This enables us to glue $F\times[-1,1]$ to $M$ via $h$ contact geometrically. We will    show (Theorem \ref{thm:preclosurewelldefined}) that the resulting \emph{contact preclosure} $(M',\xi')$ is independent, up to flexibility and contactomorphism, of the arc configuration $\mathcal{A}$ and the other choices involved. 

We start by describing the perturbation of $\xi$. Label the components of $\Gamma$ by $\Gamma_1,\dots,\Gamma_m$. Each $\Gamma_i$   has an annular neighborhood $B_i\subset\partial M$ on which the leaves of the characteristic foliation  are cocores   with no singularities. Let $B(\Gamma)$ be the union of these $B_i$. Let $A_i$ be the  component of $A(\Gamma)$ containing $\Gamma_i$. We will assume that $A_i\subset \inr (B_i)$. The map $h$ identifies $A_i$ with $\partial_iF\times[-1,1]$ for some component $\partial_iF$ of $\partial F$. In this way, the ordering on the components of $\Gamma$ induces an ordering on the components of $\partial F$. Let $k_i$ be the number of arcs in $\mathcal{A}$ with an endpoint on $\partial_iF$. The dividing set  on the annulus $\partial_iF\times[-1,1]$ thus consists of $2k_i$  cocores. 

The rough idea is  to perturb $\xi$ to a contact structure $\xi_{h}$ whose dividing set $\Gamma_{h}$  restricts to $2k_i$  cocores of $A_i$, such that $h$ identifies the positive  region of $\partial F\times[-1,1]$  with the negative region of $A(\Gamma)$ determined by $\Gamma_{h}$. We do  essentially this, but work on the level of characteristic foliations rather than dividing curves for added  control on the eventual contactomorphisms between different  preclosures. 

For  $i=1,\dots, m$, let $C_i\subset \inr(A_i)$ be an annulus such that the intersection $\Gamma_i\cap C_i$ consists of  $2k_i$ cocores of $C_i$. We next choose an isotopy \[\varphi_r:\partial M\rightarrow \partial M,\,r\in[0,1]\]  which ``straightens out" these $C_i$. Precisely, we require that  $\varphi_0={ id}$, that $\varphi_1(C_i) =A_i$,   that $h$ identifies the positive region of $\partial F\times[-1,1]$ with the negative region of $A(\Gamma)$ determined by the dividing set $\varphi_1(\Gamma)$, and that each $\varphi_r$ restricts to the identity outside of $B(\Gamma)$. 


\begin{figure}[ht]
\labellist
\hair 2pt\tiny
\pinlabel $\Gamma_i$ at 44 38.5
\pinlabel $\varphi_1(\Gamma_i)$ at 184 119
\pinlabel $C_i$ at 77 122
\pinlabel $\varphi_1(C_i)=A_i$ at 210 -5
\pinlabel $B_i$ at 58 -5
\endlabellist
\centering
\includegraphics[width=8.4cm]{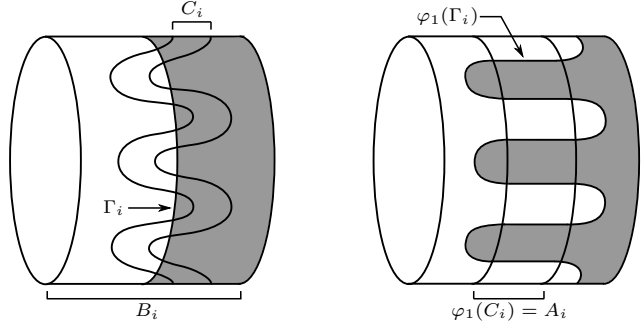}
\caption{Left, a view of  $C_i\subset B_i$, where $k_i=3$. Right,   $\varphi(\Gamma_i)=\varphi_1(\Gamma_i)$ and  $\varphi_1(C_i)=A_i$. The negative regions are shaded. }
\label{fig:straighten3}
\end{figure}


Choose a vertically invariant collar  $\partial M \times (-\infty,0]$ of $\partial M=\partial M\times\{0\}$ such that $\Gamma$ is the dividing set associated to $\partial_t$. The isotopy $\varphi_r$  induces  a diffeomorphism \[\varphi:M\rightarrow M\]  as follows. Let \[r:(-\infty,0]\rightarrow [0,1]\] be a smooth function with
\[ 
r(t) = 
\begin{cases}
0 & \text{for }  t\leq -2\\
1 & \text{for } t\geq -1.
\end{cases}
\]
We define $\varphi$ to be the map defined by \begin{equation}\label{eqn:phiextension}\varphi(x,t) = (\varphi_{r(t)}(x),t)\end{equation} for $(x,t)\in \partial M\times(-\infty, 0]$ and by the identity outside of $\partial M\times(-\infty, 0]$. Note that $\varphi$ restricts to $\varphi_1$ on $\partial M$ and  to the identity outside of  $B(\Gamma)\times[-2,0]$.

Define $\xi_0:=\varphi_*(\xi)$. Let $\mathfrak{F}_1$ be a foliation of $\partial M$ divided by $\varphi(\Gamma) = \varphi_1(\Gamma)$ which agrees  with  $h(\partial F\times[-1,1])_{\Xi_{\mathcal{A}}}$ on $A(\Gamma)$ and   with $\mathfrak{F}_0 = (\partial M)_{\xi_0}$ outside of $B(\Gamma)$, as illustrated in Figure \ref{fig:foliation2}. We   apply flexibility, as in Proposition \ref{prop:differentfoliations}, with respect to the collar determined by  the contact vector field $\varphi_*(\partial_t)$ for $\xi_0$, to obtain a contact structure $\xi_1=f_{01}(\xi_0)$  with $(\partial M)_{\xi_1} = \mathfrak{F}_1$. In doing so, we can arrange that  $\xi_1=h_*(\Xi_{\mathcal{A}})$ on $A(\Gamma)$, by Remark \ref{rmk:anycontactstructure}. Since $\mathfrak{F}_0$ and $\mathfrak{F}_1$ agree outside of $B(\Gamma)$, we can assume that $\partial_t$ is a contact vector field for $\xi_{01}$ on the complement of $B(\Gamma)\times[0,1]$ in the product $\partial M\times[0,1]$ used to define the map $f_{01}$. It then follows from the construction of $f_{01}$ that $\xi_1=\xi$ outside of $B(\Gamma)\times[-3,0]$, since $\xi_0$ and $\varphi_*(\partial_t)$ agree with $\xi$ and $\partial_t$, respectively, outside of $B(\Gamma)\times[-2,0]$.
We will hereafter denote $\xi_1$ by $\xi_h$ to indicate its dependence on $h$. 
We   may now glue $(F\times[-1,1], \Xi_{\mathcal{A}})$ to $(M,\xi_h)$ via $h$ contact geometrically. We perform this gluing, rounding corners as illustrated in Figure \ref{fig:straighten}, to obtain a \emph{contact preclosure} \[(M',\xi') = (M\cup_h F\times[-1,1], \xi_h\cup \Xi_{\mathcal{A}})\] of $(M,\Gamma,\xi)$. 

\begin{remark}
\label{rmk:divset}
The dividing set for $\xi'$ consists of two parallel nonseparating curves on each component $\partial_{\pm} M'$ of $\partial M'$. The negative region on $\partial_+ M'$ is the annulus bounded by these curves and retracts onto a regular neighborhood of $c\times\{1\}$, where $c$ is the curve in $\mathcal{A}$. Likewise, the positive region on $\partial_- M'$ is an annulus which retracts onto a neighborhood of $c\times\{-1\}$.
\end{remark}

\begin{figure}[ht]

\centering
\includegraphics[width=7.6cm]{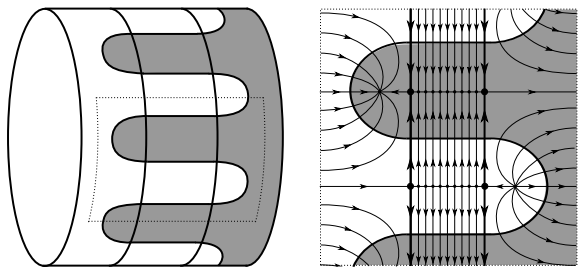}
\caption{Left, a view of $A_i$, $B_i$, and $\varphi(\Gamma_i)=\varphi_1(\Gamma_i)$. Right,   a view of the  foliation $\mathfrak{F}_1$ in the portion of $B_i$ contained within the dotted rectangle. }
\label{fig:foliation2}
\end{figure}

\begin{figure}[ht]
\labellist
\hair 2pt\tiny

\pinlabel $h$ at 108 13
\pinlabel $h$ at 108 155
\endlabellist
\centering
\includegraphics[width=12.4cm]{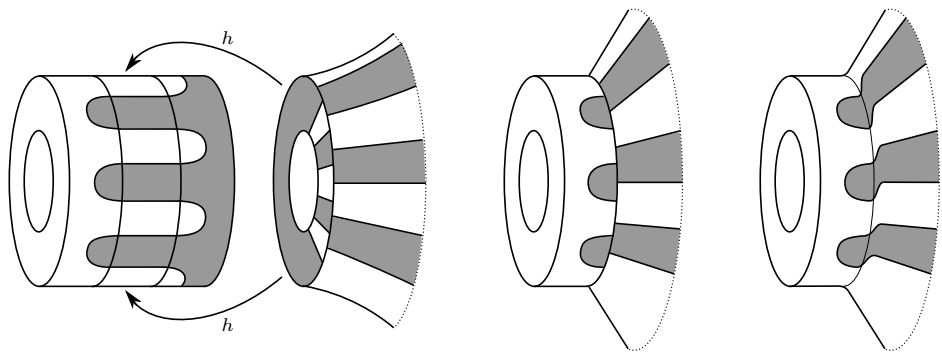}
\caption{Left, gluing $\partial F\times[-1,1]$ to $M$ along $A(\Gamma)$, as viewed near some $B_i\times (-\infty,0]$. Middle, the glued manifold. Right, the contact preclosure with convex boundary after rounding corners.}
\label{fig:straighten}
\end{figure}

The rest of this subsection is devoted to proving  the well-definedness of $(M',\xi')$. Our main result is the following.

\begin{theorem}
\label{thm:preclosurewelldefined} Suppose $(M'_1,\xi'_1)$ and $(M'_2,\xi'_2)$ are  contact preclosures of $(M,\Gamma,\xi)$ defined using   auxiliary surfaces of the same genus. Then, up to flexibility,  $(M_1',\xi_1')$ and $(M_2',\xi_2')$ are contactomorphic by a map  isotopic to one that restricts to the identity on $M\ssm N(\Gamma)$ for some regular neighborhood $N(\Gamma)$ of $\Gamma$.
\end{theorem}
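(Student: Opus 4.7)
The plan is to prove Theorem \ref{thm:preclosurewelldefined} by decomposing the data entering a contact preclosure and verifying independence one piece at a time. The data consists of: the auxiliary surface $F$ (of fixed genus); the suture neighborhood $A(\Gamma)$; the gluing map $h$; the arc configuration $\mathcal{A}$ (with its nonseparating curve $c$ and arcs $a_i$); the $[-1,1]$-invariant contact structure $\Xi_\mathcal{A}$ on $F \times [-1,1]$; the vertically invariant collar of $\partial M$ and the isotopy $\varphi_r$ used to produce the perturbation $\xi_h$; and the flexibility move building $\xi_1 = f_{01}(\xi_0)$. I would dispose of the most auxiliary pieces first --- the collar, the isotopy $\varphi_r$, the flexibility move, and the choice of $\Xi_\mathcal{A}$ itself --- using contractibility of the relevant spaces of choices together with Giroux's uniqueness lemma (Lemma \ref{lem:uniqueness}) and Lemma \ref{lem:productuniqueness}. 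By Remark \ref{rmk:isotopykind}, the isotopies arising from varying $\Xi_\mathcal{A}$ can be taken to be of the form $\varphi_r \times \mathrm{id}$ near $\partial F \times [-1,1]$, so the corresponding contactomorphism of preclosures can be arranged to be the identity on the $M$ side outside a collar of $\Gamma$.

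With the auxiliary pieces disposed of, it remains to handle independence from $F$, $A(\Gamma)$, $h$, and $\mathcal{A}$. Two neighborhoods $A_1(\Gamma) \subset A_2(\Gamma)$ give preclosures that differ by an ambient isotopy absorbing the extra collar into $F \times [-1,1]$. Two gluing maps $h_1, h_2$ satisfying the compatibility conditions differ by an orientation-preserving diffeomorphism of $\partial F \times [-1,1]$ preserving the identified boundary strata, which after isotopy is of the form $\psi \times \mathrm{id}$ and extends to a contactomorphism of $F \times [-1,1]$ supported away from $\partial F$ via Lemma \ref{lem:productuniqueness}. Two auxiliary surfaces of the same genus are related by an orientation-preserving diffeomorphism matching boundary components compatibly with $h$, which extends across the preclosure as the identity on $M$. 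Finally, for fixed $F$, two arc configurations sharing the same curve $c$ and with the same number of arcs on each component of $\partial F$ are related by a diffeomorphism of $F$ fixing $\partial F$ pointwise, handled by the same mechanism.

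The main obstacle is the general arc configuration step, since a priori different $\mathcal{A}$ yield non-isotopic dividing sets $\Gamma_\mathcal{A}$ on $F$, hence genuinely different isotopy classes of contact structures $\Xi_\mathcal{A}$, and in particular change the integer $k_i$ counting the arcs meeting $\partial_i F$. The crucial input is Remark \ref{rmk:divset}: the dividing set on each component $\partial_\pm M'$ consists of two curves parallel to $c$, with the negative (resp.\ positive) region retracting onto a neighborhood of $c \times \{1\}$ (resp.\ $c \times \{-1\}$), so the dividing data on $\partial M'$ is controlled by $c$ alone. Combining this with the freedom to flex $\Xi_\mathcal{A}$ and to reparametrize $\xi_h$ (which was designed precisely to absorb different values of $k_i$ via the straightening isotopy $\varphi_r$), one builds a one-parameter family of contact structures on $M'$ interpolating between the two preclosures, each convex on the boundary with dividing data determined by $c$. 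Giroux's uniqueness lemma (Lemma \ref{lem:uniqueness}) applied to this family then delivers a contactomorphism, up to flexibility, that is isotopic to one restricting to the identity on $M \setminus N(\Gamma)$ for a suitable regular neighborhood $N(\Gamma) \supset B(\Gamma) \times [-3,0]$.
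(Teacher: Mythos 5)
Your decomposition of the auxiliary data (collar, flexibility move, $\Xi_{\mathcal{A}}$, $A(\Gamma)$) and the disposal of each using contractibility of the relevant choice spaces together with Lemmas~\ref{lem:uniqueness} and \ref{lem:productuniqueness} is essentially the same as the paper's first few reductions and is fine. The problem is the treatment of the arc configuration $\mathcal{A}$, which is where the whole difficulty of the theorem lives.

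First, a smaller issue: the claim that ``two arc configurations sharing the same curve $c$ and with the same number of arcs $k_i$ on each component of $\partial F$ are related by a diffeomorphism of $F$ fixing $\partial F$ pointwise'' is not established and is false in general. For a surface $F$ of positive genus, two disjoint collections of arcs from $\partial F$ to $c$ with the same endpoint counts can lie in different mapping-class-group orbits (they can be nested differently, can run through different handles, etc.). The paper is careful about this: it proves that two \emph{standard} configurations with the same combinatorics are isomorphic (Lemma~\ref{lem:isoarcconfig}) precisely because standardness constrains the cyclic order around $c$, and then shows arbitrary configurations can be reduced to standard ones via elementary ``addition'' and ``deletion'' moves.

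Second, and more seriously, the main step --- nonisomorphic arc configurations, in particular different $k_i$ --- is not actually proved. You assert that one ``builds a one-parameter family of contact structures on $M'$ interpolating between the two preclosures'' and then invokes Lemma~\ref{lem:uniqueness}. Two objections: (a) Lemma~\ref{lem:uniqueness} is Giroux's uniqueness lemma for contact structures on a \emph{product} $S\times[0,1]$ with convex slices; $M'$ is not a product and carries no such slicing, so the lemma does not apply. (b) More fundamentally, constructing the interpolating family is exactly the content of the theorem. When $k_i\ne k_i'$, the dividing sets $\Gamma_{\mathcal{A}_1}$ and $\Gamma_{\mathcal{A}_2}$ on $F\times\{1\}$ differ, so $\Xi_{\mathcal{A}_1}$ and $\Xi_{\mathcal{A}_2}$ are genuinely non-isotopic on $F\times[-1,1]$; the perturbed structures $\xi_{h,1}$ and $\xi_{h,2}$ also differ near $\Gamma$. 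It is far from automatic that the glued manifolds are contactomorphic, and a straight-line path through plane fields will leave the contact condition. Remark~\ref{rmk:divset} only controls the dividing set on $\partial M'$, which does not determine the contact structure on the interior of $M'$ (two non-isotopic contact structures can share boundary dividing data).

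What the paper actually does in this step is a decomposition argument, not a family argument: it arranges all the auxiliary choices so that the two preclosures \emph{agree identically} outside a collection of explicitly constructed solid tori $T_{i,1}$ and $T_{i,2}$ (one near each suture component), and then verifies that these solid tori, after rounding corners, have convex boundary with two parallel dividing curves of slope $-1$ and are tight (by embedding them into standard vertically invariant neighborhoods of dividing curves). Since there is a unique such tight solid torus up to contactomorphism and flexibility, the tori can be matched, giving the desired contactomorphism. The combinatorial reduction via addition/deletion moves is what allows the argument to be carried out one elementary modification at a time. Neither of these ideas --- the localization of the difference into standard tight solid tori, nor the combinatorial reduction to single moves --- appears in your proposal, and without them the hard case is not handled.
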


\begin{proof}

First, suppose  all  choices in the constructions of $(M'_1,\xi'_1)$ and $(M'_2,\xi'_2)$ are the same except for that of the vertically invariant collar  of $\partial M$ used to define $\xi_h$. Suppose $\xi_{h,1}$ and $\xi_{h,2}$ are the contact structures on $M$ defined from two  different collars. The connectedness of the space of such  collars  implies that $\xi_{h,1}$ and $\xi_{h,2}$ are isotopic by an isotopy stationary on $\partial M$. It follows that $(M'_1,\xi'_1)$ and $(M'_2,\xi'_2)$ are contactomorphic by a map  isotopic to one that restricts to the identity on $M$, as desired.
It therefore suffices to prove Theorem \ref{thm:preclosurewelldefined} in the case that $(M'_1,\xi'_1)$ and $(M'_2,\xi'_2)$ are defined using the same  collar. We  will assume  below that this is the case. 
We will also continue to think of the contact structure $\Xi_{\mathcal{A}}$ as being completely determined by the arc configuration $\mathcal{A}$. This is fine for the purpose of this proof: since any two such $\Xi_{\mathcal{A}}$ are related by flexibility and isotopy as in Lemma \ref{lem:productuniqueness} and Remark \ref{rmk:isotopykind},  the contact preclosures formed from any two such $\Xi_{\mathcal{A}}$ are related as claimed in the theorem, assuming  all other choices are the same.

Below, we prove Theorem \ref{thm:preclosurewelldefined} in the case that $(M'_1,\xi'_1)$ and $(M'_2,\xi'_2)$ are built using auxiliary surfaces  with \emph{isomorphic} arc configurations.

\begin{definition}
Suppose $\mathcal{A}_1$ and $\mathcal{A}_2$ are  arc configurations on   $F_1$ and $F_2$, and suppose the boundary components of each $F_j$ have been ordered.
We say that $\mathcal{A}_1$ and $\mathcal{A}_2$ are  \emph{isomorphic} if there is a diffeomorphism from  $(F_1,\mathcal{A}_1)$ to $(F_2,\mathcal{A}_2)$  which respects these boundary orderings.
\end{definition}

For $k=1,2$, suppose   $(M'_k,\xi'_k)$ is  defined using the auxiliary surface $F_k$,  the arc configuration $\mathcal{A}_k$,   the  neighborhoods $A(\Gamma)_k = \cup_i A_{i,k}$ and $B(\Gamma)_k = \cup_i B_{i,k}$, and  the diffeomorphism $h_k$. For $i=1,\dots,m$, let  $B_i$ be an annular neighborhood of $\Gamma_i$ containing  $B_{i,1}\cup B_{i,2}$ on which the leaves of $(\partial M)_\xi$ are cocores  with no singularities, and let $B(\Gamma)=\cup_i B_i$. Let us assume  that $\mathcal{A}_1$ and $\mathcal{A}_2$ are isomorphic (where the boundary components of  $F_k$ are ordered according to $h_k$ and the ordering of the components of $\Gamma$, as usual) by an isomorphism  \[g:(F_1,\mathcal{A}_1)\to(F_2,\mathcal{A}_2).\] 
Note that $g$ induces a canonical isotopy class of  contactomorphisms \[\tilde g:(F_1\times[-1,1],\Xi_{\mathcal{A}_1})\to (F_2\times[-1,1],\Xi_{\mathcal{A}_2})\] for which $h_2\circ\tilde g\circ h_1^{-1}$ restricts to a diffeomorphism from  $A_{i,1}$ to $A_{i,2}$ for  $i=1,\dots,m$.
Let  \[\varphi_r:\partial M\rightarrow \partial M,\,r\in[0,1],\] be an isotopy supported in $B(\Gamma)$, such that  $\varphi_0={ id}$ and $\varphi_1$ restricts to the map \[\label{eqn:hgh}h_2\circ\tilde g\circ h_1^{-1}: A(\Gamma)_1\rightarrow A(\Gamma)_2.\] Let \[\varphi:M\to M\] be the diffeomorphism of $M$ extending $\varphi_1$ defined as  in \eqref{eqn:phiextension}. By construction,   the characteristic foliations  of $\xi_{h_1}$ and $(\varphi_*)^{-1}(\xi_{h_2})$ on $\partial M$   agree on   $A(\Gamma)_1$ and outside of   $B(\Gamma)$. Let  $\xi_{h_1}'$ be a contact structure obtained from  $\xi_{h_1}$ by flexibility such that \[(\partial M)_{\xi_{h_1}'} = (\partial M)_{(\varphi_*)^{-1}(\xi_{h_2})}\]  and such that $\xi_{h_1}'$ agrees with $\xi_{h_1}$ on  $A(\Gamma)_1\times[-3, 0]$ and outside  of  $B(\Gamma)\times[-3, 0]$.  The contact preclosure $(M_1',\xi_1'')$ constructed from $\xi_{h_1}'$ is  then related to  $(M_1',\xi_1')$ by flexibility. 

To complete the proof of Theorem \ref{thm:preclosurewelldefined} in this case, it therefore suffices to show that $(M_1',\xi_1'')$ is contactomorphic to $(M_2',\xi_2')$ by a map   isotopic to one that restricts to the identity outside of $B(\Gamma) \times [-3,0]$. 
For this, it suffices to  show that $(M,\xi_{h_1}')$ is contactomorphic to $(M,\xi_{h_2})$ by a map  isotopic to one which restricts to  the identity outside of $B(\Gamma)\times [-3,0]$, through maps which restrict to $h_2\circ\tilde g\circ h_1^{-1}$ on $A(\Gamma)_1$. Indeed, a contactomorphism from $(M,\xi_{h_1}')$ to $(M,\xi_{h_2})$ of this form  extends to the desired contactomorphism from $(M_1',\xi_1'')$  to $(M_2',\xi_2')$ by the map  $\tilde g$.  Since $(\varphi_*)^{-1}(\xi_{h_2})$ is already contactomorphic to $\xi_{h_2}$ by  such a map (namely, $\varphi$), it suffices to show that $\xi_{h_1}'$ and $(\varphi_*)^{-1}(\xi_{h_2})$ are isotopic by an isotopy stationary on $\partial M$. To see this, let $d_i$ be one of the boundary components of $ B_i$. For each $t$, the multicurve $\cup_i \,(d_i\times\{t\})$  divides the characteristic foliations on $\partial M\times\{t\}$ induced by  $\xi_{h_1}'$ and $(\varphi_*)^{-1}(\xi_{h_2})$. Since these two contact structures induce the same characteristic foliations on $\partial M$ and agree outside of $\partial M\times[-3,0]$, Lemma \ref{lem:uniqueness}  implies that $\xi_{h_1}'$ and $(\varphi_*)^{-1}(\xi_{h_2})$ are isotopic by an isotopy stationary on $\partial M$ (and outside of $\partial M\times[-3, 0]$), as desired.

It remains to prove Theorem \ref{thm:preclosurewelldefined} in the case that $(M'_1,\xi'_1)$ and $(M'_2,\xi'_2)$ are defined using  auxiliary surfaces of the same genus  with \emph{nonisomorphic} arc configurations. For this, we will need a way of relating nonisomorphic configurations. 

 \begin{definition}
 \label{def:standard}
Suppose $\mathcal{A} = \{c,a_1,\dots,a_m\}$ is an arc configuration on a surface $F$ with ordered boundary components  $\partial_1 F,\dots,\partial_m F$, and suppose the arcs have been labeled so that      $a_i$ is the (unique)  arc meeting $\partial_i F$. Suppose further that when $c$ is traversed  according to one of its two orientations, the arcs $a_1, \dots, a_m$ appear  ``locally" to the left of $c$ and in that cyclic order, as depicted in Figure \ref{fig:standard}. Such an arc configuration is called \emph{standard}. 
\end{definition}

\begin{figure}[ht]
\labellist
\hair 2pt\tiny
\pinlabel $c$ at -7 52
\pinlabel $a_1$ at 25 78
\pinlabel $a_2$ at 53 72
\pinlabel $a_3$ at 83 70
\pinlabel $a_m$ at 135 77
\endlabellist
\centering
\includegraphics[width=3cm]{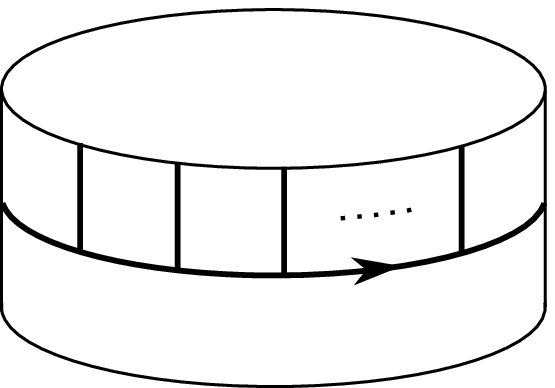}
\caption{A standard arc configuration  in a neighborhood of $c$.}
\label{fig:standard}
\end{figure}

\begin{lemma}
\label{lem:isoarcconfig}
If $\mathcal{A}_1$ and $\mathcal{A}_2$ are standard arc configurations on  surfaces  $F_1$ and $F_2$ of the same genus and with the same number of boundary components, then $\mathcal{A}_1$ and $\mathcal{A}_2$ are isomorphic. \end{lemma}

\begin{proof}
For $k=1,2$, let $F_k'$ be the surface obtained by cutting $F_k$ open along the curve and arcs in $\mathcal{A}_k$. The first condition in Definition \ref{def:standard} implies that   $F_1'$ and $F_2'$ have the same genus and two boundary components. Moreover, one boundary   component of  $F'_k$ is partitioned into segments labeled by  elements of $\mathcal{A}_k$ and  boundary components of $F_k$; the other  is labeled solely  by the curve in $\mathcal{A}_k$. The second condition in Definition \ref{def:standard} implies that there is an orientation-preserving homeomorphism from $F_1'$ to $F_2'$ which preserves this  labeling (with respect to the natural bijections between  elements of $\mathcal{A}_1$ and $\mathcal{A}_2$ and between components of $\partial F_1$ and $\partial F_2$). The lemma follows.
\end{proof}

We now define  two  ``moves" on arc configurations: \emph{addition} is the process of adding  one arc to a configuration while \emph{deletion} is the process of deleting  arcs from a configuration  until there is exactly one arc meeting each boundary component. It is easy to see that one can transform any arc configuration  into a standard  one via a finite sequence of these moves: one first uses deletion to obtain a configuration in which  there is  exactly one arc meeting each boundary component and then alternates additions with deletions to turn this configuration into a standard one, as illustrated in Figure \ref{fig:arcswaps2}. It follows that arbitrary arc configurations on  surfaces  of the same genus and with the same number of boundary components can be made isomorphic  after  finitely many additions and deletions.
Thus, to complete the proof of Theorem \ref{thm:preclosurewelldefined}, it suffices to show that the theorem holds for contact preclosures built from arc configurations related by deletion (for an arc configuration with exactly one arc meeting each boundary component, an addition is the inverse of a deletion).

\begin{figure}[ht]
\labellist
\hair 2pt
\tiny
\pinlabel $1$ at 28 -9
\pinlabel $2$ at 28 140
\pinlabel $3$ at 111 140
\pinlabel $4$ at 111 -9
\endlabellist
\centering
\includegraphics[width=15cm]{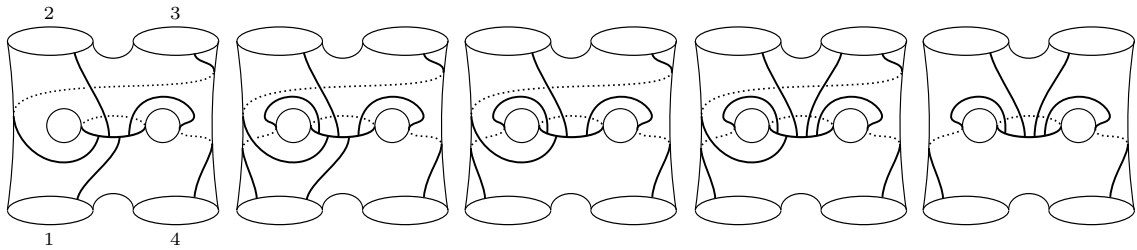}
\caption{Making an arc configuration standard through an alternating sequence of additions and deletions. The four boundary components are labeled as shown on the left.}
\label{fig:arcswaps2}
\end{figure}

Fix an auxiliary surface $F$,  the neighborhoods $A(\Gamma)$ and $B(\Gamma)$, and the diffeomorphism $h$. Let $\mathcal{A}_1$ be an arbitrary arc configuration on $F$ and suppose $\mathcal{A}_2$ is obtained from $\mathcal{A}_1$ by deletion. The dividing set $\Gamma_{\mathcal{A}_2}$  is normally defined in terms of the boundary of a regular neighborhood of $\mathcal{A}_2$, as in \eqref{eqn:divset}. Below, we will instead imagine $\Gamma_{\mathcal{A}_2}$ as coming from the boundary of a regular neighborhood of a certain \emph{graph} $\mathcal{A}_1'
$ on $F$,  \[\Gamma_{\mathcal{A}_2}=-\overline{(\partial N(\mathcal{A}_1')\ssm\partial M)}.\] This graph $\mathcal{A}'_1$ is obtained  by retracting arcs of $\mathcal{A}_1$ a short distance into $F$ until there is exactly one arc meeting each boundary component, as  shown in Figure \ref{fig:retraction}. We retract precisely those arcs that are deleted when forming $\mathcal{A}_2$ from $\mathcal{A}_1$. Although $\mathcal{A}_1'$ is not  an arc configuration in general, a  neighborhood of this graph retracts onto a  neighborhood of $\mathcal{A}_2$, so these two ways of defining $\Gamma_{\mathcal{A}_2}$ result in isotopic dividing sets.

Below, we will use the notation $\xi_{h,k}$ in place of   $\xi_h$ to denote the contact structure on $M$ defined using the map $h$ and the arc configuration $\mathcal{A}_k$. Our goal is to  prove Theorem \ref{thm:preclosurewelldefined} for the contact preclosures 
\begin{align}
\label{eqn:preclosures1}
(M_1',\xi_1') &= (M\cup_h F\times[-1,1], \xi_{h,1}\cup \Xi_{\mathcal{A}_1})\\
\label{eqn:preclosures2}(M_2',\xi_2') &= (M\cup_h F\times[-1,1], \xi_{h,2}\cup \Xi_{\mathcal{A}_2}).
\end{align}  We start by specifying the  data needed to define the contact structures $\xi_{h,j}$. As usual, we will assume that the dividing set of $\Xi_{\mathcal{A}_1}$ on the annulus $\partial_iF\times[-1,1]$ consists of $2k_i$  cocores. Note that the dividing set of $\Xi_{\mathcal{A}_2}$ on each $\partial_iF\times[-1,1]$ consists of just $2$  cocores.

 \begin{figure}[ht]
\labellist
\hair 2pt

\endlabellist
\centering
\includegraphics[width=7.5cm]{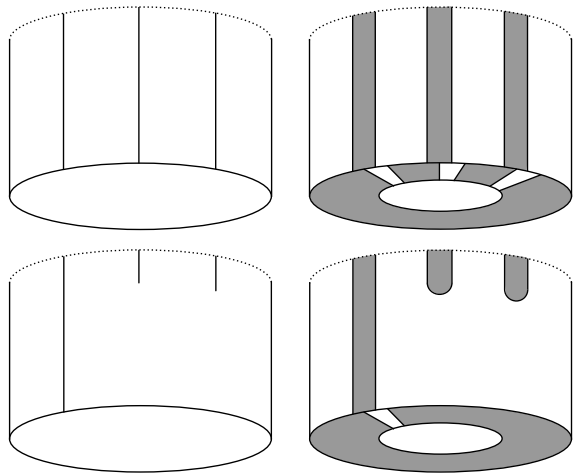}
\caption{Top: left, the arc configuration $\mathcal{A}_1$ near a component $\partial_i F$; right, the corresponding portion of $(F\times[-1,1],\Xi_{\mathcal{A}_1})$. Bottom: left, the graph $\mathcal{A}'_1$ obtained from $\mathcal{A}_1$ by retracting all but one of the arcs meeting each boundary component of $F$; right, the corresponding portion of $(F\times[-1,1],\Xi_{\mathcal{A}_2})$.}
\label{fig:retraction}
\end{figure}

 For $i=1,\dots,m$, let $D_i\subset \inr(A_i)$ be an annulus which intersects $\Gamma_i$ in $2$ cocores, and let $D(\Gamma)=\bigcup_i D_i$. Let $C_{i,1}\subset \inr(D_i)$ be an annulus which intersects $\Gamma_i$ in $2k_i$ cocores. In particular, we require that  one component of $D_i\cap \Gamma_i$ intersects $C_{i,1}$ in  $1$ cocore while the other  intersects $C_{i,1}$ in $2k_i-1$ cocores. Finally, let $C_{i,2}\subset \inr(D_i)$ be an annulus which intersects $\Gamma_i$ in $2$ cocores. See Figure \ref{fig:D} for an illustration of these annuli after  straightening below. The annuli $C_{i,1}$ and $C_{i,2}$ will be used to define the contact structures $\xi_{h,1}$ and $\xi_{h,2}$ in the usual way while the $D_i$ are auxiliary annuli that will be used to relate $(M_1',\xi_1')$ and $(M_2',\xi_2')$. 
 
 \begin{figure}[ht]
\labellist
\hair 2pt\tiny
\pinlabel $\phi_2(D_i)$ at 219 122
\pinlabel $\phi_2(C_{i,2})$ at 218 -6
\pinlabel $\phi_1(D_i)$ at 68 122
\pinlabel $\phi_1(C_{i,1})$ at 67 -6
\pinlabel $\phi_1(\Gamma_i)$ at 15 68
\pinlabel $\phi_2(\Gamma_i)$ at 166 68
\endlabellist
\centering
\includegraphics[width=8.5cm]{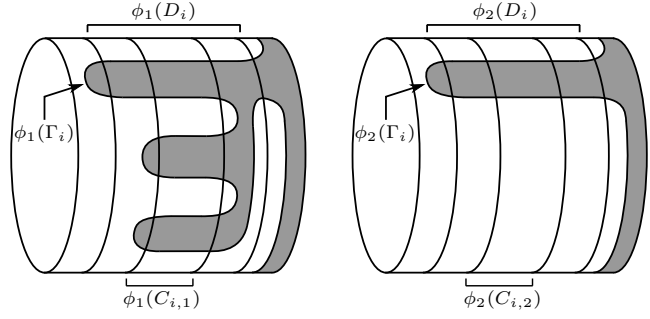}
\caption{A view from within $B_i$ of the annuli $D_i$, $C_{i,1}$,  and $C_{i,2}$ after straightening by $\varphi_1$ on the left and $\varphi_2$ on the right. In particular, $\varphi_{1}(C_{i,1})=\varphi_{2}(C_{i,2})=A_i$ and $\varphi_1(D_i)=\varphi_2(D_i)$.}
\label{fig:D}
\end{figure}

Let $\mathfrak{F}$ be a foliation of $\partial M$ divided by $\Gamma$ which contains the boundary components $d_i^{\pm}$ of $D_i$ as unions of leaves and  agrees with $(\partial M)_{\xi}$ outside of $B(\Gamma)$. Let $\xi'$ be the contact structure with $(\partial M)_{\xi'} = \mathfrak{F}$ obtained from $\xi$ by flexibility. Note that the curves $d_i^{\pm}$ are  Legendrian with respect to $\xi'$. Choose a vertically invariant collar $\partial M\times(-\infty,0]$ of $\partial M$ with respect to $\xi'$. We can arrange (by choosing $\mathfrak{F}$ more carefully) that $\xi'$ is invariant in the $[-\epsilon,\epsilon]$-direction for some small tubular neighborhoods $d_i^{\pm}\times [-\epsilon,\epsilon]\subset \partial M$ of the $d_i^{\pm} = d_i^{\pm}\times\{0\}$.  This implies that the annuli (with corners) given by \[E_i = (d_i^+\times [-4,0]_t)\,\cup \,(D_i\times\{-4\}_t)\,\cup (d_i^-\times[-4,0]_t)\,\subset \,\partial M \times (-\infty,0]_t\] are convex. 

We now construct the isotopies of $\partial M$ which ``straighten out" the annuli $C_{i,1}$ and $C_{i,2}$. 
For $k=1,2$, let \[\varphi_{r,k}:\partial M\rightarrow \partial M,\,r\in[0,1],\] be an isotopy supported in $B(\Gamma)$ such that $\varphi_{0,k}={id}$, $\varphi_{1,k}(C_{i,k}) = A_i$,  and $h$ identifies the positive region on $\partial_i F\times[-1,1]$ with respect to $\Xi_{\mathcal{A}_k}$ with the negative region on $A_i$ determined by $\varphi_{1,k}(\Gamma)$. We  will additionally require that $\varphi_{r,1}=\varphi_{r,2}$ in a neighborhood of the  curves $d_i^{\pm}$ and outside $D(\Gamma)$. Let  \[\varphi_k:M\to M\] be the diffeomorphism of $M$ extending   $\varphi_{1,k}$  defined as in \eqref{eqn:phiextension}), and define $\xi_{0,k} := (\varphi_k)_*(\xi')$. Note that $\xi_{0,1}=\xi_{0,2}$  outside of \begin{equation}\label{eqn:nbhdgamma}\varphi_1(D(\Gamma)\times[-3,0]) = \varphi_2(D(\Gamma)\times[-3,0])\end{equation} and in  neighborhoods of the  annuli $G_i=\varphi_1(E_i)=\varphi_2(E_i).$

For $k=1,2$, let $\mathfrak{F}_{1,k}$ be a foliation of $\partial M$ divided by $\varphi_k(\Gamma)$ which agrees on $A(\Gamma)$ with the image of the characteristic foliation  $(\partial F\times[-1,1])_{\Xi_{\mathcal{A}_k}}$ under $h$ and   with $\mathfrak{F}_{0,k}:=(\partial M)_{\xi_{0,k}}$ outside of  $B(\Gamma)$. We will additionally require that $\mathfrak{F}_{1,1}$ and $\mathfrak{F}_{1,2}$ agree outside of $\varphi_1(D(\Gamma))=\varphi_2(D(\Gamma)).$ Let $\xi_{1,k}$ be the contact structure with  $(\partial M)_{\xi_{1,k}} = \mathfrak{F}_{1,k}$ obtained from $\xi_{0,k}$ by flexibility, using  the collar determined by the contact vector field $(\varphi_k)_*(\partial_t)$ for $\xi_{0,k}$. As usual, we  let $\xi_{h,k}:=\xi_{1,k}$. We can arrange that the annuli $G_i$ are convex for both $\xi_{h,1}$ and $\xi_{h,2}$ and, moreover,  that $\xi_{h,1}$ and $\xi_{h,2}$ agree in neighborhoods of these annuli and outside  the neighborhood of $\Gamma$ in \eqref{eqn:nbhdgamma}. Let $N(\Gamma)$ denote the component of $M\ssm \cup_i G_i$ containing $\Gamma$. Then, in particular, $\xi_{h,1}=\xi_{h,2}$ on $M\ssm N(\Gamma).$ We will record this fact as \begin{equation}\label{eqn:part1}(M\ssm N(\Gamma), \xi_{h,1})=(M\ssm N(\Gamma), \xi_{h,2})\end{equation} for later use.

We now prove Theorem \ref{thm:preclosurewelldefined}  for the contact preclosures $(M_1',\xi_1')$  and $(M_2',\xi_2')$  formed from $\xi_{h,1}$ and $\xi_{h,2}$ as in \eqref{eqn:preclosures1} and \eqref{eqn:preclosures2}.  Since the convex surfaces \[F\times\{t\} \subset (F\times[-1,1], \Xi_{\mathcal{A}_1})\] have collared Legendrian boundary,  there is a  collar $\partial F\times[0,1]\subset F$ such that  $\Xi_{\mathcal{A}_1}$ is invariant in the $[0,1]$-direction on \[\partial F\times[0,1]\times[-1,1]\subset F\times[-1,1].\]  We can arrange that $\Xi_{\mathcal{A}_2}$ is invariant in the same direction on  the smaller  neighborhood  \[\partial F\times[0,1/2]\times[-1,1]\subset F\times[-1,1],\] and that   \begin{equation}\label{eqn:part2}((F\ssm (\partial F\times[0,1]))\times[-1,1],\Xi_{\mathcal{A}_1})=((F\ssm (\partial F\times[0,1]))\times[-1,1],\Xi_{\mathcal{A}_2}).\end{equation} Note that the annuli \[H_i=\partial_iF\times\{1\}\times[-1,1]\,\,\,\,\,\,{\rm and}\,\,\,\,\,\,H_i'=\partial_iF\times\{1/2\}\times[-1,1]\] are convex with collared Legendrian boundary with respect to both $\Xi_{\mathcal{A}_1}$ and $\Xi_{\mathcal{A}_2}$. See Figure \ref{fig:productnbhd} for an illustration of these  annuli.

 \begin{figure}[ht]
\labellist
\hair 2pt

\pinlabel $0$ at -5 20
\pinlabel $\frac{1}{2}$ at -5 67
\pinlabel $1$ at -5 114

\pinlabel $H_i'$ at 193 91
\pinlabel $H_i$ at 193 136
\endlabellist
\centering
\includegraphics[width=9.2cm]{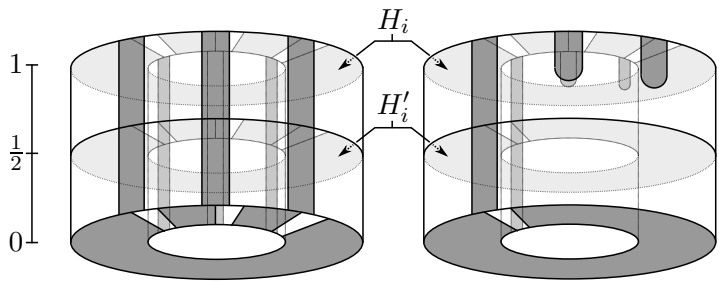}
\caption{Left and right, the regions $\partial F\times[0,1]\times[-1,1]$ in $\Xi_{\mathcal{A}_1}$ and $\Xi_{\mathcal{A}_2}.$}
\label{fig:productnbhd}
\end{figure}

For  $k=1,2$, the  annuli $G_i$ and $H_i$, together with two  annuli in $\partial M_k'$, bound a solid torus $T_{i,k}$ in $M_k'$, as shown in Figure \ref{fig:alternateview}. Moreover, the complement $(M_k'\ssm \cup_i T_{i,k},\xi_k')$ is the union of the pieces in \eqref{eqn:part1} and \eqref{eqn:part2}, which implies that \[(M_1'\ssm \cup_i T_{i,1},\xi_1')=(M_2'\ssm \cup_i T_{i,2},\xi_2').\]   To prove Theorem \ref{thm:preclosurewelldefined}, it therefore suffices to show that, after rounding corners,   $(T_{i,1},\xi_1')$ is contactomorphic to $(T_{i,2},\xi_2')$, up to flexibility. But after rounding corners, $T_{i,1}$ and $T_{i,2}$ are  solid tori with convex boundaries with dividing sets consisting of two parallel curves of slope $-1$, as shown in Figure \ref{fig:Tij} for $T_{i,2}$. As there is a unique tight solid torus with these boundary conditions, up to flexibility and contactomorphism, all that remains is to show that  $(T_{i,1},\xi_1')$ and $(T_{i,2},\xi_2')$ are tight.

 \begin{figure}[ht]
 
\labellist
\hair 2pt
\pinlabel $0$ at -8 403
\pinlabel $-\infty$ at -20 353
\pinlabel $H_i$ at 498 391
\pinlabel $G_i$ at 462 230
\pinlabel $T_{i,1}$ at 460 330
\pinlabel $T_{i,2}$ at 460 77
\endlabellist
\centering
\includegraphics[width=9cm]{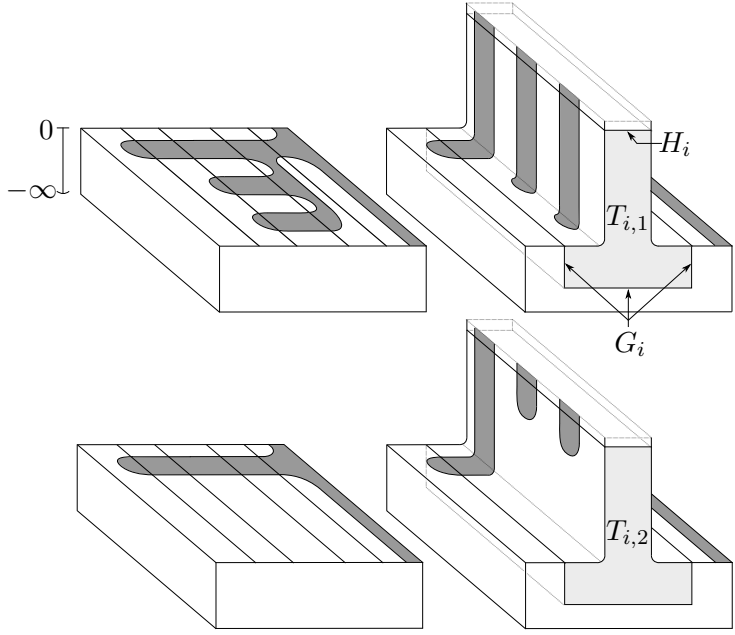}
\caption{Top: left,  $(M,\xi_{h,1})$ on a portion of $B_i\times(-\infty,0]$; right, the corresponding portion of  $(M',\xi_1')$ gotten by attaching $(F\times[-1,1],\Xi_{\mathcal{A}_1})$. Bottom, the analogous pictures for $(M,\xi_{h,2})$ and $(M',\xi_2')$. The lightly shaded regions on the top and bottom are meridional disks for the tori $T_{i,1}$ and $T_{i,2}$.}
\label{fig:alternateview}
\end{figure}

 \begin{figure}

\centering
\includegraphics[width=7.9cm]{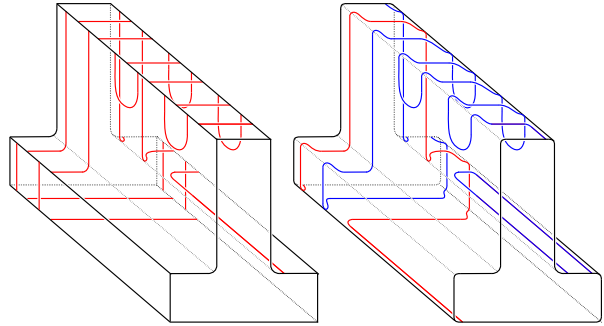}
\caption{Left, the dividing set on $\partial T_{i,2}$. Right, the dividing set after rounding corners; it consists of two curves of slope $-1$ drawn in red and blue.}
\label{fig:Tij}
\end{figure}

\begin{lemma}
For  $k=1,2$, the solid torus $(T_{i,k},\xi_k')$ is tight.
\end{lemma}

\begin{proof}
For $k=1$, this follows from the fact that  $(T_{i,1},\xi_1')$ can be embedded as a contact submanifold of some vertically invariant  neighborhood of $B_i$ in which the characteristic foliation on each copy of $B_i$ agrees with that of $\xi_{h,1}$. To see that such a neighborhood is tight, note that it is related by flexibility to a vertically invariant neighborhood of $B_i$ in which the characteristic foliation on each copy of $B_i$ consists of cocores. The latter is, in some sense, a standard neighborhood of a dividing curve: it embeds as a neighborhood of any dividing curve on any convex surface in any contact manifold, and is therefore tight. The former neighborhood of $B_i$ is thus tight as well since flexibility preserves tightness. To see that $(T_{i,1},\xi_1')$ embeds into such a neighborhood, note that $T_{i,1}$ is contained in a union  \begin{equation}\label{eqn:vinvariant}(B_i\times(-\infty,0]_s\cup_h\partial F\times[0,1]_s\times[-1,1],\xi_{h,1}\cup \Xi_{\mathcal{A}_1}),\end{equation} where  $\xi_{h,1}$ on $B_i\times(-\infty,0]_s$ is invariant in the $\partial_s$-direction. Here, $\partial_s = (\varphi_1)_*(\partial_t),$ where $\partial_t$ is the contact vector field for the vertically invariant collar $\partial M\times(-\infty,0]$ for $\xi'$ discussed earlier. Since $\Xi_{\mathcal{A}_1}$ is invariant in the $[0,1]_s$-direction, the union in \eqref{eqn:vinvariant} can be embedded in a vertically invariant neighborhood $B_i\times(-\infty,1]_s$.

For $k=2$, let $T_{i,2}'$ be the solid torus in $M_2'$ bounded by  $G_i$, $H_i'$,   and two annuli in $\partial M'_2$. By the reasoning above (considering $\Xi_{\mathcal{A}_2}$ rather than $\Xi_{\mathcal{A}_1}$), the solid torus $(T_{i,2}',\xi_2')$ embeds into a vertically invariant neighborhood of $B_i$  and is therefore tight. Note that $(T_{i,2},\xi_2')$ is obtained by gluing  $(\partial_iF\times[1/2,1]\times[-1,1],\Xi_{\mathcal{A}_2})$ to  $(T_{i,2}',\xi_2')$ along $H_i'$. After rounding corners, the first piece  is contactomorphic to the $[1/2,1]$-invariant contact structure specified by the dividing set of $\Xi_{\mathcal{A}_2}$ on $H_i'$. Indeed,   both  are tight solid tori with convex boundaries and dividing sets consisting of two parallel curves of slope $-1$. It follows that $(T_{i,2},\xi_2')$ is contactomorphic to the torus obtained from $(T_{i,2}',\xi_2')$ by attaching a vertically invariant neighborhood of some portion of $\partial T_{i,2}'$. Thus, $(T_{i,2},\xi_2')$ is contactomorphic to  $(T_{i,2}',\xi_2')$, and, hence, tight.
\end{proof}

This concludes the proof of Theorem \ref{thm:preclosurewelldefined}.
\end{proof}

The corollary below follows easily from the proof of Theorem \ref{thm:preclosurewelldefined}.

\begin{corollary}
\label{cor:flexibilitypreclosure}
Suppose $(M,\Gamma,\xi_1)$ and $(M,\Gamma,\xi_2)$ are  related by flexibility, with contact preclosures $(M'_1,\xi'_1)$ and $(M'_2,\xi'_2)$ defined using auxiliary surfaces of the same genus. Then, up to flexibility,  $(M_1',\xi_1')$ and $(M_2',\xi_2')$  are contactomorphic by a map smoothly isotopic to one that restricts to the identity on $M\ssm N(\Gamma)$ for some regular neighborhood $N(\Gamma)$ of $\Gamma$. \qed
\end{corollary}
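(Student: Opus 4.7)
The plan is to deduce the corollary directly from the argument already used to prove Theorem \ref{thm:preclosurewelldefined}, rather than redoing the analysis of contact handle/arc configurations from scratch. By Theorem \ref{thm:preclosurewelldefined} applied twice, the preclosures of $(M,\Gamma,\xi_1)$ and $(M,\Gamma,\xi_2)$ are, up to flexibility and contactomorphisms of the permitted type, independent of the auxiliary surface $F$ (of the fixed genus), the arc configuration $\mathcal{A}$, the neighborhood $A(\Gamma)$, the diffeomorphism $h$, and the vertically invariant collar of $\partial M$ used to build $\xi_h$. So we may assume $(M_1',\xi_1')$ and $(M_2',\xi_2')$ are constructed using identical such data; the only source of discrepancy will then be the flexibility relating $\xi_1$ to $\xi_2$.

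Recall that $\xi_2$ arises from $\xi_1$ by gluing a suitable contact structure $\xi_{01}$ on $\partial M\times[0,1]$ (with convex slices divided by $\Gamma$) onto $\xi_1$ and reidentifying via a collar map, so that $\xi_2$ coincides with $\xi_1$ outside of some collar $\partial M\times[-L,0]$. By construction, each $\xi_{h,k}$ is obtained from $\xi_k$ by a further flexibility step that adjusts the characteristic foliation on $\partial M$ to the common foliation $\mathfrak{F}_1$ determined by $h$ and $\mathcal{A}$. Using the transitivity $f_{02}=f_{12}\circ f_{01}$ from Proposition \ref{prop:differentfoliations}, we may absorb the flexibility relating $\xi_1$ and $\xi_2$ into the flexibility that defines $\xi_{h,2}$, so that $\xi_{h,1}$ and $\xi_{h,2}$ are both obtained from $\xi_1$ by flexibility, both induce the same characteristic foliation $\mathfrak{F}_1$ on $\partial M$, and both agree with $\xi_1$ outside of a single collar $\partial M\times[-L',0]\subset M$.

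Now we argue as at the end of the proof of Theorem \ref{thm:preclosurewelldefined}. On $\partial M\times[-L',0]$, the two contact structures $\xi_{h,1}$ and $\xi_{h,2}$ induce identical characteristic foliations on the two boundary slices, and every slice $\partial M\times\{t\}$ is convex with the same dividing multicurve. Hence, by Lemma \ref{lem:uniqueness}, $\xi_{h,1}$ and $\xi_{h,2}$ are isotopic via an isotopy stationary on $\partial(\partial M\times[-L',0])$, which we extend by the identity to an isotopy of $M$ that is stationary on $\partial M$ and on the complement of $B(\Gamma)\times[-L',0]$. This isotopy extends trivially across the glued-in piece $(F\times[-1,1],\Xi_{\mathcal{A}})$ to yield a contactomorphism between $(M_1',\xi_1')$ and $(M_2',\xi_2')$ that is smoothly isotopic to one restricting to the identity on $M\smallsetminus N(\Gamma)$, for $N(\Gamma)$ any regular neighborhood of $\Gamma$ containing $B(\Gamma)\times[-L',0]$.

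The only real subtlety, and hence the main point to be careful about, is the bookkeeping of collar sizes: one must choose $N(\Gamma)$ and the absorbed collar $\partial M\times[-L',0]$ large enough to swallow both the flexibility relating $\xi_1$ to $\xi_2$ and the further flexibility used to produce $\xi_{h,k}$, while still lying inside the $M$ factor of the preclosure so that the resulting isotopy never disturbs the glued piece $F\times[-1,1]$. Once this is arranged, the conclusion follows without further contact-topological input beyond Theorem \ref{thm:preclosurewelldefined}, Proposition \ref{prop:differentfoliations}, and Lemma \ref{lem:uniqueness}.
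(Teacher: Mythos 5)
Your strategy---reduce to identical auxiliary data via Theorem \ref{thm:preclosurewelldefined}, fold the flexibility taking $\xi_1$ to $\xi_2$ into the construction of $\xi_{h,2}$, and invoke Lemma \ref{lem:uniqueness}---is exactly what the paper intends when it says the corollary ``follows easily from the proof of Theorem \ref{thm:preclosurewelldefined},'' and it does lead to the desired conclusion.

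One intermediate claim, however, is not justified as stated. Lemma \ref{lem:uniqueness} applied on the collar $\partial M\times[-L',0]$ produces an isotopy stationary only on $\partial(\partial M\times[-L',0])$, so the extension to $M$ is stationary on the complement of $\partial M\times[-L',0]$, not (as you write) on the complement of $B(\Gamma)\times[-L',0]$. This matters: if $\xi_1$ and $\xi_2$ induce different characteristic foliations on $\partial M\ssm B(\Gamma)$, then the absorbed $\xi_{h,2}$ really does differ from $\xi_{h,1}$ over $(\partial M\ssm B(\Gamma))\times[-L',0]$, so one cannot simply relocate the lemma to the solid tori $B(\Gamma)\times[-L',0]$. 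Relatedly, your final paragraph asks for $N(\Gamma)$ to ``swallow'' the collar $\partial M\times[-L',0]$, which is impossible---a regular neighborhood of the $1$--manifold $\Gamma$ cannot contain a collar of all of $\partial M$. The cleanest repair is to note that the path of contact structures supplied by Lemma \ref{lem:uniqueness} gives, via Gray stability, a smooth isotopy from the identity to your contactomorphism, supported in $\partial M\times[-L',0]$; the contactomorphism of $M'$ is therefore smoothly isotopic to the identity, hence in particular to a map restricting to the identity on $M\ssm N(\Gamma)$, which is all the corollary asserts. (Alternatively, one can split the discrepancy between $\xi_{h,1}$ and $\xi_{h,2}$ into a piece supported near $\Gamma$, handled as in the solid-torus argument of Theorem \ref{thm:preclosurewelldefined}, and a piece supported near $\partial M\ssm B(\Gamma)$, which after gluing lies in a collar of $\partial M'$ and is absorbed by the ``up to flexibility'' allowance on the preclosures.) With that amendment your argument is complete.
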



\subsection{Contact closures and the invariants $\invt^g(M, \Gamma,\xi)$ and $\invt(M, \Gamma,\xi)$}
\label{ssec:contact-closures-invariant}

Suppose  $(M',\xi')$ is a contact preclosure of  $(M,\Gamma,\xi)$. As mentioned in Remark \ref{rmk:divset}, the dividing set for $\xi'$ consists of two parallel nonseparating curves on each  component $\partial_{\pm} M'$ of $\partial M'$. One can therefore glue $\partial_+ M'$ to $\partial_- M'$  (after applying flexibility, of course) by a map which identifies the positive region on $\partial_+ M'$  with the negative region on $\partial_- M'$ to form a closed contact manifold $(Y,\bar \xi)$ with a distinguished convex surface $R:=\partial_+ M'=-\partial_- M'$. We   call a  triple $(Y, R, \bar \xi)$ formed in this way  a \emph{simple contact closure} of $(M,\Gamma,\xi)$. One might  then attempt to define an invariant of $\xi$ in terms of the  contact invariant $\psi(Y,\bar\xi)$. We do essentially this but, for naturality purposes,   need the following slightly more involved  notion of contact closure.

\begin{definition}
\label{def:contactclosure}
A \emph{contact closure} of $(M,\Gamma,\xi)$ consists of a closure $\mathscr{D} = (Y,R,r,m)$ of $(M,\Gamma)$ together with a contact structure $\bar \xi$ on $Y$ such that 
\begin{enumerate}
\item $m$ restricts to a contact embedding of $(M\ssm N(\Gamma),\xi)$ into $(Y,\bar \xi)$ for some regular neighborhood $N(\Gamma)$ of $\Gamma$,
\item this restriction  of $m$ extends to a contactomorphism \[(M',\xi')\to (Y\ssm\inr(\Img(r)),\bar \xi)\] for some contact preclosure $(M',\xi')$ of $(M,\Gamma,\xi)$. 
\item $r^*(\bar\xi)$ is a contact structure on $R\times[-1,1]$ obtained, via flexibility, from one that is invariant in the $[-1,1]$-direction.
\end{enumerate}
\end{definition}

\begin{remark}
Note  that $(M,\Gamma,\xi)$ admits a genus $g$ contact closure for every $g\geq g(M,\Gamma)$.
\end{remark}

\begin{remark}
\label{rmk:simple} For a contact closure $(\mathscr{D},\bar\xi)$ as in Definition \ref{def:contactclosure}, the triple $(Y,r(R\times\{t\}),\bar\xi)$ is a simple contact closure of $(M,\Gamma,\xi)$ as described at the top for any $t\in [0,1]$. In particular, each $r(R\times\{t\})$ is convex with negative region an annulus bounded by   essential curves. 
\end{remark}

\begin{definition}
A \emph{marked} contact closure of $(M,\Gamma,\xi)$ is a marked closure $\data = (Y,R,r,m,\eta)$ of $(M,\Gamma)$ together with a contact structure $\bar\xi$ on $Y$ such that $((Y,R,r,m),\bar\xi)$ is a contact closure as in Definition \ref{def:contactclosure} and $r(\eta\times\{0\})$ is dual to the core of the negative annular region of $r(R\times\{0\})$. 
\end{definition}

Suppose $(\data=(Y,R,r,m,\eta),\bar \xi)$ is a marked contact closure of $(M,\Gamma,\xi)$ of genus $g$. Let $r(R\times\{0\})_{\pm}$ denote the positive and negative regions of $r(R\times\{0\})$. It is a standard result in convex surface theory that 
\[\langle c_1(\spc_{\bar\xi}), r(R\times\{0\}) \rangle_Y = \chi(r(R\times\{0\})_+)-\chi(r(R\times\{0\})_-),\] which is equal to $2-2g$  in this case since $r(R\times\{0\})_-$ is an annulus. It follows that 
\[\langle c_1(\spc_{\bar\xi}), r(-R\times\{0\}) \rangle_{-Y} = 2g-2,\] which implies that \[\HMtoc(-Y,\spc_{\bar\xi};\Gamma_{-\eta})\subset \HMtoc(-Y|{-}R;\Gamma_{-\eta})=\SHMt(-\data),\] where $-\data$ is the corresponding marked closure of $(-M,-\Gamma)$. In particular, \[\psi(Y,\bar\xi)\in\SHMt(-\data).\] This leads to the following definition.

\begin{definition}
\label{def:contactinvariant} Given a  marked contact closure $(\data = (Y,R,r,m,\eta),\bar\xi)$  of $(M,\Gamma,\xi)$ of genus $g\geq g(M,\Gamma)$, we define $ \invt^g(M,\Gamma,\xi)$ to be the element of $\SHMtfun(-M,-\Gamma)$ determined by the equivalence class of  \[\invt(\data,\bar\xi):=\psi(Y,\bar\xi)\in\SHMt(-\data),\] in the sense of Remark \ref{rmk:completesubset2}. 
\end{definition}

In Section \ref{sec:well-definedness}, we will prove that $\invt^g(M,\Gamma,\xi)$ is well-defined for each $g$, per the following theorem.

\begin{theorem}
\label{thm:well-defined} If $(\data,\bar\xi)$ and $(\data',\bar\xi')$ are two  marked contact closures of $(M,\Gamma,\xi)$ of the same genus, then \[\Psit_{-\data,-\data'}(\invt(\data,\bar\xi)) \doteq \invt(\data',\bar\xi').\] 
\end{theorem}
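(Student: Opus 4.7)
The plan is to express the canonical isomorphism $\Psit_{-\data,-\data'}$ as the map on monopole Floer homology induced by a Stein cobordism, and then invoke the functoriality of Kronheimer--Mrowka's contact invariant under such cobordisms (Theorem \ref{thm:ht-functoriality} and Corollary \ref{cor:contactplusone}).

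First I would invoke Theorem \ref{thm:preclosurewelldefined} (and Corollary \ref{cor:flexibilitypreclosure}) to reduce to the case in which the underlying contact preclosures of $(\data, \bar\xi)$ and $(\data',\bar\xi')$ are identified by a contactomorphism isotopic to the identity outside a regular neighborhood of $\Gamma$. After absorbing this contactomorphism (and any flexibility isotopy) into the marking data, the only remaining differences between the two marked contact closures are the diffeomorphism used to glue $\partial_+ M'$ to $\partial_- M'$ to produce the closed 3-manifolds $Y$ and $Y'$, the marking embeddings $r, m$, and the chosen nonseparating curve $\eta \subset R$.

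The heart of the argument is the claim that $Y'$ can be obtained from $Y$ by a sequence of contact $(-1)$-surgeries on Legendrian knots supported in a convex collar $r(R\times[-1,1])$ of $R$ and disjoint from $m(M)$. This builds on three inputs: the isomorphism $\Psit_{\data,\data'}$ is constructed in \cite{bs3} from a diffeomorphism of $R$ extended by the identity on the preclosure to a diffeomorphism of the closure; any orientation-preserving diffeomorphism of $R$ is a product of Dehn twists along simple closed curves; and each Dehn twist along a Legendrian curve on a convex surface is realized by contact $(-1)$-surgery along that curve (pushed into the collar and then Legendrian realized by Theorem \ref{thm:flexibility}). Reversing orientations, the corresponding cobordism $-Y \to -Y'$ is Stein and built from contact $(+1)$-handles, so by iterating Corollary \ref{cor:contactplusone} the induced map $\HMtoc(-Y) \to \HMtoc(-Y')$ sends $\psi(Y,\bar\xi)$ to $\pm \psi(Y',\bar\xi')$, and the same conclusion holds with local coefficients up to a unit in $\RR$.

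I expect the main obstacle to be identifying this composite of Legendrian surgery cobordism maps with the canonical isomorphism $\Psit_{-\data,-\data'}$ itself, up to a unit in $\RR$. This amounts to checking that the smooth 4-manifold obtained by attaching these 2-handles, viewed as a cobordism between the closures $Y$ and $Y'$ relative to $m(M)$, agrees (up to cobordisms inducing identity-equivalent maps) with the cobordism used in \cite{bs3} to define $\Psit_{\data,\data'}$; this is essentially a topological bookkeeping statement about how diffeomorphisms of $R$ are realized by handle attachments along curves in $r(R\times[-1,1])$. Finally, to absorb the freedom in $\eta$, I would observe that any two oriented nonseparating curves in $R$ both dual to the core of the negative annular region are related by an orientation-preserving diffeomorphism of $R$ preserving that region setwise, and that the resulting ambiguity in the local system $\Gamma_\eta$ is already subsumed by the diffeomorphism framework used to define $\Psit$.
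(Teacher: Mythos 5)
Your proposal captures the right ingredients — reduce via Theorem \ref{thm:preclosurewelldefined}, express diffeomorphisms of $R$ as Dehn twists, Legendrian realize curves in the collar $r(R\times[-1,1])$, and invoke Corollary \ref{cor:contactplusone} — and in that sense runs parallel to the paper's argument. But there is a structural mismatch where you try to ``express the canonical isomorphism $\Psit_{-\data,-\data'}$ as the map on monopole Floer homology induced by a Stein cobordism'' from $Y$ to $Y'$. The canonical isomorphism is \emph{not} defined that way in \cite{bs3}: it is a zigzag
\[
\Psit_{-\data,-\data'}=\Theta^{\bar C} \circ\HMtoc({-}X_+|{-}R;\Gamma_{-\nu})\circ \HMtoc({-}X_-|{-}R;\Gamma_{-\nu})^{-1},
\]
going through an intermediate closed manifold $(Y)_-$ obtained by doing $(-1)$-surgeries, where $X_-$ and $X_+$ are cobordisms that both go \emph{upward} from $(Y)_-$, and the inverse on $\HMtoc(-X_-)$ is an honest inverse of an isomorphism rather than a cobordism map. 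The ``main obstacle'' you flag — matching your single composite of Legendrian surgery cobordisms with $\Psit$ — is therefore not a bookkeeping identification of two 4-manifolds; you would instead be proving a genuinely new presentation of $\Psit$, which is not needed. The paper avoids this entirely by unpacking the definition of $\Psit$ directly: $X_-$ is a contact $(+1)$-surgery cobordism, so $\HMtoc({-}X_-)$ carries $\psi((Y)_-)$ to $\psi(Y)$, and since it is an isomorphism its inverse carries $\psi(Y)$ back to $\psi((Y)_-)$; then $X_+$ and $\Theta^{\bar C}$ carry that to $\psi(Y')$. You should likewise unpack the zigzag rather than seek a single Stein cobordism replacing it.

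A secondary, orientation-related issue: you assert $Y'$ is obtained from $Y$ by contact $(-1)$-surgeries and then apply Corollary \ref{cor:contactplusone}, but that corollary produces a map $\HMtoc(-Y)\to\HMtoc(-Y')$ sending $\psi(Y,\xi)$ to $\psi(Y',\xi')$ precisely when $Y'$ is obtained from $Y$ by contact \emph{$(+1)$}-surgery; Legendrian ($(-1)$-) surgery would give an arrow going the other way. In the paper the roles of $(+1)$- and $(-1)$-surgeries are kept carefully separated between $X_-$ and $X_+$, and the surface framings are shown to equal contact framings after Legendrian realization (because the curves $a_i$ avoid the dividing set $\partial A_1$) so that the smooth $(\pm 1)$-surgeries in the definition of $\Psit$ are indeed contact $(\pm 1)$-surgeries. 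This framing verification is a point you would still need to make explicit; the ``Legendrian realization'' you mention gets the curves Legendrian, but you must also confirm that the surgery framing in the definition of $\Psit$ agrees with the relevant contact framing shift.
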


Furthermore, we will show that for $g$ sufficiently large, the  contact elements $\invt^g(M,\Gamma,\xi)$ are equal, per the following theorem.

\begin{theorem}
\label{thm:well-defined2} For every $(M,\Gamma,\xi)$, there is an integer $N(M,\Gamma,\xi)$ such that if $(\data,\bar\xi)$ and $(\data',\bar\xi')$ are marked contact closures of $(M,\Gamma,\xi)$ of genus at least $N(M,\Gamma,\xi)$, then \[\Psit_{-\data,-\data'}(\invt(\data,\bar\xi)) \doteq \invt(\data',\bar\xi').\] 
\end{theorem}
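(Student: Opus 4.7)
The plan is to follow the strategy sketched in the introduction, exploiting the existence part of the relative Giroux correspondence to reduce the problem to the special case of a product sutured contact manifold, where the sutured monopole homology is free of rank one and the contact invariant is a primitive generator.

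First, I would invoke the existence part of the relative Giroux correspondence to produce a partial open book decomposition compatible with $(M,\Gamma,\xi)$. This exhibits $(M,\Gamma,\xi)$ as the result of attaching finitely many contact $2$-handles to the tight product sutured contact manifold $H(S)=(S\times[-1,1],\partial S\times\{0\},\xi_S)$ for some compact surface $S$ with nonempty boundary. By Theorem~\ref{thm:contacthandle}, the associated composition of contact $2$-handle maps gives a morphism
\[
\mathscr{H}\colon \SHMtfun(-H(S))\to\SHMtfun(-M,-\Gamma)
\]
which sends $\invt^g(H(S))$ to $\invt^g(M,\Gamma,\xi)$ for every $g$ exceeding some threshold $N_1(M,\Gamma,\xi)$ determined by the handle decomposition. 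Set $N(M,\Gamma,\xi)$ to be this threshold (possibly enlarged so that all required marked closures exist).

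Next, fix two marked contact closures $(\data,\bar\xi)$ and $(\data',\bar\xi')$ of $(M,\Gamma,\xi)$ with genera $g,h\geq N(M,\Gamma,\xi)$, and choose corresponding marked contact closures $(\data_S,\bar\xi_S)$ and $(\data'_S,\bar\xi'_S)$ of $H(S)$ of the same genera $g$ and $h$. By Remark~\ref{rmk:completesubset}, the morphism $\mathscr{H}$ is specified by its components $\mathscr{H}_{-\data_S,-\data}$ and $\mathscr{H}_{-\data'_S,-\data'}$, and these fit into a square with the canonical isomorphisms $\Psit_{-\data_S,-\data'_S}$ and $\Psit_{-\data,-\data'}$ that commutes up to multiplication by a unit. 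Theorem~\ref{thm:contacthandle} (applied to both closures) then translates to
\[
\mathscr{H}_{-\data_S,-\data}(\invt(\data_S,\bar\xi_S))\doteq\invt(\data,\bar\xi),\qquad
\mathscr{H}_{-\data'_S,-\data'}(\invt(\data'_S,\bar\xi'_S))\doteq\invt(\data',\bar\xi').
\]

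The final step is to verify the special case for $H(S)$, namely
\[
\Psit_{-\data_S,-\data'_S}(\invt(\data_S,\bar\xi_S))\doteq\invt(\data'_S,\bar\xi'_S).
\]
By Proposition~\ref{prop:productsutured}, $\SHMt(-\data_S)$ and $\SHMt(-\data'_S)$ are both isomorphic to $\RR$, and $\Psit_{-\data_S,-\data'_S}$ is an isomorphism between them, so it carries any generator to a unit multiple of any other generator. It therefore suffices to show that the contact invariants $\invt(\data_S,\bar\xi_S)$ and $\invt(\data'_S,\bar\xi'_S)$ are primitive. This is where I expect the main obstacle to lie: one must exhibit an explicit weak symplectic (or Stein) filling of the simple contact closure underlying each marked contact closure of $H(S)$, whose symplectic form pairs nontrivially with the curve specifying the local system, and then apply Theorem~\ref{thm:psi-weakly-fillable}. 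Any reasonable choice of closure of a product sutured manifold admits such a filling because the closure is a surface bundle over $S^1$ with a product contact structure, which is Stein fillable; combined with the nonseparating condition on $\eta$, this yields the nonvanishing. Substituting this into the commutative square and chasing elements then gives
\[
\Psit_{-\data,-\data'}(\invt(\data,\bar\xi))\doteq\invt(\data',\bar\xi'),
\]
completing the proof. Throughout, care must be taken to choose $N(M,\Gamma,\xi)$ large enough that the handle-map statements of Theorem~\ref{thm:contacthandle} apply simultaneously to all closures under consideration.
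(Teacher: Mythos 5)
Your overall strategy is the same as the paper's: use the existence half of the relative Giroux correspondence to exhibit $(M,\Gamma,\xi)$ as built from $H(S)$ by contact $2$-handle attachments, push the problem through the commutative square relating $\mathscr{H}$ and the $\Psit$ maps, and reduce to showing that the two contact classes of $H(S)$ correspond under $\Psit_{-\data_S,-\data_S'}$, which follows because both generate modules isomorphic to $\RR$. This is exactly Subsubsection~\ref{sssec:indepgenus} of the paper (with the genus threshold $N(M,\Gamma,\xi)$ set to the minimum of $g(H(S))$ over partial open book decompositions), so the structure of your argument is correct.

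However, the justification you give for primitivity of $\invt(\data_S,\bar\xi_S)$ is wrong in a way worth flagging: these contact closures of $H(S)$ are \emph{not} Stein fillable, and your claim that they are a ``surface bundle over $S^1$ with a product contact structure'' misdescribes them. The closure $((S\cup F)\times S^1,\bar\xi)$ carries an $S^1$-invariant contact structure whose negative region on each fiber is a nonseparating annulus; Wendl's result (\cite[Corollary~2]{wen3}, cited in Remark~\ref{rem:why-twisted}) shows that the \emph{untwisted} contact invariant of $\bar\xi$ vanishes, which already rules out strong and Stein fillability. The correct input, used in Propositions~\ref{prop:darboux-ball-invt} and~\ref{prop:handlebody-invt}, is the Niederkr{\"u}ger--Wendl \emph{weak} fillability of such $S^1$-invariant contact structures, together with a choice of $\eta$ whose class is (up to scalar) the Poincar\'e dual of the restricted symplectic form, so that Theorem~\ref{thm:psi-weakly-fillable} applies with twisted coefficients $\Gamma_{-\eta}$. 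If one tried to carry out your version by producing a Stein filling, the step would simply not go through — and, tellingly, accepting ``Stein fillable'' would make the insistence on local coefficients throughout the paper look superfluous, which is the opposite of the truth.
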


This theorem motivates the following definition.

\begin{definition}
\label{def:contactinvariantuniv} We define \[\invt(M,\Gamma,\xi):=\invt^g(M,\Gamma,\xi)\in\SHMtfun(-M,-\Gamma)\] for any $g\geq N(M,\Gamma,\xi)$.

\end{definition}

We will prove Theorems \ref{thm:well-defined} and \ref{thm:well-defined2} in Section \ref{sec:well-definedness}.

\subsection{Properties}  Below, we assume  Theorems \ref{thm:well-defined} and \ref{thm:well-defined2}  hold in order to state and prove some basic properties about our contact invariants. We state these results for the invariants $\invt^g$. By Theorem \ref{thm:well-defined2}, they also hold for the invariant $\invt$.

\begin{lemma}
\label{lem:contactomorphism} Suppose  $f$ is a contactomorphism from $(M,\Gamma,\xi)$ to $(M',\Gamma',\xi')$. Then the induced map \[\SHMtfun(f):\SHMtfun(-M,-\Gamma)\to\SHMtfun(-M',-\Gamma')\] sends $\invt^g(M,\Gamma,\xi)$ to $\invt^g(M',\Gamma',\xi')$.
\end{lemma}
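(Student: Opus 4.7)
The plan is to verify directly that any marked contact closure of $(M',\Gamma',\xi')$ pulls back via $f$ to a marked contact closure of $(M,\Gamma,\xi)$ with the same underlying closed contact manifold, and then to read off the lemma from the definition of $\SHMtfun(f)$ at the level of closures.

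Concretely, fix $g\geq g(M',\Gamma')=g(M,\Gamma)$ and a marked contact closure $(\data',\bar\xi')$ of $(M',\Gamma',\xi')$ with $\data'=(Y',R',r',m',\eta')$, so that $\invt(\data',\bar\xi')=\psi(Y',\bar\xi')$ represents $\invt^g(M',\Gamma',\xi')$. Form the marked closure $\data'_f=(Y',R',r',m'\circ f,\eta')$ of $(M,\Gamma)$ as in \eqref{eqn:dataf}; by the construction of $\SHMtfun(f)$ recalled there (applied on the orientation-reversed side), the $(-\data'_f,-\data')$-component of $\SHMtfun(f)$ is the equivalence class of the identity map $\SHMt(-\data'_f)\to\SHMt(-\data')$. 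If I can show that $(\data'_f,\bar\xi')$ is a marked contact closure of $(M,\Gamma,\xi)$, then $\invt(\data'_f,\bar\xi')$ and $\invt(\data',\bar\xi')$ both equal $\psi(Y',\bar\xi')$ in the same module, they represent $\invt^g(M,\Gamma,\xi)$ and $\invt^g(M',\Gamma',\xi')$ respectively, and the identity map identifies them. This is the desired conclusion.

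The main step is therefore the contact-closure verification. Since $f$ carries $\Gamma$ to $\Gamma'$ as a contactomorphism, $f^{-1}(N(\Gamma'))$ is a regular neighborhood of $\Gamma$; since $m'|_{M'\ssm N(\Gamma')}$ and $f$ are both contact, the restriction of $m'\circ f$ to $M\ssm f^{-1}(N(\Gamma'))$ is a contact embedding into $(Y',\bar\xi')$, giving Condition (1) of Definition \ref{def:contactclosure}. For Condition (2), suppose the contact preclosure witnessing $(\data',\bar\xi')$ has the form $(N',\xi_{N'})=M'\cup_{h'}F\times[-1,1]$ with an extending contactomorphism $\tilde m':(N',\xi_{N'})\to(Y'\ssm\inr(\Img(r')),\bar\xi')$. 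Using the same auxiliary surface $F$ and arc configuration, form the contact preclosure $(N,\xi_N)=M\cup_{h'\circ f^{-1}|_{A(\Gamma)}}F\times[-1,1]$ of $(M,\Gamma,\xi)$; the contactomorphism $f$ extends by the identity on $F\times[-1,1]$ to a contactomorphism $(N,\xi_N)\to(N',\xi_{N'})$, which composed with $\tilde m'$ produces the required extension of $m'\circ f$. Condition (3), concerning $r^*(\bar\xi')$, is unaffected because $r$ is the same embedding.

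The one subtlety is that in constructing $(N,\xi_N)$ above, the ambient contact structure on $M$ near $\Gamma$ has been perturbed to $\xi_h$ using the vertically invariant collar and flexibility, and one must check that $f$ intertwines these perturbations with those used for $M'$; this will be the main obstacle to a fully rigorous argument. However, since Theorem \ref{thm:preclosurewelldefined} and Corollary \ref{cor:flexibilitypreclosure} establish uniqueness of the contact preclosure up to flexibility and contactomorphism isotopic to the identity away from $\Gamma$, any two choices of perturbation data yield contact preclosures whose closures produce the same $\psi$-class. With the verification in place, the argument in the second paragraph finishes the proof.
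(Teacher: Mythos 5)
Your proposal is correct and follows essentially the same approach as the paper: observe that $(\data'_f,\bar\xi')$ is a marked contact closure of $(M,\Gamma,\xi)$, so that both invariants are literally the class $\psi(Y',\bar\xi')$ in the same module, identified by the identity map that computes $\SHMtfun(f)$. The paper simply asserts that $(\data'_f,\bar\xi')$ is a marked contact closure of $(M,\Gamma,\xi)$ without spelling out the verification of Definition \ref{def:contactclosure}; your extra paragraph checking conditions (1)--(3) and noting that the perturbation data near $\Gamma$ is handled by Theorem \ref{thm:preclosurewelldefined} and Corollary \ref{cor:flexibilitypreclosure} just makes explicit what the paper leaves implicit.
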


\begin{proof}
Essentially, a contactomorphism gives rise  to contactomorphic closures. More precisely, suppose $(\data',\bar\xi')$ is a marked contact closure of $(M',\Gamma',\xi')$. Since $f$ is a contactomorphism,  $(\data'_f,\bar\xi')$, as defined in \eqref{eqn:dataf}, is a marked contact closure of $(M,\Gamma,\xi)$. According to the definition of $\SHMtfun(f)$ in Subsection \ref{ssec:shm}, it suffices to show that the identity map \[id_{-\data'_f,-\data'}:\SHMt(-\data'_f)\to\SHMt(-\data')\] sends $\invt(\data'_f,\bar\xi')$ to $\invt(\data',\bar\xi')$. But this is immediate since \[\invt(\data'_f,\bar\xi') = \psi(Y',\bar \xi') = \invt(\data',\bar\xi'). \qedhere\] 
\end{proof}

Since the map $\SHMtfun(f)$  only depends on the isotopy class of $f$, we have the following.

\begin{corollary}
\label{cor:isotopyindependence}
Suppose $(M,\Gamma,\xi)$ and $(M,\Gamma,\xi')$ are sutured contact manifolds such that   $\xi$ and $\xi'$ are isotopic through diffeomorphisms fixing $\Gamma$. Then $\invt^g(M,\Gamma,\xi)=\invt^g(M,\Gamma,\xi')$. \qed
\end{corollary}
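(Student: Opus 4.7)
The plan is to reduce the statement directly to Lemma \ref{lem:contactomorphism} and the fact (noted just before the corollary) that $\SHMtfun(f)$ depends only on the sutured isotopy class of the diffeomorphism $f$. The hypothesis furnishes an isotopy $\{f_t\}_{t \in [0,1]}$ of diffeomorphisms of $M$ with $f_0 = \mathrm{id}_M$, with each $f_t$ preserving $\Gamma$ as an oriented multicurve, and with $(f_1)_*\xi = \xi'$. In particular, every $f_t$ is a diffeomorphism of the sutured manifold $(M,\Gamma)$, and $f_1 : (M,\Gamma,\xi) \to (M,\Gamma,\xi')$ is a contactomorphism of sutured contact manifolds.

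Given this setup, I would first apply Lemma \ref{lem:contactomorphism} to the contactomorphism $f_1$ to conclude that
\[
\SHMtfun(f_1)\bigl(\invt^g(M,\Gamma,\xi)\bigr) = \invt^g(M,\Gamma,\xi').
\]
Next, I would invoke the functoriality of $\SHMtfun$ on $\DiffSut$ established in Subsection \ref{ssec:shm}: the morphism $\SHMtfun(f_1)$ depends only on the isotopy class of $f_1$ as a diffeomorphism of $(M,\Gamma)$. Because the family $\{f_t\}$ is precisely such an isotopy from $\mathrm{id}_{(M,\Gamma)}$ to $f_1$, we obtain
\[
\SHMtfun(f_1) = \SHMtfun(\mathrm{id}_{(M,\Gamma)}) = [\mathrm{id}_{\SHMtfun(-M,-\Gamma)}].
\]
Combining these two equalities yields $\invt^g(M,\Gamma,\xi) = \invt^g(M,\Gamma,\xi')$, as desired.

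There is essentially no obstacle here; the work was carried out in Lemma \ref{lem:contactomorphism} and the functoriality results of \cite{bs3} recalled in Subsection \ref{ssec:shm}. The only point requiring any care is that the isotopy $\{f_t\}$ provided by the hypothesis must genuinely be an isotopy of sutured diffeomorphisms of $(M,\Gamma)$ (so that $\SHMtfun$ applies to it), but this is built directly into the phrase ``diffeomorphisms fixing $\Gamma$'' in the statement of the corollary.
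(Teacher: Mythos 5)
Your proof is correct and follows exactly the argument the paper intends: apply Lemma \ref{lem:contactomorphism} to the contactomorphism $f_1$, then observe that since $f_1$ is isotopic to the identity through diffeomorphisms of $(M,\Gamma)$, functoriality of $\SHMtfun$ on $\DiffSut$ gives $\SHMtfun(f_1) = [\mathrm{id}]$. This is precisely the one-line justification the paper offers immediately before stating the corollary.
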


The following corollary should be thought of as saying that  $\invt^g(M,\Gamma,\xi)$ is essentially independent of the particular choice of multicurve $\Gamma$ dividing $(\partial M)_\xi$.

\begin{corollary}
\label{cor:independenceofdividingset}
Suppose $(M,\Gamma,\xi)$ and $(M,\Gamma',\xi)$ are sutured contact manifolds with the same underlying contact manifold but different dividing sets. Then there is a canonical isomorphism  \[\Psit_{\xi,\Gamma,\Gamma'}:\SHMtfun(-M,-\Gamma)\to\SHMtfun(-M,-\Gamma')\] sending $\invt^g(M,\Gamma,\xi)$ to $\invt^g(M,\Gamma',\xi)$.
\end{corollary}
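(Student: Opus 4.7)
The plan is to exploit the contractibility of the space of multicurves dividing the characteristic foliation $(\partial M)_\xi$ (cited in Subsection~\ref{ssec:convex} from Massot). Picking a smooth path $\Gamma_t \subset \partial M$ with $\Gamma_0 = \Gamma$ and $\Gamma_1 = \Gamma'$ through multicurves dividing $(\partial M)_\xi$, I would promote this to a contact isotopy $\Phi_t$ of $(M,\xi)$ tangent to $\partial M$ whose restriction to $\partial M$ carries $\Gamma$ to $\Gamma_t$. The time-$1$ map $\Phi_1$ will then be a contactomorphism of sutured contact manifolds from $(M,\Gamma,\xi)$ to $(M,\Gamma',\xi)$, so I would define
\[\Psit_{\xi,\Gamma,\Gamma'} := \SHMtfun(\Phi_1)\colon \SHMtfun(-M,-\Gamma)\to\SHMtfun(-M,-\Gamma').\]
Lemma~\ref{lem:contactomorphism} then immediately gives $\Psit_{\xi,\Gamma,\Gamma'}(\invt^g(M,\Gamma,\xi))=\invt^g(M,\Gamma',\xi)$.

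To produce the contact isotopy, I would first realize each $\Gamma_t$ as the dividing set of a transverse contact vector field $v_t$ for $\xi$ defined on a fixed vertically invariant collar $\partial M\times(-\epsilon,0]$, depending smoothly on $t$. The infinitesimal motion $\dot\Gamma_t \subset T(\partial M)$ is encoded by the restriction to $\partial M$ of the contact Hamiltonian generating the family $v_t$. Via standard convex surface theory I would extract from this data a time-dependent contact vector field $X_t$ on the collar, tangent to $\partial M$, whose flow on $\partial M$ sends $\Gamma$ along $\Gamma_t$; a bump function in the collar direction (with an added Hamiltonian correction to preserve the contact condition) would then extend $X_t$ to a time-dependent contact vector field on all of $(M,\xi)$, supported near $\partial M$, and its time-$1$ flow is the desired $\Phi_1$.

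For canonicity, I would verify that different choices of path $\Gamma_t$ yield the same morphism. Any two such paths are homotopic rel endpoints through paths of dividing multicurves (again by contractibility), and a homotopy between them produces a smooth isotopy between the resulting sutured diffeomorphisms $\Phi_1$ and $\Phi'_1$. Since $\SHMtfun$ descends to isotopy classes of sutured diffeomorphisms (Subsection~\ref{ssec:shm}), the equivalence class $\Psit_{\xi,\Gamma,\Gamma'}$ is independent of all choices.

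The hard part will be the contact isotopy extension step: passing from a smooth path of dividing multicurves on $\partial M$ to an honest contact isotopy of $(M,\xi)$ tangent to $\partial M$. The ingredients (contact Hamiltonians, vertically invariant collars, Lemma~\ref{lem:uniqueness}) are standard in convex surface theory, but the bookkeeping required to ensure that the cutoff contact vector field still realizes the prescribed boundary motion of the dividing set---rather than, for example, drifting off $\partial M$ in an unintended way---will need care.
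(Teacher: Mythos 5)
Your proposal is correct, but it takes a genuinely different route from the paper's. The paper does \emph{not} attempt to make the extension a contactomorphism. Instead it extends the boundary isotopy $\varphi_r$ to a diffeomorphism $\varphi$ of $M$ by the cutoff formula \eqref{eqn:phiextension}, which is a smooth but generically non-contact extension, and then \emph{decouples} the contact-geometric verification: $\varphi$ is a contactomorphism from $(M,\Gamma,\xi)$ to $(M,\Gamma',\varphi_*\xi)$, so Lemma \ref{lem:contactomorphism} sends $\invt^g(M,\Gamma,\xi)$ to $\invt^g(M,\Gamma',\varphi_*\xi)$, and Giroux's Uniqueness Lemma (Lemma \ref{lem:uniqueness}) shows $\varphi_*\xi$ is isotopic rel $\partial M$ to $\xi$, so Corollary \ref{cor:isotopyindependence} identifies the latter with $\invt^g(M,\Gamma',\xi)$. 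This sidesteps entirely the contact isotopy extension you identify as the ``hard part.'' Your approach front-loads the work into constructing a genuine contact isotopy $\Phi_t$ realizing the prescribed boundary motion; once built, $\Phi_1$ is an honest contactomorphism and you only need Lemma \ref{lem:contactomorphism}. That construction is doable---one solves for a family of contact Hamiltonians with the appropriate boundary conditions, at the cost of some bookkeeping at singular points of the characteristic foliation where tangency forces the Hamiltonian to vanish---but it is more delicate than the paper's route. The two constructions define the same morphism in the end, since $\varphi$ and $\Phi_1$ extend isotopic boundary isotopies and hence are smoothly isotopic, so $\SHMtfun(\varphi)=\SHMtfun(\Phi_1)$; and your homotopy-of-paths argument for canonicity parallels the paper's connectedness argument. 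One small omission: you should also record the transitivity $\Psit_{\xi,\Gamma,\Gamma''} = \Psit_{\xi,\Gamma',\Gamma''}\circ\Psit_{\xi,\Gamma,\Gamma'}$ required for ``canonical,'' which follows immediately by composing the contactomorphisms, exactly as the paper does for its $\varphi$'s.
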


\begin{proof}
Since the  set of multicurves dividing  $(\partial M)_\xi$ is connected, there is an isotopy \[\varphi_r:\partial M\rightarrow \partial M,\,r\in[0,1],\] such that $\varphi_0={ id}$, each $\varphi_r$ preserves  $(\partial M)_{\xi}$, and $\varphi_1(\Gamma_i)=\Gamma_i'$. Suppose $\partial M\times(-\infty,0]$ is a vertically invariant collar of $\partial M$, and extend $\varphi_r$ to a diffeomorphism \[\varphi:M\to M\] as in \eqref{eqn:phiextension}. It is easy to see, using Lemma \ref{lem:uniqueness}, that $\xi$ and $\varphi_*(\xi)$ are isotopic by an isotopy stationary on $\partial M$. It then follows from Lemma \ref{lem:contactomorphism} and Corollary \ref{cor:isotopyindependence} that \[\Psit_{\xi,\Gamma,\Gamma'}:=\SHMtfun(\varphi):\SHMtfun(-M,-\Gamma)\to\SHMtfun(-M,-\Gamma')\] sends $\invt^g(M,\Gamma,\xi)$ to $\invt^g(M,\Gamma',\xi)$.
That this isomorphism is ``canonical" amounts to showing that it does not depend on the  choices of $\varphi$ or the collar (that it is well-defined), and that \begin{equation}\label{eqn:transitivitypsi}\Psit_{\xi,\Gamma,\Gamma''} = \Psit_{\xi,\Gamma',\Gamma''}\circ \Psit_{\xi,\Gamma,\Gamma'}\end{equation} for any three multicurves $\Gamma,\Gamma',\Gamma''$ dividing $(\partial M)_\xi$. The connectedness of the space of such collars implies that  $\varphi$ is independent, up to isotopy stationary on $\partial M$, of the collar. Thus,  $\SHMtfun(\varphi)$ is independent of the collar. With that established, let us fix some collar, and suppose $\varphi^1$ and $\varphi^2$ are  diffeomorphisms of $M$ as defined above. The contractibility of the space of multicurves dividing $(\partial M)_\xi$ implies that $\varphi^1$ and $\varphi^2$ are  isotopic. It follows that \[\SHMtfun(\varphi^1)=\SHMtfun(\varphi^2).\] Thus, $\Psit_{\xi,\Gamma,\Gamma'} $ is well-defined. Now, suppose $\varphi$ and $\varphi'$ are diffeomorphisms of $M$ of the sort used to define the maps $\Psit_{\xi,\Gamma,\Gamma'}$ and $\Psit_{\xi,\Gamma',\Gamma''}$. The transitivity in \eqref{eqn:transitivitypsi} follows immediately from the fact that  $\varphi'':=\varphi'\circ\varphi$ is a diffeomorphism of the sort used to define $\Psit_{\xi,\Gamma,\Gamma''}$. \end{proof}

The  corollary below indicates the invariance of $\invt^g$ with respect to flexibility. 

\begin{corollary}
\label{cor:contactinvariantflexibility} Suppose  $(M,\Gamma,\xi)$ and $(M,\Gamma,\xi')$ are related by flexibility. Then $\invt^g(M,\Gamma,\xi)=\invt^g(M,\Gamma,\xi')$.
\end{corollary}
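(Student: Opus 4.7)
The plan is to lift the flexibility relation from $(M,\Gamma)$ to a relation between marked contact closures via Corollary~\ref{cor:flexibilitypreclosure}, and then invoke the naturality of Kronheimer--Mrowka's closed-manifold contact invariant $\invt$.

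Fix a genus~$g$ marked contact closure $(\data,\bar\xi)$ of $(M,\Gamma,\xi)$, with $\data = (Y,R,r,m,\eta)$, and isolate its underlying contact preclosure $(M_1',\xi_1')$. Using the same auxiliary surface, neighborhood $A(\Gamma)$, gluing map $h$, and arc configuration $\mathcal{A}$, form a contact preclosure $(M_2',\xi_2')$ of $(M,\Gamma,\xi')$ on the same underlying smooth $3$-manifold. Corollary~\ref{cor:flexibilitypreclosure} supplies a contactomorphism $\Phi\colon(M_1',\xi_1')\to(M_2',\xi_2')$ (up to flexibility) smoothly isotopic to a map that restricts to the identity on $M\ssm N(\Gamma)$. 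Let $\tau\colon\partial_+M_1'\to\partial_-M_1'$ be the boundary identification used to form $Y$, and glue $\partial_+M_2'$ to $\partial_-M_2'$ via $\Phi|_{\partial_-M_1'}\circ\tau\circ(\Phi|_{\partial_+M_1'})^{-1}$. With this compatibility, $\Phi$ extends across the glued boundary to a contactomorphism $\widetilde\Phi\colon(Y,\bar\xi)\to(Y',\bar\xi')$ (up to flexibility), and transporting the remaining marked data through $\widetilde\Phi$ yields a genus $g$ marked contact closure $(\data',\bar\xi')$ of $(M,\Gamma,\xi')$ for which $\widetilde\Phi$ is a diffeomorphism of marked smooth closures.

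Since $\widetilde\Phi$ carries the entire marked-closure data of $\data$ onto that of $\data'$, the induced isomorphism $\widetilde\Phi_*\colon\SHMt(-\data)\to\SHMt(-\data')$ equals, up to a unit in $\RR$, the canonical isomorphism $\Psit_{-\data,-\data'}$ of \cite{bs3}. At the same time, $\widetilde\Phi$ being a contactomorphism (up to flexibility, which changes $\bar\xi'$ only by an isotopy of $Y'$), combined with the invariance of Kronheimer--Mrowka's invariant under contactomorphism and contact isotopy, gives $\widetilde\Phi_*(\invt(Y,\bar\xi))\doteq\invt(Y',\bar\xi')$. Together these yield $\Psit_{-\data,-\data'}(\invt(\data,\bar\xi))\doteq\invt(\data',\bar\xi')$, which by Theorem~\ref{thm:well-defined} says that $\invt^g(M,\Gamma,\xi)=\invt^g(M,\Gamma,\xi')$ in $\SHMtfun(-M,-\Gamma)$.

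The main obstacle is establishing the identification of $\widetilde\Phi_*$ with $\Psit_{-\data,-\data'}$: this is a naturality property of the construction in \cite{bs3}, asserting that two marked closures related by an ambient diffeomorphism preserving all marked data are canonically identified via the induced map on monopole Floer homology. A secondary technical point is arranging the boundary identifications so that $\Phi$ extends across the closure; this uses the freedom to isotope $\Phi$ near $\partial M_1'$ within its flexibility/isotopy class so that the two gluings match on the nose.
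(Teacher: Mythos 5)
Your approach differs from the paper's in a structurally meaningful way, and as written it contains a gap at a crucial point. The paper fixes the marked closure $\data = (Y,R,r,m,\eta)$ of $(M,\Gamma,\xi)$ once and for all and observes, using Corollary~\ref{cor:flexibilitypreclosure} together with Giroux's Flexibility Theorem~\ref{thm:flexibility}, that one can replace $\bar\xi$ by an \emph{isotopic} contact structure $\bar\xi'$ on the very same $Y$ for which $(\data,\bar\xi')$ is a marked contact closure of $(M,\Gamma,\xi')$; then $\invt(\data,\bar\xi)=\psi(Y,\bar\xi)=\psi(Y,\bar\xi')=\invt(\data,\bar\xi')$ by isotopy invariance of $\psi$, and the corollary follows from Definition~\ref{def:contactinvariant} with no invocation of $\Psit$, naturality, or a change of underlying $3$-manifold. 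You instead build a second closed manifold $Y'$ and a contactomorphism $\widetilde\Phi:(Y,\bar\xi)\to(Y',\bar\xi')$, and then must identify $\widetilde\Phi_*$ with $\Psit_{-\data,-\data'}$; this can be made to work, but it is a significantly heavier route than what the situation requires.

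The gap is in the sentence where you declare that transporting the marked data through $\widetilde\Phi$ ``yields a genus $g$ marked contact closure $(\data',\bar\xi')$ of $(M,\Gamma,\xi')$.'' Set $m'=\widetilde\Phi\circ m$. Since $m$ restricts to a contact embedding of $(M\ssm N(\Gamma),\xi)$ into $(Y,\bar\xi)$ and $\widetilde\Phi$ is a contactomorphism $(Y,\bar\xi)\to(Y',\bar\xi')$, the composite $m'$ restricts to a contact embedding of $(M\ssm N(\Gamma),\xi)$ --- not $(M\ssm N(\Gamma),\xi')$ --- into $(Y',\bar\xi')$. Because flexibility modifies $\xi$ on a collar of \emph{all} of $\partial M$ (not just near $\Gamma$), $\xi\ne\xi'$ on $M\ssm N(\Gamma)$, so condition~(1) of Definition~\ref{def:contactclosure} for $(M,\Gamma,\xi')$ fails for this $m'$. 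In other words, $(\data',\bar\xi')$ as you define it is a marked contact closure of $(M,\Gamma,\xi)$ rather than of $(M,\Gamma,\xi')$, and Theorem~\ref{thm:well-defined} applied to it says nothing about $\invt^g(M,\Gamma,\xi')$. To repair this one must instead take $m'$ to be the \emph{natural} embedding of $M$ into $M_2'\subset Y'$, so that $(\data',\bar\xi')$ genuinely closes $(M,\Gamma,\xi')$; but then $\widetilde\Phi\circ m$ and $m'$ agree only after the smooth isotopy supplied by Corollary~\ref{cor:flexibilitypreclosure}, and the identification of $\widetilde\Phi_*$ with $\Psit_{-\data,-\data'}$ requires combining that smooth isotopy with the isotopy invariance of monopole Floer homology --- a legitimate but nontrivial verification that you flag as the ``main obstacle'' without carrying it out. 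The paper's argument avoids all of this by never changing the underlying marked closure at all.
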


\begin{proof}
Suppose $(\data=(Y,R,r,m,\eta),\bar\xi)$ is a marked contact closure of $(M,\Gamma,\xi)$.   Corollary \ref{cor:flexibilitypreclosure}, together with Theorem \ref{thm:flexibility},  implies that $\bar\xi$ is isotopic to a contact structure $\bar\xi'$ 
for which $(\data,\bar\xi')$ is a marked contact closure of $(M,\Gamma,\xi')$. Then $\invt^g(M,\Gamma,\xi)=\invt^g(M,\Gamma,\xi')$ since $\invt(\data,\bar\xi)=\invt(\data,\bar\xi')$.
\end{proof}

\begin{remark} The results above allow us to largely  ignore, when dealing with the invariants $\invt^g$ and $\invt$, the differences between contact structures related by flexibility or isotopy. Accordingly, we will frequently work on the level of dividing sets rather than characteristic foliations and will often think of dividing sets as \emph{isotopy classes} of multicurves.
\end{remark}

Note if $(M,\Gamma,\xi)$ is overtwisted, then so is any contact closure $(\data,\bar\xi)$ of $(M,\Gamma,\xi)$. This implies that  $\invt(\data,\bar\xi)=0$, by Theorem \ref{thm:hm-ot}. The theorem below follows immediately. 

\begin{theorem}
\label{thm:ot}
If $(M,\Gamma,\xi)$ is overtwisted, then $\invt^g(M,\Gamma,\xi)=0$.\qed
\end{theorem}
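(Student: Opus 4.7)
The plan is to reduce the statement directly to Theorem \ref{thm:hm-ot}, by showing that the overtwistedness of $(M,\Gamma,\xi)$ propagates to every marked contact closure of it.

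First, I would fix a marked contact closure $(\data = (Y,R,r,m,\eta), \bar\xi)$ of $(M,\Gamma,\xi)$ of some genus $g \geq g(M,\Gamma)$. By Definition \ref{def:contactclosure}, the embedding $m$ restricts to a contact embedding of $(M \ssm N(\Gamma), \xi)$ into $(Y, \bar\xi)$ for some regular neighborhood $N(\Gamma)$ of $\Gamma$. By Definition \ref{def:contactinvariant}, we have $\invt(\data,\bar\xi) = \psi(Y,\bar\xi)$, so it suffices to prove $(Y,\bar\xi)$ is overtwisted as a closed contact manifold, after which Theorem \ref{thm:hm-ot} gives $\psi(Y,\bar\xi) = 0$ and hence $\invt^g(M,\Gamma,\xi) = 0$.

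The one technical point is that the overtwisted disk in $(M,\Gamma,\xi)$ might a priori lie near the boundary, where $m$ is not required to be a contact embedding. To handle this, I would invoke the standard fact from convex surface theory that since $\partial M$ is convex, a collar neighborhood of $\partial M$ in $(M,\xi)$ is contactomorphic to a vertically invariant neighborhood determined by $\Gamma$, which is tight. Consequently any overtwisted disk $D \subset (M,\xi)$ may be isotoped (as a contact submanifold) away from $\partial M$, in particular into the complement of $N(\Gamma)$. Then $m(D) \subset (Y,\bar\xi)$ is an overtwisted disk, so $(Y,\bar\xi)$ is overtwisted.

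The main obstacle, if any, is only this mild point about pushing the overtwisted disk off the boundary; the rest is an immediate consequence of the functoriality of Kronheimer--Mrowka's closed-manifold contact invariant under the definitional unpacking of $\invt(\data,\bar\xi)$. Since the argument works for every marked contact closure of genus $g$, and the zero element is preserved under the canonical isomorphisms $\Psit_{-\data,-\data'}$, this proves $\invt^g(M,\Gamma,\xi) = 0$ as an element of $\SHMtfun(-M,-\Gamma)$.
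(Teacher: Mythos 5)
Your overall reduction --- arguing that every marked contact closure $(Y,\bar\xi)$ is overtwisted and then invoking Theorem \ref{thm:hm-ot} --- is exactly the paper's approach, which leaves this implication as a one-line remark. However, there is a gap in your justification of the key step. You claim that because the vertically invariant collar of $\partial M$ is tight, any overtwisted disk $D$ can be isotoped into $M\ssm N(\Gamma)$. Tightness of the collar only shows that $D$ cannot lie \emph{entirely} inside the collar; it provides no contact isotopy that carries a disk partially overlapping $N(\Gamma)$ off of it. Moreover, the regular neighborhood $N(\Gamma)$ appearing in Definition \ref{def:contactclosure} is furnished by the closure data and is not shrinkable at will; in the paper's construction (proof of Theorem \ref{thm:preclosurewelldefined}) it reaches a definite depth into $\inr(M)$, bounded by the annuli $G_i$, so there is no a priori reason a given $D$ misses it.

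A cleaner route, which is what the paper's one-line remark implicitly relies on, goes through condition (2) of Definition \ref{def:contactclosure} instead of condition (1): the restriction of $m$ extends to a contactomorphism from a contact preclosure $(M',\xi')=(M\cup_h F\times[-1,1],\,\xi_h\cup\Xi_{\mathcal{A}})$ onto $Y\ssm\inr(\Img(r))$, and this preclosure contains $(M,\xi_h)$ as a contact submanifold. Since $\xi_h$ is obtained from $\xi$ by pushing forward along a diffeomorphism and then applying flexibility, both of which preserve the tight/overtwisted dichotomy (see the remark following Proposition \ref{prop:differentfoliations}), $(M,\xi_h)$ is overtwisted, hence so are $(M',\xi')$ and $(Y,\bar\xi)$. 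Substituting this argument for the disk-isotopy claim completes your proof.
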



Given a closed contact 3-manifold $(Y,\xi)$, let  $Y(p)$ denote the sutured contact manifold obtained from $Y$ by removing  a Darboux ball centered at $p$. It is not hard  to show that there is a canonical isotopy class of contactomorphisms relating any two such manifolds for a given point $p$, justifying our notation. When it is not important to keep track of $p$, we will write $Y(1)$ instead (as  in the introduction), indicating that we have removed one Darboux ball. More generally, $Y(n)$ will refer to the (contactomorphism type of the) sutured contact manifold obtained  by removing $n$ disjoint Darboux balls. 

We will prove the following in Section \ref{sec:well-definedness}.

\begin{proposition}
\label{prop:darboux-complement}
There is a morphism 
\begin{equation*}
\label{eq:stein-one-handle}
F_p:\SHMtfun(-Y(p)) \to \HMtoc(-Y) \otimes_\mathbb{Z}\RR
\end{equation*}
which sends $\invt^g(Y(p))$ to $\psi(Y,\xi) \otimes \mathbf{1}$, where $\mathbf{1}$ refers to the equivalence class of $1\in\RR$. \end{proposition}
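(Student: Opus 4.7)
The plan is to realize each marked contact closure of $Y(p)$ as a contact connect sum $(Y,\xi)\,\#\,(N,\xi_N)$ and to apply Theorem \ref{thm:ht-functoriality} to the corresponding Weinstein $1$-handle cobordism. Given a marked contact closure $(\data = (Y',R,r,m,\eta), \bar\xi)$ of $(Y(p),\Gamma,\xi|_{Y(p)})$, the sphere $S := m(\partial Y(p))\subset Y'$ separates $Y'$ into $m(Y(p))$ and a complementary region $N_1$ with $\partial N_1 = S \cong S^2$. Filling $N_1$ along $S$ by a standard Darboux $3$-ball yields a closed contact $3$-manifold $(N,\xi_N)$ for which $(Y',\bar\xi) \cong (Y,\xi)\,\#\,(N,\xi_N)$ up to isotopy. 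Since $R$ and $\eta$ both lie in $N_1 \subset N$, the data $(N,R,r,m_N,\eta, \xi_N)$, where $m_N$ is the inclusion of the capping Darboux ball $B^3(1) = (D^2 \times I, \partial D^2 \times \{0\})$ into $N$, forms a marked contact closure of $B^3(1)$ equipped with its standard tight product contact structure.

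Applying Theorem \ref{thm:ht-functoriality} to the Weinstein $1$-handle cobordism $W$ from $(Y,\xi)\sqcup(N,\xi_N)$ to $(Y,\xi)\,\#\,(N,\xi_N)$, viewed as a cobordism from $-Y'$ to $-Y\sqcup -N$, yields a map
\[ \HMtoc(W): \HMtoc(-Y')\longrightarrow \HMtoc(-Y)\otimes_\Z \HMtoc(-N) \]
sending $\psi(Y',\bar\xi)$ to $\psi(Y,\xi)\otimes\psi(N,\xi_N)$ up to a unit in $\RR$. Restricting to the summand in the top $\Sc$-structures with respect to $R\subset N$, with local system $\Gamma_{-\eta}$ (supported on $\eta \subset R \subset N$), and invoking Proposition \ref{prop:productsutured} to identify $\HMtoc(-N|{-}R;\Gamma_{-\eta}) \cong \RR$ produces a map
\[ F_\data: \SHMt(-\data)\longrightarrow \HMtoc(-Y)\otimes_\Z \RR. \]
Since $(N,\xi_N)$ is Stein fillable (as a closure of a product sutured contact manifold, which is Stein fillable by filling the product piece and the closing piece separately and pasting), Theorem \ref{thm:psi-weakly-fillable} implies $\psi(N,\xi_N)$ is primitive. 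Normalizing the isomorphism $\HMtoc(-N|{-}R;\Gamma_{-\eta}) \xrightarrow{\sim} \RR$ to carry this generator to $\mathbf{1}$, we conclude $F_\data(\invt(\data,\bar\xi)) \doteq \psi(Y,\xi)\otimes \mathbf{1}$.

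To assemble these closure-level maps into a morphism $F_p$ in $\RPSys$ (using Remark \ref{rmk:completesubset} to extend from contact closures to arbitrary marked closures of $-Y(p)$), one must verify that $F_\data$ and $F_{\data'}$ intertwine the canonical isomorphism $\Psit_{-\data,-\data'}$ for any pair of marked contact closures. I expect this naturality to be the main obstacle: the splicing cobordisms defining $\Psit$ do not a priori respect the connect-sum splitting, so one must arrange the auxiliary data for $\data$ and $\data'$ so that the relating cobordism is supported entirely within the $N$-factor. Under such a choice, $\Psit_{-\data,-\data'}$ factors as the identity on $\HMtoc(-Y)$ tensored with the corresponding canonical isomorphism between closures of $-B^3(1)$, which under the identification with $\RR$ is multiplication by a unit. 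Naturality of $F_p$ then reduces to the well-definedness (up to units) of $\invt^g(B^3(1)) \in \SHMtfun(-B^3(1)) \cong \RR$, which is the $B^3(1)$ case of Theorem \ref{thm:well-defined} and is built into the construction of $\SHMtfun$ on product sutured manifolds.
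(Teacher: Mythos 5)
Your construction is essentially the paper's, run in the opposite direction: the paper builds closures of $Y(p)$ by starting with a closure $\data$ of $(B^3,S^1,\xi_{std})$ and connect-summing with $(Y,\xi)$, whereas you start with an arbitrary marked contact closure of $Y(p)$ and decompose it along the sphere $m(\partial Y(p))$ into $(Y,\xi)\#(N,\xi_N)$. Both are legitimate, and both define $F_p$ by applying Theorem~\ref{thm:ht-functoriality} to the Weinstein $1$-handle cobordism and invoking Remark~\ref{rmk:completesubset} to complete the collection of closure-level maps to a morphism in $\RPSys$.

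There is, however, a concrete error in your justification: you claim that $(N,\xi_N)$ is Stein fillable ``as a closure of a product sutured contact manifold.'' This is false, and the point is important enough that the paper flags it in Remark~\ref{rem:why-twisted}: the $S^1$-invariant contact structures arising as closures of the Darboux ball are only \emph{weakly} symplectically fillable (via Niederkr\"uger--Wendl), and by a result of Wendl their \emph{untwisted} contact invariants actually vanish---which would be impossible if they were Stein fillable. So the primitivity of $\psi(N,\xi_N)$ in $\HMtoc(-N|{-}R;\Gamma_{-\eta})$ really does require the weak-filling version of Theorem~\ref{thm:psi-weakly-fillable}, with the specific local system $\Gamma_{-\eta}$ determined by the Poincar\'e dual of the restricted symplectic form; this is exactly the content of Proposition~\ref{prop:darboux-ball-invt}, which you should cite rather than re-derive with an incorrect fillability claim. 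A smaller omission: the map $\HMtoc(W)$ you invoke is for a cobordism with one incoming and two outgoing boundary components on which the $\Sc$ structures are not restricted to be nontorsion; defining such a map requires Bloom's work (noted in the paper's remark following Proposition~\ref{prop:Fp}), and you should at least acknowledge this technical dependency.
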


Since the monopole Floer invariant $\psi(Y,\xi)$ is nonzero for  strongly symplectically fillable contact structures (see \cite{ht}), we have the following immediate corollary.

\begin{corollary}
\label{cor:nonzerostronglyfillable}
If $(Y,\xi)$ is strongly symplectically fillable, then $\invt^g(Y(p))\neq 0$. \qed

\end{corollary}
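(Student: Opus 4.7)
The plan is to chain Proposition \ref{prop:darboux-complement} with the non-vanishing of Kronheimer and Mrowka's closed contact invariant for symplectically fillable contact manifolds. Proposition \ref{prop:darboux-complement} furnishes a morphism
\[ F_p\colon \SHMtfun(-Y(p)) \to \HMtoc(-Y)\otimes_\zz\RR \]
with $F_p(\invt^g(Y(p)))\doteq \psi(Y,\xi)\otimes\mathbf{1}$. Since morphisms of projectively transitive systems of $\RR$-modules send the zero element to the zero element, it suffices to show that $\psi(Y,\xi)\otimes\mathbf{1}$ is nonzero in $\HMtoc(-Y)\otimes_\zz\RR$.

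To verify this, I would invoke Theorem \ref{thm:psi-weakly-fillable}: strong symplectic fillability implies weak symplectic fillability, so the hypothesis of the theorem is met. Moreover, for a strong filling $(X,\omega)$ the symplectic form admits a Liouville primitive in a collar of $Y=\partial X$, so $[\omega|_Y]=0\in H^2(Y;\R)$, and the $1$-cycle $\eta\subset Y$ Poincaré dual to $[\omega|_Y]$ may be taken to be empty. With this choice $\Gamma_{-\eta}$ is the trivial local system, and Theorem \ref{thm:psi-weakly-fillable} asserts that $\psi(Y,\xi)$ is primitive (hence in particular not $\zz$-torsion) in $\HMtoc(-Y,\spc_\xi)\otimes\RR$, which is a direct summand of $\HMtoc(-Y)\otimes\RR$.

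Primitivity of $\psi(Y,\xi)$ rules out the possibility that $\psi(Y,\xi)\otimes\mathbf{1}$ vanishes in $\HMtoc(-Y)\otimes_\zz\RR$. Combining this with the first paragraph, $F_p(\invt^g(Y(p)))\neq 0$ and therefore $\invt^g(Y(p))\neq 0$. There is essentially no obstacle: the only matter requiring attention is the compatibility of the local coefficient conventions between Proposition \ref{prop:darboux-complement} and Theorem \ref{thm:psi-weakly-fillable}, which in the strongly fillable case is automatic because both local systems can be taken to be trivial.
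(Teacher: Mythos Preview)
Your argument is correct and follows essentially the same route as the paper: both use Proposition \ref{prop:darboux-complement} to reduce to the nonvanishing of $\psi(Y,\xi)\otimes\mathbf{1}$ in $\HMtoc(-Y)\otimes_\zz\RR$. The only minor difference is that the paper cites \cite{ht} directly for the nonvanishing of $\psi(Y,\xi)$ under strong fillability, whereas you deduce it from Theorem \ref{thm:psi-weakly-fillable} by observing that a strong filling has exact $\omega$ near the boundary, so $\eta$ may be taken empty and the local system trivial; your added care in noting that primitivity (rather than mere nonvanishing over $\zz$) is what survives tensoring with $\RR$ is a welcome clarification.
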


\begin{remark}
\label{rmk:analoguehatplus}
The morphism in Proposition \ref{prop:darboux-complement} can be thought of an  analogue of the  natural map in Heegaard Floer homology, \[\hf(-Y) \to \hfp(-Y),\]  which sends ${c}(Y,\xi)$ to $c^+(Y,\xi)$. Indeed, the modules comprising the systems  $\SHMtfun(-Y(p))$ and $\HMtoc(-Y) \otimes_\mathbb{Z} \RR$ are isomorphic to $\hf(-Y)\otimes_\mathbb{Z}\RR$ and $\hfp(-Y)\otimes_\mathbb{Z} \RR$, respectively. \end{remark}

Suppose $K$ is a Legendrian knot in the interior of $(M,\Gamma,\xi)$ and  that $(M',\Gamma',\xi')$ is the result of contact $(+1)$-surgery on $K$. We will prove the following in Section \ref{sec:well-definedness}.

\begin{proposition}
\label{prop:shm-legendrian-surgery}
There is a morphism
\[ F_K:\SHMtfun(-M,-\Gamma) \to \SHMtfun(-M',-\Gamma') \]
which sends  $\invt^g(M,\Gamma,\xi)$  to $\invt^g(M',\Gamma',\xi')$.
\end{proposition}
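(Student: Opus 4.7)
Since $K$ lies in the interior of $M$, contact $(+1)$-surgery on $K$ does not alter $\partial M$ or its characteristic foliation, so $(M',\Gamma',\xi')$ has $\partial M'=\partial M$ and $\Gamma'=\Gamma$. Consequently, any auxiliary data $(F,A(\Gamma),h,R,r,\eta)$ used to form a marked contact closure of $(M,\Gamma,\xi)$ may be reused for $(M',\Gamma',\xi')$.

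The first step is to define $F_K$ on individual closures. Fix a marked contact closure $(\data=(Y,R,r,m,\eta),\bar\xi)$ of $(M,\Gamma,\xi)$. Then $m(K)$ is a Legendrian knot in $(Y,\bar\xi)$ disjoint from $r(R\times[-1,1])$, and contact $(+1)$-surgery on $m(K)$ yields a closed contact $3$-manifold $(Y',\bar\xi')$. The embeddings $r$ and (a natural modification of) $m$ to an embedding $m'$ of $M'$ produce a marked contact closure $(\data'=(Y',R,r,m',\eta),\bar\xi')$ of $(M',\Gamma',\xi')$ of the same genus. Let $W$ be the $2$-handle cobordism from $Y$ to $Y'$ associated to this surgery. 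Since $W$ is a product on a neighborhood of $R$, the top $\Sc$ structures relative to $R$ are preserved, so the induced map restricts to a map
\[
F_{K,\data}:=\HMtoc(-W;\Gamma_{-\eta})\colon \SHMt(-\data)\to\SHMt(-\data').
\]
By Corollary~\ref{cor:contactplusone}, this map sends $\invt(\data,\bar\xi)=\psi(Y,\bar\xi)$ to $\pm\psi(Y',\bar\xi')=\pm\invt(\data',\bar\xi')$; passing to equivalence classes, $[F_{K,\data}]$ sends $[\invt(\data,\bar\xi)]$ to $[\invt(\data',\bar\xi')]$.

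The second step is to assemble these maps into a morphism $F_K$ of projectively transitive systems. By Remark~\ref{rmk:completesubset}, it suffices to check that for any two marked closures $\data_1,\data_2$ of $(M,\Gamma)$ with  images $\data_1',\data_2'$ under the above construction, the square
\[
\xymatrix@C=50pt@R=25pt{
\SHMt(-\data_1)\ar[r]^-{F_{K,\data_1}}\ar[d]_{\Psit_{-\data_1,-\data_2}} & \SHMt(-\data_1')\ar[d]^{\Psit_{-\data_1',-\data_2'}}\\
\SHMt(-\data_2)\ar[r]_-{F_{K,\data_2}} & \SHMt(-\data_2')
}
\]
commutes up to a unit. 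The canonical isomorphisms $\Psit$ are built (as recalled in Section~\ref{sec:well-definedness}) from diffeomorphism-induced maps and from splicing cobordisms along the auxiliary surface $R$; since contact $(+1)$-surgery occurs entirely within the interior of $m(M)$, the $2$-handle cobordism $W$ is supported in a region disjoint from the splicing cobordisms and from the support of the relevant diffeomorphisms. The composite cobordisms in the two ways around the square are therefore diffeomorphic rel boundary (the $2$-handle and the splicing/gluing can be performed in either order), and induce the same map on monopole Floer homology up to a unit in $\RR$.

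With the morphism $F_K$ in hand, Step 1 applied at each genus $g$ together with Definition~\ref{def:contactinvariant} yields $F_K(\invt^g(M,\Gamma,\xi))=\invt^g(M',\Gamma',\xi')$, proving the proposition. The main technical obstacle is the naturality verification in Step 2: one must keep track of how the $2$-handle cobordism interacts with the splicing cobordisms used to relate closures of different genera, and with the identifications used to relate closures of the same genus. The key point that makes this tractable is that the interior location of $K$ guarantees spatial disjointness of $W$ from all the auxiliary cobordisms and diffeomorphisms appearing in the construction of $\Psit$, so the obstruction reduces to a general commutation statement for disjoint handle attachments.
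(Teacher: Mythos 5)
Your proposal matches the paper's proof essentially step for step: you define $F_K$ on an individual marked contact closure via the $2$-handle cobordism map for contact $(+1)$-surgery on $m(K)$, you verify naturality by noting that this $2$-handle is attached inside $m(M)$ while the cobordisms defining $\Psit$ (the $2$-handle and splicing cobordisms) are supported outside $m(M)$ and hence commute, and you invoke Corollary~\ref{cor:contactplusone} to see that the contact class is carried to the contact class. The only cosmetic slip is notational: the local system on the cobordism should be written $\Gamma_{-\nu}$ for a cylinder $\nu\subset W$ joining $r(\eta\times\{0\})$ to $r'(\eta\times\{0\})$, rather than $\Gamma_{-\eta}$, and the paper writes the restricted map as $\HMtoc({-}W|{-}R;\Gamma_{-\nu})$.
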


\subsection{Examples}
\label{ssec:closureexamples}
Below, we  compute the contact invariants  of  the Darboux ball and  \emph{product sutured contact handlebodies} more generally. As above, we state these results in terms of the invariants $\invt^g$, but they also hold for the invariant $\invt$ by Theorem \ref{thm:well-defined2}.

We start by constructing a genus $g$ marked contact closure of the Darboux ball $(B^3,S^1,\xi_{std})$ for each \[g\geq g(B^3,S^1)=2\] (where the dividing set $S^1$ is a single equatorial curve on $\partial B^3$). Consider the $[-1,1]$-invariant contact structure $\xi_{D^2}$ on $D^2\times[-1,1]$ for which each $D^2\times\{t\}$ is convex with collared Legendrian boundary and the dividing set on $D^2\times\{1\}$ consists of a single properly embedded arc, as shown in Figure \ref{fig:darbouxballcorners}. The product sutured contact manifold $(D^2\times[-1,1],\partial D^2\times\{0\},\xi_{D^2})$ is contactomorphic to $(B^3,S^1,\xi_{std})$ after rounding corners. One advantage of thinking of the Darboux ball in this way is that, in doing so, we have, in effect, automatically perturbed $\xi_{std}$ as required for forming contact preclosures. 

\begin{figure}[ht]
\labellist
\small \hair 2pt
\pinlabel $\partial D^2\times\{0\}$ at -34 19
\pinlabel $S^1$ at 334 82

\endlabellist
\centering
\includegraphics[width=7.3cm]{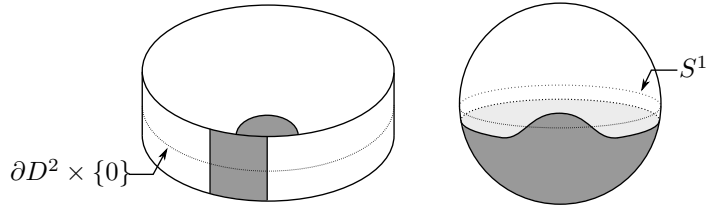}
\caption{Left, $(D^2\times[-1,1],\xi_{D^2})$ with the negative region shaded. Right, the Darboux ball obtained by rounding corners. }
\label{fig:darbouxballcorners}
\end{figure}

Indeed, let $F$ be a genus $g\geq 2$ surface with one boundary component, and let $\mathcal{A} = \{c,a\}$ be an arc configuration on $F$ with a single arc. We may  form a contact preclosure of the Darboux ball by gluing $(F\times[-1,1],\Xi_{\mathcal{A}})$ to $(D^2\times[-1,1],\xi_{D^2})$ according to a map \[h:\partial F\times[-1,1]\to\partial D^2\times[-1,1]\] of the form $f\times{\rm id}$ for some diffeomorphism $f:\partial F\to \partial D^2$, as in Figure \ref{fig:darbouxballclosure}.
The result is a $[-1,1]$-invariant contact structure $\xi'$ on $M'=(D^2\cup F)\times[-1,1]$. Each $(D^2\cup F)\times\{t\}$ is convex with negative region an annular neighborhood $A(c)\times\{t\}$ of the curve $c\times\{t\}$. To form a marked contact closure, we take $R=(D^2\cup F)$ and glue $R\times[-1,1]$, equipped with the $[-1,1]$-invariant contact structure with negative region $A(c)\times\{t\}$ on each $R\times\{t\}$, to $M'$ by the ``identity" maps 
\[
R\times\{\pm 1\}\to (D^2\cup F)\times\{\mp 1\}.
\]
Let $\eta$ be a curve in $R$ dual to the core of $A(c)$. The resulting contact closure is $(\data,\bar\xi)$ with \[\data = ((D^2\cup F)\times S^1, (D^2\cup F),r,m,\eta),\] where $\bar\xi$ is an $S^1$-invariant contact structure for which the negative region on each fiber is a copy of $A(c)$. Here, we are thinking of $S^1$ as the union of two copies of $[-1,1]$, and $r$ and $m$ as the obvious embeddings.

\begin{figure}[ht]
\centering
\includegraphics[width=10.3cm]{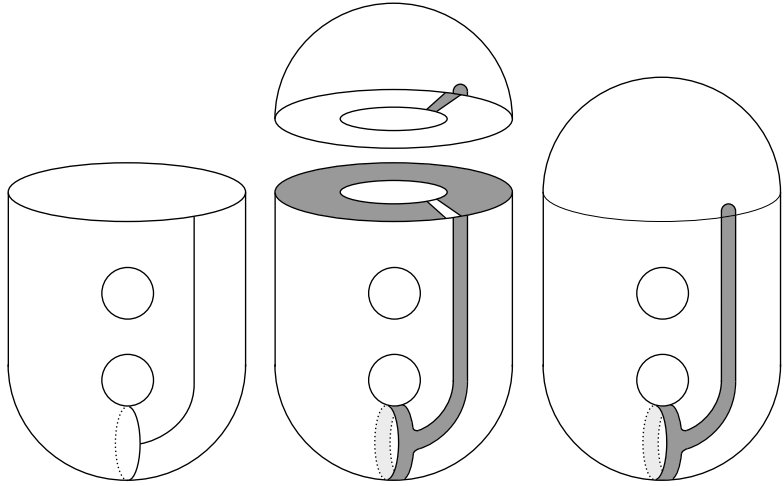}
\caption{Left, the arc configuration $\mathcal{A}$ on $F$ with $g(F)=2$. Middle, gluing $(F\times[-1,1],\Xi_{\mathcal{A}})$ to $(D^2\times[-1,1],\xi_{D^2})$ with the negative region shaded. Right, $(D^2\cup F)$ with the  annulus $A(c)$ shaded. }
\label{fig:darbouxballclosure}
\end{figure}

\begin{proposition}
\label{prop:darboux-ball-invt} The invariant $\invt^g(B^3,S^1,\xi_{std})$ is a unit in $\SHMtfun(-B^3,-S^1)\cong \RR.$
\end{proposition}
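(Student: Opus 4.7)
The plan is to show directly that the contact class $\psi(Y, \bar{\xi}) \in \SHMt(-\data)$, for the explicit marked contact closure $(\data, \bar{\xi})$ constructed just above the proposition, is a primitive generator of the rank-one free $\RR$-module $\SHMt(-\data) \cong \RR$. Since Theorem \ref{thm:well-defined} is being assumed, it suffices to verify this for any one such closure; I would take the $(\data, \bar{\xi})$ with $Y = R \times S^1$, $R = \Sigma_g = D^2 \cup F$, $\bar{\xi}$ the $S^1$-invariant contact structure whose negative region on each fiber is the annulus $A(c)$, and $\eta \subset R \times \{\mathrm{pt}\}$ a non-separating curve dual to the core $c$ of $A(c)$.

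First, I would observe that the target module $\SHMt(-\data) = \HMtoc(-Y\mid -R; \Gamma_{-\eta})$ is isomorphic to $\RR$: this is the same application of \cite[Lemma 4.7]{km4} used in the proof of Proposition \ref{prop:productsutured}. Since the module is free of rank one, showing $\invt(\data, \bar{\xi})$ is a unit is equivalent to showing $\psi(Y, \bar{\xi})$ is primitive.

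Second, I would apply Theorem \ref{thm:psi-weakly-fillable}, which reduces the task to constructing a weak symplectic filling $(X, \omega)$ of $(Y, \bar{\xi})$ with $\eta$ Poincar\'e dual to $[\omega|_Y] \in H^2(Y; \mathbb{R})$. This is the main obstacle. Since no component of $R \setminus \partial A(c)$ is a disk, $\bar{\xi}$ is universally tight and is Stein fillable by the standard classification of $S^1$-invariant contact structures on $\Sigma \times S^1$; fix such a Stein filling $(X_0, \omega_0)$. Because $\omega_0$ is exact near $Y$, one has $[\omega_0|_Y] = 0$, so a direct application does not suffice. The class $PD(\eta)$ lies in the K\"unneth summand $H^1(R) \otimes H^1(S^1) \subset H^2(Y; \mathbb{R})$; assuming it lies in the image of $H^2(X_0; \mathbb{R}) \to H^2(Y; \mathbb{R})$---which I would verify from the explicit Stein handle decomposition of $X_0$---I would pull a chosen representative back to a closed $2$-form $\theta$ on $X_0$ and set $\omega = \omega_0 + \epsilon\,\theta$ for $\epsilon > 0$ small. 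This form remains symplectic on $X_0$ and restricts positively to $\bar{\xi}$, giving a weak symplectic filling with $[\omega|_Y] = \epsilon\, PD(\eta)$; rescaling $\eta$ absorbs the factor $\epsilon$. The cohomological verification is the most delicate step.

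Given such a filling, Theorem \ref{thm:psi-weakly-fillable} implies that $\psi(Y, \bar{\xi})$ is primitive in $\HMtoc(-Y, \spc_{\bar{\xi}}; \Gamma_{-\eta}) \subset \SHMt(-\data) \cong \RR$, hence a unit. By Theorem \ref{thm:well-defined}, the resulting element $\invt^g(B^3, S^1, \xi_{std}) \in \SHMtfun(-B^3, -S^1)$ is well-defined and is a unit, completing the proof.
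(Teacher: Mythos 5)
Your proposed proof has the right overall structure (exhibit a weak filling with $[\omega|_Y]$ dual to $\eta$, apply Theorem \ref{thm:psi-weakly-fillable}, use $\SHMt(-\data)\cong\RR$ from Proposition \ref{prop:productsutured}), but the fillability input is wrong in a way that undermines the whole argument. You assert that $\bar\xi$ is Stein fillable because no component of $R\setminus\partial A(c)$ is a disk. This is false: $S^1$-invariant contact structures on $\Sigma\times S^1$ whose negative (or positive) region on each fiber is an annulus have planar torsion in Wendl's sense, and Wendl's \cite[Corollary 2]{wen3} shows their \emph{untwisted} ECH (hence monopole) contact invariant vanishes. In particular $\bar\xi$ admits no strong filling, let alone a Stein filling, since a strong filling would force the untwisted contact class to be nonzero. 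The paper makes exactly this point in Remark \ref{rem:why-twisted}: ``the contact structure $\bar\xi$ above is only weakly fillable.'' This is the entire reason local coefficients are needed in the first place, so a Stein filling would contradict the setup of the problem.

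Because the Stein filling $(X_0,\omega_0)$ doesn't exist, there is nothing to perturb in the second half of your argument. The step that the paper takes instead is to invoke Niederkr{\"u}ger--Wendl \cite[Theorem 5]{wennied}, who construct a \emph{weak} symplectic filling $(W,\omega)$ of these $S^1$-invariant contact manifolds directly, with $\omega$ deliberately non-exact near the boundary --- in fact with $[\omega|_Y]$ Poincar\'e dual, up to scalar, to a curve in the class of $r(\eta\times\{0\})$. That construction supplies precisely the cohomological condition you were trying to engineer via the perturbation $\omega_0+\epsilon\theta$, without needing to verify that $PD(\eta)$ lies in the image of $H^2(X_0;\RR)\to H^2(Y;\RR)$ (a question you flagged as delicate but did not resolve). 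Once you substitute the Niederkr{\"u}ger--Wendl filling for the non-existent Stein filling, the remainder of your argument goes through and matches the paper's.
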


\begin{proof}
It follows from work of Niederkr{\"u}ger and Wendl (see \cite[Theorem 5]{wennied}) that the contact manifold $((D^2\cup F)\times S^1,\bar\xi)$ is weakly symplectically fillable by some $(W,\omega)$. According to their construction, we may choose the curve $\eta$ so that $r(\eta\times\{0\})$ is, up to a scalar multiple, the Poincar{\'e} dual of $[\omega|_{(D^2\cup F)\times S^1}]$. Since $\SHMt(-\data)$ is defined with respect to the local system $\Gamma_{-\eta}$, it follows from Theorem \ref{thm:psi-weakly-fillable} that the contact class $\invt(\data,\bar\xi)$ is a primitive element of $\SHMt(-\data)$. The proposition then follows from the fact that $\SHMt(-\data)\cong \RR$, by Proposition \ref{prop:productsutured}.
\end{proof}

\begin{remark}
\label{rem:why-twisted} The proof of Proposition \ref{prop:darboux-ball-invt} highlights the need for working with twisted coefficients: the contact structure $\bar\xi$ above is only weakly fillable and, indeed, Wendl shows in \cite[Corollary 2]{wen3} that its untwisted ECH contact invariant vanishes, from which it follows that $\psi(\data,\bar\xi)\in\SHM(-\data)$ vanishes as well. 
\end{remark}

 Below, we compute the contact invariants of product manifolds built from  general  surfaces. 

Let $S$ be a genus $k$ surface with $l\geq 1$ boundary components. Consider the $[-1,1]$-invariant contact structure $\xi_{S}$ on $S\times[-1,1]$ for which each $S\times\{t\}$ is convex with collared Legendrian boundary and the dividing set on $S\times\{1\}$ consists of $k$ boundary parallel arcs, one for each component of $\partial S$, oriented in the same direction as the boundary. See Figure \ref{fig:productsurfacecorners}. Let $H(S)$ be the \emph{product sutured contact handlebody} of genus $2k+l-1$ obtained from $(S\times[-1,1],\partial S\times\{0\},\xi_{S})$ by rounding corners. 
Note that $H(S)$ is precisely the sort of contact handlebody that appears in the  Heegaard splitting associated to an open book with page $S$. 

\begin{figure}[ht]
\centering
\includegraphics[width=11.7cm]{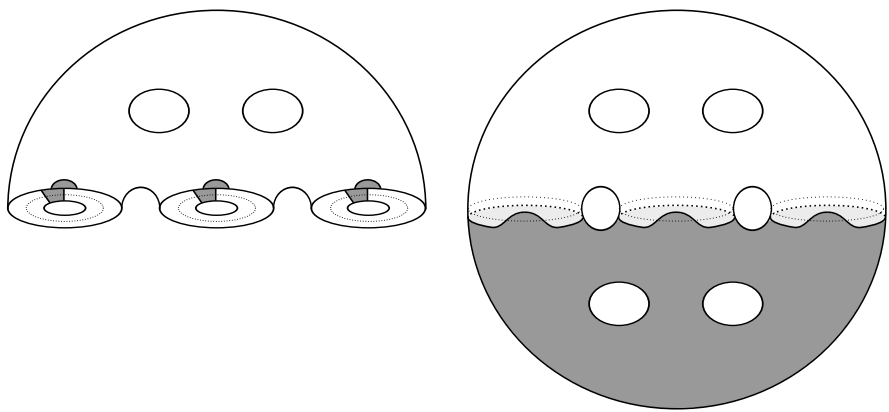}
\caption{Left, $(S\times[-1,1],\xi_{S})$,  with  negative region shaded, for $k=2,l=3$. Right, the convex boundary of the product sutured contact handlebody $H(S)$ obtained by rounding corners.}
\label{fig:productsurfacecorners}
\end{figure}

We have the following generalization of Proposition \ref{prop:darboux-ball-invt}.

\begin{proposition}
\label{prop:handlebody-invt}
The invariant $\invt^g(H(S))$ is a unit in $\SHMtfun(-H(S))\cong \RR$.
\end{proposition}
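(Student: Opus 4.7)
The plan is to generalize the argument of Proposition \ref{prop:darboux-ball-invt} from the disk case to an arbitrary compact surface $S$ with boundary. Fixing $g \geq g(H(S))$, I would construct an explicit marked contact closure $(\data,\bar\xi)$ of $H(S)$ whose underlying 3-manifold has the product form $(S \cup F) \times S^1$. Specifically: take $F$ to be an auxiliary surface with $l$ boundary components and genus chosen so that $g(S \cup F) = g$, pick a diffeomorphism $h : \partial F \times [-1,1] \to \partial S \times [-1,1]$ of the form $f \times \mathrm{id}$, and choose an arc configuration $\mathcal{A}$ on $F$ compatible with the dividing set of $\xi_S$ on $\partial S \times \{1\}$ so that gluing $(F \times [-1,1], \Xi_{\mathcal{A}})$ to $(S \times [-1,1], \xi_S)$ via $h$ produces an $[-1,1]$-invariant contact structure on the preclosure $(S \cup F) \times [-1,1]$. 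Closing up by the identity maps $R \times \{\pm 1\} \to (S \cup F) \times \{\mp 1\}$, with $R = S \cup F$, yields an $S^1$-invariant contact structure $\bar\xi$ on $Y := (S \cup F) \times S^1$, exactly as in the construction preceding Proposition \ref{prop:darboux-ball-invt}.

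Two facts then finish the argument. First, $H(S)$ is a product sutured manifold, so by (the proof of) Proposition \ref{prop:productsutured}, the closure above has $\SHMt(-\data) \cong \RR$. Second, I would appeal to the Niederkr\"uger--Wendl weak fillability result (as cited in Proposition \ref{prop:darboux-ball-invt}) to produce a weak symplectic filling $(W,\omega)$ of the closed $S^1$-invariant contact manifold $(Y,\bar\xi)$, choosing the curve $\eta \subset R$ in the definition of the closure so that $r(\eta \times \{0\})$ is, up to scalar, Poincar\'e dual to $[\omega|_Y] \in H^2(Y;\mathbb{R})$. Since $\SHMt(-\data)$ is defined with respect to the local system $\Gamma_{-\eta}$, Theorem \ref{thm:psi-weakly-fillable} then guarantees that $\invt(\data,\bar\xi) = \psi(Y,\bar\xi)$ is primitive in $\SHMt(-\data)$. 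Combined with $\SHMt(-\data) \cong \RR$, primitivity is equivalent to being a unit, which gives the conclusion. Invariance under change of closure genus is then automatic from Theorems \ref{thm:well-defined} and \ref{thm:well-defined2}.

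The main obstacle lies in verifying that the Niederkr\"uger--Wendl construction applies to our $S^1$-invariant contact manifold $((S \cup F) \times S^1, \bar\xi)$. In the Darboux case ($S = D^2$) the dividing set of $\bar\xi$ on each fiber is a single nonseparating curve, whereas here the dividing set of $\bar\xi$ on $S \cup F$ is a more intricate union of arcs inherited from the $k$ boundary-parallel arcs in $\xi_S$ and the multicurve $\Gamma_{\mathcal{A}}$ on $F$, spliced together along $\partial F = \partial S$. What is needed is that neither the positive nor the negative region of $S \cup F$ contains a disk component, so that $\bar\xi$ is tight by Giroux's criterion for $S^1$-invariant contact structures and hence weakly fillable in the sense required. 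This can be arranged by choosing $F$ of sufficiently large genus and placing the arcs of $\mathcal{A}$ so that each complementary region meets a nonseparating curve of $F$, which is a mild combinatorial requirement on $\mathcal{A}$ and presents no essential difficulty.
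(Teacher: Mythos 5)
Your proposal takes the same route as the paper: build a closure of the product form $(S\cup F)\times S^1$ with an $S^1$-invariant contact structure, invoke the Niederkr\"uger--Wendl weak fillability result together with Theorem~\ref{thm:psi-weakly-fillable} for primitivity, and combine this with Proposition~\ref{prop:productsutured} to identify $\SHMt(-\data)\cong\RR$. That much is correct.

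The ``main obstacle'' you flag, however, is not actually there, and your proposed workaround is unnecessary. You describe the dividing set of $\bar\xi$ on $S\cup F$ as ``a more intricate union of arcs inherited from the boundary-parallel arcs in $\xi_S$ and the multicurve $\Gamma_{\mathcal A}$ on $F$,'' and you suggest one must choose $F$ of large genus and place $\mathcal A$ so that no complementary region is a disk. But this is exactly what Remark~\ref{rmk:divset} (and the construction of contact preclosures in Subsection~\ref{ssec:contactclosure}) guarantees for free: the boundary-parallel arcs in $S$ cap off the legs of $N(\mathcal A)$, and the dividing set on each convex fiber $(S\cup F)\times\{t\}$ is always \emph{two parallel nonseparating curves bounding an annulus} (a neighborhood of $c$). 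Neither the positive nor the negative region ever contains a disk, regardless of the choices of $F$ or $\mathcal A$, because the negative region is a nonseparating annulus in a closed surface of genus $\geq 2$. Consequently, the $S^1$-invariant contact manifolds produced by closing up $H(S)$ are \emph{literally the same} closed contact manifolds that arise from the Darboux ball at each fixed genus $g$ --- which is precisely the observation the paper's proof hinges on (``These are exactly the same $S^1$-invariant contact manifolds as were considered in the proof of Proposition~\ref{prop:darboux-ball-invt}''). Once you absorb Remark~\ref{rmk:divset}, your argument closes without any extra care needed about the arc configuration.
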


\begin{proof} This proof is virtually identical to that of Proposition \ref{prop:darboux-ball-invt}. We start by constructing a genus $g$ marked contact closure of $H(S)$ for every \begin{equation}\label{eqn:ghs}g\geq g(H(S))=\max\{2,k+l \} = \max\{2,g(S) + |\partial S|\}.\end{equation} Let $F$ be a surface with $l$ boundary components, and let $\mathcal{A} = \{c,a_1,\dots,a_l\}$ be an arc configuration on $F$ with one arc meeting each boundary component. We form a genus $k+l+g(F)-1$ contact preclosure of $H(S)$ by gluing $(F\times[-1,1],\Xi_{\mathcal{A}})$ to $(S\times[-1,1],\xi_{S})$ and then proceed as in the case of the Darboux ball to construct a marked contact closure of the form $(\data,\bar\xi)$ with \[\data = ((S\cup F)\times S^1, (S\cup F),r,m,\eta),\] where $\bar\xi$ is an $S^1$-invariant contact structure for which the negative region on each fiber is a nonseparating annulus. These are exactly the same $S^1$-invariant contact manifolds as were considered in the proof of Proposition \ref{prop:darboux-ball-invt}. Thus, for an appropriate choice of $\eta$, the contact class $\invt(\data,\bar\xi)$ is a unit in $\SHMt(-\data)\cong \RR$.
\end{proof}

\section{The well-definedness of  $\invt^g(M,\Gamma,\xi)$ and $\invt(M,\Gamma,\xi)$}
\label{sec:well-definedness}
We prove Theorems \ref{thm:well-defined} and \ref{thm:well-defined2} in the next two subsections. On the way to our proof of Theorem \ref{thm:well-defined2}, we define maps on $\SHMtfun$ associated to contact handle attachments and  prove Propositions \ref{prop:shm-legendrian-surgery} and \ref{prop:darboux-complement}.

\subsection{The well-definedness of $\invt^g(M,\Gamma,\xi)$}
\label{ssec:invariant-fixed-g}
We start by describing the isomorphism  $\Psit_{-\data_1,-\data_2}$ for $g(\data_1)= g(\data_2)$, as given in \cite{bs3} but tailored slightly to our setting. We then prove Theorem \ref{thm:well-defined}, which implies that $\invt^g(M,\Gamma,\xi)\in\SHMtfun(-M,-\Gamma)$ is well-defined.

Suppose 
\begin{align*}
(\data_1,\bar\xi_1)&= ((Y_1,R_1,r_1,m_1,\eta_1),\bar\xi_1)\\
(\data_2,\bar\xi_2)&= ((Y_2,R_2,r_2,m_2,\eta_2),\bar\xi_2)
\end{align*}
are two marked contact closures of $(M,\Gamma,\xi)$ of genus $g\geq g(M,\Gamma)$. To define $\Psit_{-\data_1,-\data_2}$, we first choose a contactomorphism \[C:(Y_1\ssm\inr(\Img(r_1)),\bar\xi_1)\to (Y_2\ssm\inr(\Img(r_2)),\bar\xi_2)\] which restricts to $m_2\circ m_1^{-1}$ on $m_1(M\ssm N(\Gamma))$ for some neighborhood $N(\Gamma)$ of $\Gamma$. A contactomorphism of this form exists by Theorem \ref{thm:preclosurewelldefined}. (Technically, Theorem \ref{thm:preclosurewelldefined} says that there is  a contactomorphism of this form after applying flexibility to one of the complements above. However, we will ignore this point, as  we can achieve the same effect by modifying one of the  $\bar\xi_i$ via an arbitrarily small isotopy supported away from $m_i(M)$.) Let $\varphi_{\pm}$ and $\varphi$ be the diffeomorphisms defined by
\begin{align*}
\varphi_{\pm}&= (r_2^{\pm})^{-1}\circ C\circ r_1^{\pm}: R_1\to R_2\\
\varphi&=(\varphi_+)^{-1}\circ \varphi_-:R_1\to R_1,
\end{align*}
where $r_i^\pm$ is the composition
\[R_i\xrightarrow{id\times\{\pm 1\}}R_i\times\{\pm 1\}\xrightarrow{r_i} Y_i.\] 
Remark \ref{rmk:divset} implies that the negative region of $\bar\xi_i$ on each  $r_i(R_i\times\{t\})$ is of the form $r_i(A_i\times\{t\})$ for some annulus $A_i\subset R_i$. Since $C$ is a contactomorphism,  $\varphi_{\pm}$ sends $A_1$ to $A_2$, which implies that $\varphi$ sends $A_1$ to itself. Let 
\[\psi:R_1\to R_1\] be any diffeomorphism such that $\psi$ sends $A_1$ to itself and \[(\varphi_-\circ\psi)(\eta_1) = \eta_2.\]

\begin{remark} 
\label{rmk:cuttinggluing}The diffeomorphisms above  are defined so that the triple $(Y_2,r_2(R_2\times\{0\}),\eta_2)$ is diffeomorphic to that obtained from  $(Y_1,r_1(R_1\times\{0\}),\eta_1)$ by cutting the latter open along the surfaces $r_1(R_1\times\{t\})$ and $r_1(R_1\times\{t'\})$ for some $t<0<t'$ and regluing by the maps $r_1\circ \psi^{-1}\circ r_1^{-1}$ and $r_1\circ(\varphi\circ\psi)\circ r_1^{-1}.$ By expressing these maps as compositions  of Dehn twists, we can realize this cutting and regluing operation via surgery. The isomorphism $\Psit_{-\data_1,-\data_2}$ is then defined in terms of 2-handle cobordism maps associated to such surgeries, as  below.
\end{remark}

Since  $\varphi\circ\psi$ and $\psi^{-1}$ fix the annulus $A_1$, these diffeomorphisms are isotopic to compositions of Dehn twists about nonseparating curves $a_1,\dots,a_m\subset R_1\ssm \partial A_1$,
\begin{align*}
\varphi\circ\psi&\sim D^{e_1}_{a_1}\circ\cdots\circ D^{e_n}_{a_n}\\
\psi^{-1}&\sim D^{e_{n+1}}_{a_{n+1}}\circ\cdots\circ D^{e_m}_{a_m}.
\end{align*}
Here $D_{a_i}$ is a positive Dehn twist about $a_i$, and  $e_i \in\{ \pm 1\}$. 

We next choose real numbers \[-3/4<t_m<\dots<t_{n+1}<-1/4<1/4<t_n<\dots<t_1<3/4,\] and pick some $t_i'$ between $t_i$ and the next greatest number in this list for every $i$ such that $e_i=+1.$ Let $(Y_1)_-$ be the 3-manifold obtained from $Y_1$ by performing $(-1)$-surgeries on the curves $r_1(a_i\times\{t_i\})$ for which $e_i=+1$,  with respect to the framings induced by the surfaces $r_1(R_1\times\{t_i\})$. Let $X_-$ be the 4-manifold obtained by attaching $(+1)$-framed 2-handles to $(Y_1)_-\times[0,1]$ along the curves $r_1(a_i\times\{t_i'\})\times\{1\}$ for which $e_i=+1$. One boundary component of $X_-$ is $-(Y_1)_-$. The other is canonically (up to isotopy) diffeomorphic to $Y_1$ since  the $(+1)$-surgery on $r_1(a_i\times\{t_i'\})$ cancels  the $(-1)$-surgery on  $r_1(a_i\times\{t_i\})$. We may therefore view $X_-$ as a cobordism from $(Y_1)_-$ to $Y_1$. This cobordism gives rise to a map
\[\HMtoc({-}X_-|{-}R_1;\Gamma_{-\nu}):\HMtoc(-(Y_1)_-|{-}R_1;\Gamma_{-\eta_1})\to\HMtoc(-Y_1|{-}R_1;\Gamma_{-\eta_1}),\] where $\nu$ is the cylinder $\nu= r_1(\eta_1\times\{0\})\times[0,1]\subset X_-$. 

Similarly,  let $X_+$  be the 
4-manifold obtained from $(Y_1)_-\times[0,1]$ by attaching $(+1)$-framed 2-handles along the curves $r_1(a_i\times\{t_i\})\times\{1\}$ for which $e_i =-1$. The boundary of $X_+$ is the union of $-(Y_1)_-$ with the 3-manifold $(Y_1)_+$ obtained from $(Y_1)_-$ by performing $(+1)$-surgeries on the curves $r_1(a_i\times\{t_i\})$ for which $e_i = -1$. Thus, $X_+$ gives rise to a map
\[\HMtoc({-}X_+|{-}R_1;\Gamma_{-\nu}):\HMtoc(-(Y_1)_-|{-}R_1;\Gamma_{-\eta_1})\to\HMtoc(-(Y_1)_+|{-}R_1;\Gamma_{-\eta_1}),\] where  $\nu= r_1(\eta_1\times\{0\})\times[0,1]\subset X_+$ in this case. 
This map and the one above are shown to be isomorphisms in \cite{bs3}.

As suggested in Remark \ref{rmk:cuttinggluing}, there is a unique isotopy class of diffeomorphisms \[\bar C:(Y_1)_+\to Y_2\] which restricts to $C$ on  $Y_1\ssm \inr(\Img(r_1))\subset (Y_1)_+$. Let  \[\Theta^{\bar C}:\HMtoc({-}(Y_1)_+|{-}R_1;\Gamma_{-\eta_1})\to \HMtoc({-}Y_2|{-}R_2;\Gamma_{-\eta_2})\] be the  isomorphism on monopole Floer homology induced by $\bar C$. The map \[\Psit_{-\data_1,-\data_2}:\HMtoc({-}Y_1|{-}R_1;\Gamma_{-\eta_1})\to \HMtoc({-}Y_2|{-}R_2;\Gamma_{-\eta_2})\] is defined to be the composition \[\Psit_{-\data_1,-\data_2}=\Theta^{\bar C} \circ\HMtoc({-}X_+|{-}R_1;\Gamma_{-\nu})\circ \HMtoc({-}X_-|{-}R_1;\Gamma_{-\nu})^{-1}.\] In \cite{bs3}, we proved that this map is independent of the choices made in its construction, up to multiplication by a unit in $\RR$. Having defined $\Psit_{-\data_1,-\data_2}$, we may now prove Theorem \ref{thm:well-defined}.

\begin{proof}[Proof of Theorem \ref{thm:well-defined}]

It suffices to show that \[\Psit_{-\data_1,-\data_2}(\invt(\data_1,\bar\xi_1))\doteq \invt(\data_2,\bar\xi_2)\] for the marked contact closures $(\data_1,\bar\xi_1)$ and $(\data_2,\bar\xi_2)$ above.  Note that the curves   $r_1(a_i\times\{t_i\})$ and $r_1(a_i\times\{t_i'\})$ are \emph{nonisolating} in  $r_1(R_1\times\{t_i\})$ and $r_1(R_1\times\{t_i'\})$ since each component of $R_1\ssm a_i$ intersects $\partial A_1$. We can  therefore make these curves   Legendrian for all $i=1,\dots,m$  by isotoping $\bar\xi_1$ slightly, according to  the \emph{Legendrian Realization Principle} \cite{kanda, honda2}. In a slight abuse of notation, let us simply  assume that these curves are Legendrian with respect to $\bar\xi_1$. The surface framings on these Legendrian curves agree with their contact framings since the curves are disjoint from the dividing sets on their respective surfaces (each $a_i$ is disjoint from $\partial A_1$). In particular, we can arrange that the $(\pm 1)$-surgeries performed in defining  $\Psit_{-\data_1,-\data_2}$ are actually  contact $(\pm 1)$-surgeries. Let $(\bar\xi_1)_{\pm}$ be the   contact structures on $(Y_1)_{\pm}$ induced by these surgeries. 
Since  $(+1)$-surgery on $r_1(a_i\times\{t_i'\})$ cancels  $(-1)$-surgery on  $r_1(a_i\times\{t_i\})$ contact geometrically as well as topologically, we have that
\[ \HMtoc({-}X_-|{-}R_1;\Gamma_{-\nu})(\psi((Y_1)_{-},(\bar\xi_1)_-))\doteq \invt(\data_1,\bar\xi_1)\] by Corollary \ref{cor:contactplusone}. The same corollary  tells us that 
\[\HMtoc({-}X_+|{-}R_1;\Gamma_{-\nu})(\psi((Y_1)_{-},(\bar\xi_1)_-))\doteq \psi((Y_1)_{+},(\bar\xi_1)_+).\] Finally, we can arrange that the diffeomorphism $\bar{C}$  is a contactomorphism, which implies \[\Theta^{\bar C}(\psi((Y_1)_{+},(\bar\xi_1)_+))\doteq \invt(\data_2,\bar\xi_2).\] Putting these pieces together, we have that \[\Psit_{-\data_1,-\data_2}(\invt(\data_1,\bar\xi_1))\doteq \invt(\data_2,\bar\xi_2).\] completing the proof. \end{proof}

\subsection{The well-definedness of $\invt(M,\Gamma,\xi)$}
\label{ssec:indepgenus}
We start by describing  the isomorphism   $\Psit_{-\data_1,-\data_2}$ in the case that $g(\data_1)\neq g(\data_2)$. The exposition here is tailored  to the setting of contact closures  and therefore
differs slightly from that in  \cite{bs3}. We then define contact handle attachment maps and prove Theorem \ref{thm:well-defined2}, which implies that $\psi(M,\Gamma,\xi)\in\SHMtfun(-M,-\Gamma)$ is well-defined.

Suppose $(\data_1,\bar\xi_1)$ and $(\data_2,\bar\xi_2)$ are marked contact closures of $(M,\Gamma,\xi)$. Let us first consider the case  in which \[g(\data_2)=g(\data_1)+1=g+1.\]
To define $\Psit_{-\data_1,-\data_2}$, we  first construct  two  additional marked contact closures  as follows. Let $F$ be an auxiliary surface  such that the closed surface formed by gluing $F$ to $R_+(\Gamma)$ has genus $g+1$. Then $F$ has genus at least two since $\data_1$ is formed from an auxiliary surface of genus at least one and $g(\data_1)=g$. It follows that there is an embedded subsurface $\Sigma \subset \inr(F)$ of genus one with two boundary components $c_1, c_2\subset \inr(F)$ such that $F\ssm \Sigma$ is connected. Let $\mathcal{A} = \{c, a_1,\dots,a_m\}$ be an arc configuration on $F$  contained in $F\ssm \Sigma$. Let $(M',\xi')$ be the contact preclosure formed from $F$, $\mathcal{A}$, and some choices of $A(\Gamma)$ and \[h:\partial F\times[-1,1]\to A(\Gamma).\] Note that the boundary components $\partial_{\pm} M'$ have genus $g+1$. As usual, the negative region on $\partial_+M'$ is an annular neighborhood $A(c)$ of $c$, by Remark \ref{rmk:divset}. Let $R$ be a copy of $\partial_+ M'$ and let $(Y,\bar\xi)$ be the closed contact manifold obtained by gluing $R\times[-1,1]$, equipped with the $[-1,1]$-invariant contact structure with negative region $A(c)\times\{t\}$ on each $R\times\{t\}$, to $M'$ by diffeomorphisms \begin{equation}\label{eqn:Rgluing}R\times\{\pm 1\}\to \partial M'_{\mp}\end{equation} which restrict to the ``identity" maps from $\Sigma\times\{\pm 1\}\subset R\times\{\pm 1\}$ to $\Sigma\times\{\mp 1\}\subset \partial M'_{\mp}$. Let $\eta$ be a curve in $R$ dual to the core of $A(c)$ which restricts to a properly embedded arc on $\Sigma$. Then  \[(\data = (Y, R, r,m,\eta),\bar\xi)\] is a genus $g+1$ marked contact closure of $(M,\Gamma,\xi)$, where $r$ and $m$ are the obvious embeddings of $R\times[-1,1]$ and $M$ into $Y$. 

Let $F'$ be the surface obtained from $\overline{F\ssm \Sigma}$ by gluing $c_1$ to $c_2$ via an orientation-reversing diffeomorphism \[f:c_1\to c_2\] which sends $c_1\cap \eta$ to $c_2\cap \eta$, as shown in Figure \ref{fig:Sigma}. Since $\mathcal{A}$ is disjoint from $\Sigma$, it descends to an arc configuration on $F'$. Let $(M'',\xi'')$ be the contact preclosure formed from $F'$, $\mathcal{A}$, $A(\Gamma)$, and $h$. Note that the boundary components $\partial M''_{\pm}$ are obtained from  $\partial M'_{\pm}$ by removing $\Sigma\times\{\pm 1\}$ and gluing  $c_1\times\{\pm 1\}$ to $c_2\times\{\pm 1\}$ by $f$. Let $R'$ be a copy of $\partial M''_{\pm}$ and let $(Y',\bar\xi')$ be the closed contact manifold obtained by gluing $R'\times[-1,1]$, equipped with the $[-1,1]$-invariant contact structure with negative region $A(c)\times\{t\}$ on each $R\times\{t\}$, to $M''$ by the diffeomorphisms \[R'\times\{\pm 1\}\to \partial M''_{\mp}\] induced by those in \eqref{eqn:Rgluing}. Then \[(\data' = (Y', R', r',m',\eta'),\bar\xi')\] is a genus $g$ marked contact closure of $(M,\Gamma,\xi)$, where $r'$ and $m'$ are the embeddings naturally induced by $r$ and $m$ and $\eta'\subset R'$ is the curve induced by $\eta$. 

\begin{figure}[ht]
\labellist
\tiny

\pinlabel $\eta$ at 91 97
\pinlabel $\eta'$ at 247 97
\pinlabel $c_1$ at 17 62
\pinlabel $c_2$ at 106 62
\pinlabel $c$ at 62 54
\pinlabel $c$ at 215 54
\endlabellist
\centering
\includegraphics[width=6.6cm]{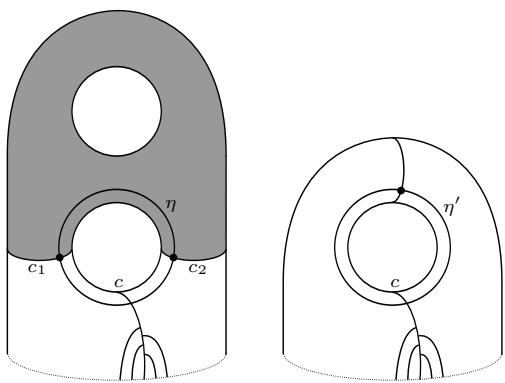}
\caption{Left, a portion of the surface $F$ with the region $\Sigma$ shaded. Right, the surface $F'$. The arc configuration $\mathcal{A}$ is shown in red, intersecting $\eta$ and $\eta'$ along the curve $c$.}
\label{fig:Sigma}
\end{figure}

The isomorphism $\Psit_{-\data',-\data}$ is defined in terms of a \emph{splicing} cobordism $W$, described below. Once we have defined this isomorphism,  we define $\Psit_{-\data_1,-\data_2}$  as in \eqref{eqn:data1data2}.
In defining $W$, the important observation is that the union of annuli \[(c_i\times[-1,1]\subset F\times[-1,1]\subset M')\,\,\cup\,\, (c_i\times[-1,1]\subset R\times[-1,1])\]  is an embedded torus $T_i = c_i\times S^1\subset Y$ for $i=1,2$. Together, these  tori cut $Y$ into    pieces $Y_M$ and $Y_\Sigma$ with \[-\partial Y_M = \partial Y_\Sigma = T_1\cup T_2.\]  In particular, $Y_M$ is the piece that contains $M$ and $Y_\Sigma$ is  the mapping torus of some diffeomorphism of $\Sigma$.  Note that the closed manifold obtained from $Y_M$ by gluing $T_1$ to $T_2$  by $f\times id$ is precisely $Y'$, while the manifold obtained from $Y_\Sigma$ in this way is a mapping torus $M(\Sigma')$ of some diffeomorphism of $\Sigma'$, where $\Sigma'$ is the closed genus two surface obtained from $\Sigma$ by gluing $c_1$ to $c_2$ by $f$. Let $\eta_{\Sigma'}\subset \Sigma'$ be the curve induced by $\eta$. The splicing cobordism \[W: Y' \sqcup M(\Sigma') \to Y\] is then defined by gluing the products $Y_M\times[0,1]$ and $Y_\Sigma \times [0,1]$ to $T_1\times S$, where $S$ is the saddle cobordism depicted in Figure \ref{fig:splicing}. We glue these pieces along the ``horizontal" portions of their boundaries according to the schematic in that figure, making use of the map $f\times id$. This cobordism induces a map 
\begin{align*}
\HMtoc({-}W|{-}R';\Gamma_{{-}\nu})&:\HMtoc({-}Y'|{-}R';\Gamma_{{-}\eta'})\otimes_{\RR}\HMtoc(-M(\Sigma')|{-}\Sigma';\Gamma_{{-}\eta_{\Sigma'}})\\
&\to \HMtoc({-}Y|{-}R;\Gamma_{{-}\eta}),\end{align*}
where $\nu\subset W$ is a pair-of-pants cobordism from $\eta'\sqcup \eta_{\Sigma'}$ to $\eta$. We  define \[\Psit_{-\data',-\data}(-) = \HMtoc({-}W|{-}R';\Gamma_{{-}\nu})(-\otimes 1),\] where $1$ is a generator of $\HMtoc(-M(\Sigma')|{-}\Sigma';\Gamma_{{-}\eta_{\Sigma'}})\cong \RR$. We   then define  \begin{equation}\label{eqn:data1data2}\Psit_{-\data_1,-\data_2} = \Psit_{-\data,-\data_2}\circ\Psit_{-\data',-\data}\circ\Psit_{-\data_1,-\data'}.\end{equation} Here, $\Psit_{-\data,-\data_2}$ and $\Psit_{-\data_1,-\data'}$ are the maps defined in the previous subsection for closures of the same genus.

\begin{figure}[ht]
\labellist
\small


\pinlabel $S$ at 142 130
\pinlabel $T_1\times S$ at 490 130
\pinlabel $Y_M\times [0,1]$ at 490 239
\pinlabel $Y_{\Sigma}\times[0,1]$ at 490 18
\pinlabel $Y'$ at 346 207
\pinlabel $Y$ at 620 130
\pinlabel $M(\Sigma')$ at 332 53
\endlabellist
\centering
\includegraphics[width=10cm]{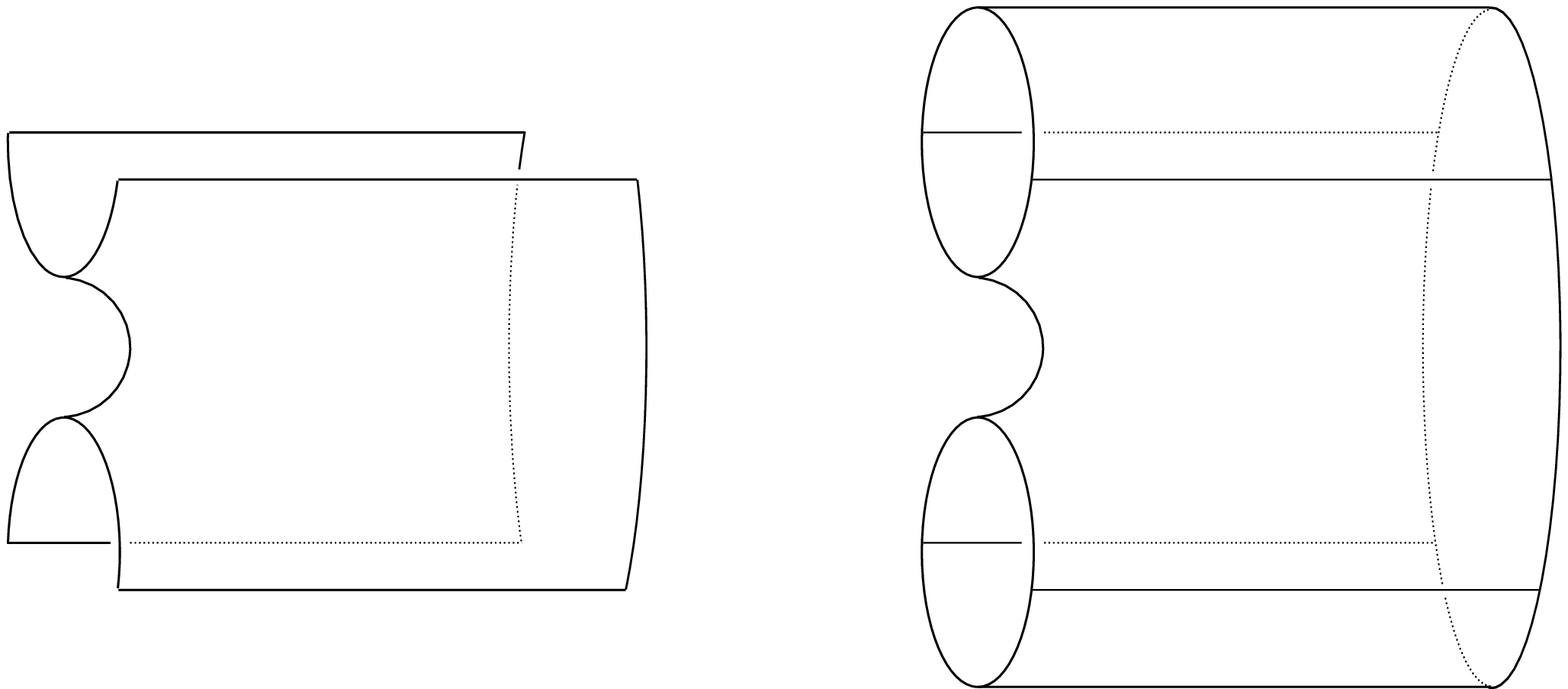}
\caption{Left, the saddle $S$. Right, a schematic of the splicing cobordism $W$.}
\label{fig:splicing}
\end{figure}

In the case that $g(\data_2)=g(\data_1)-1$, we define \[\Psit_{-\data_1,-\data_2}=\Psit_{-\data_2,-\data_1}^{-1}.\]
For the general case, we choose a sequence  $(\data^1,\bar\xi^1),\dots,(\data^n,\bar\xi^n)$  of marked contact closures of $(M,\Gamma,\xi)$ such that $(\data^1,\bar\xi^1)=(\data_1,\bar\xi_1)$, $(\data^n,\bar\xi^n)=(\data_2,\bar\xi_2)$, and \[|g(\data^{i+1})-g(\data^i)|\leq 1\] for all $i=1,\dots,n-1$. Then we define \[\Psit_{-\data_1,-\data_2} = \Psit_{-\data^{n-1},-\data^n}\circ\dots\circ \Psit_{-\data^1,-\data^2}.\] We proved in \cite{bs3} that this map is independent of the choices made in its construction, up to multiplication by a unit in $\RR$.


Below, we define maps on $\SHMtfun$ associated to contact $0$-, $1$-, $2$-, and $3$-handle attachments. We will use the $2$-handle  attachment maps at the end to prove Theorem \ref{thm:well-defined2}, which implies  that the elements $\invt^g(M,\Gamma,\xi)$ are equal for large $g$ and, hence, that $\invt(M,\Gamma,\xi)$ is well-defined.

\subsubsection{$0$-handle attachments}
\label{sssec:0handles}
Attaching a contact $0$-handle to $(M,\Gamma,\xi)$ is equivalent to taking the disjoint union of $(M,\Gamma,\xi)$ with the  Darboux ball $(B^3,S^1, \xi_{std})$. Let $(M_0,\Gamma_0,\xi_0)$ be this disjoint union. We claim that every marked contact closure of $(M_0,\Gamma_0,\xi_0)$ is a marked contact closure of $(M,\Gamma,\xi)$. 
To see this,  think of the  Darboux ball as the product  sutured contact manifold $(D^2\times[-1,1],\partial D^2\times\{0\},\xi_{D^2})$, as in Subsection \ref{ssec:closureexamples}. Let $F_0$ be an auxiliary surface for $(M_0,\Gamma_0,\xi_0)$ with arc configuration $\mathcal{A}_0 = \{c,a_1,\dots,a_m\}$ so that $a_1$ is the unique arc meeting the boundary component $\partial_1F_0$. We form a contact preclosure $(M'_0,\xi'_0)$ of $(M_0,\Gamma_0,\xi_0)$ by attaching $F_0\times[-1,1]$ to $M_0$ such that $\partial_1F_0\times[-1,1]$ is glued to $\partial D^2\times[-1,1]$ by a map \[h:\partial_1 F_0\times[-1,1]\to\partial D^2\times[-1,1]\] of the form $f\times{\rm id}$ for some diffeomorphism $f:\partial_1 F_0\to \partial D^2$. Let $A(c)$ denote the negative annular region on $\partial_+ M'_0$. Let $R$ be a copy of $\partial_+ M'_0$. Let $(Y_0,\bar\xi_0)$ be the closed contact manifold obtained by gluing $R\times[-1,1]$, equipped with the $[-1,1]$-invariant contact structure with negative region $A(c)\times\{t\}$ on each $R\times\{t\}$, to $M'_0$ by  diffeomorphisms \[R\times\{\pm 1\}\to \partial_{\mp} M'_0\] which identify  dividing sets. Let $\eta$ be a curve in $R$ dual to the core of $A(c)$. Then  \[(\data_0 = (Y_0, R, r,m_0,\eta),\bar\xi_0)\] is a marked contact closure of $(M_0,\Gamma_0,\xi_0)$, where $r$ and $m_0$ are the obvious embeddings of $R\times[-1,1]$ and $M_0$ into $Y$. 

Note that $(M'_0,\xi'_0)$ is   a contact preclosure of $(M,\Gamma,\xi)$ as well, formed from  the auxiliary surface $F=F_0\cup_f D^2$ and the arc configuration $\mathcal{A}= \{c,a_2,\dots,a_m\}$. Thus, \[(\data = (Y_0, R, r,m,\eta),\bar\xi = \bar\xi_0)\] is a marked contact closure of $(M,\Gamma,\xi)$, where $m$ is the restriction of $m_0$ to $M$. In particular, $\SHMt(-\data) = \SHMt(-\data_0)$. We define the $0$-handle attachment map \[\mathscr{H}_0:\SHMtfun(-M,-\Gamma)\to \SHMtfun(-M_0,-\Gamma_0)\] to be the morphism determined by the equivalence class of the identity map  from $\SHMt(-\data)$ to $\SHMt(-\data_0)$, which we will denote in this case by \[id_{-\data,-\data_0}:\SHMt(-\data)\to\SHMt(-\data_0).\] 
To prove that $\mathscr{H}_0$ is well-defined, we need only show that if $(\data_0,\bar\xi_0)$ and $(\data_0',\bar\xi'_0)$ are marked contact closures of $(M_0,\Gamma_0,\xi_0)$ constructed as above, and  $(\data,\bar\xi)$ and $(\data',\bar\xi')$ are the corresponding marked contact closures of $(M,\Gamma,\xi)$, then the  diagram   \[ \xymatrix@C=55pt@R=30pt{
\SHMt(-\data) \ar[r]^-{id_{-\data,-\data_0}} \ar[d]_{\Psit_{-\data,-\data'}} & \SHMt(-\data_0) \ar@<-1.5ex>[d]^{\Psit_{-\data_0,-\data_0'}} \\
\SHMt(-\data') \ar[r]_-{id_{-\data',-\data_0'}} & \SHMt(-\data_0')
} \] commutes, up to multiplication by a unit in $\RR$. But this is clear: $\Psit_{-\data_0,-\data_0'}$ is a composition of  maps associated to 2-handle   and splicing cobordisms, and $\Psit_{-\data,-\data'}$ can be defined via the exact same composition. 
Note that \[id_{-\data,-\data_0}(\invt(\data,\bar\xi))\doteq \invt(\data_0,\bar\xi_0)\]  since  $\invt(\data,\bar\xi) = \invt(\data_0,\bar\xi_0)$ in  $\SHMt(-\data) = \SHMt(-\data_0)$. We therefore have the following.

\begin{proposition}
\label{prop:H0}
$\mathscr{H}_0(\invt^g(M,\Gamma,\xi))=\invt^g(M_0,\Gamma_0,\xi_0)$ for each $g\geq g(M,\Gamma)$. \qed
  \end{proposition}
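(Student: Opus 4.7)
The plan is essentially to observe that the proposition follows at once from the construction of $\mathscr{H}_0$ developed in the preceding paragraphs. First, I would fix an arbitrary genus-$g$ marked contact closure $(\data_0,\bar\xi_0)$ of $(M_0,\Gamma_0,\xi_0)$ built via an auxiliary surface $F_0$ with arc configuration $\mathcal{A}_0=\{c,a_1,\dots,a_m\}$ as above, and invoke the recipe that produces from it the marked contact closure $(\data,\bar\xi)$ of $(M,\Gamma,\xi)$ using $F=F_0\cup_f D^2$ and $\mathcal{A}=\{c,a_2,\dots,a_m\}$. The key point is that the underlying data $(Y_0,R,r,\eta)$ and the contact structure $\bar\xi=\bar\xi_0$ are \emph{literally identical}; only the sutured embedding changes, via $m=m_0|_M$. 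In particular $\SHMt(-\data)=\SHMt(-\data_0)$ as $\RR$-modules.

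Next, by Definition \ref{def:contactinvariant}, both $\invt(\data,\bar\xi)$ and $\invt(\data_0,\bar\xi_0)$ are by fiat equal to $\psi(Y_0,\bar\xi_0)$, and thus agree on the nose as elements of the common module. Since $\mathscr{H}_0$ is defined to be the morphism of projectively transitive systems determined, per Remark \ref{rmk:completesubset}, by the equivalence classes of the identity maps $id_{-\data,-\data_0}\colon \SHMt(-\data)\to\SHMt(-\data_0)$, and since $\invt^g(M,\Gamma,\xi)$ and $\invt^g(M_0,\Gamma_0,\xi_0)$ are by Definition \ref{def:contactinvariant} and Remark \ref{rmk:completesubset2} the elements determined by these respective representatives, the desired equality $\mathscr{H}_0(\invt^g(M,\Gamma,\xi))=\invt^g(M_0,\Gamma_0,\xi_0)$ is immediate.

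The only point requiring any thought is the well-definedness of $\mathscr{H}_0$ as a morphism of projectively transitive systems, which I expect to be the mild main obstacle. Concretely, one must verify that for any two such paired contact closures $(\data,\bar\xi),(\data_0,\bar\xi_0)$ and $(\data',\bar\xi'),(\data'_0,\bar\xi'_0)$, the square
\[ \xymatrix@C=55pt@R=25pt{
\SHMt(-\data) \ar[r]^-{id} \ar[d]_{\Psit_{-\data,-\data'}} & \SHMt(-\data_0) \ar[d]^{\Psit_{-\data_0,-\data_0'}} \\
\SHMt(-\data') \ar[r]_-{id} & \SHMt(-\data_0')
} \]
commutes up to a unit. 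This was noted in the paragraph preceding the proposition, and holds because the 2-handle and splicing cobordisms defining $\Psit_{-\data_0,-\data_0'}$ can be chosen to live entirely in the auxiliary region $R\times[-1,1]\subset Y_0$ disjoint from $m_0(M_0)$, hence from $m(M)$, so the same composition realizes $\Psit_{-\data,-\data'}$. With well-definedness in hand, no additional work is needed.
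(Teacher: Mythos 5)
Your proof is correct and follows essentially the same line as the paper's: you identify the marked contact closure $(\data_0,\bar\xi_0)$ of $(M_0,\Gamma_0,\xi_0)$ with a closure $(\data,\bar\xi)$ of $(M,\Gamma,\xi)$ having the same underlying $(Y_0,R,r,\eta,\bar\xi_0)$, note that both $\invt(\data,\bar\xi)$ and $\invt(\data_0,\bar\xi_0)$ are literally $\psi(Y_0,\bar\xi_0)$, and then appeal to the definition of $\mathscr{H}_0$ as the morphism induced by the identity maps, with well-definedness checked exactly as in the paragraph preceding the proposition. Your extra remark that the 2-handle and splicing cobordisms defining the canonical isomorphisms are supported away from $m_0(M_0)\supset m(M)$ is a correct and slightly more explicit gloss on the paper's assertion that the same composition of cobordism maps realizes both vertical arrows in the commuting square.
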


\subsubsection{$1$-handle attachments}
\label{sssec:1handles}
Suppose $D_-$ and $D_+$ are disjoint embedded disks in $\partial M$ which each intersect $\Gamma$ in a single properly embedded arc. To attach a contact $1$-handle  to $(M,\Gamma,\xi)$ along these disks, we glue $(D^2\times[-1,1],\xi_{D^2})$ to $(M,\Gamma,\xi)$ by diffeomorphisms \[D^2\times\{-1\}\to D_- \ \ {\rm and} \ \  D^2\times\{+1\}\to D_+,\] which preserve and reverse orientations, respectively, and  identify dividing sets, and then we round corners, as illustrated in Figure \ref{fig:onehandle}. Let $(M_1,\Gamma_1,\xi_1)$ be the resulting sutured contact manifold. As in the $0$-handle case, we claim that every marked contact closure of $(M_1,\Gamma_1,\xi_1)$ is also a marked contact closure  of $(M,\Gamma,\xi)$. 

\begin{figure}[ht]

\centering
\includegraphics[width=14cm]{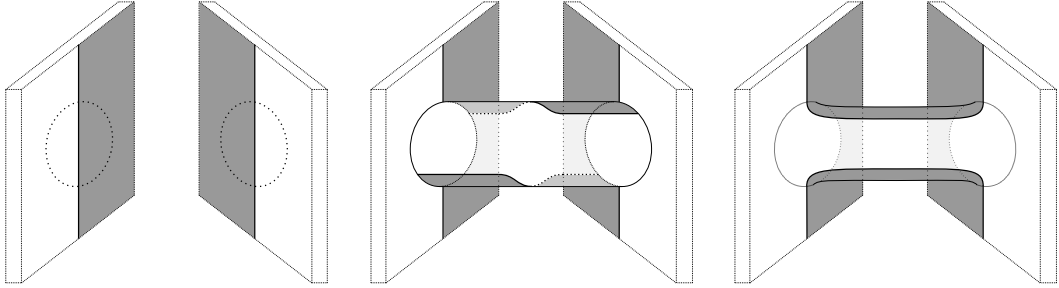}
\caption{Left, a portion of a vertical invariant neighborhood of $\partial M$ near the disks $D_-,D_+\subset \partial M$, whose boundaries are dotted. Middle, attaching the contact $1$-handle.  Right, the $1$-handle attachment after rounding corners.}
\label{fig:onehandle}
\end{figure}

The rough idea is that if $F_1$ is an auxiliary surface for $(M_1,\Gamma_1)$, then, in the corresponding preclosure $M_1'$, the union of $F_1\times[-1,1]$ with the contact $1$-handle is a product $F\times[-1,1]$, where $F$ is an auxiliary surface for $(M,\Gamma)$, so that $M_1'$ is also a preclosure of $(M,\Gamma)$. We make this  precise as follows. Choose an auxiliary surface $F_1$ for $(M_1,\Gamma_1)$, a neighborhood $A(\Gamma_1)$, and  a diffeomorphism \[h_1:\partial F_1\times[-1,1]\to A(\Gamma_1).\]  We perturb $\xi_1$ as usual, so that, near the contact $1$-handle, the resulting dividing set intersects $A(\Gamma_1)$  as shown in the upper right of Figure \ref{fig:onehandleaux}.  We choose an arc configuration $\mathcal{A}_1$  on $F_1$ so that the negative region  of $\Gamma_{\mathcal{A}_1}$ on a portion of the convex surface $F_1\times\{1\}\subset (F_1\times[-1,1],\Xi_{\mathcal{A}_1})$  glued near the  $1$-handle consists of neighborhoods of four arcs, as shown in the lower right of Figure \ref{fig:onehandleaux}. Let $(M_1',\xi_1')$ denote the resulting contact preclosure of $(M_1,\Gamma_1,\xi_1)$.

Let $\gamma_+$ and $\gamma_-$ be the  arcs of $\partial F_1$ such that $\gamma_{\pm}\times\{1\}\subset \partial F_1\times\{1\}$ are mapped to the  contact $1$-handle by $h_1$. Let $F$ be the surface obtained  by attaching a two-dimensional $1$-handle  to $F_1$ with feet at $\gamma_{\pm}$. Note that $F$ is an auxiliary surface for $(M,\Gamma)$. We use it to construct a contact preclosure of $(M,\Gamma,\xi)$ as follows. Let $A(\Gamma)\subset \partial M$ be a neighborhood of $\Gamma$ which agrees with $A(\Gamma_1)$ outside of the attaching disks $D_{\pm}$. We perturb $\xi$ so that the resulting dividing set agrees with that of the perturbed $\xi_1$ outside of $D_{\pm}$ and is disjoint from $A(\Gamma)$ inside these disks, as shown in the upper left of Figure \ref{fig:onehandleaux}. By choosing $h_1$ more carefully to begin with, we can assume that it extends to a map \[h:\partial F\times[-1,1]\to A(\Gamma)\] which agrees with $h_1$ away from $\gamma_{\pm}\times[-1,1]$. Let $\mathcal{A}$ be the arc configuration on $F$ induced by $\mathcal{A}_1$, and let $(M',\xi')$ be the corresponding contact preclosure of $(M,\Gamma,\xi)$.


\begin{figure}[ht]

\centering
\includegraphics[width=9.6cm]{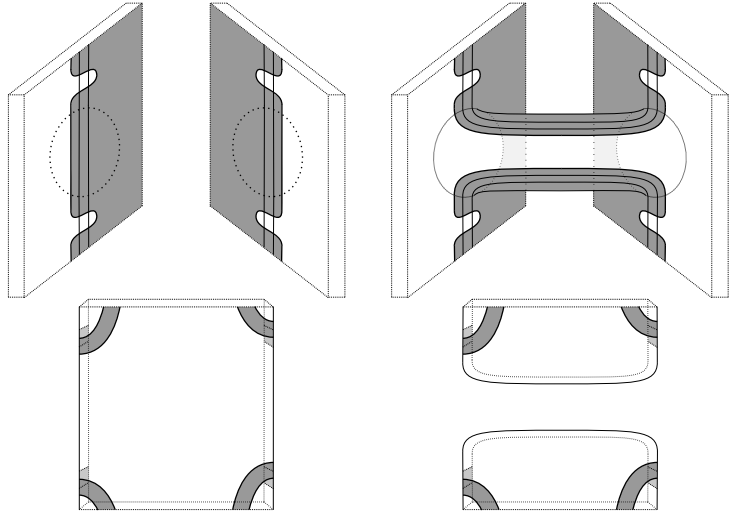}
\caption{Upper left, a portion of a vertically invariant neighborhood of $\partial M$ near  $D_+,D_-$ showing  $A(\Gamma)$ and the perturbed dividing set. Upper right, the corresponding portions of $M_1$ and $A(\Gamma_1)$. Lower left, a portion of $F\times[-1,1]$ with the negative regions on $F\times\{1\}$ and $\partial F\times[-1,1]$ shaded. Lower right, the corresponding portion of $F_1\times[-1,1]$. The 3-balls $N\subset M'$ and $N_1\subset M_1'$ are obtained by gluing the portions of $F\times[-1,1]$ and $F_1\times[-1,1]$ shown here to the portions of $M$ and $M_1$ shown here via the maps $h$ and $h_1$.}
\label{fig:onehandleaux}
\end{figure}

Let $N\subset M'$ be the 3-ball obtained by gluing the portion of $F\times[-1,1]$ shown in Figure \ref{fig:onehandleaux} to the portion of $M$ shown there, and define $N_1\subset M_1'$ analogously. Note that  \begin{equation}\label{eqn:NM}(M'\ssm N,\xi')=(M_1'\ssm N_1,\xi_1').\end{equation}  We claim that, after rounding corners,  $N$ and $N_1$ are Darboux balls. This will imply that the identity map on the manifold in \eqref{eqn:NM} extends (uniquely, up to isotopy) to a contactomorphism \[(M',\xi')\to(M_1',\xi_1').\] For the  claim, it is enough to show that $N$ and $N_1$ are tight (as there is  a unique tight ball). But since $N$ and $N_1$ only depend on  $\xi$ on a vertically invariant neighborhood of $\partial M$, it suffices to find some $(M,\Gamma,\xi)$ such that all contact preclosures of $(M,\Gamma,\xi)$ and $(M_1,\Gamma_1,\xi_1)$ are tight. We can take $(M,\Gamma,\xi)$ to be the  Darboux ball $(B^3,S^1,\xi_{std})$, in which case  $(M_1,\Gamma_1,\xi_1)$ is the product sutured contact handlebody $H(S)$ for a surface $S$ with genus $0$ and $2$ boundary components. It follows from the results in Subsections \ref{ssec:closureexamples} and \ref{ssec:invariant-fixed-g} that   contact preclosures of both the Darboux ball and this handlebody are always tight, settling the claim.

Now, let $R$ be a copy of $\partial_+ M'_1$, and let $(Y_1,\bar\xi_1)$ be the closed contact manifold obtained by gluing $R\times[-1,1]$, equipped with the appropriate $[-1,1]$-invariant contact structure, to $M'_1$ in the usual way. Consider the marked contact closure  \[(\data_1 = (Y_1, R, r,m_1,\eta),\bar\xi_1)\] of $(M_1,\Gamma_1,\xi_1)$, where $\eta$ is an appropriately chosen curve on $R$, and $r$ and $m_1$ are the obvious embeddings of $R\times[-1,1]$ and $M_1$ into $Y_1$. Since  $(M',\xi')$ and $(M_1',\xi_1')$ are  contactomorphic by the identity map outside of balls, we have that \[(\data = (Y_1, R, r,m,\eta),\bar\xi=\bar\xi_1)\] is a marked contact closure of $(M,\xi)$, where  $m$ is the restriction of $m_1$ to $M$. In particular, $\SHMt(-\data) = \SHMt(-\data_1)$. We define the $1$-handle attachment map \[\mathscr{H}_1:\SHMtfun(-M,-\Gamma)\to \SHMtfun(-M_1,-\Gamma_1)\] to be the morphism determined by the equivalence class of the identity map  from $\SHMt(-\data)$ to $\SHMt(-\data_1)$. The proof that $\mathscr{H}_1$ is well-defined is then exactly as in the $0$-handle case. 
Since the identity map $id_{-\data,-\data_1}$ sends $\invt(\data,\bar\xi)$ to $\invt(\data_1,\bar\xi_1)$, we have the following.

\begin{proposition}
\label{prop:H1}
$\mathscr{H}_1(\invt^g(M,\Gamma,\xi))=\invt^g(M_1,\Gamma_1,\xi_1)$ for each $g\geq g(M,\Gamma)$. \qed
\end{proposition}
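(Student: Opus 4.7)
The plan is to unwind the construction of $\mathscr{H}_1$ just given and observe that the claim is essentially tautological at the level of a compatible pair of marked contact closures. First, I would fix $g \geq g(M,\Gamma)$ and choose a marked contact closure $(\data_1,\bar\xi_1) = ((Y_1,R,r,m_1,\eta),\bar\xi_1)$ of $(M_1,\Gamma_1,\xi_1)$ of genus $g$, built as in the construction preceding the proposition from an auxiliary surface $F_1$ and an arc configuration $\mathcal{A}_1$ chosen so that the contact $1$-handle is absorbed into the preclosure. Let $(\data,\bar\xi) = ((Y_1,R,r,m,\eta),\bar\xi_1)$ denote the associated marked contact closure of $(M,\Gamma,\xi)$ of the same genus $g$, where $m = m_1|_M$ and the ambient closed contact manifold is literally the same $(Y_1,\bar\xi_1)$.

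Next, I would observe that since $\SHMt(-\data) = \SHMt(-\data_1)$ as the identical module $\HMtoc(-Y_1|-R;\Gamma_{-\eta})$, and since by Definition \ref{def:contactinvariant}
\[
\invt(\data,\bar\xi) = \psi(Y_1,\bar\xi_1) = \invt(\data_1,\bar\xi_1),
\]
the identity homomorphism $id_{-\data,-\data_1}\colon \SHMt(-\data) \to \SHMt(-\data_1)$ sends $\invt(\data,\bar\xi)$ to $\invt(\data_1,\bar\xi_1)$ on the nose. By the definition of $\mathscr{H}_1$, the equivalence class of $id_{-\data,-\data_1}$ is (a representative of) the component $\mathscr{H}_{1,-\data,-\data_1}$ of the morphism $\mathscr{H}_1$ of projectively transitive systems. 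Combining this with Theorem \ref{thm:well-defined}, which ensures that $\invt^g(M,\Gamma,\xi)$ and $\invt^g(M_1,\Gamma_1,\xi_1)$ are well-defined elements of $\SHMtfun(-M,-\Gamma)$ and $\SHMtfun(-M_1,-\Gamma_1)$, the compatibility in Definition \ref{def:element} forces the equality
\[
\mathscr{H}_1(\invt^g(M,\Gamma,\xi)) = \invt^g(M_1,\Gamma_1,\xi_1)
\]
at every component, since both sides agree at the component $(-\data,-\data_1)$ and are then determined by transitivity.

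There is essentially no technical obstacle in this argument — all of the geometric work has already been carried out in the construction of $\mathscr{H}_1$ itself, in particular the identification of $N \subset M'$ and $N_1 \subset M_1'$ as tight (hence Darboux) $3$-balls, which is what makes $(M',\xi')$ and $(M_1',\xi_1')$ contactomorphic by the identity away from balls and so guarantees that the two marked contact closures have identical ambient closed contact manifolds. Once that identification is in hand, the equality $\invt(\data,\bar\xi) = \invt(\data_1,\bar\xi_1)$ and hence the proposition follow immediately from the definitions. The only subtlety to be careful about is ensuring that the chosen $(\data_1,\bar\xi_1)$ really can be realized with genus exactly $g$ for every $g \geq g(M,\Gamma)$; this follows from the freedom in choosing the genus of $F_1$, since attaching the contact $1$-handle does not change $g(M,\Gamma)$ in a way that obstructs this range.
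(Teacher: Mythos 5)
Your proof is correct and follows exactly the paper's approach: the proposition is an immediate consequence of the construction of $\mathscr{H}_1$, since the marked contact closures $(\data,\bar\xi)$ and $(\data_1,\bar\xi_1)$ share the same ambient closed contact manifold $(Y_1,\bar\xi_1)$, so $\invt(\data,\bar\xi)=\psi(Y_1,\bar\xi_1)=\invt(\data_1,\bar\xi_1)$ and the identity map $id_{-\data,-\data_1}$ (which induces $\mathscr{H}_1$) carries one to the other. The paper treats this as self-evident and marks the proposition with a \qed; your unwinding matches its intended argument.
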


\subsubsection{$2$-handle attachments}
\label{sssec:2handles}
Suppose $\gamma$ is an embedded curve in $\partial M$ which intersects $\Gamma$ in two points. Let $A(\gamma)$ be an annular neighborhood of $\gamma$ intersecting $\Gamma$ in two cocores. To attach a contact $2$-handle  to $(M,\Gamma,\xi)$ along $A(\gamma)$, we glue $(D^2\times[-1,1],\xi_{D^2})$ to $(M,\Gamma,\xi)$ by an orientation-reversing diffeomorphism \[\partial D^2\times [-1,1] \to A(\gamma)\] which identifies positive regions with negative regions, and then  round corners, as illustrated in Figure \ref{fig:twohandle2}. Let $(M_2,\Gamma_2,\xi_2)$ be the resulting sutured contact manifold.

\begin{figure}[ht]

\centering
\includegraphics[width=11.3cm]{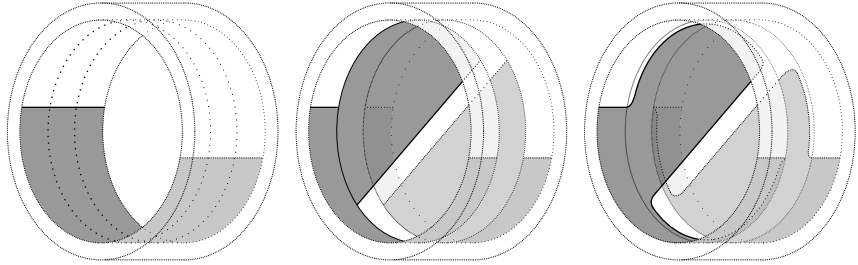}
\caption{Left, a portion of a vertically invariant neighborhood of $\partial M$ near $A(\gamma)\subset\partial M$, whose boundary is dotted. This portion is a neighborhood $N$ of the Legendrian curve $\gamma'$. Middle,  attaching the contact $2$-handle. Right, the $2$-handle attachment after rounding corners.}
\label{fig:twohandle2}
\end{figure}

Now, consider the sutured contact manifold $(M_{1},\Gamma_{1},\xi_{1})$ obtained from $(M_2,\Gamma_2,\xi_2)$ by attaching a contact $1$-handle along disks in the interiors of the $D^2\times\{\pm 1\}$ boundary components of the contact $2$-handle, as indicated in Figure \ref{fig:twohandle}, and let \[\mathscr{H}_1:\SHMtfun(-M_2,-\Gamma_2)\to \SHMtfun(-M_1,-\Gamma_1)\] be the corresponding $1$-handle attachment map, as defined in Subsubsection \ref{sssec:1handles}. It is easy to see that $(M_{1},\Gamma_{1})$ is diffeomorphic to the sutured manifold obtained from $(M,\Gamma)$ by performing $\partial M$-framed surgery on a copy $\gamma'$ of $\gamma$ in the interior of $M$. By the Legendrian Realization Principle, we can assume that $\gamma'$ is Legendrian in $(M,\Gamma,\xi)$ since $\gamma$ is nonisolating in $\partial M$. Moreover, since $\gamma$ intersects $\Gamma$ in exactly two points, the $\partial M$-framing on $\gamma'$ is one more than its contact framing. Below, we argue that $(M_{1},\Gamma_{1},\xi_{1})$ is in fact contactomorphic (by a canonical isotopy class of contactomorphisms) to the result of contact $(+1)$-surgery on  $\gamma'$. 

\begin{figure}[ht]
\centering
\includegraphics[height=7.8cm]{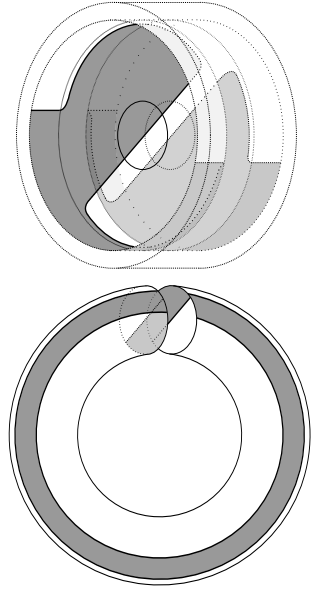}
\caption{Attaching a contact $1$-handle  to form $M_{1}$. The circles on $D^2\times\{\pm 1\}$ indicate   where the feet of this handle are to be attached. The union of the $1$-handle  below with the portion of $M_1$ shown above  is the solid torus $N_{1}$.}
\label{fig:twohandle}
\end{figure}

To see this, let $N\subset M$ be the solid torus on the left in Figure \ref{fig:twohandle2} and let $N_{1}\subset M_{1}$ be the solid torus obtained from $N$ by attaching the $1$- and $2$-handles  as indicated in Figures \ref{fig:twohandle2} and \ref{fig:twohandle}.  After slight modification, $N$ and $N_{1}$ can be made to have convex boundaries.  Note that  \begin{equation}\label{eqn:NM'}(M\ssm N,\Gamma,\xi)=(M_{1}\ssm N_{1},\Gamma_{1},\xi_{1}).\end{equation} Furthermore, the identity map, restricted to $\partial M\ssm N  = \partial M_1\ssm N_1$, extends uniquely (up to isotopy) to a diffeomorphism   \[(\partial M, \Gamma)\to(\partial M_{1}, \Gamma_{1}).\] It follows that the identity map on the manifold in \eqref{eqn:NM'} extends uniquely (up to isotopy) to a contactomorphism \[(M\ssm N',\Gamma,\xi)\to(M_{1}\ssm N_{1}',\Gamma_{1},\xi_{1}),\] where $N'\subset \inr(N)$ is the  solid torus with convex boundary obtained by removing a vertically invariant collar $\partial N\times[-\infty,0)$ from $N$, and $N'_{1}$ is defined from $N_{1}$ analogously. In other words, there is a canonical contactomorphism (up to isotopy), \[f:(M',\Gamma',\xi')\to(M_{1},\Gamma_{1},\xi_{1}),\] where $(M',\Gamma',\xi')$ is the contact manifold obtained from $(M,\Gamma,\xi)$ by removing the solid torus neighborhood $N'$ of $\gamma'$ and gluing back in a  solid torus contactomorphic to $N_{1}'$ according to the contact framing on $\gamma'$ plus one. To show that this operation is actually a contact $(+1)$-surgery, all that remains is to show that $N$ and $N_{1}$ are tight (as there is a unique tight solid torus with the given dividing set on its boundary).

Since $N$ and $N_{1}$ only depend on  $\xi$ on a vertically invariant neighborhood of $\partial M$, it suffices to find some $(M,\Gamma,\xi)$ such that  both $(M,\Gamma,\xi)$ and some sutured contact manifold obtained from $(M,\Gamma,\xi)$ by attaching contact $2$- and $1$-handles as above are tight. We can take $(M,\Gamma,\xi)$ to be the tight solid torus $H(S)$ for a surface  $S$ with genus 0 and 2 boundary components. Etg{\"u} and {\"O}zba{\u{g}}c{\i} show in \cite[Example 3]{etguozbagci} that one can obtain the  Darboux ball  by  attaching a contact $2$-handle to this solid torus. We proved in Proposition \ref{prop:darboux-ball-invt} that any marked contact closure $(\data,\bar\xi)$ of $(B^3,S^1,\xi_{std})$ has nonzero invariant $\invt(\data,\bar\xi)$. This then implies, by the earlier results in this section, that any marked contact closure of the sutured contact manifold obtained from the Darboux ball by attaching a contact $1$-handle also has nonzero invariant. The manifold resulting from this $1$-handle attachment is therefore tight as well. 

Thus, $(M',\Gamma',\xi')$ is obtained from $(M,\Gamma,\xi)$ via contact $(+1)$-surgery along the Legendrian curve $\gamma'$. 

In order to define the $2$-handle map $\mathscr{H}_2$, we first define the morphism $F_K$ 
 in Proposition \ref{prop:shm-legendrian-surgery}. Suppose \[(\data = (Y,R,r,m,\eta),\bar\xi)\] is a marked contact closure of $(M,\Gamma,\xi)$. Let $(Y',\bar\xi')$ be the contact 3-manifold obtained from $Y$ by performing contact $(+1)$-surgery on  $m(K)$ for some Legendrian knot $K\subset M$. Then  \[(\data' = (Y',R,r',m',\eta),\bar\xi')\] is a contact closure of $(M',\Gamma',\xi')$, where  $r'$ is the map induced by $r$ and $m'$ is the embedding of $M'$ into $Y'$ induced by $m$. Let $W$ be the $2$-handle cobordism from $Y$ to $Y'$ corresponding to the above surgery. We define 
 \[F_{K}: \SHMtfun(-M,-\Gamma)\to\SHMtfun(-M',-\Gamma')\] 
  to be the morphism induced by the map 
\[\HMtoc({-}W|{-}R;\Gamma_{-\nu}):\SHMt(-\data)\to\SHMt(-\data'),\] where $\nu\subset W$ is the natural cylindrical cobordism from $r(\eta\times\{0\})\subset Y$ to $r'(\eta\times\{0\})\subset Y'$. 
To prove that $F_{K}$ is well-defined, we must show that the  diagram 
\[ \xymatrix@C=41pt@R=32pt{
\SHMt(-\data_1) \ar[rr]^-{\HMtoc({-}W_1|{-}R_1;\Gamma_{-\nu_1})} \ar[d]_{\Psit_{-\data_{1},-\data_{2}}} && \SHMt(-\data_1') \ar[d]^{\Psit_{-\data_1',-\data_2'}} \\
\SHMt(-\data_{2}) \ar[rr]_-{\HMtoc({-}W_2|{-}R_2;\Gamma_{-\nu_2})} &&  \SHMt(-\data_2')
} \]
commutes, up to multiplication by a unit in $\RR$,  for any two marked contact closures 
$(\data_1 ,\bar\xi_1)$ and $(\data_2,\bar\xi_2)$  of $(M,\Gamma,\xi)$, where $(\data_i' ,\bar\xi_i')$ is the marked contact closure  of $(M',\Gamma',\xi')$ induced by $(\data_i,\bar\xi_i)$ and $W_i$ is the $2$-handle cobordism from $Y_i$ to $Y_i'$.
But this  follows from the  commutativity of the cobordisms used to define these maps:  $W_1$ and $W_2$ are built by attaching $2$-handles along curves in the regions $m_1(M)$ and $m_2(M)$, while the vertical isomorphisms are defined from cobordisms built by attaching 2-handles or splicing along tori outside of these regions. Since $\HMtoc({-}W|{-}R;\Gamma_{-\nu})$ sends $\invt(\data,\bar\xi)$ to $\invt(\data',\bar\xi')$, by Corollary \ref{cor:contactplusone}, we have the following, which proves Proposition \ref{prop:shm-legendrian-surgery}.

\begin{proposition}
\label{prop:Fgamma}
$F_{K}(\invt^g(M,\Gamma,\xi)) = \invt^g(M',\Gamma',\xi')$ for each $g\geq g(M,\Gamma)$. \qed
\end{proposition}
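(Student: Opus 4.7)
The plan is to reduce the statement about elements of projectively transitive systems to a statement about representatives in a single pair of marked contact closures, and then invoke Corollary \ref{cor:contactplusone} which encodes the functoriality of the closed-manifold contact invariant $\psi$ under contact $(+1)$-surgery.

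First, I would fix a marked contact closure $(\data = (Y,R,r,m,\eta),\bar\xi)$ of $(M,\Gamma,\xi)$ of genus $g\geq g(M,\Gamma)$. Since $K$ is a Legendrian knot in the interior of $M$, its image $m(K)\subset Y$ is Legendrian in $(Y,\bar\xi)$ and is disjoint from $r(R\times[-1,1])$, so contact $(+1)$-surgery on $m(K)$ produces a closed contact manifold $(Y',\bar\xi')$ together with an induced marked contact closure $(\data' = (Y',R,r',m',\eta),\bar\xi')$ of $(M',\Gamma',\xi')$, exactly as used in the construction of $F_K$ preceding the proposition. By Definition \ref{def:contactinvariant}, we have $\invt(\data,\bar\xi) = \psi(Y,\bar\xi)$ and $\invt(\data',\bar\xi') = \psi(Y',\bar\xi')$.

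Next, let $W$ be the $2$-handle cobordism from $Y$ to $Y'$ corresponding to this surgery, and let $\nu\subset W$ be the cylindrical cobordism from $r(\eta\times\{0\})$ to $r'(\eta\times\{0\})$. By Corollary \ref{cor:contactplusone}, which (per the remark following Theorem \ref{thm:ht-functoriality}) applies in the local-coefficient setting up to a unit in $\RR$, the induced map
\[ \HMtoc(-W;\Gamma_{-\nu}): \HMtoc(-Y;\Gamma_{-\eta}) \to \HMtoc(-Y';\Gamma_{-\eta}) \]
sends $\psi(Y,\bar\xi)$ to $\pm\psi(Y',\bar\xi')$. Since the relevant contact classes lie in the topmost $\Sc$-summands relative to $R$ (which is guaranteed by the convexity condition built into the definition of a marked contact closure, together with the fact that $W$ is a $2$-handle cobordism disjoint from $r(R\times[-1,1])$, so that $\Sc$-degree with respect to $R$ is preserved), this restricts to
\[ \HMtoc({-}W|{-}R;\Gamma_{-\nu})(\invt(\data,\bar\xi)) \doteq \invt(\data',\bar\xi'). \]

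Finally, $F_K$ was defined as the morphism of projectively transitive systems determined by the equivalence class of $\HMtoc({-}W|{-}R;\Gamma_{-\nu})$, and by Theorem \ref{thm:well-defined} the equivalence classes $\invt(\data,\bar\xi)$ and $\invt(\data',\bar\xi')$ assemble into well-defined elements $\invt^g(M,\Gamma,\xi)$ and $\invt^g(M',\Gamma',\xi')$ of the respective systems. The displayed relation above is exactly the identity $F_K(\invt^g(M,\Gamma,\xi)) = \invt^g(M',\Gamma',\xi')$ read at the pair $(\data,\data')$, and by Remark \ref{rmk:completesubset2} this is sufficient to establish the identity between elements. No step poses a genuine obstacle: the analytic content is packaged in Corollary \ref{cor:contactplusone}, and the projective-system bookkeeping is already handled by the well-definedness of $F_K$ and of $\invt^g$.
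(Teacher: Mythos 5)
Your proposal is correct and takes essentially the same approach as the paper. The paper's proof is precisely the observation, made in the sentence immediately preceding the proposition, that $\HMtoc(-W|{-}R;\Gamma_{-\nu})$ sends $\invt(\data,\bar\xi)$ to $\invt(\data',\bar\xi')$ by Corollary \ref{cor:contactplusone}, which you correctly identify as the key ingredient; your additional remarks about the $\Sc$-degree bookkeeping (the $2$-handle being attached away from $r(R\times[-1,1])$) and the projective-system formalities simply make explicit what the paper leaves implicit.
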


We now define the $2$-handle attachment map \[\mathscr{H}_2:\SHMtfun(-M,-\Gamma)\to \SHMtfun(-M_2,-\Gamma_2),\] to be the composition of morphisms \[\mathscr{H}_2 = \mathscr{H}_1^{-1}\circ \SHMtfun(f)\circ F_{\gamma'}.\] That $\mathscr{H}_2$ is independent of $\gamma'$  follows from the fact that any two such Legendrian realizations of $\gamma$ are related by an ambient isotopy of $M$ supported in $N$.  
Unpacking the composition above, we see that $\mathscr{H}_2$ may  also be formulated  as follows. Suppose $(\data,\bar\xi)$ is a marked contact closure of $(M,\Gamma,\xi)$ and let $(\data',\bar\xi')$ be the induced marked contact closure of the surgered manifold $(M',\Gamma',\xi')$. Then \[(\data_{2}=(Y',R',r',m_{2},\eta'),\bar\xi_2=\bar\xi')\] is a marked contact closure of $(M_{2},\Gamma_{2},\xi_{2})$, where $m_{2}$ is the restriction of $m'\circ f^{-1}$ to $M_2\subset M_{1}$. Let \[id_{-\data',-\data_{2}}: \SHMt(-\data')\to\SHMt(-\data_{2})\] be the identity map on $\SHMt(-\data')=\SHMt(-\data_{2})$. Then $\mathscr{H}_2$ is the morphism induced by the map 
\[id_{-\data',-\data_{2}}\circ \HMtoc({-}W|{-}R;\Gamma_{-\nu}):\SHMt(-\data)\to \SHMt(-\data_2).\]
Note that Propositions \ref{prop:Fgamma} and \ref{prop:H1} imply the following.

\begin{proposition}
\label{prop:H2}
$\mathscr{H}_2(\invt^g(M,\Gamma,\xi))=\invt^g(M_2,\Gamma_2,\xi_2)$ for each $g\geq g(M,\Gamma)$. \qed
\end{proposition}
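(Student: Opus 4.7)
The plan is to prove this by unwinding the definition $\mathscr{H}_2 = \mathscr{H}_1^{-1} \circ \SHMtfun(f) \circ F_{\gamma'}$ and applying, in sequence, the three results already established in the paper that describe what each factor does to the contact invariant. Specifically, I will compute the image of $\invt^g(M,\Gamma,\xi)$ under the composition by tracking it through each piece.

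First, I would apply Proposition \ref{prop:Fgamma} (which is the content of Proposition \ref{prop:shm-legendrian-surgery}) to obtain
\[ F_{\gamma'}(\invt^g(M,\Gamma,\xi)) \doteq \invt^g(M',\Gamma',\xi'), \]
where $(M',\Gamma',\xi')$ is the result of contact $(+1)$-surgery on the Legendrian realization $\gamma'$ of $\gamma$; this is exactly what was established in the previous subsubsection using Corollary \ref{cor:contactplusone} at the level of closures. Next, since $f:(M',\Gamma',\xi') \to (M_1,\Gamma_1,\xi_1)$ was constructed (and argued above to exist uniquely up to isotopy) as a contactomorphism, Lemma \ref{lem:contactomorphism} gives
\[ \SHMtfun(f)(\invt^g(M',\Gamma',\xi')) \doteq \invt^g(M_1,\Gamma_1,\xi_1). \]
Finally, Proposition \ref{prop:H1} applied to the $1$-handle attachment from $(M_2,\Gamma_2,\xi_2)$ to $(M_1,\Gamma_1,\xi_1)$ gives
\[ \mathscr{H}_1(\invt^g(M_2,\Gamma_2,\xi_2)) \doteq \invt^g(M_1,\Gamma_1,\xi_1), \]
and since $\mathscr{H}_1$ is an isomorphism of projectively transitive systems (being induced by an identity map of monopole Floer modules), we conclude
\[ \mathscr{H}_1^{-1}(\invt^g(M_1,\Gamma_1,\xi_1)) \doteq \invt^g(M_2,\Gamma_2,\xi_2). \]
Composing these three identities yields $\mathscr{H}_2(\invt^g(M,\Gamma,\xi)) \doteq \invt^g(M_2,\Gamma_2,\xi_2)$, as required.

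There is essentially no obstacle here beyond bookkeeping, because all of the substantive geometric content has already been packaged: the identification of the contact $2$-handle attachment with contact $(+1)$-surgery on $\gamma'$ followed by a contact $1$-handle attachment was carried out just above in the discussion leading up to the definition of $\mathscr{H}_2$, and the functoriality of $\psi$ under contact $(+1)$-surgery was imported from Theorem \ref{thm:ht-functoriality} via Corollary \ref{cor:contactplusone}. The only point that deserves a brief remark in the write-up is that the choice of Legendrian realization $\gamma'$ does not matter: any two such realizations differ by a contact isotopy of $(M,\Gamma,\xi)$ supported in the solid torus neighborhood $N$ of Figure \ref{fig:twohandle2}, so Corollary \ref{cor:isotopyindependence} together with the naturality of $F_{\gamma'}$ from Proposition \ref{prop:Fgamma} ensures that the displayed equality holds independently of this auxiliary choice.
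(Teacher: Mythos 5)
Your proposal is correct and is essentially the argument the paper intends: the paper dispatches Proposition \ref{prop:H2} with the remark that Propositions \ref{prop:Fgamma} and \ref{prop:H1} imply it, tacitly using Lemma \ref{lem:contactomorphism} for the middle factor $\SHMtfun(f)$ of the composition $\mathscr{H}_2 = \mathscr{H}_1^{-1}\circ \SHMtfun(f)\circ F_{\gamma'}$, which you make explicit. Your closing remark about independence of the Legendrian realization $\gamma'$ also matches the paper's own observation immediately after the definition of $\mathscr{H}_2$.
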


\subsubsection{$3$-handle attachments}
\label{sssec:3handles}
Attaching a contact 3-handle to $(M,\Gamma,\xi)$ amounts to gluing the Darboux ball  to $(M,\Gamma,\xi)$ along an $S^2$ boundary component of $M$ with one dividing curve. Let $(M_3,\Gamma_3,\xi_3)$ be the result of this gluing. We will first assume that $\partial M$ is disconnected, so that $M_3$ has boundary. Let $p$ be a point in $M_3$ in the interior of this Darboux ball. Then there is a canonical isotopy class of contactomorphisms \[f:(M,\Gamma,\xi)\to (M',\Gamma',\xi'),\] where $(M',\Gamma',\xi')$ is the sutured contact manifold obtained by taking the contact connected sum of $(M_3,\Gamma_3,\xi_3)$ with $(B^3,S^1,\xi_{std})$ at the point $p$. Let $(M_{0},\Gamma_{0},\xi_{0})$ be the disjoint union of $(M_3,\Gamma_3,\xi_3)$ with $(B^3,S^1,\xi_{std}),$ and let \[\mathscr{H}_0: \SHMtfun(-M_3, -\Gamma_3)\to \SHMtfun(-M_{0},-\Gamma_{0})\] be the corresponding  $0$-handle attachment map, as defined in Subsubsection \ref{sssec:0handles}. Suppose \[(\data_{0} = (Y_{0},R,r,m,\eta), \bar\xi_{0})\] is a marked contact closure of $(M_{0},\Gamma_{0},\xi_{0})$. Then \[(\data' = (Y',R,r,m',\eta), \bar\xi')\] is a marked contact closure of $(M',\Gamma',\xi')$, where $(Y',\bar\xi')$ is the self contact connected sum obtained from $(Y_{0},\bar\xi_{0})$ by removing Darboux balls around $m(p)$ and some point in $m(B^3)\subset Y_{0}$ and gluing in $S^2\times I$, equipped with a tight, $I$-invariant contact structure, and $m'$ is the  embedding of $M'$ into $Y'$ induced by $m$. In particular, $(Y',\bar\xi')$ is a contact connected sum of $(Y_{0},\bar\xi_{0})$ with the tight $S^1\times S^2$.
Now, there is a natural Stein $1$-handle cobordism \[(W,\omega):(Y_{0},\bar\xi_{0})\to(Y',\bar\xi').\] Let $\nu\subset W$ be a  cylindrical cobordism from $r(\eta)\subset Y_{0}$ to $r(\eta)\subset Y'$. Then the map  \[\HMtoc(W|{-}R;\Gamma_{-\nu}): \SHMt(-\data')\to\SHMt(-\data_{0})\] sends $\invt(\data',\xi')$ to $\invt(\data_{0},\xi_{0})$, up to multiplication by a unit in $\RR$, by Theorem \ref{thm:ht-functoriality}. We define \[F_{\#}:\SHMtfun(-M',-\Gamma')\to \SHMtfun(-M_0,-\Gamma_0)\] to be the morphism determined by the equivalence class of this map. That $F_{\#}$ is well-defined follows from similar considerations as before; namely, these Stein 1-handle cobordisms are attached along balls in the interiors of $Y'$ and $Y_0$ and therefore commute with the 2-handle and splicing cobordisms used to define the isomorphisms in the systems $\SHMtfun(-M',-\Gamma')$ and $\SHMtfun(-M_0,-\Gamma_0)$. 
We  define the $3$-handle attachment map \[\mathscr{H}_3:\SHMtfun(-M,-\Gamma)\to\SHMtfun(-M_3,-\Gamma_3)\] to be the composition \[\mathscr{H}_3 = \mathscr{H}_0^{-1}\circ F_{\#}\circ\SHMtfun(f).\] By Proposition \ref{prop:H0}, we have the following. 

\begin{proposition}
\label{prop:H3}
$\mathscr{H}_3(\invt^g(M,\Gamma,\xi))=\invt^g(M_3,\Gamma_3,\xi_3)$ for each $g\geq g(M,\Gamma)$. \qed
\end{proposition}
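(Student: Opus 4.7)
The plan is to verify Proposition \ref{prop:H3} by tracking $\invt^g(M,\Gamma,\xi)$ through each of the three morphisms in the definition
\[\mathscr{H}_3 = \mathscr{H}_0^{-1}\circ F_{\#}\circ\SHMtfun(f),\]
and showing that each one takes  contact class to contact class.

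First, I would apply Lemma \ref{lem:contactomorphism} to the canonical contactomorphism $f:(M,\Gamma,\xi)\to(M',\Gamma',\xi')$ to conclude that $\SHMtfun(f)$ sends $\invt^g(M,\Gamma,\xi)$ to $\invt^g(M',\Gamma',\xi')$. This step is essentially immediate given the work already done in Lemma \ref{lem:contactomorphism}, and it only uses the fact that $f$ is well-defined up to isotopy; the isotopy class of $f$ was established in the construction of $\mathscr{H}_3$.

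Next, I would unpack $F_{\#}$. Fix a marked contact closure $(\data_0,\bar\xi_0)$ of $(M_0,\Gamma_0,\xi_0)$ and let $(\data',\bar\xi')$ be the induced marked contact closure of $(M',\Gamma',\xi')$ obtained by performing the contact self-connected sum inside $m(M_0)\subset Y_0$. By construction $\invt(\data',\bar\xi') = \psi(Y',\bar\xi')$ and $\invt(\data_0,\bar\xi_0) = \psi(Y_0,\bar\xi_0)$. The Stein $1$-handle cobordism $(W,\omega):(Y_0,\bar\xi_0)\to(Y',\bar\xi')$ is an exact symplectic cobordism, so Theorem \ref{thm:ht-functoriality} gives that the induced map
\[\HMtoc(W|{-}R;\Gamma_{-\nu}):\SHMt(-\data')\to\SHMt(-\data_0)\]
sends $\psi(Y',\bar\xi')$ to $\psi(Y_0,\bar\xi_0)$ up to a unit. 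Upgrading to the projectively transitive systems via Remark \ref{rmk:completesubset2}, this says $F_{\#}(\invt^g(M',\Gamma',\xi')) = \invt^g(M_0,\Gamma_0,\xi_0)$.

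Finally, I apply Proposition \ref{prop:H0}, which states that $\mathscr{H}_0(\invt^g(M_3,\Gamma_3,\xi_3)) = \invt^g(M_0,\Gamma_0,\xi_0)$; since $\mathscr{H}_0$ is an isomorphism (it is determined by an identity map of closure modules), inverting yields $\mathscr{H}_0^{-1}(\invt^g(M_0,\Gamma_0,\xi_0)) = \invt^g(M_3,\Gamma_3,\xi_3)$. Composing the three steps gives $\mathscr{H}_3(\invt^g(M,\Gamma,\xi))=\invt^g(M_3,\Gamma_3,\xi_3)$, as desired.

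The only delicate point is ensuring that all three steps can be done simultaneously for a single chain of compatible closures, so that the equivalence classes match up in the projectively transitive systems rather than only pointwise. I would handle this by choosing a starting marked contact closure $(\data,\bar\xi)$ of $(M,\Gamma,\xi)$, transporting it through $f$ to a marked contact closure $(\data',\bar\xi')$ of $(M',\Gamma',\xi')$ (here the identification is literally the identity on the underlying $\SHMt$-modules), then using the Stein $1$-handle cobordism to produce $(\data_0,\bar\xi_0)$, and then recognizing this as the image under $\mathscr{H}_0$ of a marked contact closure of $(M_3,\Gamma_3,\xi_3)$ coming from the same auxiliary surface data. The case where $\partial M$ is connected (so $M_3$ is closed) is not covered by this construction as stated, but can be reduced to the connected-boundary case by first attaching a contact $0$-handle and invoking Proposition \ref{prop:H0}, since contact handle attachments commute with each other on the level of underlying smooth topology.
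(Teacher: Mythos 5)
Your proposal is correct and follows essentially the same route as the paper: the paper's proof of Proposition \ref{prop:H3} is a direct consequence of the construction of $\mathscr{H}_3 = \mathscr{H}_0^{-1}\circ F_{\#}\circ\SHMtfun(f)$, where $\SHMtfun(f)$ preserves contact classes (Lemma \ref{lem:contactomorphism}), $F_{\#}$ does so by Theorem \ref{thm:ht-functoriality} applied to the Stein $1$-handle cobordism, and $\mathscr{H}_0$ does so by Proposition \ref{prop:H0} — exactly the three steps you spell out, together with the observation that all three can be witnessed on a single compatible chain of marked contact closures. One small remark: your last sentence about ``the case where $\partial M$ is connected (so $M_3$ is closed)'' is unnecessary, since a balanced sutured manifold has nonempty boundary by definition, so $M_3$ closed is excluded from the hypotheses of the proposition (the paper handles that situation separately via the morphism $F_p$ in Proposition \ref{prop:darboux-complement}); also you appear to mean ``reduced to the disconnected-boundary case.''
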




 Suppose now that $(Y,\xi)$ is a closed contact manifold, and let $Y(p)$ be the sutured contact manifold obtained from $(Y,\xi)$ by removing a Darboux ball around $p$. Below, we use similar ideas to construct the morphism \[F_p:\SHMtfun(-Y(p)) \to \HMtoc(-Y) \otimes_\mathbb{Z} \RR\] in Proposition \ref{prop:darboux-complement}. Note that $(Y,\xi)$ is obtained from $Y(p)$ by a contact $3$-handle attachment. Hence, there is a canonical isotopy class of contactomorphisms \[f:Y(p) \to M,\]  where $M$ is the sutured contact manifold obtained as  the contact connected sum of $(Y,\xi)$ with $(B^3,S^1,\xi_{std})$ at the point $p$. Suppose \[(\data = (Y_{B^3},R,r,m,\eta),\bar\xi)\] is a marked contact closure of  $(B^3,S^1,\xi_{std})$. This naturally gives rise to a marked contact closure \[(\data_M= (Y\#Y_{B^3},R,r,m_M,\eta),\xi\#\bar\xi)\] of $M$, where $m_M$ is the obvious extension of $m$. Let \[(W,\omega):(Y,\xi)\sqcup (Y_{B^3},\bar\xi)\to(Y\#Y_{B^3},\xi\#\bar\xi)\] be the natural Stein $1$-handle cobordism, and let $\nu\subset W$ be a  cylindrical cobordism from $r(\eta)\subset Y_{B^3}$ to $r(\eta)\subset Y\# Y_{B^3}$.  The map  \begin{equation}\label{eqn:mapW}\HMtoc(W|{-}R;\Gamma_{-\nu}):\SHMt(-\data_M)\to\HMtoc(-Y)\otimes_{\mathbb{Z}}\SHMt(-\data)\end{equation} sends $\invt(\data_M,\xi\#\bar\xi)$ to $\psi(Y,\xi)\otimes \invt(\data,\bar\xi)$, up to multiplication by a unit in $\RR$, by Theorem \ref{thm:ht-functoriality}. Let \[F_{\#}:\SHMtfun(-M)\to\HMtoc(-Y)\otimes_{\mathbb{Z}}\SHMtfun(-B^3,-S^1)\] be the morphism determined by the equivalence class of this map. Since $\SHMtfun(-B^3,-S^1)\cong \RR$, we may define $F_p$ to be the composition \[F_p =F_{\#}\circ \SHMtfun(f).\] We then have the following, which proves Proposition \ref{prop:darboux-complement}.

\begin{proposition}
\label{prop:Fp}
$F_p(\invt^g(Y(p))) = \psi(Y,\xi)\otimes \mathbf{1}$ for each $g\geq g(Y(p))=2$. \qed
\end{proposition}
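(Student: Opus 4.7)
The proof proposal is to unwind the composition $F_p = F_\# \circ \SHMtfun(f)$ on a convenient representative and then apply the two ingredients that already do the real work: functoriality of $\SHMtfun$ under contactomorphism (Lemma \ref{lem:contactomorphism}) and functoriality of Kronheimer--Mrowka's contact invariant under exact symplectic cobordism (Theorem \ref{thm:ht-functoriality}), combined with the fact that the contact class of the Darboux ball is a unit (Proposition \ref{prop:darboux-ball-invt}).

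Concretely, I would fix any marked contact closure $(\data = (Y_{B^3},R,r,m,\eta), \bar\xi)$ of $(B^3,S^1,\xi_{std})$ with $g(\data) \geq 2$ and pass to the induced marked contact closure $(\data_M,\xi \# \bar\xi)$ of $M = Y \# B^3$ described in the excerpt. By Lemma \ref{lem:contactomorphism}, the morphism $\SHMtfun(f)$ sends $\invt^g(Y(p))$ to $\invt^g(M)$, and at the level of representatives this is simply the identification $\psi(Y_{B^3} \# Y, \xi \# \bar\xi) \in \SHMt(-\data_M)$ coming from $f$.  Next I would apply the cobordism map from \eqref{eqn:mapW}, namely
\[ \HMtoc(W|{-}R;\Gamma_{-\nu}) : \SHMt(-\data_M) \to \HMtoc(-Y) \otimes_{\mathbb{Z}} \SHMt(-\data). \]
The cobordism $W$ is an exact symplectic (in fact Stein) cobordism from $(Y,\xi) \sqcup (Y_{B^3},\bar\xi)$ to $(Y \# Y_{B^3}, \xi \# \bar\xi)$, so Theorem \ref{thm:ht-functoriality} (reversing the cobordism direction, as in Corollary \ref{cor:contactplusone}) applied with the local system $\Gamma_{-\nu}$ tells us that $\HMtoc(W|{-}R;\Gamma_{-\nu})$ sends $\psi(Y \# Y_{B^3}, \xi \# \bar\xi)$ to $\psi(Y,\xi) \otimes \invt(\data,\bar\xi)$ up to a unit in $\RR$.

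Finally, I would use the identification $\SHMtfun(-B^3, -S^1) \cong \RR$, under which $\invt(\data,\bar\xi)$ is a unit by Proposition \ref{prop:darboux-ball-invt}; its equivalence class is $\mathbf{1}$ by definition. Following through the definition $F_p = F_\# \circ \SHMtfun(f)$ therefore yields $F_p(\invt^g(Y(p))) = \psi(Y,\xi) \otimes \mathbf{1}$ in $\HMtoc(-Y) \otimes_{\mathbb{Z}} \RR$, which is the desired equality.

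The only genuinely substantive point is the tensor product decomposition of the cobordism map in \eqref{eqn:mapW}. This rests on the fact that, with $\eta$ supported in the $Y_{B^3}$ summand, the twisted monopole Floer homology of $Y \# Y_{B^3}$ in the top $\Sc$ structures relative to $R$ splits as $\HMtoc(-Y) \otimes_{\mathbb{Z}} \HMtoc(-Y_{B^3}|{-}R;\Gamma_{-\eta})$, and that the Stein 1-handle map respects this splitting by sending the connected-sum contact invariant to the tensor product of the contact invariants (the local coefficient version of the standard behavior of $\psi$ under connected sum); this is where Theorem \ref{thm:ht-functoriality} enters in its genuine form. Everything else is a bookkeeping exercise in the framework of projectively transitive systems.
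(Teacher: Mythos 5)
Your proof is correct and matches the paper's own argument, which is carried out in the paragraph preceding the proposition (whence the terminal square, indicating that the preceding construction constitutes the proof): each step you identify---Lemma \ref{lem:contactomorphism} for $\SHMtfun(f)$, Theorem \ref{thm:ht-functoriality} for the Stein $1$-handle cobordism map in \eqref{eqn:mapW}, and Proposition \ref{prop:darboux-ball-invt} to pin down $\invt(\data,\bar\xi)$ as the generator $\mathbf{1}$---is exactly what the paper uses.

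Your closing paragraph, however, misidentifies where the subtlety lies, and the specific claim it makes is false. The argument does not use, and does not need, a K{\"u}nneth-type splitting
\[ \HMtoc({-}Y\#Y_{B^3}|{-}R;\Gamma_{-\eta}) \cong \HMtoc(-Y)\otimes_{\mathbb{Z}}\HMtoc(-Y_{B^3}|{-}R;\Gamma_{-\eta}). \]
Indeed, by Remark \ref{rmk:analoguehatplus} the left side is (up to tensoring with $\RR$) a version of $\hf(-Y)$, while the right side is a version of $\hfp(-Y)$; these are generally not isomorphic, and \eqref{eqn:mapW} is a cobordism map rather than an identification of groups. The tensor product appears as the \emph{target} simply because $W$, viewed from $-Y\#Y_{B^3}$, has two outgoing boundary components $-Y$ and $-Y_{B^3}$, and $\HMtoc$ of a disjoint union is a tensor product. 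Theorem \ref{thm:ht-functoriality} then says this cobordism map carries $\psi(Y\#Y_{B^3},\xi\#\bar\xi)$ to $\psi(Y,\xi)\otimes\psi(Y_{B^3},\bar\xi)$, which is all that is needed. The genuine technical issue---flagged by the paper in the remark immediately following the proposition---is that defining cobordism maps on $\HMtoc$ for cobordisms with multiple outgoing ends requires care because of reducible monopoles; the paper invokes Bloom's work \cite{bloom4} for this.
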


\begin{remark}
For the map in \eqref{eqn:mapW}, we are viewing $W$ as a cobordism with one incoming and two outgoing boundary components. Reducible monopoles  make defining  maps associated to cobordisms with multiple incoming or outgoing boundary components difficult. This difficulty is typically overcome by restricting to nontorsion $\Sc$ structures on the boundary. In \eqref{eqn:mapW}, however, we are not restricting the $\Sc$ structures on $Y$.  Fortunately for us, Bloom has recently worked out the combinatorics needed to define maps on $\HMtoc$ associated to cobordisms with a single incoming boundary component and multiple outgoing boundary components \cite{bloom4}. 
\end{remark}

\subsubsection{The relative Giroux correspondence and $\invt(M,\Gamma,\xi)$}
\label{sssec:indepgenus}

Below, we use the ``existence" part of the relative Giroux correspondence between partial open books and sutured contact manifolds, together with our contact $2$-handle attachment maps, to prove Theorem \ref{thm:well-defined2}. Our discussion of  this correspondence   differs slightly in style but not in substance from the discussions in \cite{etguozbagcirelative,hkm4}.

\begin{definition} A \emph{partial open book} is a quadruple $(S,P,h,\mathbf{c})$, where: 
\begin{enumerate}
\item $S$ is a surface with nonempty boundary, 
\item $P$ is a subsurface of $S$,  
\item $h:P\to S$ is an embedding which restricts to the identity on $\partial P\cap \partial S$, 
\item $\mathbf{c}=\{c_1,\dots,c_n\}$ is a set of disjoint, properly embedded arcs in $P$ such that $S\ssm \mathbf{c}$ deformation retracts onto $S\ssm P$.
\end{enumerate}
\end{definition}

\begin{remark}
The collection $\mathbf{c}$ of \emph{basis} arcs for $P$ is not typically recorded in the data of a partial open book. Usually, it is just required that $S$ be obtained from $\overline{S\ssm P}$ by successive  $1$-handle attachments. The basis arcs specify a $1$-handle decomposition of $P$.
\end{remark}

Let $H(S)$ be the  product sutured contact handlebody obtained from $(S\times[-1,1],\partial S\times\{0\},\xi_{S})$ by rounding corners, as defined in  Subsection \ref{ssec:closureexamples}. 
Let $\gamma_i$ be the curve on $\partial H(S)$ corresponding to  \begin{equation}\label{eqn:basishandle}(c_i\times\{1\})\cup (\partial c_i\times [-1,1])\cup (h(c_i)\times\{-1\})\,\subset\, \partial (S\times[-1,1]).\end{equation}  Let $M(S,P,h,\mathbf{c})$ be the sutured contact manifold obtained from $H(S)$ by attaching contact $2$-handles along the curves in \begin{equation}\label{eqn:gammac}\boldsymbol{\gamma}(h,\mathbf{c}):=\{\gamma_1,\dots,\gamma_n\}.\end{equation}


\begin{definition}
A \emph{partial open book decomposition} of $(M,\Gamma,\xi)$ is a partial open book $(S,P,h,\mathbf{c})$ together with a contactomorphism \[f:M(S,P,h,\mathbf{c})\to (M,\Gamma,\xi).\]
\end{definition}

The theorem below represents the ``existence" part of the relative Giroux correspondence between partial open books and sutured contact manifolds, proven by Honda, Kazez, and Mati{\'c} in \cite{hkm4}.

\begin{theorem}
\label{thm:relativegiroux1}
Every sutured contact manifold admits a partial open book decomposition.
\end{theorem}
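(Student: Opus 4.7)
The plan is to adapt the argument due to Honda, Kazez, and Matić, producing the partial open book from a cell decomposition of $(M,\Gamma,\xi)$ that is adapted to the contact structure. The upshot is that one finds a spine in $M$ whose standard neighborhood is contactomorphic to $H(S)$ for a suitable page $S$, and the complement of this neighborhood is a union of standard contact 2-handle regions attached along curves of the form \eqref{eqn:basishandle}.

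First I would construct a \emph{contact cell decomposition} of $(M,\Gamma,\xi)$: a finite CW decomposition in which $R_+(\Gamma)$ appears as a subcomplex, the 1-skeleton is a Legendrian graph $K$ rel $\partial M$, each 2-cell $D_i$ has Legendrian boundary with $\mathrm{tb}(\partial D_i) = -1$ after being made convex, and each 3-cell is a Darboux ball. Existence follows from convex surface theory applied to $\partial M$ (made standard by flexibility), Legendrian approximation of a cellulation of $M$ rel $R_+(\Gamma)$, and the Legendrian realization principle to arrange the $\mathrm{tb} = -1$ condition on each 2-cell, subdividing further if needed.

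Next I would let $S$ be a compact convex surface with collared Legendrian boundary obtained by thickening $R_+(\Gamma) \cup K$ slightly inside $M$; by uniqueness of standard Legendrian graph neighborhoods and Giroux's Uniqueness Lemma (Lemma \ref{lem:uniqueness}), a closed neighborhood of this spine is contactomorphic to $H(S)$, with $R_+(\Gamma)$ corresponding to $(S \smallsetminus P) \times \{1\}$ for the subsurface $P \subset S$ made of the 1-handle bands running along the edges of $K$. Let $\mathbf{c} = \{c_1,\dots,c_n\}$ be the cocores of these bands; condition (4) in the definition of a partial open book is automatic from the choice of $P$. The closure of the complement of $H(S)$ inside $M$ is then a union of 3-balls, one per 2-cell $D_i$, meeting $\partial H(S)$ in neighborhoods of curves $\gamma_i$ of the form \eqref{eqn:basishandle}; the condition $\mathrm{tb}(\partial D_i)=-1$ plus uniqueness of tight structures on the 3-ball with prescribed convex boundary implies each of these balls is a standard contact 2-handle, and the gluing across it on the $S \times \{-1\}$ side is recorded by a unique diffeomorphism $h(c_i) \subset S$. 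Performing this for every $i$ determines the monodromy $h: P \to S$ (which is the identity on $\partial P \cap \partial S$ since the attachments happen away from the sutures), so that $M(S,P,h,\mathbf{c})$ is tautologically contactomorphic to $(M,\Gamma,\xi)$ by assembling the pieces.

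The main obstacle will be Step 1: producing a cell decomposition in which the convex 2-cells \emph{simultaneously} satisfy $\mathrm{tb}(\partial D_i) = -1$, the 1-skeleton meets $R_+(\Gamma)$ cleanly, and the 3-cells are Darboux. Once this geometric input is established, the remaining steps amount to translating cellular data into the combinatorial data $(S,P,h,\mathbf{c})$, and the compatibility with the contact structure is forced piece by piece by the uniqueness results in Subsection \ref{ssec:convex}.
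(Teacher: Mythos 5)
The paper does not prove Theorem \ref{thm:relativegiroux1}; it is stated as an external input, with the proof attributed to Honda, Kazez, and Mati{\'c} \cite{hkm4} (the ``existence'' half of the relative Giroux correspondence), so there is no in-paper argument to compare your proposal against.

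Your sketch does follow the HKM strategy in outline. The place where I would push back is the assertion that the closure of $M \ssm H(S)$ is a disjoint union of $3$-balls, one per $2$-cell, each meeting $\partial H(S)$ along a single annulus $A(\gamma_i)$. This is not automatic from an arbitrary contact cell decomposition: the complement of a tubular neighborhood of the $1$-skeleton (even union $R_+(\Gamma)$) is generically a handlebody-type region, not a union of balls; what decomposes into balls is the complement of a neighborhood of the full $2$-skeleton, i.e., the $3$-cells. For the partial open book to close up under contact $2$-handle attachments alone --- with no separate $3$-handle step and with the dividing set $\Gamma$ reproduced exactly on $\partial M$ --- one must arrange the $2$-cells, $3$-cells, and resulting $\gamma_i$ so that attaching the contact $2$-handles simultaneously fills each Darboux $3$-cell. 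The mechanism that makes this possible is that a final contact $2$-handle attached across a convex $S^2$ with one dividing circle caps off a tight ball (cf.\ \cite[Example 3]{etguozbagci}), but arranging the cellulation so this bookkeeping works out is the substantive content of HKM's proof. You flag the existence of a suitable cellulation as the main obstacle, which is right; as written, though, the proposal also elides the $M \ssm H(S)$ decomposition step, and the ``one ball per $2$-cell'' count is not correct without further hypotheses. The remaining translation into the data $(S,P,h,\mathbf{c})$ --- Legendrian realization of the skeleton, the $\mathrm{tb}(\partial D_i)=-1$ normalization, uniqueness of standard Legendrian-graph neighborhoods identifying the spine with $H(S)$, and reading $h$ off the gluings --- is consistent with the source.
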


\begin{definition} We define $N(M,\Gamma,\xi)$ to be the minimum of \[\{g(H(S)) = \max\{2,g(S)+|\partial S|\}\}\] over all partial open book decompositions $(S,P,h,\mathbf{c},f)$ of $(M,\Gamma,\xi)$. This is the constant  in Theorem \ref{thm:well-defined2}.
\end{definition}

\begin{proof}[Proof of Theorem \ref{thm:well-defined2}]
Let $(\data,\bar\xi)$ and $(\data',\bar\xi')$ be marked contact closures of $(M,\Gamma,\xi)$ with genus at least $N(M,\Gamma,\xi)$. It suffices to show that  \begin{equation}\label{eqn:fundamentaleq}\Psit_{-\data,-\data'}(\invt(\data,\bar\xi)) \doteq \invt(\data',\bar\xi').\end{equation} Suppose $(S,P,h,\mathbf{c},f)$ is a partial open book for $(M,\Gamma,\xi)$ with $g(H(S)) = N(M,\Gamma,\xi)$. Let $(\data_f,\bar\xi)$ and $(\data'_f,\bar\xi')$ be the induced marked contact closures of $M(S,P,h,\mathbf{c})$. Let $(\data_S,\bar\xi_S)$ and $(\data'_S,\bar\xi'_S)$ be marked contact closures of $H(S)$ with \begin{equation}\label{eqn:gg'}g(\data_S) = g(\data)=g(\data_f)\,\,\,\,{\rm and }\,\,\,\, g(\data'_S) = g(\data')=g(\data'_f).\end{equation} Since $M(S,P,h,\mathbf{c})$ is obtained from $H(S)$ by attaching contact $2$-handles, there is a morphism \[\mathscr{H}:\SHMtfun(-H(S))\to \SHMtfun(-M(S,P,h,\mathbf{c}))\] obtained by composing the corresponding $2$-handle morphisms  defined in Subsubsection \ref{sssec:2handles}.  Let 
\begin{align*}
\mathscr{H}_{-\data_S,-\data_f}&:\SHMt(-\data_S)\to\SHMt(-\data_f)\\
\mathscr{H}_{-\data_S',-\data_f'}&:\SHMt(-\data_S')\to\SHMt(-\data_f')
\end{align*}
be the induced (equivalence classes of) maps. Then the diagram 
\[ \xymatrix@C=30pt@R=32pt{
\SHMt(-\data_S) \ar[rr]^-{\mathscr{H}_{-\data_S,-\data_f}} \ar[d]^{\Psit_{-\data_S,-\data_S'}} && \SHMt(-\data_f) \ar[d]^{\Psit_{-\data_f,-\data_f'}}\ar[rr]^-{id_{-\data_f,-\data}} &&\SHMt(-\data)\ar[d]^{\Psit_{-\data,-\data'}} \\
\SHMt(-\data_S') \ar[rr]_-{\mathscr{H}_{-\data_S',-\data_f'}} &&  \SHMt(-\data_f')\ar[rr]_{id_{-\data_f',-\data'}} &&\SHMt(-\data')
}\] commutes, up to multiplication by a unit in $\RR$. Furthermore, the genus equalities in \eqref{eqn:gg'}, combined with Proposition \ref{prop:H2}, imply that 
\begin{align*}
\mathscr{H}_{-\data_S,-\data_f}(\invt(\data_S,\bar\xi_S))&\doteq \invt(\data_f,\bar\xi)\\
\mathscr{H}_{-\data_S',-\data_f'}(\invt(\data_S',\bar\xi_S'))&\doteq \invt(\data_f',\bar\xi').
\end{align*}
We know that \[\Psit_{-\data_S,-\data_S'}(\invt(\data_S,\bar\xi_S))\doteq \invt(\data_S',\bar\xi_S')\] since these two contact classes generate \[\SHMt(-\data_S)\cong\SHMt(-\data_S')\cong\RR,\] by Proposition \ref{prop:handlebody-invt}. The commutativity of the leftmost square in the diagram above then implies that  \[\Psit_{-\data_f,-\data_f'}(\invt(\data_f,\bar\xi))\doteq \invt(\data_f',\bar\xi').\] But this fact, combined with  the commutativity of the rightmost square and the obvious equalities 
\begin{align*}
id_{-\data_f,-\data}(\invt(\data_f,\bar\xi))&= \invt(\data,\bar\xi)\\
id_{-\data_f',-\data'}(\invt(\data_f',\bar\xi'))&= \invt(\data',\bar\xi'),
\end{align*}
 implies \eqref{eqn:fundamentaleq}.
\end{proof}

In particular, Theorem \ref{thm:well-defined2} implies that the elements $\invt^g(M,\Gamma,\xi)$ are equal for all $g\geq N(M,\Gamma,\xi)$. As in Definition \ref{def:contactinvariantuniv}, we denote this common element by \[\invt(M,\Gamma,\xi)\in\SHMtfun(-M,-\Gamma).\] The following are then immediate corollaries of Propositions \ref{prop:H0}, \ref{prop:H1}, \ref{prop:Fgamma}, \ref{prop:H2}, \ref{prop:H3}, and \ref{prop:Fp}. We refer to those propositions for the notation.

\begin{corollary}
\label{cor:handles} For $i=0,\dots,3$, the morphism  \[\mathscr{H}_i:\SHMtfun(-M,-\Gamma)\to\SHMtfun(-M_i,-\Gamma_i)\]   sends $\invt(M,\Gamma,\xi)$ to $\invt(M_i,\Gamma_i,\xi_i)$. \qed
\end{corollary}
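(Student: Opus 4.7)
The plan is to deduce this directly from the four Propositions \ref{prop:H0}, \ref{prop:H1}, \ref{prop:H2}, \ref{prop:H3} together with Theorem \ref{thm:well-defined2}. Fix $i\in\{0,1,2,3\}$. First, by Theorem \ref{thm:well-defined2}, there exist integers $N(M,\Gamma,\xi)$ and $N(M_i,\Gamma_i,\xi_i)$ such that the stabilized classes satisfy
\[ \invt^g(M,\Gamma,\xi) = \invt(M,\Gamma,\xi) \quad \text{and} \quad \invt^g(M_i,\Gamma_i,\xi_i) = \invt(M_i,\Gamma_i,\xi_i) \]
(as elements of their respective projectively transitive systems) whenever $g$ exceeds both of these constants. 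Choose any such $g$ that is also at least $\max(g(M,\Gamma), g(M_i,\Gamma_i))$, so that marked contact closures of the required genus exist on both sides.

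Next, apply the appropriate proposition for the value of $i$: Proposition \ref{prop:H0}, \ref{prop:H1}, \ref{prop:H2}, or \ref{prop:H3} asserts that for this same $g$,
\[ \mathscr{H}_i\bigl(\invt^g(M,\Gamma,\xi)\bigr) = \invt^g(M_i,\Gamma_i,\xi_i). \]
Substituting the identifications from the previous paragraph yields $\mathscr{H}_i(\invt(M,\Gamma,\xi)) = \invt(M_i,\Gamma_i,\xi_i)$, which is the desired statement.

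There is essentially no obstacle here: all of the substantive work has already been carried out in Propositions \ref{prop:H0}--\ref{prop:H3} (where the $g$-indexed identity was proved by a direct closure-level comparison using either the identity on monopole Floer homology, a $2$-handle cobordism map, or a Stein $1$-handle cobordism map) and in Theorem \ref{thm:well-defined2} (where the stabilization of $\invt^g$ was established via the ``existence'' half of the relative Giroux correspondence). The only thing to verify is that one may choose a single genus $g$ large enough to invoke both ingredients simultaneously on $(M,\Gamma,\xi)$ and $(M_i,\Gamma_i,\xi_i)$; since each of the two bounds is a finite integer, this is automatic.
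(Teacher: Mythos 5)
Your argument is correct and is exactly the ``immediate'' deduction the paper has in mind: for $g$ exceeding $N(M,\Gamma,\xi)$, $N(M_i,\Gamma_i,\xi_i)$, and $g(M,\Gamma)$, Propositions \ref{prop:H0}--\ref{prop:H3} give $\mathscr{H}_i(\invt^g(M,\Gamma,\xi)) = \invt^g(M_i,\Gamma_i,\xi_i)$, while Theorem \ref{thm:well-defined2} identifies $\invt^g = \invt$ on both sides. The paper states the corollary without proof precisely because the only content is this choice of a common sufficiently large $g$, which you have spelled out correctly.
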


\begin{corollary}
\label{cor:Fgamma'} The morphism \[F_{K}:\SHMtfun(-M,-\Gamma)\to\SHMtfun(-M',-\Gamma')\] sends 
$\invt(M,\Gamma,\xi)$ to  $\invt(M',\Gamma',\xi')$. \qed
\end{corollary}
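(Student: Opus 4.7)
The plan is to derive Corollary \ref{cor:Fgamma'} as an immediate consequence of Proposition \ref{prop:Fgamma} combined with Theorem \ref{thm:well-defined2} (via Definition \ref{def:contactinvariantuniv}). The key observation is that the morphism $F_K$ itself does not depend on genus: it is a single morphism of projectively transitive systems, defined at each marked closure by the cobordism map $\HMtoc(-W|{-}R;\Gamma_{-\nu})$ associated to the $2$-handle cobordism for contact $(+1)$-surgery on $m(K)$, and Proposition \ref{prop:Fgamma} already tells us it intertwines the elements $\invt^g$ at every $g \geq g(M,\Gamma)$.

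Concretely, I would first choose an integer
\[ g \geq \max\bigl(N(M,\Gamma,\xi),\,N(M',\Gamma',\xi')\bigr). \]
By Theorem \ref{thm:well-defined2} and Definition \ref{def:contactinvariantuniv}, the elements of the projectively transitive systems satisfy
\[ \invt(M,\Gamma,\xi) = \invt^g(M,\Gamma,\xi) \quad \text{and} \quad \invt(M',\Gamma',\xi') = \invt^g(M',\Gamma',\xi'). \]
Then I would invoke Proposition \ref{prop:Fgamma}, which asserts that at this genus
\[ F_K\bigl(\invt^g(M,\Gamma,\xi)\bigr) = \invt^g(M',\Gamma',\xi'). \]
Substituting the identifications above gives $F_K(\invt(M,\Gamma,\xi)) = \invt(M',\Gamma',\xi')$, which is exactly the claim.

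There is essentially no obstacle here; the only point that deserves any care is making sure both stabilization thresholds $N(M,\Gamma,\xi)$ and $N(M',\Gamma',\xi')$ are reached simultaneously, which is why we take $g$ at least the maximum of the two. Once this is observed, the corollary follows formally, with no further appeal to Kronheimer--Mrowka's contact invariant or to the construction of the splicing/$2$-handle cobordisms used in defining $\Psit$. The analogous corollaries for the handle-attachment maps $\mathscr{H}_i$ follow by exactly the same template, replacing Proposition \ref{prop:Fgamma} with Propositions \ref{prop:H0}, \ref{prop:H1}, \ref{prop:H2}, \ref{prop:H3}, or \ref{prop:Fp} as appropriate.
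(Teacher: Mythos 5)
Your proof matches the paper's exactly: the corollary is stated there as an immediate consequence of Proposition \ref{prop:Fgamma} together with the genus-stabilization established in Theorem \ref{thm:well-defined2} and Definition \ref{def:contactinvariantuniv}. Your observation about taking $g$ at least the maximum of the two thresholds $N(M,\Gamma,\xi)$ and $N(M',\Gamma',\xi')$ is the right care to take, and nothing further is needed.
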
 

\begin{corollary}
\label{cor:Fp'} The morphism
\[F_p:\SHMtfun(-Y(p)) \to \HMtoc(-Y) \otimes_\mathbb{Z} \RR\]  sends $\invt(Y(p))$ to $\psi(\xi)\otimes \mathbf{1}$. \qed
\end{corollary}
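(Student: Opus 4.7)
The plan is straightforward: Corollary \ref{cor:Fp'} is essentially just the promotion of Proposition \ref{prop:Fp} from the genus-dependent invariant $\invt^g$ to the stable invariant $\invt$. Since all of the hard work has already been done in constructing $F_p$ and verifying its behavior on $\invt^g(Y(p))$ for every $g\geq 2$, the corollary should fall out by combining that proposition with Theorem \ref{thm:well-defined2}.

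More concretely, I would argue as follows. By Theorem \ref{thm:well-defined2} and Definition \ref{def:contactinvariantuniv}, the element $\invt(Y(p))\in\SHMtfun(-Y(p))$ coincides with $\invt^g(Y(p))$ for every $g\geq N(Y(p))$, where $N(Y(p))\geq g(Y(p))=2$. Pick any such $g$; then Proposition \ref{prop:Fp} says $F_p(\invt^g(Y(p)))\doteq \psi(Y,\xi)\otimes \mathbf{1}$. Since $F_p$ is a morphism of projectively transitive systems and its value on an element is computed by applying any representative of $F_p$ to any representative of the element, this equality at the level of $\invt^g$ immediately yields \[F_p(\invt(Y(p)))\doteq \psi(Y,\xi)\otimes \mathbf{1}\] as an equation in $\HMtoc(-Y)\otimes_{\zz}\RR$.

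There is essentially no obstacle here; the only point requiring a sentence of care is that one must verify that $F_p$, as a morphism of systems, really does send the stable element $\invt(Y(p))$ to the stable element $\psi(Y,\xi)\otimes \mathbf{1}$ and not merely the individual representatives $\invt^g(Y(p))$. But this is immediate from the definitions in Subsection \ref{ssec:transitive-systems}: elements of a projectively transitive system are families of equivalence classes compatible with the canonical isomorphisms, and morphisms act componentwise on such families. Thus the genus-$g$ identity from Proposition \ref{prop:Fp}, valid for every $g\geq N(Y(p))$ and compatible with the canonical isomorphisms $\Psit_{-\data,-\data'}$ by the same cobordism-commutativity argument that was used to prove $F_p$ is well-defined, assembles into the stated equality at the level of stable invariants. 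Hence the entire proof is a one-line appeal to Proposition \ref{prop:Fp} together with Theorem \ref{thm:well-defined2}.
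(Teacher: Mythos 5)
Your proposal is correct and matches the paper's reasoning exactly: the paper presents Corollary \ref{cor:Fp'} (together with Corollaries \ref{cor:handles} and \ref{cor:Fgamma'}) as an immediate consequence of Proposition \ref{prop:Fp} once Theorem \ref{thm:well-defined2} identifies $\invt(Y(p))$ with $\invt^g(Y(p))$ for all sufficiently large $g$, which is precisely the argument you spell out.
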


The following  corollary provides the inspiration for our construction in \cite{bsSHI} of a contact invariant in sutured instanton homology.

\begin{corollary}
\label{cor:alternatebasis}
Suppose $(S,P,h,\mathbf{c},f)$ is a partial open book decomposition of $(M,\Gamma,\xi)$. Let \[\mathscr{H}:\SHMtfun(-H(S))\to \SHMtfun(-M(S,P,h,\mathbf{c}))\] be the composition of contact 2-handle morphisms associated to $\mathbf{c}$. Then \[\invt(M,\Gamma,\xi) = \SHMtfun(f)(\mathscr{H}(\mathbf{1}))\in \SHMtfun(-M,-\Gamma),\]
where $\mathbf{1}$ is the generator of $\SHMtfun(-H(S))\cong \RR$. \qed
\end{corollary}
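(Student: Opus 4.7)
The plan is to chain together three facts that have already been established in the paper. First, by Proposition \ref{prop:handlebody-invt} we have $\SHMtfun(-H(S))\cong \RR$ with $\invt(H(S))$ a unit, so after rescaling we may identify $\invt(H(S))$ with the generator $\mathbf{1}$. Second, since $M(S,P,h,\mathbf{c})$ is by construction obtained from $H(S)$ by attaching contact $2$-handles along the curves in $\boldsymbol{\gamma}(h,\mathbf{c})$ defined in \eqref{eqn:gammac}, the morphism $\mathscr{H}$ is the composition of the corresponding contact $2$-handle morphisms $\mathscr{H}_2$ from Subsubsection \ref{sssec:2handles}. Third, the contactomorphism $f$ induces a morphism $\SHMtfun(f)$ which transports the contact invariant.

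First I would apply Corollary \ref{cor:handles} (the $i=2$ case) one handle at a time. Let $M^{(0)} = H(S)$ and inductively let $(M^{(k)},\Gamma^{(k)},\xi^{(k)})$ be the result of attaching the contact $2$-handle along $\gamma_k$ to $(M^{(k-1)},\Gamma^{(k-1)},\xi^{(k-1)})$, so that $M^{(n)} = M(S,P,h,\mathbf{c})$. Each corresponding morphism $\mathscr{H}_2^{(k)}$ sends $\invt(M^{(k-1)},\Gamma^{(k-1)},\xi^{(k-1)})$ to $\invt(M^{(k)},\Gamma^{(k)},\xi^{(k)})$. Composing, we obtain
\[ \mathscr{H}(\invt(H(S))) = \invt(M(S,P,h,\mathbf{c})). \]
Combined with the identification $\invt(H(S)) = \mathbf{1}$, this gives $\mathscr{H}(\mathbf{1}) = \invt(M(S,P,h,\mathbf{c}))$ in $\SHMtfun(-M(S,P,h,\mathbf{c}))$.

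Finally I would apply Lemma \ref{lem:contactomorphism} to the contactomorphism $f:M(S,P,h,\mathbf{c})\to (M,\Gamma,\xi)$, which gives
\[ \SHMtfun(f)(\invt(M(S,P,h,\mathbf{c}))) = \invt(M,\Gamma,\xi). \]
Substituting $\mathscr{H}(\mathbf{1})$ for $\invt(M(S,P,h,\mathbf{c}))$ yields the desired identity. There is no serious obstacle here since all the substantive work has already been done in establishing Proposition \ref{prop:handlebody-invt}, Proposition \ref{prop:H2}/Corollary \ref{cor:handles}, and Lemma \ref{lem:contactomorphism}; the corollary is essentially a repackaging of the proof of Theorem \ref{thm:well-defined2}. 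The only mild subtlety to note is that Corollary \ref{cor:handles} depends on Theorem \ref{thm:well-defined2}, so one should verify that the argument is not circular: indeed Theorem \ref{thm:well-defined2} is proved using the propositions $\mathscr{H}_i(\invt^g) = \invt^g$ for the \emph{genus-$g$} invariants (Proposition \ref{prop:H2}), and then Corollary \ref{cor:handles} promotes this to the stable invariant $\invt$, which is exactly what we use here.
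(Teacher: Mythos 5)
Your proposal is correct and follows exactly the argument the paper has in mind: the corollary is stated with an immediate \qed because it drops out directly from Proposition \ref{prop:handlebody-invt}, Corollary \ref{cor:handles}, and Lemma \ref{lem:contactomorphism}, precisely as you chain them. Your remark on avoiding circularity (Corollary \ref{cor:handles} rests on Theorem \ref{thm:well-defined2}, which is itself proved via the genus-$g$ statement Proposition \ref{prop:H2}) is accurate and a nice explicit check; the only cosmetic point is that in the projectively transitive framework $\invt(H(S))$ and $\mathbf{1}$ are already equal as units of the system, so no rescaling is actually needed.
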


Suppose $(M,\Gamma)$ is  a \emph{sutured submanifold} of $(M',\Gamma')$, as defined in \cite{hkm5}. Let $\xi$ be a contact structure on $M'\ssm \inr(M)$ with convex boundary and dividing set $\Gamma$ on $\partial M$ and $\Gamma'$ on $\partial M'$. The sutured contact manifold $(M'\ssm \inr(M),\Gamma \cup \Gamma',\xi')$ can be obtained from a vertically invariant contact structure on $\partial M\times I$ by attaching contact handles. Given a contact handle decomposition $H$ of this sort, we  define  \[\Phi_{\xi,H}:\SHMtfun(-M,-\Gamma)\to\SHMtfun(-M',-\Gamma')\] to be the  corresponding composition of  contact handle attachment maps, as in the introduction. Note that if $\xi_M$ is a contact structure on $M$ which agrees with $\xi$ near $\partial M$, then \[\Phi_{\xi,H}(\invt(M,\Gamma,\xi_M)) = \invt(M',\Gamma',\xi_M\cup \xi)\] by Corollary \ref{cor:handles}. 

\begin{corollary}
\label{cor:contactsubmfld} If $(M,\Gamma,\xi)$ embeds into $(M',\Gamma',\xi')$ as a sutured contact submanifold     and $\invt(M,\Gamma,\xi) = 0$, then $\invt(M',\Gamma',\xi')=0$. \qed
\end{corollary}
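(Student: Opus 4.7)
The plan is to apply, almost directly, the machinery developed in the paragraph immediately preceding the corollary. Since $(M,\Gamma,\xi)$ embeds as a sutured contact submanifold of $(M',\Gamma',\xi')$, the complement $M'\ssm\inr(M)$ inherits a contact structure $\xi_c := \xi'|_{M'\ssm\inr(M)}$ whose convex boundary has dividing set $\Gamma\cup\Gamma'$ on $-\partial M\cup \partial M'$. By the discussion preceding the corollary, $(M'\ssm\inr(M),\Gamma\cup\Gamma',\xi_c)$ can be obtained from a vertically invariant contact structure on $\partial M\times I$ by attaching a sequence of contact handles; fix such a handle decomposition $H$.

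Composing the contact handle attachment morphisms $\mathscr{H}_i$ defined in Subsections~\ref{sssec:0handles}--\ref{sssec:3handles} according to $H$ yields the morphism
\[
\Phi_{\xi_c,H}:\SHMtfun(-M,-\Gamma)\to\SHMtfun(-M',-\Gamma').
\]
Taking the sutured contact structure $\xi_M := \xi$ on $M$, which by construction agrees with $\xi_c$ near $\partial M$, so that $\xi_M\cup\xi_c$ is isotopic rel $\partial M$ to $\xi'$, the identity stated right before the corollary gives
\[
\Phi_{\xi_c,H}\bigl(\invt(M,\Gamma,\xi)\bigr) \;=\; \invt(M',\Gamma',\xi_M\cup\xi_c) \;=\; \invt(M',\Gamma',\xi'),
\]
where the second equality uses Corollary~\ref{cor:isotopyindependence}.

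Finally, $\Phi_{\xi_c,H}$ is a morphism of projectively transitive systems of $\RR$-modules, hence is given fiberwise by $\RR$-linear homomorphisms up to units, and in particular sends the zero element of $\SHMtfun(-M,-\Gamma)$ to the zero element of $\SHMtfun(-M',-\Gamma')$. Therefore, if $\invt(M,\Gamma,\xi)=0$, then $\invt(M',\Gamma',\xi')=\Phi_{\xi_c,H}(0)=0$, as claimed.

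There is no serious obstacle here: the only ingredient that is not purely formal is the existence of a contact handle decomposition of the complement $(M'\ssm\inr(M),\Gamma\cup\Gamma',\xi_c)$, which is used above as a black box from the preceding discussion (and which ultimately rests on Theorem~\ref{thm:relativegiroux1}). Given that, the corollary is an immediate consequence of the linearity of the composite handle attachment morphism together with Corollary~\ref{cor:handles}.
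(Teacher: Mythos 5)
Your proof is correct and matches the paper's intended argument: the corollary is stated with \qed precisely because it is an immediate formal consequence of the paragraph directly above it, which constructs $\Phi_{\xi,H}$ from contact handle attachment maps and records that $\Phi_{\xi,H}(\invt(M,\Gamma,\xi_M)) = \invt(M',\Gamma',\xi_M\cup\xi)$; since morphisms of projectively transitive systems send $0$ to $0$, vanishing propagates. One small imprecision: the claim that $(M'\ssm\inr(M),\Gamma\cup\Gamma',\xi_c)$ admits a contact handle decomposition relative to $\partial M\times I$ is a standard fact from \cite{hkm5} (attributed there to Giroux), not an application of the existence part of the relative Giroux correspondence (Theorem~\ref{thm:relativegiroux1}), though the two are related.
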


We can use Corollary \ref{cor:contactsubmfld} to prove the following slightly weaker version of Theorem \ref{thm:ot}  without relying on the fact  that the monopole Floer invariant vanishes for overtwisted contact structures on closed 3-manifolds (Theorem \ref{thm:hm-ot}).

\begin{lemma}
\label{lem:ot2}
If $(M,\Gamma,\xi)$ is overtwisted, then $\invt(M,\Gamma,\xi)=0$.
\end{lemma}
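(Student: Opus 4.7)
The strategy is to apply Corollary \ref{cor:contactsubmfld}: it suffices to exhibit a fixed sutured contact submanifold $(N,\Gamma_N,\xi_N)\subset(M,\Gamma,\xi)$ with $\invt(N,\Gamma_N,\xi_N)=0$. A natural choice is a standard convex sutured neighborhood of an overtwisted disk $D\subset \inr(M)$; up to contactomorphism $(N,\Gamma_N,\xi_N)$ depends only on the existence of $D$ as an overtwisted disk, so the embedding clause is automatic for every overtwisted $(M,\Gamma,\xi)$. The problem thus reduces to computing the invariant of this single model.

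To verify $\invt(N,\Gamma_N,\xi_N)=0$ I apply Corollary \ref{cor:alternatebasis}. Pick a partial open book decomposition $(S,P,h,\mathbf{c},f)$ of $N$ in which the monodromy $h$ fixes one of the basis arcs $c_1\in\mathbf{c}$ rel boundary; overtwistedness provides such a decomposition, by the sutured analogue of Goodman's non-right-veering criterion, extracted directly from the presence of $D$ in the interior of a page. With $h(c_1)=c_1$, the attaching curve $\gamma_1\subset\partial H(S)$ from equation \eqref{eqn:basishandle} is a meridian of the $1$-handle of $H(S)$ dual to $c_1$ and therefore bounds a compressing disk $\Delta\subset H(S)$. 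Corollary \ref{cor:alternatebasis} writes $\invt(N,\Gamma_N,\xi_N) \doteq \SHMtfun(f)\bigl(\mathscr{H}_n\circ\cdots\circ\mathscr{H}_1(\mathbf{1})\bigr)$, so it is enough to prove $\mathscr{H}_1(\mathbf{1})=0$ for the contact 2-handle attached along this meridian.

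This local computation is the main obstacle, and the place where one must avoid invoking Theorem \ref{thm:hm-ot}. By Subsubsection \ref{sssec:2handles}, $\mathscr{H}_1$ is induced by the monopole Floer cobordism map corresponding to contact $(+1)$-surgery on a Legendrian realization $\gamma_1'$ of $\gamma_1$ in $\inr H(S)$; since $\gamma_1$ bounds $\Delta$, the knot $\gamma_1'$ is a null-homotopic Legendrian unknot whose contact framing is computable from the page $S$. My plan is to split the post-surgery closure, via an exact symplectic cobordism obtained by attaching a cancelling Stein $1$-handle along a Legendrian push-off of $\gamma_1'$, into a piece containing a bypass disk for $D$ whose topmost $\Sc$ summand of $\HMtoc$ can be forced to vanish by a direct dimension/degree argument (the relevant summand over the connect-summed overtwisted factor is zero for elementary reasons, independent of the general overtwisted vanishing theorem). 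Pushing the generator $\mathbf{1}\doteq\invt(\data,\bar\xi)$ from Proposition \ref{prop:handlebody-invt} through this Stein cobordism via Theorem \ref{thm:ht-functoriality} then lands it in the vanishing summand, giving $\mathscr{H}_1(\mathbf{1})=0$. Combined with the first two paragraphs, Corollary \ref{cor:contactsubmfld} yields $\invt(M,\Gamma,\xi)=0$.
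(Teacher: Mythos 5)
Your high-level reduction is the same as the paper's: by Corollary \ref{cor:contactsubmfld} it suffices to treat a standard sutured neighborhood of an overtwisted disk, and one should compute that model's invariant via a partial open book and Corollary \ref{cor:alternatebasis}. But from there your argument has two genuine gaps.

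First, the claim that overtwistedness produces a partial open book $(S,P,h,\mathbf{c})$ with $h(c_1)=c_1$ rel boundary is unjustified. Non-right-veering (Honda--Kazez--Mati\'c/Goodman) says the monodromy sends some boundary-based arc strictly to the \emph{left} of itself; it does not produce a monodromy that \emph{fixes} a basis arc, and that is a very special condition. The paper instead uses the explicit HKM partial open book for the overtwisted-disk neighborhood: $S$ an annulus, $\mathbf{c}=\{c\}$ a boundary-parallel arc, and $h(c)$ a \emph{different} boundary-parallel arc with $c\cup h(c)$ homotopic to the core $\alpha$ of $S$. There $h$ does not fix $c$, and the crucial geometric feature is different from yours: the attaching curve $\gamma$ is isotopic in $\partial H(S)$ to $\alpha\times\{1\}$, whose image in any closure is isotopic (with matching framing) to a nonseparating curve $r(a\times\{t\})$ lying \emph{in the closing surface $R$}. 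It is $R$, not $H(S)$, that gets compressed by the $0$-surgery, and that is what drives the vanishing. Your $\gamma_1$ bounding a compressing disk of $H(S)$ is a different geometric situation and does not by itself force any monopole group to vanish --- indeed, $0$-surgery (with a page framing) on a curve bounding a disk in $H(S)$ can produce $S^1\times S^2$ or lens-space summands with nonzero Floer homology.

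Second, your ``local computation'' --- the Stein $1$-handle splitting, the ``bypass disk,'' and the ``dimension/degree argument'' on ``the connect-summed overtwisted factor'' --- is not an argument but a sketch of a plan, and the plan appears circular: you are trying to avoid Theorem \ref{thm:hm-ot}, yet the step where the ``overtwisted factor'' contributes a vanishing summand is exactly an overtwisted-vanishing statement that you would need to prove from scratch. The paper avoids all of this. After the $2$-handle attachment it shows that, in the resulting closure $-Y'$, the class $[r(-R\times\{0\})]$ is represented by a surface of genus $g(R)-1$, so the adjunction inequality kills every $\Sc$ structure contributing to $\SHMt(-\data')$, giving the stronger statement $\SHMtfun(-M(S,P,h,\mathbf{c}))=0$ outright. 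Replacing your paragraphs two and three by that genus-reduction/adjunction argument (with the explicit HKM annulus open book) is what closes the gap.
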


\begin{proof}
By Corollary \ref{cor:contactsubmfld}, it is enough to show that a standard neighborhood $(M,\Gamma,\xi)$ of an overtwisted disk  has vanishing invariant. In \cite{hkm4}, Honda, Kazez, and Mati{\'c} describe a partial open book for $(S,P,h,\mathbf{c})$ for $(M,\Gamma,\xi)$ in which $S$ is an annulus, $\mathbf{c}$ consists of a single boundary parallel arc $c$, and $h(c)$ is another boundary parallel arc such that $c\cup h(c)$ is homotopic to a core curve $\alpha$ of the annulus $S$. As usual, we let $\gamma$ be the curve on $\partial H(S)$ corresponding to  \[(c\times\{1\})\cup (\partial c\times [-1,1])\cup (h(c)\times\{-1\}) \subset S\times[-1,1].\] Then $M(S,P,h,\mathbf{c})$ is obtained from $H(S)$ by attaching a contact 2-handle along $\gamma$. Let \[\mathscr{H}:\SHMtfun(-H(S))\to \SHMtfun(-M(S,P,h,\mathbf{c}))\] be the corresponding map. By Corollary \ref{cor:alternatebasis}, it suffices to show that $\mathscr{H}\equiv 0$. In fact, we will show that $\SHMtfun(-M(S,P,h,\mathbf{c}))=0$. 

To see this, let $\data = (Y,R,r,m,\eta)$ be any marked closure of $H(S)$. Let $\gamma'$ be a parallel copy of $\gamma$ in the interior of $Y$ and let $Y'$ be the result of $0$-surgery on $m(\gamma')$ with respect to the framing induced by $\partial H(S)$. By the construction of the contact 2-handle  map in the previous section, we know that there is an embedding \[m':M(S,P,h,\mathbf{c})\to Y'\] such that $\data' = (Y',R,r,m',\eta')$ is a marked closure of $M(S,P,h,\mathbf{c})$. Note that $\gamma$ is isotopic to the curve $\alpha'\subset \partial H(S)$ corresponding to $\alpha\times\{1\} \subset S\times[-1,1]$, by an isotopy which sends the $\partial H(S)$-framing on $\gamma$ to that of $\alpha'$, as depicted in Figure \ref{fig:OTdisk}. The image $m(\alpha')$ is isotopic to $r(a\times\{t\})$ for some embedded curve $a\subset R$ and any $t\in[0,1]$, by an isotopy which sends the $\partial H(S)$-framing on $m(\alpha')$ to the $r(R\times\{t\})$-framing on $r(a\times\{t\})$. We can therefore view $Y'$ as obtained from $Y$ by $0$-surgery on $r(a\times\{t\})$. Since $r(a\times\{t\})$ compresses $r(R\times\{t\})$, the surface $r(-R\times\{0\})\subset -Y'$ is homologous to a surface of genus $g(R)-1$. By the adjunction inequality in monopole Floer homology \cite{kmbook}, this implies that $\HMtoc(-Y',\spc;\Gamma_{\eta})=0$ whenever \[|\langle c_1(\spc), [r(-R\times\{0\})]\rangle| = 2g(R)-2.\] In particular, $\SHMt(-\data')=0$, which implies that $\SHMtfun(-M(S,P,h,\mathbf{c}))=0$.

\begin{figure}[ht]
\labellist
\hair 2pt\tiny
\pinlabel $c$ at 14 118
\pinlabel $h(c)$ at 156 118
\pinlabel $\alpha'$ at 569 100
\pinlabel $\gamma$ at 418 60
\endlabellist
\centering
\includegraphics[width=13cm]{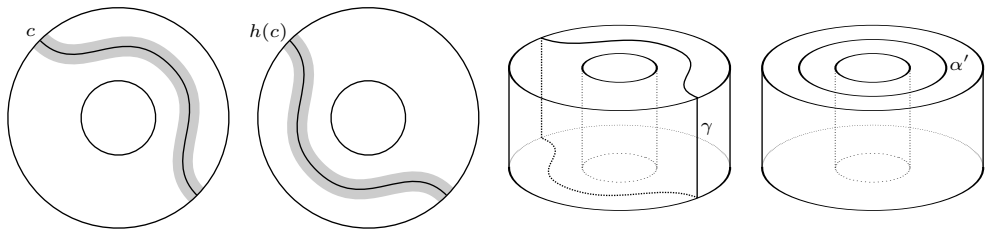}
\caption{A partial open book  for a standard neighborhood of an overtwisted disk.  The shaded regions represent $P$ and $h(P)$. The two rightmost diagrams show the curves $\gamma$ and $\alpha'$ in $\partial H(S)$, drawn as $(c\times\{1\})\cup (\partial c\times [-1,1])\cup (h(c)\times\{-1\})$ and $\alpha\times\{1\}$ in $S\times[-1,1]$.}
\label{fig:OTdisk}
\end{figure}

\end{proof}

\begin{remark}
With a bit of work, one should similarly be able to use Corollary \ref{cor:contactsubmfld} to prove that if $(M,\Gamma,\xi)$ has positive Giroux torsion, then $\invt(M,\Gamma,\xi)=0$ in analogy with \cite{ghihonvh}.
\end{remark}

\begin{remark} In \cite{hkm5}, Honda, Kazez, and Mati{\'c} define a map  similar to $\Phi_{\xi,H}$ which depends only on $\xi$. We expect that our map $\Phi_{\xi,H}$ is likewise independent of $H$, as in Conjecture \ref{conj:PhiHwd}.
\end{remark}

\section{The bypass exact triangle}
\label{sec:bypasstri}
In this section, we  work over the Novikov field $\RR/2\RR := \RR\otimes_{\mathbb{Z}} \Z/2\Z$ in order to use the surgery exact triangle in monopole Floer homology (see Remark \ref{rmk:novfield}). The results of the previous sections, including the construction and invariance of $\invt(M,\Gamma,\xi)$ and the definition of the contact handle attachment maps, hold over $\RR/2\RR$ without modification.

Suppose $(M,\Gamma)$ is a sutured manifold and  $\alpha\subset \partial M$ is an arc which intersects $\Gamma$ in three points, including both endpoints of $ \alpha$. A \emph{bypass move} along $\alpha$ replaces $\Gamma$ with a new set of sutures $\Gamma'$ which differ from $\Gamma$ in a neighborhood of $\alpha$, as shown in Figure \ref{fig:bypass-move}. 

\begin{figure}[ht]
\labellist
\small \hair 2pt
\pinlabel $\alpha$ at 22 28
\endlabellist
\centering
\includegraphics[width=5.6cm]{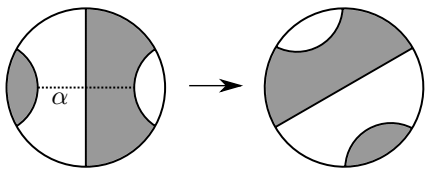}
\caption{A bypass move along the arc $\alpha$.}
\label{fig:bypass-move}
\end{figure}

If $\Gamma$ is the dividing set of a contact structure $\xi$ on $M$, then a bypass move is achieved by attaching an actual \emph{bypass} along $\alpha$, as defined by Honda in \cite{honda2}.  In \cite{ozbagci}, {\"O}zba{\u{g}}c{\i} observed  that attaching a bypass along $\alpha$ is equivalent to first attaching a contact $1$-handle  along disks in $\partial M$ centered at the endpoints of $ \alpha$ and then attaching a contact $2$-handle along the union $\beta$ of $\alpha$ with an arc  on the boundary of this $1$-handle, as shown in Figure \ref{fig:bypass-handles}. There is a canonical isotopy class of diffeomorphisms between the resulting manifold and $M$ which restrict to the identity outside a neighborhood of these  handles. A bypass move along $\alpha$ thus gives rise to a morphism \[\mathscr{H}_\alpha:\SHMtfun(-M,-\Gamma)\to \SHMtfun(-M,-\Gamma')\] which is the composition of the corresponding contact $1$- and $2$-handle maps with the map induced by this isotopy class of diffeomorphisms.  Corollary \ref{cor:handles} implies the following.

\begin{figure}[ht]
\labellist
\small \hair 2pt
\pinlabel $\alpha$ at 51 57
\pinlabel $\beta$ at 242 39
\endlabellist

\centering
\includegraphics[width=10cm]{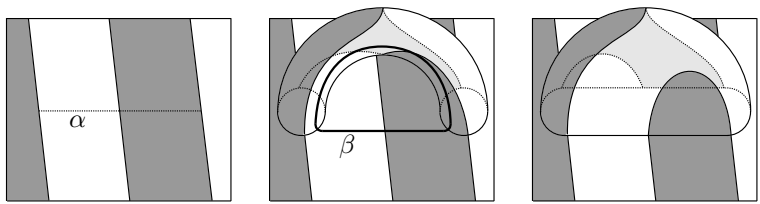}
\caption{Performing a bypass move by attaching a contact $1$-handle at the endpoints of $\alpha$ and a contact $2$-handle along $\beta$.}
\label{fig:bypass-handles}
\end{figure}

\begin{proposition}
\label{prop:bypass}
Suppose $(M,\Gamma',\xi')$ is obtained from $(M,\Gamma,\xi)$ by attaching a bypass along $\alpha$ and pulling back the resulting contact structure to $M$ by the canonical isotopy class of diffeomorphisms. Then the induced  map $\mathscr{H}_\alpha$ sends $\invt(M,\Gamma,\xi)$ to $\invt(M,\Gamma',\xi')$.
\end{proposition}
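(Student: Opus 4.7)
The proof is essentially a bookkeeping exercise that chains together results already established in the paper; the strategy is to unwind the definition of $\mathscr{H}_\alpha$ and apply the relevant naturality statements at each stage. First, let me introduce intermediate sutured contact manifolds. Let $(M_1,\Gamma_1,\xi_1)$ denote the result of attaching the contact $1$-handle to $(M,\Gamma,\xi)$ along disks centered at the two endpoints of $\alpha$, and let $(M_2,\Gamma_2,\xi_2)$ denote the result of attaching the contact $2$-handle to $(M_1,\Gamma_1,\xi_1)$ along the curve $\beta$ (the union of $\alpha$ with an arc across the cocore of the $1$-handle). By {\"O}zba{\u{g}}c{\i}'s observation \cite{ozbagci}, which is invoked in the discussion preceding the proposition, there is a canonical isotopy class of diffeomorphisms $f\colon (M_2,\Gamma_2)\to(M,\Gamma')$ restricting to the identity outside a regular neighborhood of the two handles, and $f_*(\xi_2)$ is contact isotopic to $\xi'$. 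By construction, $\mathscr{H}_\alpha$ is the composition
\[
\SHMtfun(-M,-\Gamma)\xrightarrow{\mathscr{H}_1}\SHMtfun(-M_1,-\Gamma_1)\xrightarrow{\mathscr{H}_2}\SHMtfun(-M_2,-\Gamma_2)\xrightarrow{\SHMtfun(f)}\SHMtfun(-M,-\Gamma').
\]

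Next, I would simply apply the naturality results at each step. Corollary \ref{cor:handles}, applied with $i=1$ and then $i=2$, gives
\[
\mathscr{H}_1\bigl(\invt(M,\Gamma,\xi)\bigr)=\invt(M_1,\Gamma_1,\xi_1),\qquad
\mathscr{H}_2\bigl(\invt(M_1,\Gamma_1,\xi_1)\bigr)=\invt(M_2,\Gamma_2,\xi_2).
\]
Since $f$ may be chosen to be a contactomorphism onto $(M,\Gamma',f_*(\xi_2))$, Lemma \ref{lem:contactomorphism} yields
\[
\SHMtfun(f)\bigl(\invt(M_2,\Gamma_2,\xi_2)\bigr)=\invt(M,\Gamma',f_*(\xi_2)),
\]
and Corollary \ref{cor:isotopyindependence} identifies the right-hand side with $\invt(M,\Gamma',\xi')$, since $f_*(\xi_2)$ is contact isotopic to $\xi'$ through diffeomorphisms fixing $\Gamma'$. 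Composing these three equalities yields the desired identity $\mathscr{H}_\alpha(\invt(M,\Gamma,\xi))=\invt(M,\Gamma',\xi')$.

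The main subtlety, and the only place where one must be careful, is verifying that the canonical isotopy class of diffeomorphisms $f$ really is represented by a contactomorphism onto $(M,\Gamma',\xi')$ up to isotopy stationary on $\Gamma'$. This is essentially the content of {\"O}zba{\u{g}}c{\i}'s construction in \cite{ozbagci}, but one should check that everything proceeds correctly at the level of dividing sets (rather than characteristic foliations) and modulo flexibility; this is exactly the kind of identification for which Corollaries \ref{cor:isotopyindependence} and \ref{cor:contactinvariantflexibility} were designed, so no new technical work is required. Once this identification is made explicit, the proof reduces to the three-step chase described above.
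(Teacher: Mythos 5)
Your proof is correct and follows the same approach as the paper, which simply states that Corollary \ref{cor:handles} implies the proposition without spelling out the details; your three-step chase (Corollary \ref{cor:handles} for the $1$- and $2$-handle maps, then Lemma \ref{lem:contactomorphism} and Corollary \ref{cor:isotopyindependence} for the canonical diffeomorphism) makes that implication explicit and is exactly the intended argument.
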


Figure \ref{fig:bypass-triangle} shows a   sequence of bypass moves, performed in some fixed neighborhood in $\partial M$, resulting in a 3-periodic sequence of sutures on $M$. Such a sequence of bypass moves is what Honda calls a \emph{bypass triangle}. Work-in-progress of Honda shows that a bypass triangle gives rise to a \emph{bypass exact triangle} in sutured (Heegaard) Floer homology. The main result of this section is the analogous result in the monopole Floer setting, per the theorem below. 

\begin{figure}[ht]
\labellist
\small \hair 2pt
\pinlabel $\alpha_1$  at 24 118
\pinlabel $\alpha_2$  at 148 117
\pinlabel $\alpha_3$  at 96 38
\pinlabel $\Gamma_1$  at -3 145
\pinlabel $\Gamma_2$  at 173 144
\pinlabel $\Gamma_3$  at 85 -8
\endlabellist
\centering
\includegraphics[width=5.6cm]{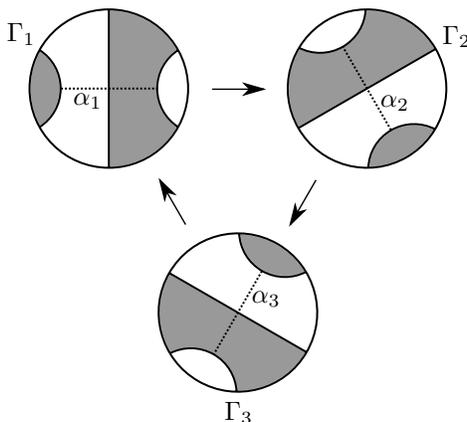}
\caption{The bypass triangle.  Each picture shows the attaching arc used to achieve the next set of sutures in the triangle.}
\label{fig:bypass-triangle}
\end{figure}

\begin{theorem}
\label{thm:bypass}
Suppose $\Gamma_1,\Gamma_2,\Gamma_3\subset \partial M$ is the 3-periodic sequence of sutures resulting from successive bypass moves along arcs $\alpha_1,\alpha_2,\alpha_3$ as   in Figure \ref{fig:bypass-triangle}. Then the maps $\mathscr{H}_{\alpha_1},\mathscr{H}_{\alpha_2},\mathscr{H}_{\alpha_3}$ fit into an exact triangle
\[ \xymatrix@C=-25pt@R=35pt{
\SHMtfun(-M,-\Gamma_1) \ar[rr]^{\mathscr{H}_{\alpha_1}} & & \SHMtfun(-M,-\Gamma_2) \ar[dl]^{\mathscr{H}_{\alpha_2}} \\
& \SHMtfun(-M,-\Gamma_3). \ar[ul]^{\mathscr{H}_{\alpha_3}} & \\
} \]
\end{theorem}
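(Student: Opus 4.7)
The plan is to deduce Theorem \ref{thm:bypass} from the surgery exact triangle in monopole Floer homology over the Novikov field $\RR/2\RR$, as foreshadowed in the introduction. Each bypass map $\mathscr{H}_{\alpha_i}$ is by construction a composition of a contact $1$-handle map and a contact $2$-handle map, followed by a canonical diffeomorphism. Since the contact $1$-handle map is an identity morphism on closures (Subsubsection \ref{sssec:1handles}), the problem reduces to analyzing three $2$-handle cobordism maps in a common closure.

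First I would reduce to a single $1$-handle attachment shared by all three moves. Within the bypass region each arc $\alpha_i$ may be isotoped (preserving the dividing set) so that its endpoints lie in a fixed pair of small disks $D_{\pm}\subset\partial M$ transverse to $\Gamma$. Attaching a single contact $1$-handle along $D_{\pm}$ produces sutured contact manifolds $(M^+,\Gamma_i^+,\xi_i^+)$, and each $\mathscr{H}_{\alpha_i}$ factors as $\mathscr{H}_1$ followed by a contact $2$-handle map along a curve $\beta_i\subset\partial M^+$ obtained by completing $\alpha_i$ with an arc on the cocore of the $1$-handle.

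Second, I would exhibit the three contact $2$-handles as contact $(+1)$-surgeries on three Legendrian curves $\beta_1',\beta_2',\beta_3'$ inside a single marked contact closure $(\data^+,\bar\xi^+)$ of $(M^+,\Gamma_1^+,\xi_1^+)$. Using Honda's Farey tessellation description of the bypass triangle \cite{honda4}, the $\beta_i'$ can be chosen to lie on a common torus $T\subset Y^+$ bounding the tubular neighborhood of a knot $K\subset Y^+$, realizing slopes that pairwise differ by one. After the contact-vs.-surface framing shift built into the construction of $\mathscr{H}_2$ (Subsubsection \ref{sssec:2handles}), the three resulting 3-manifolds are the $\infty$-, $0$-, and $(-1)$-surgeries on $K$, and these are genus $g(R)$ marked closures of $(M,\Gamma_1)$, $(M,\Gamma_2)$, $(M,\Gamma_3)$ respectively. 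One may position $K$ so that it is disjoint from both the surface $R$ and the curve $\eta$ defining the local system, so that each surgery preserves both the topmost-$\Sc$ summand in \eqref{eqn:relativelocal} and the local system $\Gamma_\eta$.

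Finally, the surgery exact triangle for monopole Floer homology with local coefficients over $\RR/2\RR$ (Kronheimer--Mrowka \cite{kmbook}) yields an exact triangle on the three monopole Floer groups whose connecting maps are precisely the corresponding $2$-handle cobordism maps. Restricting to the summands in the topmost $\Sc$-structures relative to $R$ produces an exact triangle of the modules $\SHMt(-\data_i)$ in which the maps are, up to units in $\RR/2\RR$, the bypass maps $\mathscr{H}_{\alpha_i}$. I expect the main obstacle to lie in Step 2: carefully verifying that the three Legendrian curves $\beta_i'$ can simultaneously be realized as the slopes $\infty,0,-1$ on the boundary of a common knot neighborhood, with $K$ avoiding both $R$ and $\eta$ and with framings matching the contact $(+1)$-surgery conventions of $\mathscr{H}_2$. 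Once this local model is established, Theorem \ref{thm:bypass} follows from the closed $3$-manifold surgery triangle together with the well-definedness of $\SHMtfun$ proved in Sections \ref{sec:monopoleinvt} and \ref{sec:well-definedness}.
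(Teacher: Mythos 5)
Your overall strategy---realizing the bypass triangle as the surgery exact triangle in monopole Floer homology---is the right one, and it is the same as the paper's. However, there are two genuine issues with the execution.

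First, the ``single shared $1$-handle'' reduction in your Step~1 does not work as stated. The arcs $\alpha_1,\alpha_2,\alpha_3$ in the bypass triangle (after arranging them all relative to $\Gamma_1$ as in the paper's Figure~\ref{fig:bypass-arcs}) have six distinct endpoints, and each bypass move requires its own contact $1$-handle with feet at the endpoints of its own arc. You cannot isotope all three arcs so that their endpoints lie in a fixed pair of disks $D_\pm$ without changing the bypass moves. The paper instead attaches \emph{all three} $1$-handles $H_1,H_2,H_3$ simultaneously, forming a manifold $(Z_1,\gamma_1)$, and then performs the three $2$-handle attachments along curves $\beta_1,\beta_2,\beta_3\subset\partial Z_1$ to cycle back to a manifold canonically diffeomorphic to $(Z_1,\gamma_1)$.

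Second, your Step~2 is a genuinely different route from the paper's, and you acknowledge it is unverified. You propose to find a common knot $K$ with the three surgery curves realized as slopes $\infty,0,-1$ on $\partial\nu(K)$ via the Farey picture; it is not at all clear from the construction of the closure that the three curves $\beta_i'$ lie on a common torus, and you would need substantial work to show this. The paper argues differently: when constructing the marked closure $\data_1$ of $(Z_1,\gamma_1)$, they perform auxiliary $(+1)$-surgeries on pushoffs $a',b'$ of two curves $a,b\subset\partial Z_1$, and then use a sequence of handleslides over $a'$, $b'$, and the surgered $\beta_1'$ to show directly that $\beta_2''$ (the image of $\beta_2'$ after a handleslide) is a meridian of $K=\beta_1'$ with the correct framing, and that $\beta_3''$ is in turn a meridian of $\beta_2''$. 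The auxiliary surgeries on $a'$ and $b'$ are essential to making these handleslides work; without them, the meridional relationships do not appear, and no version of your Step~2 goes through. This handleslide argument replaces the Farey/common-torus step, and it is where the real content of the paper's proof lies.
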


\begin{proof}
We will prove Theorem \ref{thm:bypass} by realizing the bypass exact triangle  as the usual surgery exact triangle in monopole Floer homology. 

Note that by enlarging our local picture slightly, we can think of the arcs $\alpha_1,\alpha_2,\alpha_3$ as being arranged as in Figure \ref{fig:bypass-arcs} with respect to $\Gamma_1$. We may therefore view \[(M,\Gamma_2)\,\,\,{\rm and} \,\,\,(M,\Gamma_3)\,\,\, {\rm and}\,\,\,(M,\Gamma_1)\] as being obtained from $(M,\Gamma_1)$ by attaching bypasses along the arcs \[\alpha_1\,\,\,{\rm and}\,\,\,\alpha_1,\alpha_2\,\,\, {\rm and}\,\,\, \alpha_1,\alpha_2, \alpha_3,\] respectively. As described above, attaching a bypass along $\alpha_i$ amounts to attaching a contact $1$-handle $H_i$ along disks centered at the endpoints of $\alpha_i$ and then attaching a contact $2$-handle along a curve $\beta_i$ which extends $\alpha_i$ over the handle. Let $(Z_1,\gamma_1)$ be the sutured manifold obtained by attaching all three $H_1,H_2,H_3$ to $(M,\Gamma_1)$, as  in Figure \ref{fig:bypass-setup}. We will  view $\beta_1,\beta_2,\beta_3$ as curves in $\partial Z_1$, as shown in the figure. For $i=1,2,3$, let $(Z_{i+1},\gamma_{i+1})$ be the result of attaching a contact $2$-handle to $(Z_i,\gamma_i)$ along $\beta_i$. We thus have the following canonical (up to isotopy) identifications:
\begin{align*} 
(Z_1,\gamma_1)&\cong (M,\Gamma_1)\cup H_1\cup H_2\cup H_3\\
(Z_2,\gamma_2)&\cong (M,\Gamma_2)\cup H_2\cup H_3\\
(Z_3,\gamma_3)&\cong (M,\Gamma_3)\cup H_3\\
(Z_4,\gamma_4)&\cong (M,\Gamma_1).\\
\end{align*}

\begin{figure}[ht]
\labellist
\small \hair 2pt
\pinlabel $\alpha_1$ at 17 35
\pinlabel $\alpha_2$ at 32 51
\pinlabel $\alpha_3$ at 48 67
\endlabellist
\centering
\includegraphics[width=4cm]{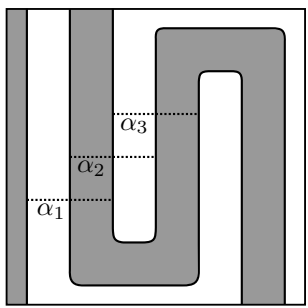}
\caption{Another view of the arcs of attachment for the bypasses in the triangle. The suture drawn here is $\Gamma_1$.}
\label{fig:bypass-arcs}
\end{figure}

\begin{figure}[ht]
\labellist
\small \hair 2pt
\pinlabel $\beta_1$ at 416 63
\pinlabel $\beta_2$ at 461 165
\pinlabel $\beta_3$ at 507 265
\pinlabel $a$ [B] at 974 95
\pinlabel $b$ [B] at 915 97
\endlabellist
\centering
\includegraphics[width=15cm]{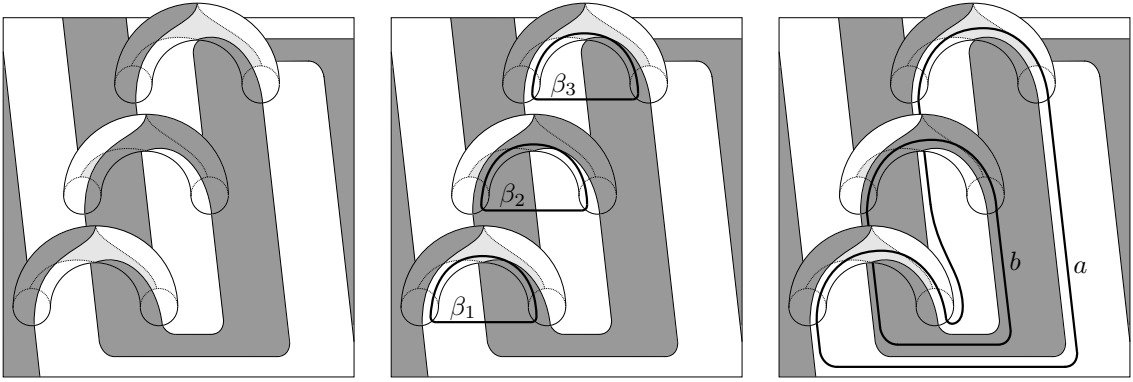}
\caption{A view of $(Z_1,\gamma_1)$, obtained by attaching the contact 1-handles $H_1,H_2,H_3$ to $(M,\Gamma_1).$ Middle, the attaching curves $\beta_1,\beta_2,\beta_3$ for the contact $2$-handles. Right, the curves $a$ and $b$.}
\label{fig:bypass-setup}
\end{figure}

Recall  that contact $1$-handle attachment has little effect on the level of closures. Specifically, if $\data=(Y,R,r,m,\eta)$ is a marked closure of sutured manifold after the $1$-handle attachment, then there is a marked closure of the sutured manifold before the $1$-handle attachment of the form $\data'=(Y,R,r,m',\eta)$. The corresponding $1$-handle attachment morphism is induced by the identity map from $\SHMt(-\data')$ to $\SHMt(-\data)$. 
We thus have canonical isomorphisms
\[\SHMtfun(-Z_i,-\gamma_i)\cong \SHMtfun(-M,-\Gamma_i),\] for $i=1,2,3,4$, where the subscript of $\Gamma_i$ is taken mod $3$. In particular, $\SHMtfun(-Z_4,-\gamma_4)$ is canonically identified with $\SHMtfun(-Z_1,-\gamma_1)$.
Therefore, to prove Theorem \ref{thm:bypass}, it  suffices to prove that there is an exact triangle 
\[ \xymatrix@C=-25pt@R=35pt{
\SHMtfun(-Z_1,-\gamma_1) \ar[rr]^{\mathscr{H}_{\beta_1}} & & \SHMtfun(-Z_2,-\gamma_2) \ar[dl]^{\mathscr{H}_{\beta_2}} \\
& \SHMtfun(-Z_3,-\gamma_3), \ar[ul]^{\mathscr{H}_{\beta_3}} & \\
} \] where $\mathscr{H}_{\beta_i}$ is the morphism associated to contact $2$-handle attachment along $\beta_i$.

Recall that on the level of closures,  contact $2$-handle attachment corresponds to surgery. Specifically,  if $\data_i = (Y_i,R,r_i,m_i,\eta)$ is a marked closure of $(Z_i,\gamma_i)$, then there is a marked closure of  $(Z_{i+1},\gamma_{i+1})$ of the form $\data_{i+1} = (Y_{i+1},R,r_{i+1},m_{i+1},\eta)$, where $Y_{i+1}$ is the result  of $0$-surgery on $m_i(\beta_i')$ with respect to the $(\partial Z_i)$-framing, where $\beta_i'$ is a pushoff of $\beta_i$ into the interior of $Z_i$. The morphism $\mathscr{H}_{\beta_i}$ is induced by the $2$-handle cobordism map
\[F_i:=\HMtoc({-}W_i|{-}R;\Gamma_{-\nu}):\SHMt(-\data_i)\to\SHMt(-\data_{i+1})\] corresponding to this surgery. So, to prove the exact triangle above, and, therefore, Theorem \ref{thm:bypass}, it  suffices to find a closure $\data_1$ of $(Z_1,\gamma_1)$ such that the surgeries relating the $-Y_i$ are exactly those that one encounters in the usual surgery exact triangle in monopole Floer homology. More precisely, it suffices to arrange that:
\begin{itemize}
 \item $F_1$ is the map associated to $0$-surgery on some  $K\subset-Y_1$,
 \item $F_2$ is the map associated to $(-1)$-surgery on a meridian $\mu_1\subset -Y_2$ of $K$,
 \item $F_3$ is the map associated to $(-1)$-surgery on a meridian $\mu_2\subset -Y_3$ of $\mu_1$.
  \end{itemize}
  
Let $\data = (Y,R,r,m,\eta)$ be a marked closure of $(Z_1,\gamma_1)$. Let $a$ and $b$   be embedded curves in the positive and negative regions of $\partial Z_1$ as shown in Figure \ref{fig:bypass-setup}. Since neither curve intersects $\gamma_1$, we can assume that $a$ and $b$ are contained in $r(R\times\{-1\})$ and $r(R\times\{+1\})$, respectively. Let $Y_1$ be the manifold obtained from $Y$ by performing $(+1)$-surgeries on pushoffs $a'$ and $b'$ of $a$ and $b$ into the interior of $r(R\times[-1,1])$, with respect to their $\partial Z_1$-framings. Then $\data_1 = (Y_1,R,r_1,m_1,\eta)$ is a marked closure of $(Z_1,\gamma_1)$, where $m_1$ is the embedding induced by $m$ and $r_1$ is the canonical (up to isotopy) embedding induced by $r$. For ease of notation, we will think of the $m_i$ as being inclusions, and simply write $x$ for $m_i(x)$ for points $x\in Z_i$. In particular, $W_i$ is the 2-handle cobordism corresponding to $0$-surgery on $\beta_i'\subset Y_i$. 

\begin{figure}[ht]
\labellist
\small \hair 2pt
\pinlabel $\beta_2''$ at 457 515
\pinlabel $\beta_3''$ at 517 230
\endlabellist
\centering
\includegraphics[width=14cm]{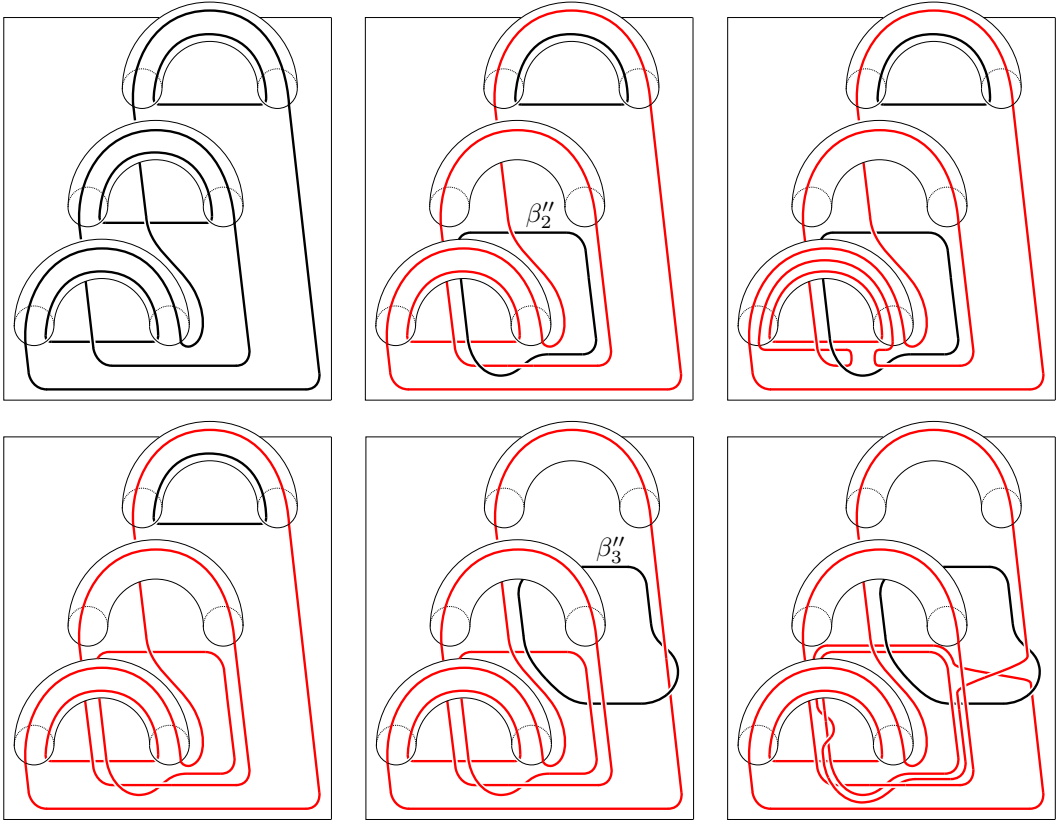}
\caption{Top-left, the curves $\beta_1',\beta_2',\beta_3',a',b'$ in a neighborhood of $\partial Z_1\subset Y$. Red indicates   curves that have been surgered along. Top-middle, sliding $\beta'_2\subset Y_2$ over $b'$ to produce $\beta_2''$. Top-right, showing that $\beta_2''$ bounds a meridional disk of $\beta_1'$ disjoint from the other surgery curves. Bottom-left, $\beta_3'\subset Y_3$. Bottom-middle, sliding $\beta'_3$ over $a'$ and $\beta_1'$ to produce $\beta_3''$. Bottom-right, showing that $\beta_3''$ bounds a meridional disk of $\beta_2''$ disjoint from the other surgery curves.}
\label{fig:handleslide1}
\end{figure}

Recall that $Y_2$ is the result of $0$-surgery on $\beta_1'\subset Y_1$. Let $\beta_2''$ be the curve in $Y_2$ obtained by handlesliding $\beta_2'$ across the surgered  curve $b'$, as shown in the top-middle of Figure \ref{fig:handleslide1}. Note that the $0$-framing on $\beta_2'$ corresponds to the $(+1)$-framing on $\beta_2''$ under this isotopy. We may therefore think of $Y_3$ as the result of $(+1)$-surgery on $\beta_2''$ and $W_2$ as the corresponding 2-handle cobordism. We claim that $\beta_2''$ is a meridian of the surgered curve $\beta_1'$. This is apparent once we handleslide the surgered curve $b'$ over the surgered curve $\beta_1'$, as shown in the top-right of Figure \ref{fig:handleslide1}.

Let $\beta_3''$ be the curve in $Y_3$ obtained by handlesliding $\beta_3'$ over the surgered curve $a'$ and then over the surgered curve $\beta_1'$, as shown in the bottom-middle of Figure \ref{fig:handleslide1}. The $0$-framing on $\beta_3'$ corresponds to the $(+1)$-framing on $\beta_3''$ under this isotopy, so we may therefore think of $Y_4\cong Y_1$ as the result of $(+1)$-surgery on $\beta_3''$ and $W_3$ as the corresponding 2-handle cobordism. We claim that $\beta_3''$ is a meridian of the surgered curve $\beta_2''$. This is  apparent once we handleslide the surgered curve $a'$ over the surgered curve $\beta_2''$, as shown in the bottom-right of Figure \ref{fig:handleslide1}, noting that we are free to isotop $\beta_2''$ through the 1-handle $H_2$. It follows from these considerations that the maps $F_1,F_2,F_3$ are of the form described above for $K = \beta_1'$, $\mu_1 = \beta_2''$,  $\mu_2 = \beta_3''$.
This completes the proof of Theorem \ref{thm:bypass}.
\end{proof}

\bibliographystyle{hplain}
\bibliography{References}

\end{document}